\documentclass[11pt]{article}
\usepackage{amsmath,mathrsfs}
\usepackage{amsthm}
\usepackage{subfigure}
\usepackage{graphicx}
\usepackage{amssymb,enumerate}
 \usepackage{latexsym}
 \usepackage{cite}
\usepackage[pagewise]{lineno}
 \usepackage[T1]{fontenc}
\usepackage{authblk}
\newtheorem{thm}{Theorem}[section]
\newtheorem{pro}{Proposition}[section]

\newtheorem{lem}[thm]{Lemma}
\newtheorem{Remark}[thm]{Remark}

\newenvironment{Proof of Theorem 2.2}[1][Proof of Theorem 2.2]{\noindent \textbf{#1.} }{\hfill$\Box$\\}
\newenvironment{Proof of Theorem 1.2}[1][Proof of Theorem 1.2]{\noindent \textbf{#1.} }{\hfill$\Box$\\}

\newcommand{\norm}[1]{\left\Vert#1\right\Vert}

\numberwithin{equation}{section}
\numberwithin{figure}{section}
\allowdisplaybreaks[2]
    \newcommand{\U}{\mathcal{U}}

\usepackage{needspace}
\makeatletter
\newcommand{\statementspace}{\if@nobreak\else\Needspace{3\baselineskip}\fi}
\BeforeBeginEnvironment{thm}{\statementspace}
\BeforeBeginEnvironment{pro}{\statementspace}
\BeforeBeginEnvironment{lem}{\statementspace}
\g@addto@macro\normalsize{%
  \abovedisplayskip=7.5pt plus 3pt minus 2pt
  \belowdisplayskip=7.5pt plus 3pt minus 2pt
  \abovedisplayshortskip=0pt plus 3pt
  \belowdisplayshortskip=6.5pt plus 3.5pt minus 3pt}
\makeatother

\begin{document}
\title{Global dynamics near solitary waves for the generalized Boussinesq equation under even-odd perturbations}
\author[1]{Xiaoguang Li\thanks{\ lixgmath@163.com\ (X. Li) }}
\author[2]{Jun Wang\thanks{ Corresponding author:\ wangjunmath1996@163.com\ (J. Wang)}}
\affil[1]{School of Mathematical Science, Sichuan Normal University, Chengdu 610068, P.R. China}
\affil[2]{School of Mathematical Science, Fudan University, Shanghai 200433, P.R. China}

\date{}

\maketitle
 {\begin{center}
\begin{minipage}{4.5in}
\small{\textbf{Abstract:}\ 
We study the global dynamics of the generalized Boussinesq equation near standing solitary waves. For even--odd perturbations, Maul\'{e}n [J. Math. Pures Appl. 177 (2023)] constructed an asymptotically stable center-stable manifold $\mathcal{M}$ near the solitary wave $\vec{Q}$. We complete this local description by proving that any sufficiently small even-odd perturbation of $\vec{Q}$ lying outside $\mathcal{M}$ either scatters in the energy space as $t\to+\infty$ or blows up in finite time. The proof relies on two main ingredients. First, we classify even--odd solutions with energy below the ground-state energy by identifying two invariant regions, associated with scattering and blow-up, respectively. Second, we establish a one-pass theorem in one dimension: a non-scattering solution cannot re-enter a suitably chosen neighborhood of the solitary wave once it has left that neighborhood. Consequently, the solution remains confined to one of the two invariant regions. Together with the classification above, this yields a dichotomy between scattering and blow-up outside $\mathcal{M}$.}\\
 {\textbf{Key words:} Generalized Boussinesq equations; solitary waves; global dynamics; scattering; blow-up }
 \\ {\textbf{2020 MR Subject Classification:} 35B35, 35B40, 35B44, 35P25}
\end{minipage}
\end{center}}
\tableofcontents
\section{Introduction}
\subsection{Model and setting}
\indent
\par
We study the Cauchy problem of the generalized Boussinesq equation
\[\label{eq01}\partial^{2}_{t}u-\partial^{2}_{x} u+\partial^{4}_{x} u+\partial^{2}_{x}(|u|^{p}u)=0,\quad u=u(t,x): \mathbb {R}\times\mathbb{R}\to \mathbb{R}\tag{GBQ},\]
which dates back to the 19th century and first appeared as an approximation for the propagation of small-amplitude surface waves by Boussinesq \cite{ref Bo1872}. Since then, the model has been used to model thin inviscid layers with free surface, nonlinear strings, shape-memory alloys, and wave propagation in elastic rods \cite{ref Wh1974}, among others.

Equation (\ref{eq01}) admits the following equivalent system:
\[\label{eq02}
\begin{cases}\partial_{t}u=\partial_{x}v,
\\\partial_{t}v=\partial_{x}(-\partial_{x}^{2} u+u-|u|^{p}u)\tag{HGBQ}.
\end {cases}\]
System (\ref{eq02}) possesses two conservation laws:
\begin{equation}
\label{eq03}
E(u,v)(t)=\frac{1}{2}\int_{\mathbb {R}}v^{2}+u^{2}+(\partial_{x} u)^{2}-\frac{2}{p+2}|u|^{p+2}dx=E(u_{0},v_{0}),
\end{equation}
\begin{equation}
\label{eq04}
I(u,v)(t)=\int_{\mathbb {R}}uvdx=I(u_{0},v_{0}),
\end{equation}
which are associated with the standard energy space $X^{1}=H^{1}(\mathbb{R})\times L^{2}(\mathbb{R})$.

In Hamiltonian form, system (\ref{eq02}) reads
\begin{equation*}
\partial_{t}\vec{u}=J\mathcal{A}\vec{u}+J\mathcal{N}(\vec{u}),\;\vec{u}=(u,v)^{T}
\end{equation*}
with
\begin{equation*}
J=\left(\begin{matrix}
                 0 & \partial_{x} \\
                 \partial_{x} & 0
               \end{matrix}\right),\;
\mathcal{A}=\left(\begin{matrix}
                 -\partial^{2}_{x}+1 & 0 \\
                 0 & 1
               \end{matrix}\right),\;
\mathcal{N}(\vec{u})=\left(\begin{matrix}
                 -|u|^{p}u \\
                 0
               \end{matrix}\right).
\end{equation*}
The operator $J\mathcal{A}$ generates a continuous unitary group $e^{tJ\mathcal{A}}$ on $X^{1}$, given explicitly by the Fourier multiplier
\begin{equation*}
e^{tJ\mathcal{A}}\vec{u}_{0}=\mathcal{F}^{-1}\left(\begin{matrix}
                 \cos(t\xi\sqrt{1+\xi^{2}}) & \frac{i}{\sqrt{1+\xi^{2}}}\sin(t\xi\sqrt{1+\xi^{2}})\\
                 i\sqrt{1+\xi^{2}}\sin(t\xi\sqrt{1+\xi^{2}}) & \cos(t\xi\sqrt{1+\xi^{2}})
               \end{matrix}\right)\mathcal{F}\binom{u_{0}}{v_{0}},
\end{equation*}
and satisfies the identity
\begin{center}
$\langle e^{tJ\mathcal{A}}\vec{u},\vec{v}\rangle_{X^{1}}=\langle\vec{u},e^{-tJ\mathcal{A}}\vec{v}\rangle_{X^{1}},\;\forall \vec{u},\vec{v}\in X^{1},\; \forall t\in\mathbb{R}$.
\end{center}

Over the years, the generalized Boussinesq equation has been extensively studied. We briefly recall the well-posedness and dynamical results that are most relevant to our work. The local well-posedness for (\ref{eq02}) was established in
$H^{s+2}(\mathbb{R})\times H^{s+1}(\mathbb{R})$ for $s>\frac{1}{2}$ by Bona--Sachs \cite{ref Bo1988}, later extended to the energy space $H^{1}(\mathbb{R})\times L^{2}(\mathbb{R})$ by Tsutsumi--Matahashi \cite{ref Ts1991} and Liu \cite{ref Liu1993}. For the scalar good Boussinesq equation with quadratic nonlinearity $u^2$, Kishimoto \cite{ref Ki2013} established local well-posedness for initial data $(u_0,\partial_tu(0))\in H^s(\mathbb R)\times H^{s-2}(\mathbb R)$ with $s\geq-\frac12$. Concerning long-time dynamics, finite-time blow-up was first proved by Sachs \cite{ref Sa1990} and subsequently refined by Liu \cite{ref Liu1995} via invariant sets arguments, with higher-dimensional extensions recently given by Guo et al. \cite{ref Ch2023, ref Ch2021}. On the other hand, for small initial data, global existence and linear scattering were obtained by Linares--Scialom \cite{ref Li1995} and Liu \cite{ref Liu1997}, while Mu\~noz--Poblete--Pozo \cite{ref Mu2018} proved local energy decay in growing subsets of the light cone; see also \cite{ref Ch2007, ref Fa2008, ref Wang2007, ref Wang2009} for further asymptotic results.

System (\ref{eq02}) admits a family of solitary waves (traveling waves)
$$\vec{Q}_{c}(x-ct)=(Q_{c}(x-ct), -cQ_{c}(x-ct))^{T},$$
where
$$Q_{c}=[\frac{(p+2)(1-c^{2})}{2}]^{\frac{1}{p}}\operatorname{sech}^{\frac{2}{p}}(\frac{p\sqrt{1-c^{2}}}{2}x)$$
denotes the positive even solution of the elliptic equation
\begin{equation}
\label{eq06}
-\partial^{2}_{x} Q_{c}+(1-c^{2})Q_{c}- Q_{c}^{p+1}=0, \quad c^{2}<1.
\end{equation}

 Solitary waves play an important role in the study of dynamics and have attracted considerable attention. Using the Grillakis--Shatah--Strauss framework \cite{ref Gr1987, ref Gr1990}, Bona--Sachs \cite{ref Bo1988} proved the orbital stability for $0<p<4$ and $\frac{p}{4}<c^{2}<1$. In contrast, Liu \cite{ref Liu1993} proved that all traveling waves are orbitally unstable when either $0<p<4$ with $c^{2}<\frac{p}{4}$, or $p\geq4$. Strong instability was subsequently shown by Liu \cite{ref Liu2000, ref Liu2007} for all $p>0$ and $0<c^{2}<\frac{p}{2p+4}$. For the degenerate case $0<p<4$ and $c^{2}=\frac{p}{4}$, Wu et al. \cite{ref Wu2020} demonstrated the orbital instability as well.

 We focus on the solitary wave $\vec{Q}=(Q,0)^{T}$ with $c=0$. Maul\'{e}n \cite{ref Ma2023} constructed a center-stable manifold near $\vec{Q}$ under small even-odd perturbations. More precisely, consider the linearized operator of (\ref{eq01}) around $Q$:
 \begin{center}
$-\partial_{x}^{2}L =-\partial_{x}^{2}\big(-\partial_{x}^{2}+1-(p+1) Q^{p}\big)$.
\end{center}
  Maul\'{e}n \cite{ref Ma2023} showed that $-\partial_x^2L$ has a simple negative eigenvalue $-\nu_0^2$, with a real even eigenfunction $\phi_0$ normalized by
\[
\|\partial_x^{-1}\phi_0\|_{L^2}=1,\qquad
-\partial_x^2L\phi_0=-\nu_0^2\phi_0,\qquad \nu_0>0.
\]
The eigenfunction and its antiderivatives decay exponentially. Define the even-odd eigenvectors and their dual vectors by
\[
\vec Y_\pm=\binom{\phi_0}{\pm\nu_0\partial_x^{-1}\phi_0},\qquad
\vec Z_\pm=\binom{-\partial_x^{-2}\phi_0}{\pm\nu_0^{-1}\partial_x^{-1}\phi_0}.
\]
Integration by parts gives
$\langle\vec Y_\pm,\vec Z_\pm\rangle=2$ and
$\langle\vec Y_\pm,\vec Z_\mp\rangle=0$.
Thus $\langle\vec\epsilon,\vec Z_+\rangle=0$ removes the unstable linear coordinate.
 Let $\delta_0 > 0$ and
 \begin{center}
$A_{0} =\left\{\vec{\epsilon} \in H^1(\mathbb{R})\times L^2(\mathbb{R})\;\middle|\;
\vec{\epsilon} \text{ is even-odd},\;\norm{\vec{\epsilon}}_{H^1 \times L^2} < \delta_{0},\;
\langle\vec{\epsilon},\vec{Z}_{+}\rangle = 0\right\}.$
\end{center}
 In these coordinates, Maul\'{e}n's result \cite{ref Ma2023} takes the following form.
 \begin{thm}\cite{ref Ma2023}
 \label{thm001}
Let $p>2$. There exist constants $C, \delta_0 > 0$ and a Lipschitz function $h : A_{0} \to \mathbb{R}$ with
$h(\mathbf{0})=\mathbf{0}$ and $|h(\vec{\epsilon})| \le C \|\vec{\epsilon}\|_{H^1 \times L^2}^{3/2}$ such that, denoting
\begin{equation*}
\mathcal{M} = \bigl\{\vec{Q} + \vec{\epsilon}+h(\vec{\epsilon})\vec{Y}_{+}\;:\; \vec{\epsilon} \in A_{0} \bigr\},
\end{equation*}
\begin{enumerate}
\renewcommand{\labelenumi}{\textup{(\arabic{enumi})}}
\item If $\vec{u}_{0}\in \mathcal{M}$, then the solution $\vec{u}(t)$ of (\ref{eq02}) with initial data $\vec{u}_{0}$ is global forward and satisfies, for all $t \ge 0$,
\[
\|\vec{u}(t) - \vec{Q}\|_{H^1(\mathbb{R}) \times L^2(\mathbb{R})}
\le C \|\vec{u}_{0} - \vec{Q}\|_{H^1(\mathbb{R}) \times L^2(\mathbb{R})}.
\]
\item If a global forward even-odd solution $\vec{u}(t)$ of (\ref{eq02}) satisfies, for all $t \ge 0$,
\[
\|\vec{u}(t) - \vec{Q}\|_{H^1(\mathbb{R}) \times L^2(\mathbb{R})}
\le \frac{1}{2} \delta_0,
\]
then for all $t \ge 0$, $\vec{u}(t) \in \mathcal{M}$.
\end{enumerate}
\end{thm}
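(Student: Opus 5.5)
This is Maulén's theorem, and I would reprove it in his framework rather than build something new: a Lyapunov--Perron type fixed point argument, driven by a virial (local energy decay) estimate for the linearized flow around $\vec Q$ in the even--odd class. First I would set up the decomposition. Write $\vec u(t)=\vec Q+\vec\epsilon(t)$ and decompose
\[
\vec\epsilon=a_+(t)\vec Y_++a_-(t)\vec Y_-+\vec\eta(t),\qquad \langle\vec\eta,\vec Z_\pm\rangle=0 .
\]
Here $a_\pm=\tfrac12\langle\vec\epsilon,\vec Z_\pm\rangle$. Parity is essential. For even $u$, odd $v$ the translation direction $(Q',0)$ is odd, so it is excluded, and the only non-dispersive modes left are $\vec Y_\pm$. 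Projecting (HGBQ) gives
\[
\dot a_\pm=\pm\nu_0a_\pm+N_\pm(\vec\epsilon),\qquad \partial_t\vec\eta=J\mathcal A\vec\eta-J(p+1)Q^p\eta_1+P\,J\mathcal N_2(\vec\epsilon),
\]
with quadratic-or-higher nonlinearities (here $p>2$ gives $C^2$ control of $|u|^pu$).

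The key linear input is a coercivity and virial estimate for $\vec\eta$. On the subspace orthogonal to $\vec Z_\pm$ (even--odd), the energy Hessian $\langle\mathcal A\vec\eta,\vec\eta\rangle-(p+1)\int Q^p\eta_1^2$ is coercive. I would then build a two-scale virial functional
\[
\mathcal I=\int\varphi_A\,\eta_2\,\partial_x\eta_1\,dx+\text{correction terms},
\]
with a weight $\varphi_A$ localized at scale $A$ and a transformed variable at scale $B\ll A$, in the style of Kowalczyk--Martel--Muñoz. The goal is to show
\[
\int_0^T\|\operatorname{sech}(x/A)\vec\eta\|^2\,dt\lesssim \delta^2+\int_0^T|a_+|^2+|a_-|^2\,dt+\text{cubic}.
\]
I expect this step to be the main obstacle. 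It requires a spectral positivity check for the transformed operator (no resonance at zero in the even sector), and Maulén verifies it numerically-assisted for $p>2$. This is where I would cite \cite{ref Ma2023} directly.

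For the manifold, fix $\vec\epsilon_0\in A_0$ and look for the value $h=a_+(0)$ such that $a_+(t)$ stays bounded. This forces
\[
a_+(t)=-\int_t^\infty e^{-\nu_0(s-t)}N_+(s)\,ds .
\]
The map $h\mapsto$ (bounded solution) is then a contraction in the space of trajectories with $\sup_t\|\vec\epsilon(t)\|\le C\delta$. The Lipschitz dependence and $|h|\lesssim\|\vec\epsilon_0\|^{3/2}$ come from the quadratic size of $N_+$ combined with a bootstrap on the energy norm. Energy conservation together with coercivity bounds $\|\vec\eta\|^2\lesssim\|\vec\epsilon_0\|^2+|a_+|^2+|a_-|^2+\|\vec\epsilon\|^3$. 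The $a_-$ mode decays, and the bootstrap closes, giving part (1) with the constant $C$.

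Part (2) is uniqueness. If a solution stays within $\tfrac12\delta_0$ for all $t\ge0$, then $a_+$ is bounded. Integrating the unstable ODE backward from infinity forces $a_+(0)$ to equal the fixed-point value. By uniqueness of the contraction, this means $\vec u(0)\in\mathcal M$. Applying the same argument from each time $t$ shows $\vec u(t)\in\mathcal M$ for all $t\ge0$.
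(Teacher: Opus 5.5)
The paper does not prove this statement; it quotes it from Maul\'en's work, in the Kowalczyk--Martel--Mu\~noz formulation. So your sketch has to stand on its own, and its central step does not.

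You claim the Lyapunov--Perron map is a contraction on trajectories with $\sup_{t\ge0}\|\vec\epsilon(t)\|_{X^1}\le C\delta$. For the component $\vec\eta$ this needs a bound, uniform in $t$, on
\[
\int_0^t e^{(t-s)J\mathcal L}P_c\,J\widetilde{\mathcal N}(\vec\epsilon(s))\,ds,
\]
where $P_c$ is the projection onto the complement of $\vec Y_\pm$. The only linear information you have is that $e^{tJ\mathcal L}$ is bounded on that complement, by conservation of the coercive form $\langle\mathcal L\cdot,\cdot\rangle$. The forcing is merely $O(\delta^2)$ in $X^1$ with no time integrability, so the available bound on this term is $O(\delta^2 t)$. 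No dispersive or Strichartz estimates for the flow linearized at $\vec Q$ are available to repair this; avoiding them is exactly why virial methods are used. The virial bound is a nonlinear a priori estimate for genuine solutions, not a linear estimate for iterates or for differences of trajectories. The same applies to your control of $\vec\eta$ by energy conservation and coercivity. That step is correct for true solutions, since $\mathcal L\vec Y_\pm=-\nu_0^2\vec Z_\mp$, but energy is not conserved along the iterates of a fixed-point map. So the contraction is not proved, and neither is part (2), which you derive from ``uniqueness of the contraction''.

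Either of two standard repairs works.

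(a) Run Lyapunov--Perron as Bates--Jones do. Truncate the nonlinearity, which is locally Lipschitz on $X^1$ since $J\mathcal N(\vec u)=(0,-\partial_x(|u|^pu))^T$. Work in the norm $\sup_{t\ge0}e^{-\beta t}\|\cdot\|_{X^1}$ with $0<\beta<\nu_0$, where the bounded center flow is harmless. Any solution that stays in the small ball for all $t\ge0$ lies on the resulting graph, which gives (2). Energy conservation and coercivity, applied to true solutions on the graph, then give (1) and $|h|\lesssim\|\vec\epsilon\|^2$.

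(b) Follow the Kowalczyk--Martel--Mu\~noz/Maul\'en route. For fixed $\vec\epsilon\in A_0$, choose $h$ by a one-dimensional shooting argument in the window $|a_+|\le\|\vec\epsilon\|^{3/2}$, using your energy bootstrap. Under the bootstrap, $\dot a_+=\nu_0a_++O(\|\vec\epsilon\|^2)$ points outward on the window's boundary; this is the origin of the exponent $3/2$. Uniqueness and the Lipschitz bound then come from a cone argument for the difference $\vec w$ of two solutions that stay close to $\vec Q$ forever. One has $\dot w_+=\nu_0w_++O(\delta\|\vec w\|)$, while $w_-^2+\langle\mathcal L\vec w_c,\vec w_c\rangle$ grows at rate at most $O(\delta\|\vec w\|^2)$. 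This forces $|w_+(0)|\lesssim|w_-(0)|+\|\vec w_c(0)\|_{X^1}$.

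Note also that parts (1)--(2) need no virial estimate; it enters only the local convergence stated after the theorem. Your functional $\int\varphi_A\eta_2\partial_x\eta_1$ is in any case not a virial for this system. With an odd weight it vanishes identically on even--odd data. For any weight, its time derivative contains $\int\varphi_A[(\partial_x^2\eta_1)^2-(\partial_x\eta_2)^2]$, which has no sign and is undefined at energy regularity. The localized momentum here is $\int\varphi\,\eta_1\eta_2$, as in the paper's $\mathcal I_\varphi$.
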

Moreover, solutions on $\mathcal M$ converge locally to the standing wave in the sense of \cite{ref Ma2023}. In particular, for every compact interval $I\subset\mathbb R$,
\[
\|u(t)-Q\|_{L^2(I)}+\|u(t)-Q\|_{L^\infty(I)}\longrightarrow0
\qquad(t\to+\infty).
\]
\subsection{Main results}
\indent
\par
Theorem~\ref{thm001} describes the solutions that remain near the
standing wave: their initial data lie on the center-stable manifold
$\mathcal M$. For perturbations outside $\mathcal M$, the solution
must eventually leave this neighborhood, but its subsequent behavior
is not determined by the local theory. Our aim is to complete this
picture for even-odd perturbations in the energy space. We prove that,
for $p>6$, every sufficiently small perturbation outside $\mathcal M$
either scatters forward in time or blows up in finite positive time.
The proof combines a scattering/blow-up dichotomy below the ground-state
energy with a one-pass argument controlling solutions after departure.
The latter is needed because perturbations near $\vec Q$ may have
energy above $E(\vec Q)$. Together with the dynamics on $\mathcal M$,
these results give the following global classification.

\begin{thm}(Global dynamics)
 \label{thm002}
Let $p>6$. There exists $\varepsilon>0$ sufficiently small such that for any even-odd initial data $\vec u_0\in U_\varepsilon(\vec Q)\cap X^1$, the corresponding solution $\vec{u}(t)$ of (\ref{eq02}) exhibits one of the following behaviors:
\begin{enumerate}
\renewcommand{\labelenumi}{\textup{(\arabic{enumi})}}
\item If $\vec{u}_{0}\in  U_{\varepsilon}(\vec{Q})\cap\mathcal{M}$, then $\vec{u}(t)$ is global forward and satisfies, for all $t \ge 0$,
\[
\|\vec{u}(t) - \vec{Q}\|_{X^1}
\le C \|\vec{u}_{0}-\vec{Q}\|_{X^1},
\]
for some constant $C>0$ independent of $t$ and $\vec u_0$.
\item If $\vec u_0\in U_{\varepsilon}(\vec{Q})\setminus\mathcal{M}$, then $\vec u(t)$ either
\begin{itemize}
\renewcommand{\labelitemi}{\tiny\textbullet}
\item scatters as $t\to+\infty$: there exists $\vec u_+\in X^1$ such that
\[
\lim_{t\to+\infty} \|\vec u(t) - e^{tJ\mathcal{A}}\vec u_+\|_{X^1}=0,
\]
or
\item blows up in finite time: there exists $0<T<+\infty$ such that
\[
\lim_{t\to T^{-}} \|\vec u(t)\|_{X^1}=+\infty.
\]
\end{itemize}
\end{enumerate}
\end{thm}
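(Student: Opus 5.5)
Part (1) is immediate from Theorem~\ref{thm001}(1) once $\varepsilon\le\delta_0$, so the work is in part (2). The plan follows the Nakanishi--Schlag scheme, adapted to (\ref{eq02}) in the even--odd class. Let $\vec u_0\in U_\varepsilon(\vec Q)\setminus\mathcal M$ and suppose $\vec u$ is global forward and does not scatter; the goal is a contradiction or a blow-up conclusion. By Theorem~\ref{thm001}(2), $\vec u(t)$ cannot stay in the $\frac12\delta_0$-ball around $\vec Q$ for all $t\ge0$, so there is an exit time $t_1$. Near $\vec Q$ we decompose
\[
\vec u=\vec Q+\lambda_+\vec Y_++\lambda_-\vec Y_-+\vec\gamma,\qquad \langle\vec\gamma,\vec Z_\pm\rangle=0 .
\]
We also use the nonlinear distance $d(\vec u)\sim\|\vec u-\vec Q\|_{X^1}$. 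Its square is defined as $E(\vec u)-E(\vec Q)$ plus a multiple of $\lambda_1^2$ with $\lambda_1=\frac12(\lambda_++\lambda_-)$; coercivity of $L$ on the complement of $\vec Y_\pm$ in the even--odd class (evenness kills the translation kernel) makes it equivalent to the norm.

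\textbf{Ejection lemma.} Since $\dot\lambda_\pm=\pm\nu_0\lambda_\pm+O(d^2)$, once $d(\vec u(t))$ exceeds a threshold of order $\sqrt{|E(\vec u)-E(\vec Q)|}$ and is increasing, the unstable mode dominates. Then $d$ grows like $e^{\nu_0 t}$ until it reaches a fixed small $\delta_*$, and the sign $\mathfrak s=\operatorname{sign}\lambda_1$ is frozen. A second-order expansion of $K(u)=\int u_x^2+u^2-|u|^{p+2}$ gives $K(u(t))\approx -\mathfrak s\,c\,d(t)$, so $K$ has a definite sign along the ejection.

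\textbf{Variational classification.} For energy below $E(\vec Q)+\epsilon^2$ and away from the $\delta_*$-neighborhood, the sign of $K$ is constant on connected components; this follows from the Pohozaev/minimization characterization of $Q$ as the even ground state. The region $K<0$ is the blow-up region. There we run Liu's invariant-set argument with the functional $\int|\partial_x^{-1}u|^2$, and the odd $v$ makes $\partial_x^{-1}v$ well defined. We get $\frac{d^2}{dt^2}\|\partial_x^{-1}u\|^2\ge c>0$ together with the convexity inequality, which forces finite-time blow-up. The region $K>0$ with energy below the ground state is the scattering region. There we need the concentration-compactness / rigidity argument with $K\ge\kappa>0$ uniformly. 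We use the profile decomposition for the Boussinesq flow with $p>6$ (energy-subcritical, with Strichartz estimates from the dispersion $\xi\sqrt{1+\xi^2}$), and exclude the critical element by a localized virial identity. The even--odd symmetry removes translation and momentum degeneracies.

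\textbf{One-pass theorem.} This is the main obstacle: after ejection with $K>0$ the solution cannot come back into the $\delta_*$-ball. I would argue by contradiction using a localized virial functional
\[
V_R(t)=\Big\langle \chi_R\,x\,\partial_x u,\ \partial_x^{-1}v\Big\rangle ,
\]
adapted to the first-order system. Its derivative equals $-K(u)$ plus error terms controlled by $R^{-1}$ and exterior energy. The proof integrates $\dot V_R$ over the excursion interval. In the ejection phase the gain from $K$ is of order $\int d\,dt$, and the exit and re-entry boundary terms are bounded by $d(t_1)+d(t_2)$. In the far phase, the variational lower bound $K\gtrsim\min(\kappa,d)$ holds. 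Compactness-free control of the cutoff error is the delicate point; there I would split space into a region where $|x|\lesssim R$ carries the solution's mass and a dispersive exterior handled by the small-data scattering theory, as in Nakanishi--Schlag's 1D adaptations. If the solution returned, the integrated positive contribution would exceed the boundary terms, which gives the contradiction. The same argument applies with the sign reversed for $K<0$.

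Combining these, after its last exit the solution stays in one region with a fixed sign of $K$. Hence it either blows up, in the $K<0$ case, or scatters, in the $K>0$ case. The first alternative contradicts the assumption only when $K>0$; otherwise the solution blows up, which proves (2).
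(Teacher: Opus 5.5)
Your overall architecture matches the paper's: Theorem~\ref{thm001} for part (1), then ejection, a common sign for $K_1,K_2$ away from $\pm\vec Q$, one-pass, and the two sign channels. However, the steps you treat as routine are where the paper needs new input, and as sketched they do not go through.

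The first gap is your last line, ``$K>0$ after the last exit, hence it scatters.'' This does not follow from sub-threshold theory. Data in $U_\varepsilon(\vec Q)\setminus\mathcal M$ may have $E(\vec u_0)>E(\vec Q)$, and that is exactly the case Theorem~\ref{thm003} does not cover. The paper needs a separate critical-element argument at the threshold energy:
\begin{enumerate}
\item Take non-scattering counterexamples with $\varepsilon_n\to0$ that exit at $d_Q=R$ with $K_1>0$.
\item Apply Lemma~\ref{lem:threshold-compactness} at the exit times. Divergent time parameters are excluded because $\langle\vec w_n(0),\vec Q\rangle_{X^1}>0$.
\item This gives a forward non-scattering limit $\vec u_c$ with $E(\vec u_c)=E(\vec Q)$.
\item The no-return property of the approximants, together with Lemma~\ref{le104}, gives $\inf_{t\ge0}K_j(\vec u_c(t))>0$.
\item Prove the forward orbit is precompact, and only then run the localized virial.
\end{enumerate}

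The second gap is the one-dimensional loss of compactness, which your proposal assumes away. Even--odd symmetry does not remove translations: a reflected pair of packets can escape to $\pm\infty$ (Remark~\ref{Remaa001}). This has two consequences.
\begin{itemize}
\item Your sub-threshold rigidity step fails as stated, since the profile decomposition may produce escaping pairs rather than a single centered critical element.
\item Your plan for the cutoff error in the one-pass argument fails. The exterior can carry two packets of energy near $E(\vec Q)/2$ each, and small-data scattering does not control them.
\end{itemize}
The missing idea is the strict general-data threshold $E_c^{\mathrm{gen}}\ge(\frac12+\varepsilon_*)E(\vec Q)$ of Proposition~\ref{prop:strict-gap}, obtained from the bounded monotone quartic functional $\mathfrak V$. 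It makes each escaping constituent scatter. With it, the paper proves $\partial_t\mathcal I_\varphi\le0$ on $\{d_Q>\delta^*\}$ by contradiction and compactness, not by a compactness-free splitting.

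Two further points in the one-pass argument also need fixing.
\begin{itemize}
\item The contradiction only closes for a return to the $R$-ball with $\varepsilon\ll R\ll\delta^*$. At the level $d_Q=\delta^*$ the virial boundary terms are of the same order as the ejection gain. Intermediate re-entries into $R<d_Q\le\delta^*$ must be handled by ejection from each local minimum.
\item ``The same argument with the sign reversed'' fails for $K<0$. There is no a priori $X^1$ bound to control the exterior nonlinear term. Also, $\|\partial_x^{-1}u\|_{L^2}$ is infinite near $\vec Q$, already for $u=Q$ since $\int Q\ne0$; oddness of $v$ does not help.
\end{itemize}
For the negative channel the paper instead uses the unlocalized functional $\mathcal T_r=\langle\partial_x^{-1}[u(t)-\Pi_r u(t_0)],\,v(t)-v(t_1)\rangle$ for no-return. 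For the final concavity blow-up it uses $h_r=\frac12\|\partial_x^{-1}(u-\Pi_r u_0)\|_{L^2}^2$.
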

\begin{Remark}
\label{Rema001}
The proof of the dichotomy outside the manifold $\mathcal{M}$ is based on  a crucial one-pass argument in the spirit of Nakanishi--Schlag \cite{ref Nak2011, ref Na2012}, who studied the dynamics of solutions with energy above the ground state for Klein--Gordon and NLS equations. Their method relies on the radial decay estimate available in three dimensions (see \cite{ref St1977}); however, such an estimate fails in the one-dimensional setting of the present paper (e.g., non-compact escape of two reflected translates of a localized function). We therefore prove the one-pass statement needed for the classification. Roughly speaking, we prove that for some constants $0<\varepsilon\ll R\ll1$, any non-scattering solution starting from the $\varepsilon$-neighborhood of $\vec{Q}$, once it leaves the larger $R$-neighborhood of $\pm\vec{Q}$, can never return. After this exit, the long-time dynamics are confined to one of two energy channels, determined solely by the sign of the functional
 $$K_{1}(\vec{u})=\int_{\mathbb{R}}|\partial_{x}u|^{2}+|u|^{2}+|v|^{2}-|u|^{p+2}dx.$$
 Specifically, solutions with $K_{1}(\vec{u})>0$ scatter, subject to the technical condition $p>6$, while solutions with $K_{1}(\vec{u})<0$ blow up in finite time.
\end{Remark}
\begin{Remark}
\label{Remaa001}
Even-odd symmetry does not by itself prevent loss of compactness:
two reflected packets may escape to spatial infinity. The strict
general-data threshold in Proposition~\ref{prop:strict-gap} rules out
such pairs at the energies relevant here. The remaining critical
profile is centered, which permits a fixed-center virial argument.
\end{Remark}
We first establish the dynamics below the solitary-wave energy, which will also be used in the proof of Theorem~\ref{thm002}.
\begin{thm}(Sub-threshold dynamics)
\label{thm003}
Let $p>6$. Define
\begin{center}
$\mathcal E_{<Q}=\{\vec{u}\in X^{1}:\vec{u}\;\text{is even-odd},\; E(\vec{u})< E(\vec{Q})\}$
\end{center}
and sets
\begin{center}
$\mathcal{K}^{+}=\{\vec{u}\in \mathcal E_{<Q}:K_{1}(\vec{u})>0\}$,\; $\mathcal{K}^{-}=\{\vec{u}\in \mathcal E_{<Q}:K_{1}(\vec{u})<0\}.$
\end{center}
Then the sets $\mathcal{K}^{+}, \mathcal{K}^{-}$ are invariant under the flow of (\ref{eq02}). Moreover, the following hold:
\begin{enumerate}
\renewcommand{\labelenumi}{\textup{(\arabic{enumi})}}
\item If $\vec{u}_{0}\in\mathcal{K}^{+}$, then $\vec{u}(t)$ exists globally in time and scatters as $t\rightarrow\pm\infty$.
\item If $\vec{u}_{0}\in\mathcal{K}^{-}$, then $\vec{u}(t)$ blows up in both finite positive time and negative time.
\end{enumerate}
\end{thm}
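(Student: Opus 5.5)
Our plan is the Kenig--Merle scheme adapted to the Boussinesq system, organized around the variational characterization of $\vec Q$. First, we establish the sharp variational inequality: among even-odd $\vec u\in X^1$ with $K_1(\vec u)=0$, $\vec u\neq0$, the energy satisfies $E(\vec u)\ge E(\vec Q)$. Since $E(\vec u)-\frac12K_1(\vec u)=\frac{p}{2(p+2)}\int|u|^{p+2}$, this amounts to minimizing $\int |u|^{p+2}$ on the Nehari-type constraint. The infimum is attained (with $v=0$) by the one-dimensional ground state $Q$ of $-Q''+Q-Q^{p+1}=0$; this is the classical Gagliardo--Nirenberg/Weinstein characterization.

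Invariance of $\mathcal K^\pm$ then follows from a continuity argument. If $K_1$ vanished along the flow while $E<E(\vec Q)$, then either $\vec u=0$ or the variational inequality would be contradicted. The case $\vec u=0$ is excluded in $\mathcal K^-$ by energy conservation, and a neighborhood of $0$ lies in $\mathcal K^+$. Quantitatively, we expect
\[
K_1(\vec u)\ge \min\{c\|\vec u\|_{X^1}^2,\ c(E(\vec Q)-E(\vec u))\}\quad\text{on }\mathcal K^+,
\qquad
K_1(\vec u)\le -c(E(\vec Q)-E(\vec u))\quad\text{on }\mathcal K^-.
\]
This gives uniform a priori $X^1$ bounds on $\mathcal K^+$, via $E\ge\frac{p}{2(p+2)}\|\vec u\|^2_{X^1}$ when $K_1\ge0$, and hence global existence.

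For blow-up on $\mathcal K^-$, we would follow the Liu/Sachs argument. Set $M(t)=\|\partial_x^{-1}u\|_{L^2}^2$ (for even-odd data $\partial_x^{-1}u$ is an odd $L^2$ function when defined, and otherwise we truncate, as in Liu). Then
\[
M''=2\|v\|^2-2\Big(\int u_x^2+u^2-|u|^{p+2}\Big).
\]
Combining this with energy conservation gives $M''\ge (p+4)\|v\|^2+c\,\big(E(\vec Q)-E\big)$, using the upper bound on $K_1$ above. The standard concavity lemma applied to $M^{-\alpha}$ then yields blow-up in both time directions. The technical point is the finiteness of $\partial_x^{-1}u$; we handle it with a localized version or with the $\partial_x^{-1}v$ formulation, using the odd symmetry of $v$.

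For scattering on $\mathcal K^+$ we run concentration-compactness/rigidity. The ingredients are:
\begin{itemize}
\item small-data scattering in Strichartz spaces adapted to the dispersion $\xi\sqrt{1+\xi^2}$, where $p>6$ puts the nonlinearity above the $L^2$-critical Strichartz threshold for the $|\xi|^{-1/2}$-type smoothing;
\item a linear profile decomposition for even-odd data;
\item long-time perturbation theory.
\end{itemize}
Assuming failure, we extract a minimal-energy non-scattering even-odd critical element with precompact trajectory modulo translations.

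The main obstacle is that the even-odd symmetry does not fix translations: a profile at $x_n\to\infty$ comes paired with its reflection. Here we invoke Proposition~\ref{prop:strict-gap}. A pair of reflected profiles each carries energy, and a non-scattering profile needs energy at least the general-data threshold, which is strictly larger than half of $E(\vec Q)$ in the relevant regime. Hence a pair would exceed the critical energy, so the critical element is centered with bounded spatial center. Rigidity then uses a truncated virial identity at the fixed center,
\[
\frac{d}{dt}\int \varphi_R\, u\,\partial_x^{-1}v \lesssim -K_1(\vec u)+o_R(1),
\]
together with the lower bound on $K_1$ and compactness, to force $\vec u\equiv0$, a contradiction. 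Time reversibility gives scattering as $t\to-\infty$ as well.
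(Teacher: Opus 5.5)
Your overall architecture matches the paper's: variational barrier and invariance, even-odd paired profiles, exclusion of escaping pairs via Proposition~\ref{prop:strict-gap}, a centered critical element, virial rigidity, and concavity on $\mathcal K^-$. However, the rigidity step fails as written.

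\textbf{The rigidity virial has no sign.} Formally $\int u\,\partial_x^{-1}v=-\int\partial_x^{-1}u\,v$. So, up to the cutoff, your functional $\int\varphi_R u\,\partial_x^{-1}v$ is $-\tfrac12M'$ for the amplitude virial you use in the blow-up part. Its derivative is therefore $-\tfrac12M''=K_1(\vec u)-2\|v\|_{L^2}^2$ plus cutoff errors. On $\mathcal K^+$ this quantity has no sign: it equals $K_1>0$ when $v=0$, and it is negative once $\|v\|_{L^2}^2>\|u\|_{H^1}^2$. Hence the claimed bound $\lesssim -K_1+o_R(1)$ is false in either sign convention, and a lower bound on $K_1$ gives no monotonicity. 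This is exactly why the amplitude virial is usable only in the negative region, where the $2\|v\|_{L^2}^2$ term has the favorable sign.

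\textbf{The missing idea for rigidity.} You need the scaling (momentum) virial $\mathcal I_\varphi=\int\varphi uv$ with $\varphi(x)=\phi(Rx)/R$. Its derivative is $-K_2(\vec u)$ plus errors supported in $|x|>1/R$ and of size $O(R^2\|u\|_{L^2}^2)$. These errors are uniformly small by the tail bound coming from precompactness, while $|\mathcal I_\varphi|\lesssim R^{-1}$. This argument needs a uniform lower bound on $K_2$, not $K_1$, along the critical orbit. That requires two further ingredients:
\begin{itemize}
\item below the ground-state energy, $K_1$ and $K_2$ have the same strict sign (Proposition~\ref{pro106}, or both constraints in Lemma~\ref{le101});
\item the closure of the compact orbit avoids $0$ and $\pm\vec Q$, which gives $\inf_tK_2(\vec u_c(t))>0$.
\end{itemize}

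\textbf{The blow-up regularization is left open.} For even $u$ with $\int u\neq0$, for instance all data near $\pm\vec Q$, one has $\partial_x^{-1}u\notin L^2$. The oddness of $v$ does not help with this. A spatial truncation of $\|\partial_x^{-1}u\|_{L^2}^2$ produces nonlinear cutoff errors that you would have to control without any radial decay estimate.

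The paper instead subtracts only the low-frequency part of the initial datum. It sets $h_r(t)=\frac12\|\partial_x^{-1}(u(t)-\Pi_ru_0)\|_{L^2}^2$, which is finite because $\partial_x^{-1}(u(t)-\Pi_ru_0)=\int_0^tv\,ds+\partial_x^{-1}\Pi_r^\perp u_0$. Then
\[
\partial_t^2h_r=-K_1+2\|v\|_{L^2}^2+\int\Pi_ru_0\,(-\partial_x^2u+u-|u|^pu)\,dx .
\]
The extra term is bounded by $C\|\Pi_ru_0\|_{H^1}\bigl(\|u\|_{H^1}+\|u\|_{L^{p+2}}^{p+1}\bigr)$, and $\|\Pi_ru_0\|_{H^1}\to0$ as $r\to0$. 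But $\|u\|_{L^{p+2}}$ is not a priori bounded along a blowing-up solution. So this term must be absorbed through the case analysis of Lemma~\ref{lee802} and the two alternatives in the concavity inequality of Proposition~\ref{proo002}. Your inequality $M''\ge(p+4)\|v\|_{L^2}^2+c\,(E(\vec Q)-E)$, and the concavity step built on it, therefore do not carry over verbatim to general data in $\mathcal K^-$.
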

The dichotomy between scattering and finite-time blow-up in
Theorem~\ref{thm003} is a central theme in the study of focusing
dispersive equations. Kenig--Merle \cite{ref Ke2006,ref Ke2008}
introduced the concentration--compactness and rigidity framework for
the energy-critical nonlinear Schr\"odinger and wave equations, obtaining
sharp dynamical thresholds below the ground state. This approach was
subsequently developed for higher-dimensional NLS by Killip--Visan
\cite{ref Ki2010}, for mass-supercritical, energy-subcritical NLS by
Holmer--Roudenko \cite{ref Ho2008}, and for NLS with combined
nonlinearities by Miao--Xu--Zhao \cite{ref Mi2013}. The generalized
Boussinesq equation shares the potential-well structure behind these
results. In one dimension, however, even-odd symmetry allows pairs of
reflected profiles to escape to spatial infinity, and this loss of
compactness must be addressed in the scattering argument.

A crucial ingredient is the linear evolution $e^{tJ\mathcal A}$.
In one dimension it satisfies the dispersive estimate
\cite{ref Ch2021,ref Gu2006}
\[
\begin{aligned}
\|Pe^{tJ\mathcal A}\vec u\|_{\dot B^0_{q,2}}
\lesssim |t|^{1/q-1/2}\bigl(&
\|K(D)^{1/q-1/2}u\|_{\dot B^0_{q',2}}\\
&+\|K(D)^{1/q-1/2}\Lambda^{-1}v\|_{\dot B^0_{q',2}}\bigr),
\qquad 2\leq q\leq\infty,
\end{aligned}
\]
where $1/q+1/q'=1$, $D=|\partial_x|$,
$\Lambda=(1+D^2)^{1/2}$, and $K(D)=D\Lambda^{-1}$; see
Lemma~\ref{le301}. For $q>2$, the factor $K(D)^{1/q-1/2}$ is singular
at zero frequency. Thus the familiar decay of the propagator does not
immediately provide the estimates needed for arbitrary energy data.
We use homogeneous Sobolev embedding and a suitable choice of
spacetime exponents to compensate for this loss. The assumption
$p>6$ makes these estimates and the nonlinear interpolations compatible
on the full space $X^1$; see Remark~\ref{Re001}.

The sub-threshold result in Theorem~\ref{thm003}, together with the one-pass argument, gives the global classification in Theorem~\ref{thm002}.
\subsection{Outline of the proofs}
\indent
\par
Our approach to the dynamics near solitary waves is inspired by the
concentration--compactness and rigidity method of Kenig--Merle
\cite{ref Ke2006,ref Ke2008} and the one-pass theory of
Nakanishi--Schlag \cite{ref Nak2011,ref Na2012}. 

\medskip\noindent\textit{Sub-threshold scattering.}
The concentration--compactness argument for Theorem~\ref{thm003}
requires particular care in one dimension. A profile decomposition
adapted to $\U(t)$ in $X^1_{\mathrm{eo}}$ may contain pairs of
reflected wave packets escaping to infinity. In contrast to the
higher-dimensional radial setting, where the Strauss estimate
\cite{ref St1977} controls the spatial tails, evenness alone does not
confine these packets to a fixed center. Moreover, \eqref{eq01} has
neither Galilean nor Lorentz invariance, so the corresponding changes
of frame used for NLS and wave equations are unavailable here; see
\cite{ref Du2008,ref Ke2008}.

In a recent paper, Chen-Shao \cite{ref CS2026} do not include symmetric escape in their linear profile decomposition. In Remark~3, they assert that all spatial translations can be set to zero. This assertion does not appear to account for the reflected pairs allowed by evenness in one dimension. Consequently, their reduction to a centered compact critical element, and hence their proof of one-dimensional even-odd scattering, is not fully justified as written.

We distinguish the general-data and even-odd scattering thresholds. Let
$X^1_{\mathrm{eo}}=H_e^1(\mathbb R)\times L_o^2(\mathbb R)$ and set
\[
(\tilde p,\tilde q)=\left(\frac{7p-6}{p},\frac{21p-18}{p-6}\right),
\qquad S(I)=L_t^{\tilde p}L_x^{\tilde q}(I\times\mathbb R).
\]
Define
\[
A_{\mathrm{gen}}(E)=\sup\{\|P\vec u\|_{S(\mathbb R)}:
\vec u_0\in X^1,\ E(\vec u_0)<E,\ K_1(\vec u_0)>0\},\]
\[A_{\mathrm{eo}}(E)=\sup\{\|P\vec u\|_{S(\mathbb R)}:
\vec u_0\in X^1_{\mathrm{eo}},\ E(\vec u_0)<E,\ K_1(\vec u_0)>0\},
\]
and put
\begin{equation}\label{eqe03}
E_c^{\mathrm{gen}}=\sup\{E>0:A_{\mathrm{gen}}(E)<\infty\},\qquad
E_c^{\mathrm{eo}}=\sup\{E>0:A_{\mathrm{eo}}(E)<\infty\}.
\end{equation}
These quantities measure uniform scattering bounds on each energy class. We assign norm $+\infty$ to a solution whose maximal lifespan is not all of $\mathbb R$.

To rule out escaping pairs, we prove the following strict bound
(Proposition~\ref{prop:strict-gap}): there is an $\varepsilon_*>0$, independent of $p>6$, such that
\begin{equation}\label{intro:gap}
E_c^{\mathrm{gen}}\geq\left(\frac12+\varepsilon_*\right)E(\vec Q)
>\frac12E(\vec Q).
\end{equation}
The key is a bounded dispersive functional whose derivative controls
the nonlinearity beyond half the ground-state energy. With
$D=|\partial_x|$, $\Lambda=(1+D^2)^{1/2}$, and $\mathcal H$ the
Hilbert transform, the real-linear isometry
\[
z=u-i\Lambda^{-1}\mathcal H v,\qquad
\|z\|_{H^1}^2=\|\vec u\|_{X^1}^2
\]
diagonalizes the system as
\[
i\partial_tz=D\Lambda z-D\Lambda^{-1}(|u|^pu).
\]
For $S_0(t)=e^{-itD\Lambda}$ and
$\mathscr D(z)=\|\partial_x|z|^2\|_{L^2}^2$, a change of variables
in the Fourier phase gives
\[
\int_{\mathbb R}\mathscr D(S_0(s)z)\,ds
\leq\frac23\|\Lambda^{1/2}z\|_{L^2}^4.
\]
Thus the quartic functional and its normalization,
\[
\mathfrak M(z)=\frac12\int_{\mathbb R}\operatorname{sgn}(s)
\mathscr D(S_0(s)z)\,ds,\qquad
\mathfrak V(z)=\frac{\mathfrak M(z)}{\|\Lambda^{1/2}z\|_{L^2}^4},
\]
are well defined for nonzero $z$, and $\mathfrak V$ is bounded.
Decompose $D\Lambda^{-1/2}(|u|^pu)$ into its component parallel to
$\Lambda^{1/2}z$ in complex $L^2$ and its orthogonal complement.
The parallel contribution cancels exactly against differentiation of
the denominator, leaving
\[
\partial_t\mathfrak V(z(t))=
\frac{-\mathscr D(z(t))+\mathcal R_\perp(t)}
{\|\Lambda^{1/2}z(t)\|_{L^2}^4}.
\]
Complex orthogonality eliminates the cross term in the squared norm
of the variation of the two-variable product. The density and
fractional-chain estimates then yield
\[
|\mathcal R_\perp|\leq a_p\|z\|_{H^1}^{p}\mathscr D(z).
\]
Energy trapping gives a strict absorption margin at half the
ground-state energy, uniform for $p\geq6$. Uniform continuity,
including as $p\to\infty$, extends this margin to
$E(\vec u_0)<(\frac12+\varepsilon_*)E(\vec Q)$:
\[
|\mathcal R_\perp|\leq(1-\delta)\mathscr D(z),\qquad \delta>0,
\]
with $\delta$ independent of $p>6$. Integrating the resulting monotonicity inequality bounds $u$ in
$L_t^6L_x^\infty$. Together with the energy bound, this gives
$J\mathcal N(\vec u)\in L_t^1X^1$ and scattering in both time
directions. Proposition~\ref{prop:strict-gap} and Appendix~\ref{app:gap}
give the quantitative estimates.

We then complete the even-odd argument through profile decomposition, reduction to a single critical element, and virial rigidity. If \(E_c^{\mathrm{eo}}<E(\vec Q)\), each constituent of an escaping pair has energy at most \(E_c^{\mathrm{eo}}/2\), hence lies strictly below the general scattering threshold. These constituents therefore scatter without any parity assumption, while centered profiles below \(E_c^{\mathrm{eo}}\) scatter by definition. Long-time perturbation then leaves a single centered even-odd critical element; its orbit is precompact in $X^1$ without a moving spatial center. The positive
potential well and compactness provide a uniform positive lower bound for $K_2$ on this orbit, and a localized virial identity gives a
contradiction. This proves scattering for even-odd solutions in the invariant set $\mathcal{K}^{+}$ (see Proposition \ref{proo001}). A regularized concavity argument yields finite-time blow-up for solutions in $\mathcal{K}^{-}$ (see Proposition \ref{proo002}).

For general data we establish only the uniform gap \eqref{intro:gap}.
Scattering throughout the nonsymmetric positive well below
$E(\vec Q)$ would require additional control of critical elements
with a moving spatial center and is left for future work.

\medskip\noindent\textit{Global dynamics and the one-pass argument.}

To prove Theorem~\ref{thm002}, we combine the local analysis near the solitary wave with the one-pass argument in Section~4. The main difficulty is to control the localized virial errors without a radial tail estimate. We treat the two signs of the variational functionals separately, using compactness in the positive case.

Writing $\vec u=\vec Q+\vec\eta$, the perturbed equation is
\[
\partial_t\vec\eta=J\mathcal L\vec\eta
+J\widetilde{\mathcal N}(\vec\eta),\qquad
\mathcal L=E''(\vec Q),
\]
where $\widetilde{\mathcal N}$ is the nonlinear remainder.
The spectral decomposition of Proposition~\ref{le13} gives
\[
\vec\eta=\lambda_+\vec\phi^++\lambda_-\vec\phi^-+\vec\gamma,
\qquad \langle\mathcal L\vec\phi^\pm,\vec\gamma\rangle=0,
\]
with $J\mathcal L\vec\phi^\pm=\pm\mu\vec\phi^\pm$ and
\[
\langle\mathcal L\vec\gamma,\vec\gamma\rangle
\sim\|\vec\gamma\|_{X^1}^2.
\]
For $\lambda_1=(\lambda_++\lambda_-)/2$ and
$\lambda_2=(\lambda_+-\lambda_-)/2$, the energy expansion reads
\[
E(\vec u)-E(\vec Q)+2\lambda_2^2
=\lambda_1^2+\lambda_2^2+\frac12\langle\mathcal L\vec\gamma,\vec\gamma\rangle
+O(\|\vec\eta\|_{X^1}^3)\sim\|\vec\eta\|_{X^1}^2.
\]
This defines a continuous distance $d_Q$ near both standing waves,
extended by a smooth cutoff so that
\[
d_Q(\vec u)\sim d_0(\vec u)
:=\min_{\sigma=\pm1}\|\vec u-\sigma\vec Q\|_{X^1}.
\]
The full construction is given in
\eqref{distance:coordinates}--\eqref{distance:equivalence}; in the
ejection region the cutoff is one and
$d_Q^2=E(\vec u)-E(\vec Q)+2\lambda_2^2$.
The amplitude and spatial-scaling derivatives $K_1$ and $K_2$,
defined explicitly in Section~2.3, satisfy near $\pm\vec Q$
\[
E(\vec u)-E(\vec Q)+CK_j(\vec u)^2\gtrsim d_Q(\vec u)^2,
\qquad j=1,2.
\]
An amplitude-scaling argument shows that $K_1$ and $K_2$ have the
same sign whenever
\[
|E(\vec u)-E(\vec Q)|\lesssim\varepsilon^2\ll R^2
\lesssim d_Q(\vec u)^2.
\]
The hyperbolic structure also forces a solution leaving the
$R$-neighborhood to reach the larger $\delta^*$-neighborhood at an
exponential rate. Proposition~\ref{pro105} proves this ejection in
both time directions, with stable and unstable modes interchanged.
These facts lead to the required one-pass property: a non-scattering
solution cannot return after leaving the $R$-neighborhood of
$\pm\vec Q$, and its subsequent dynamics remain in a fixed sign
channel.

In the proof of the one-pass theorem (see Proposition \ref{pro001}), we use two virial functionals for the cases $K_{j}<0$ and $K_{j}>0$, respectively. Suppose that $t_{0}$ and $t_{1}$ denote the times at which the solution $\vec{u}$ first leaves and subsequently returns to the $R$-neighborhood of $\pm\vec{Q}$, respectively.

For the case $K_{1}\big(\vec{u}(t_{0})\big)<0$, we consider
\[
\mathcal{T}_{r}(u,v)(t)=\langle\partial_{x}^{-1}[u(t)-u_{r}(t_{0})],\, v(t)-v(t_{1})\rangle,
\]
where $u_r(t_0)$ is the low-frequency cutoff of $u$ at time $t_0$ defined in \eqref{equu00015}. We obtain the uniform estimate
\begin{equation*}
\partial_t\mathcal{T}_{R}(u,v)(t)\geq c[-K_{1}\big(\vec{u}(t)\big)]-C\sqrt{R},
\end{equation*}
which, combined with the exponential growth rate of $d_{Q}\big(\vec{u}(t)\big)$, yields a contradiction by showing that
\[
\delta^{*}\lesssim|\mathcal{T}_{R}(u,v)(t_{0})|\lesssim\sqrt{R}\ll\delta^{*}.
\]

For the case $K_{1}\big(\vec{u}(t_{0})\big)>0$, we consider
\[
\mathcal{I}_{\varphi}(u,v)(t)=\int_{\mathbb{R}}\varphi uv\,dx,
\]
where $\varphi(x)=\phi(Rx)/R$ and $\phi$ is the cutoff in
\eqref{eeqqq02}. The localized virial identity has leading term
$-K_2(\vec u)$. Controlling its cutoff errors gives
\[
\partial_t\mathcal I_\varphi(\vec u(t))\leq
\begin{cases}
-c\,d_Q(\vec u(t)),&R<d_Q(\vec u(t))\leq\delta^*,\\
0,&d_Q(\vec u(t))>\delta^*,
\end{cases}
\]
on a hypothetical first-return interval, for suitable
$0<\varepsilon\ll R\ll\delta^*$.
The second bound follows from concentration--compactness on that
finite interval: $K_1>0$ supplies the energy-space bound, while the
final segment near $\pm\vec Q$ has length tending to infinity and
forces the scattering norm to diverge. The compactness consequence
is recorded in Lemma~\ref{lem:threshold-compactness}.
Here the energy approaches $E(\vec Q)$, so an escaping constituent
may have energy exactly $E(\vec Q)/2$. The strict gap
\eqref{intro:gap} still ensures its scattering, explaining why the
improvement beyond half energy is also essential in this argument.

These bounds, together with the exponential growth rate of $d_{Q}(\vec{u}(t))$, lead to
\[
\delta^{*}\lesssim|\mathcal{I}_{\varphi}(u,v)(t_{1})-\mathcal{I}_{\varphi}(u,v)(t_{0})|\lesssim R,
\]
which also yields the desired contradiction.

Finally, solutions that remain close to $\vec Q$ belong to $\mathcal M$ by Theorem~\ref{thm001}. For solutions outside $\mathcal M$, the ejection and one-pass statements reduce the global analysis to the two invariant sign channels. The positive channel is treated by a final compactness and perturbation argument using the sub-threshold scattering result, while the negative channel is treated by the regularized concavity argument. This completes the proof of Theorem~\ref{thm002}.
\subsubsection*{Notation}
We collect here some standard notation used throughout the paper.
\begin{itemize}
\itemsep0em
\item For $\vec{u}=(u_1,u_2)^T\in X^0$, we define the component projections
\[
P\vec{u}=u_1,\qquad P^c\vec{u}=u_2.
\]
\item We write $\U(t)=e^{tJ\mathcal A}$ for the linear evolution of
the system. The scalar group used in the diagonalized equation is
$S_0(t)=e^{-it\omega(D)}$, with $\omega(D)=D\Lambda$.
\item For a time interval $I$, we abbreviate
$L_t^aL_x^b(I\times\mathbb R)$ to $L_t^aL_x^b(I)$; the domain is
$\mathbb R\times\mathbb R$ when no interval is indicated.
\item The reflection and translation operators are defined by
\[
\mathscr R(f,g)(x)=\big(f(-x), - g(-x)\big)^{T},\qquad T_{a}(f,g)(x)= \big(f(x - a),g(x - a)\big)^{T}.
\]
\item Let $X^0:=L^2(\mathbb R)\times L^2(\mathbb R)$ be endowed with the norm
\[
\|\vec{u}\|_{X^0}:=\bigl(\|u_1\|_{L^2}^2+\|u_2\|_{L^2}^2\bigr)^{1/2},\qquad \vec{u}=(u_1,u_2)^T\in X^0.
\]
\item Let $\langle\cdot,\cdot\rangle$ denote the real inner product on $X^0$:
\[
\langle \vec{u},\vec{v}\rangle:=\int_{\mathbb R}\bigl(u_1v_1+u_2v_2\bigr)\,dx,
\qquad \vec{u}=(u_1,u_2)^T,\ \vec{v}=(v_1,v_2)^T\in X^0.
\]
\item Let $X^1:=H^1(\mathbb R)\times L^2(\mathbb R)$, equipped with the norm
\[
\|\vec{u}\|_{X^1}:=\bigl(\|u_1\|_{H^1}^2+\|u_2\|_{L^2}^2\bigr)^{1/2},\qquad \vec{u}=(u_1,u_2)^T\in X^1.
\]
\item Let $X^1_{\mathrm{eo}}:=H_e^1(\mathbb R)\times L_o^2(\mathbb R)\subset X^1$ be the subspace of even-odd functions:
\[
X^1_{\mathrm{eo}}=\{\vec{u}=(u_1,u_2)^T\in X^1 : u_1 \text{ is even},\ u_2 \text{ is odd}\}.
\]
\item Let $\langle\cdot,\cdot\rangle_{X^1}$ denote the inner product on $X^1$ induced by $\mathcal A$:
\[
\langle \vec u,\vec v\rangle_{X^1}
:=\int_{\mathbb R}(u_1v_1+\partial_xu_1\partial_xv_1+u_2v_2)\,dx.
\]
We also use $\langle\cdot,\cdot\rangle$ for the dual pairing between
$(X^1)^*=H^{-1}\times L^2$ and $X^1$, so that
\[
\langle\vec u,\vec v\rangle_{X^1}=\langle\mathcal A\vec u,\vec v\rangle,
\qquad \vec u,\vec v\in X^1.
\]
\item For $\alpha>0$ and $\vec{w}\in X^1$, define the $\alpha$-neighborhood of $\vec{w}$ as
\[
U_\alpha(\vec{w}):=\{\vec{u}\in X^1:\|\vec{u}-\vec{w}\|_{X^1}<\alpha\}.
\]
We write $U_\alpha(\pm\vec Q)=U_\alpha(\vec Q)\cup U_\alpha(-\vec Q)$.
\item Let $\mathcal{F}$ and $\mathcal{F}^{-1}$ denote the Fourier transform and its inverse, respectively:
\[
\mathcal{F}(f)(\xi)=\frac{1}{\sqrt{2\pi}}\int_{\mathbb{R}}e^{-i\xi x}f(x)\,dx,\quad
\mathcal{F}^{-1}(f)(x)=\frac{1}{\sqrt{2\pi}}\int_{\mathbb{R}}e^{i\xi x}f(\xi)\,d\xi,\quad f\in L^1(\mathbb R)\cap L^2(\mathbb R).
\]
The transforms extend to all of $L^2$ by Plancherel's theorem.
The same normalization is used for the partial transforms
$\mathcal{F}_x$, $\mathcal{F}_t$, and $\mathcal{F}_s$.
\item For $a,b>0$, we write $a\lesssim b$ if there exists $C>0$ such that $a\le Cb$; and $a\sim b$ if $a\lesssim b\lesssim a$.
\item Throughout the paper, $C>0$ denotes a generic constant, whose value may change from line to line.
\end{itemize}
\section{Preliminaries}
\subsection{Spectral properties of $J\mathcal{L}$ }
\indent
\par
We begin with the self-adjoint energy operator
\[
\mathcal{L}=\begin{pmatrix}
-\partial_x^2+1-(p+1)Q^p & 0 \\
0 & 1
\end{pmatrix} : X^{1}\to (X^{1})^{*}.
\]
For the spectral statements, we use its self-adjoint realization on $X^0=L^2\times L^2$, with domain $H^2\times L^2$. The scalar part is determined by the ground state $Q$ of \eqref{eq06}. We then apply the Hamiltonian index theorem to the linearized operator $J\mathcal L$.

\begin{lem}(\cite{ref Wu2020})
\label{le11}
For any $0<p<\infty$, the following hold:
\begin{enumerate}
\item[(i)] the essential spectrum satisfies $\sigma_{\mathrm{ess}}(\mathcal{L}) \subset [1,\infty)$;
\item[(ii)] $\mathcal{L}$ has exactly one simple negative eigenvalue;
\item[(iii)] $\ker \mathcal{L}=\operatorname{span}\{\partial_x \vec{Q}\}$.
\end{enumerate}
\end{lem}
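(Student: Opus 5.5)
Because $\mathcal L$ is block diagonal, $\mathcal L=\operatorname{diag}(L_0,1)$ with $L_0=-\partial_x^2+1-(p+1)Q^p$, the plan is to reduce every claim to the scalar Schrödinger operator $L_0$. The second block is the identity on $L^2$; it contributes only the point $1$ to the spectrum, which is an eigenvalue of infinite multiplicity and hence lies in the essential spectrum. It contributes no negative eigenvalue and no kernel. Thus $\sigma(\mathcal L)=\sigma(L_0)\cup\{1\}$, negative eigenvalues of $\mathcal L$ are exactly those of $L_0$ with the same multiplicities, and $\ker\mathcal L=\ker L_0\times\{0\}$.

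For (i), $Q^p$ decays exponentially, so $(p+1)Q^p$ is a relatively compact perturbation of $-\partial_x^2+1$. Weyl's theorem then gives $\sigma_{\mathrm{ess}}(L_0)=[1,\infty)$, and combined with the second block this yields $\sigma_{\mathrm{ess}}(\mathcal L)\subset[1,\infty)$.

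For (ii), the plan is to use the Sturm oscillation theorem for one-dimensional Schrödinger operators below the essential spectrum. Differentiating the profile equation \eqref{eq06} with $c=0$ shows $L_0Q'=0$, and $Q'$ has exactly one zero, at $x=0$. Hence $0$ is the second eigenvalue of $L_0$, so there is exactly one eigenvalue below $0$, and it is simple because eigenvalues of 1D Schrödinger operators are simple. To confirm it is strictly negative, compute $\langle L_0Q,Q\rangle=-p\int Q^{p+2}<0$, using \eqref{eq06} to write $L_0Q=-pQ^{p+1}$. Alternatively, one can write the explicit Pöschl–Teller form of the potential, since $Q^p=\frac{p+2}{2}\operatorname{sech}^2(\frac p2x)$.

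For (iii), since the eigenvalue $0<1$ lies below the essential spectrum, it is simple by the same 1D argument (the Wronskian of two decaying solutions vanishes). Therefore $\ker L_0=\operatorname{span}\{Q'\}$. Alternatively, a second solution of $L_0f=0$ grows exponentially by reduction of order, so it is not in $L^2$. It follows that $\ker\mathcal L=\operatorname{span}\{(Q',0)^T\}=\operatorname{span}\{\partial_x\vec Q\}$.

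The only point needing care, and the main obstacle, is justifying the oscillation count for the half-line-decaying eigenfunction. The plan is to apply the standard Sturm–Liouville theory on $\mathbb R$ for potentials decaying to a constant, or alternatively to use the explicit diagonalization available from the sech-squared potential, which gives the eigenvalues in closed form.
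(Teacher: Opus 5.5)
Your proposal is correct and follows the route the paper indicates. The paper gives no argument of its own: it cites \cite{ref Wu2020} and remarks that the lemma rests on the classical spectral properties of the scalar operator $L=-\partial_x^2+1-(p+1)Q^p$, which is exactly your block-diagonal reduction $\mathcal L=\operatorname{diag}(L,1)$ followed by Weyl's theorem, Sturm oscillation with $LQ'=0$, and simplicity of one-dimensional bound states (your P\"oschl--Teller check is also consistent with the paper's later identity $LQ^m=(1-m^2)Q^m$ for $m=1+p/2$).
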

For a detailed proof, see \cite{ref Wu2020} (Lemmas 3.1 and 3.2). This result relies primarily on the classical spectral properties of the linear operator $L=-\partial_x^2+1-(p+1)Q^p$.

\begin{pro}
\label{le13}
For any $0<p<\infty$, $J\mathcal{L}$ has exactly one simple negative eigenvalue $-\mu<0$ and exactly one simple positive eigenvalue $\mu>0$, corresponding to the even-odd eigenfunctions $\vec{\phi}^{-}$ and $\vec{\phi}^{+}$, respectively. Moreover, for any even-odd function $\vec{u}$, there exists a unique decomposition
\[
\vec{u}=\lambda_{+}\vec{\phi}^{+}+\lambda_{-}\vec{\phi}^{-}+\vec{\gamma}
\]
such that
\[
\langle \mathcal{L}\vec{\phi}^{+},\vec{\gamma}\rangle=\langle \mathcal{L}\vec{\phi}^{-},\vec{\gamma}\rangle=0
\]
and
\[
\langle \mathcal{L}\vec{\gamma},\vec{\gamma}\rangle\gtrsim\|\vec{\gamma}\|^{2}_{X^{1}}.
\]
\end{pro}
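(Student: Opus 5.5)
We first construct the eigenpair explicitly from the scalar negative eigenvalue of $-\partial_x^2 L$ recalled from \cite{ref Ma2023}. Write $\vec\phi=(\phi_1,\phi_2)^T$; then $J\mathcal L\vec\phi=\lambda\vec\phi$ reads
\[
\partial_x\phi_2=\lambda\phi_1,\qquad \partial_x L\phi_1=\lambda\phi_2 .
\]
Eliminating $\phi_2$ gives $\partial_x^2L\phi_1=\lambda^2\phi_1$, that is, $-\partial_x^2L\phi_1=-\lambda^2\phi_1$. Hence $\lambda=\pm\nu_0=:\pm\mu$ and $\phi_1=\phi_0$ (even), with $\phi_2=\pm\mu\,\partial_x^{-1}\phi_0$ (odd). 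So $\vec\phi^\pm=\vec Y_\pm$ are even-odd and exponentially decaying.

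Conversely, any eigenvalue $\lambda\notin i\mathbb R$ of $J\mathcal L$ forces $\lambda^2$ to be a negative eigenvalue of $-\partial_x^2L$. Since $J\mathcal L$ is Hamiltonian, its real eigenvalues come in pairs $\pm\lambda$. By the Hamiltonian index count (Grillakis--Shatah--Strauss / Kapitula), the number of real positive eigenvalues is at most $n(\mathcal L)=1$ by Lemma~\ref{le11}. Together these give exactly one positive and one negative eigenvalue, each simple.

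Next we study the symplectic structure. For $\lambda\neq0$ we have $\vec\phi=\lambda^{-1}J\mathcal L\vec\phi$. Since $J$ is skew, $\langle\mathcal L J\mathcal L\vec\phi,\vec\phi\rangle=0$, so $\langle\mathcal L\vec\phi^\pm,\vec\phi^\pm\rangle=0$. The cross term $\langle\mathcal L\vec\phi^+,\vec\phi^-\rangle$ is nonzero, and we normalize it to $-1$. A direct check: with $\phi_0$ normalized, $\langle\mathcal L\vec\phi^+,\vec\phi^-\rangle=\langle L\phi_0,\phi_0\rangle-\mu^2\|\partial_x^{-1}\phi_0\|^2$. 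Using $L\phi_0=\mu^2\partial_x^{-2}\phi_0$, the first term equals $-\mu^2\|\partial_x^{-1}\phi_0\|^2$, so the cross term is $-2\mu^2<0$.

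Given even-odd $\vec u$, we define
\[
\lambda_+=-\frac{\langle\mathcal L\vec\phi^-,\vec u\rangle}{c},\qquad \lambda_-=-\frac{\langle\mathcal L\vec\phi^+,\vec u\rangle}{c},
\]
with $c=-\langle\mathcal L\vec\phi^+,\vec\phi^-\rangle$, and set $\vec\gamma=\vec u-\lambda_+\vec\phi^+-\lambda_-\vec\phi^-$. Because the self-pairings vanish, $\vec\gamma$ satisfies both orthogonality conditions, and this determines $\lambda_\pm$ uniquely. Existence and uniqueness of the decomposition are therefore linear algebra.

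The main step is coercivity on
\[
G=\{\vec\gamma\in X^1_{\mathrm{eo}}:\langle\mathcal L\vec\phi^\pm,\vec\gamma\rangle=0\}.
\]
The kernel $\partial_x\vec Q=(Q',0)$ is odd-even, so $\mathcal L$ restricted to $X^1_{\mathrm{eo}}$ has trivial kernel, one negative direction, and essential spectrum in $[1,\infty)$.

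We first show $\langle\mathcal L\vec\gamma,\vec\gamma\rangle\ge0$ on $G$. The span $\{\vec\phi^+,\vec\phi^-\}$ is a 2-plane on which the quadratic form $\langle\mathcal L\cdot,\cdot\rangle$ has signature $(1,1)$: on the direction $\vec\phi^++\vec\phi^-$ it equals $2\langle\mathcal L\vec\phi^+,\vec\phi^-\rangle<0$. Suppose some $\vec\gamma\in G$ has $\langle\mathcal L\vec\gamma,\vec\gamma\rangle<0$. Since $\vec\gamma$ is $\mathcal L$-orthogonal to the plane, adding it to $\vec\phi^++\vec\phi^-$ gives a 2-dimensional negative subspace $\mathrm{span}\{\vec\phi^++\vec\phi^-,\vec\gamma\}$, contradicting $n(\mathcal L)=1$. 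Hence the form is nonnegative on $G$.

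Nondegeneracy comes from the same argument. If $\langle\mathcal L\vec\gamma,\vec\gamma\rangle=0$ for some $\vec\gamma\ne0$ in $G$, then $\vec\gamma$ minimizes the form on $G$. This gives $\mathcal L\vec\gamma\in\mathrm{span}\{\mathcal L\vec\phi^\pm\}$, hence $\vec\gamma\in\mathrm{span}\{\vec\phi^\pm\}$ because $\ker\mathcal L=\{0\}$ in the even-odd class. That contradicts $\vec\gamma\in G$.

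Strict positivity is then upgraded to $X^1$-coercivity by the usual compactness argument. Take a minimizing sequence of the form on the unit $X^1$-sphere of $G$ and pass to a weak limit. The term $(p+1)Q^p u^2$ is weakly continuous by the exponential decay of $Q$, and the essential spectrum lies in $[1,\infty)$. So the limit is nonzero if the infimum were $0$, contradicting strict positivity. This coercivity step is the one requiring the most care, specifically the signature count on the 2-plane; the rest is routine.
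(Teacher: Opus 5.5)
Your proof is correct, but it reaches both halves of the statement by a different route from the paper.

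\textbf{Eigenvalues.} The paper never uses the scalar problem for $-\partial_x^2L$. It applies the Lin--Zeng formula $k_r+2k_c+2k_i^{\le0}+k_0^{\le0}=n^-(\mathcal L)=1$ and proves $k_0^{\le0}=0$ from the generalized kernel $E_0=\ker\mathcal L\oplus\operatorname{span}\{\partial_c\vec Q_c|_{c=0}\}$. Here $\partial_c\vec Q_c|_{c=0}=(0,-Q)^T$ and $\langle\mathcal L(0,-Q)^T,(0,-Q)^T\rangle=\|Q\|_{L^2}^2>0$. This yields existence and simplicity of $\mu$ at once, self-containedly for every $0<p<\infty$. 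Parity is then recovered from simplicity, the reflection $\sigma_0\vec\phi^+(-x)$, and nonnegativity of $\mathcal L$ on odd-even vectors.

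You instead import existence from the scalar eigenpair $(\nu_0,\phi_0)$. You then need only the inequality $k_r\le n^-(\mathcal L)$, which is elementary: the generalized eigenspaces with $\operatorname{Re}\lambda>0$ form an $\mathcal L$-neutral subspace meeting $\ker\mathcal L$ trivially. Your route buys explicitness: $\vec\phi^\pm=\vec Y_\pm$, $\mu=\nu_0$, and parity for free. Moreover, since $\mathcal L\vec Y_-=-\nu_0^2\vec Z_+$, your $\lambda_+$ is a multiple of Maul\'{e}n's coordinate $\langle\cdot,\vec Z_+\rangle$ defining $A_0$. The price is that existence is only as general in $p$ as the quoted scalar result, which comes from work whose main theorem assumes $p>2$. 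Inserting the paper's $k_0^{\le0}=0$ computation would remove that dependence and cover the full range $0<p<\infty$.

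\textbf{Coercivity.} The paper uses the explicit vector $\vec v=\langle\vec\phi^+,\vec\zeta^-\rangle\vec\gamma-\langle\vec\gamma,\vec\zeta^-\rangle\vec\phi^+$, which is orthogonal to the negative eigenvector $\vec\zeta^-$ of $\mathcal L$. It then shows $\langle\mathcal L\vec v,\vec v\rangle=\langle\vec\phi^+,\vec\zeta^-\rangle^2\langle\mathcal L\vec\gamma,\vec\gamma\rangle$ and $\|\vec v\|_{X^1}\gtrsim\|\vec\gamma\|_{X^1}$. That argument needs $\langle\vec\phi^+,\vec\zeta^-\rangle\ne0$ and the spectral gap, but no compactness. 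Your signature argument is equally valid and avoids $\vec\zeta^-$ altogether. It gets nonnegativity from the would-be two-dimensional negative subspace $\operatorname{span}\{\vec\phi^++\vec\phi^-,\vec\gamma\}$. Nondegeneracy comes from $\mathcal L\vec\gamma\in\operatorname{span}\{\mathcal L\vec\phi^\pm\}$ together with parity and $\ker\mathcal L\cap X^1_{\mathrm{eo}}=\{0\}$. Coercivity then follows from weak continuity of $\int Q^p|P\vec\gamma|^2\,dx$.

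\textbf{Minor points.} The scalar eigenvalue produced by $\lambda$ is $-\lambda^2$, not $\lambda^2$. Also, your $\vec\phi^-=\vec Y_-$ is the negative of the paper's $(-f,g)^T$. Your pairing is therefore $-2\mu^2$, whereas the paper normalizes this pairing to $+1$ and uses that normalization in the later energy expansion.
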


\begin{proof}
The proof is based on the index theorem of Lin and Zeng \cite{ref Lin2022}. Let $n^{-}(\mathcal{L}|_{S})$ and $n^{\leq0}(\mathcal{L}|_{S})$ denote the maximal dimensions of the negative and non-positive subspaces of $\langle\mathcal{L}\cdot,\cdot\rangle$ restricted to $S$, respectively. For any eigenvalue $\lambda$ of $J\mathcal{L}$, let $E_{\lambda}$ denote its generalized eigenspace. Define
\[
k_{r}=\sum_{\lambda>0}\dim E_{\lambda},\qquad
k_{c}=\sum_{\Re\lambda,\Im\lambda>0}\dim E_{\lambda},
\]
\[
k_{i}^{\leq0}=\sum_{\Re\lambda=0,\Im\lambda>0}n^{\leq0}(\mathcal L|_{E_\lambda}),\qquad
k_{0}^{\leq0}=n^{\leq0}\bigl(\mathcal{L}|_{E_{0}/\ker\mathcal{L}}\bigr).
\]
Here the indices on complex eigenspaces are counted over $\mathbb C$, using the Hermitian extension of the energy form, and the form on $E_0/\ker\mathcal L$ is the form induced by $\mathcal L$. The index formula reads
\begin{equation}
\label{eq201}
k_{r}+2k_{c}+2k_{i}^{\leq0}+k_{0}^{\leq0}=n^{-}(\mathcal{L}|_{X^{1}}).
\end{equation}
By Lemma \ref{le11}, we have $n^{-}(\mathcal{L}|_{X^{1}})=1$. Next, we show that $k_{0}^{\leq0}=0$, which yields $k_{r}=1$; that is, $J\mathcal{L}$ has exactly one positive simple eigenvalue $\mu>0$.

Indeed, note that
\[
J\mathcal{L}\,\partial_{c}\vec{Q}_{c}\big|_{c=0}=-\partial_x\vec{Q}\in\ker\mathcal{L},
\]
so $\partial_{c}\vec{Q}_{c}|_{c=0}\in E_{0}\setminus\ker\mathcal{L}$. We claim that
\[
E_0=\ker\mathcal L\oplus\operatorname{span}\{\partial_c\vec Q_c|_{c=0}\}.
\]
Since $J$ is injective on its domain in $(X^1)^*$,
$\ker(J\mathcal L)=\ker\mathcal L=\operatorname{span}\{\partial_x\vec Q\}$.
The displayed identity therefore determines all generalized eigenvectors
of rank two, up to an element of this kernel. A longer Jordan chain
would yield $\vec w$ with
$J\mathcal L\vec w=\partial_c\vec Q_c|_{c=0}$, after subtracting a
multiple of $\partial_c\vec Q_c|_{c=0}$ if necessary. In that case,
\[
\langle\mathcal{L}\partial_{c}\vec{Q}_{c}|_{c=0},\partial_{c}\vec{Q}_{c}|_{c=0}\rangle
= \langle\mathcal{L}J\mathcal{L}\vec{w},\partial_{c}\vec{Q}_{c}|_{c=0}\rangle
= -\langle\vec{w},\mathcal{L}J\mathcal{L}\partial_{c}\vec{Q}_{c}|_{c=0}\rangle =0.
\]
However, a direct computation gives
\[
\langle\mathcal{L}\partial_{c}\vec{Q}_{c}|_{c=0},\partial_{c}\vec{Q}_{c}|_{c=0}\rangle
= \|Q\|_{L^2}^2 >0,
\]
a contradiction. Hence the claim holds, and consequently $k_{0}^{\leq0}=0$. From (\ref{eq201}), we obtain $k_{r}=1$, meaning that $J\mathcal{L}$ has exactly one positive simple eigenvalue $\mu>0$. The time-reversal involution $\mathcal T(f,g)=(f,-g)$ satisfies $\mathcal T J\mathcal L=-J\mathcal L\mathcal T$. It therefore gives the unique simple eigenvalue $-\mu$ as well.

We now verify that $\pm\mu$ correspond to even-odd eigenfunctions. Indeed,
\[
J\mathcal{L}\,\sigma_0 \vec{\phi}^{+}(-x)=\mu\,\sigma_0 \vec{\phi}^{+}(-x),
\]
where $\sigma_0=\begin{pmatrix}1&0\\0&-1\end{pmatrix}$. Simplicity implies $\sigma_0\vec\phi^+(-x)=\pm\vec\phi^+(x)$.
The minus sign would place $\vec\phi^+$ in the odd-even subspace.
On that subspace $\mathcal L$ is nonnegative, with kernel
$\operatorname{span}\{\partial_x\vec Q\}$, whereas skew-adjointness of $J$
and $J\mathcal L\vec\phi^+=\mu\vec\phi^+$ give
$\langle\mathcal L\vec\phi^+,\vec\phi^+\rangle=0$.
Thus the minus sign would imply $\vec\phi^+\in\ker\mathcal L$, which is
impossible for $\mu\ne0$. Hence $\vec\phi^+$ is even-odd, and the same
argument applies to $\vec\phi^-$.

We choose a normalization compatible with the local coordinates
used below. If $\vec\phi^+=(f,g)^T$, time reversal allows us to take
$\vec\phi^-=(-f,g)^T$. Since
$\langle\mathcal L\vec\phi^+,\vec\phi^+\rangle=0$ and $g\ne0$,
\[
\langle\mathcal L\vec\phi^+,\vec\phi^-\rangle=2\|g\|_{L^{2}}^2>0.
\]
A common rescaling therefore gives
$\langle\mathcal L\vec\phi^+,\vec\phi^-\rangle=1$. Set
\[
\vec{\gamma}=\vec{u}-\langle \mathcal{L}\vec{\phi}^{-},\vec{u}\rangle\vec{\phi}^{+}
            -\langle \mathcal{L}\vec{\phi}^{+},\vec{u}\rangle\vec{\phi}^{-}.
\]
By construction,
\[
\langle \mathcal{L}\vec{\phi}^{+},\vec{\gamma}\rangle=\langle \mathcal{L}\vec{\phi}^{-},\vec{\gamma}\rangle=0.
\]
Now define
\[
\vec{v}=\langle \vec{\phi}^{+}, \vec{\zeta}^{-}\rangle\vec{\gamma}
      -\langle \vec{\gamma}, \vec{\zeta}^{-}\rangle\vec{\phi}^{+},
\]
where $\vec{\zeta}^{-}$ is the eigenfunction corresponding to the unique negative eigenvalue of $\mathcal{L}$. Since $\vec{u}$ is even-odd, both $\vec{\gamma}$ and $\vec{v}$ are even-odd, and hence
\[
\langle\vec{v},\vec{\zeta}^{-}\rangle=\langle\vec{v},\partial_x\vec{Q}\rangle=0.
\]
This implies
\begin{equation}
\label{eq2013}
\langle \vec{\phi}^{+}, \vec{\zeta}^{-}\rangle^{2}\langle \mathcal{L}\vec{\gamma},\vec{\gamma}\rangle
= \langle \mathcal{L}\vec{v},\vec{v}\rangle
\gtrsim \|\vec{v}\|^{2}_{X^{1}}.
\end{equation}
The coefficient $\langle\vec\phi^+,\vec\zeta^-\rangle$ is nonzero: otherwise the even-odd eigenfunction $\vec\phi^+$ would lie in the positive spectral subspace of $\mathcal L$, contradicting $\langle\mathcal L\vec\phi^+,\vec\phi^+\rangle=0$. We also claim that
\begin{equation}
\label{eq002013}
\|\vec{v}\|_{X^{1}}\gtrsim\|\vec{\gamma}\|_{X^{1}}.
\end{equation}
Indeed, suppose there exists a sequence with $\|\vec{\gamma}_{n}\|_{X^{1}}=1$ such that
\begin{equation}
\label{eq02013}
\|\vec{v}_{n}\|_{X^{1}}
= \bigl\|\langle \vec{\phi}^{+}, \vec{\zeta}^{-}\rangle\vec{\gamma}_{n}
      -\langle \vec{\gamma}_{n}, \vec{\zeta}^{-}\rangle\vec{\phi}^{+}\bigr\|_{X^{1}}
\to 0.
\end{equation}
Then $\langle \vec{\gamma}_{n}, \vec{\zeta}^{-}\rangle =-\langle \mathcal{L}\vec{\phi}^{-},\vec{v}_{n}\rangle \to 0$, and from (\ref{eq02013}) we get $\|\vec{\gamma}_{n}\|_{X^{1}}\to 0$, a contradiction. Therefore the claim (\ref{eq002013}) holds. Combining this with (\ref{eq2013}) yields
\[
\langle \mathcal{L}\vec{\gamma},\vec{\gamma}\rangle\gtrsim\|\vec{\gamma}\|^{2}_{X^{1}},
\]
which completes the proof.
\end{proof}
\subsection{Dispersive and Strichartz estimates}
\indent
\par
We first collect the linear estimates used below. To distinguish a
time-integrability exponent from the nonlinear power $p$, we write
$(a,q)\in\Omega(\dot H^s)$ when
\[
\frac2a+\frac1q=\frac12-s,\qquad 2\leq a,q<\infty,
\qquad s<\frac12.
\]
The Fourier multipliers are
\[
D=|\partial_x|=\mathcal{F}^{-1}|\xi|\mathcal{F},\qquad
\Lambda=(1+D^2)^{1/2},\qquad
\omega(D)=D\Lambda,\qquad K(D)=D\Lambda^{-1}.
\]
We also use the Hilbert transform $\mathcal H$, whose symbol is
$-i\operatorname{sgn}\xi$. Fractional powers are defined by their
Fourier symbols. In particular, $\mathcal H\partial_x=D$ and
$D\mathcal H=-\partial_x$.

\begin{lem}[\cite{ref Ch2021,ref Gu2006}]\label{le301}
The group $S_0(t)=e^{-it\omega(D)}$ satisfies the following estimates.

\noindent (i) For $2\leq q\leq\infty$ and $1/q+1/q'=1$,
\begin{equation}\label{ee201}
\|S_0(t)f\|_{\dot B^0_{q,2}}
\lesssim |t|^{1/q-1/2}
\|K(D)^{1/q-1/2}f\|_{\dot B^0_{q',2}}.
\end{equation}
\noindent (ii) For $(a,q)\in\Omega(L^2)$,
\begin{equation}\label{ee202}
\|S_0(t)f\|_{L_t^a\dot B^0_{q,2}}
\lesssim\|K(D)^{-1/a}f\|_{L^{2}}.
\end{equation}
\noindent (iii) For $(a_1,q_1),(a_2,q_2)\in\Omega(L^2)$,
\begin{equation}\label{ee203}
\left\|\int_{\mathbb R}S_0(t-s)F(s)\,ds\right\|_{L_t^{a_1}\dot B^0_{q_1,2}}
\lesssim\|K(D)^{-1/a_1-1/a_2}F\|_{L_t^{a_2'}\dot B^0_{q_2',2}}.
\end{equation}
\end{lem}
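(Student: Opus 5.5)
The estimates for $S_0(t)=e^{-it\omega(D)}$ with $\omega(\xi)=|\xi|\sqrt{1+\xi^2}$ follow the standard route for dispersive groups with non-homogeneous symbol: a frequency-localized dispersive bound, Besov summation, and then $TT^*$ with Christ--Kiselev (or Hardy--Littlewood--Sobolev in time) for (ii)--(iii).

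\textbf{Step 1: localized dispersive bound.} Let $P_N$ be a Littlewood--Paley projection to $|\xi|\sim N$, $N\in 2^{\mathbb Z}$. The kernel of $S_0(t)P_N$ is
\[
\int e^{i(x\xi-t\omega(\xi))}\psi(\xi/N)\,d\xi .
\]
A direct computation gives
\[
\omega''(\xi)=\operatorname{sgn}\xi\,\frac{\xi(2\xi^2+3)}{(1+\xi^2)^{3/2}},
\]
so $|\omega''(\xi)|\sim N\langle N\rangle^{-1}\cdot\langle N\rangle\cdot\min(1,\ldots)$. More precisely, $|\omega''|\sim N$ for $N\le1$ and $|\omega''|\sim1$ for $N\ge1$, i.e. $|\omega''|\sim K(N)\langle N\rangle$. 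Van der Corput (second derivative test, with $\psi$ of bounded variation after rescaling) then gives
\[
\|S_0(t)P_N f\|_{L^\infty}\lesssim |t|^{-1/2}\,(K(N)\langle N\rangle)^{-1/2}\,\|P_Nf\|_{L^1}.
\]
In fact only the cruder bound $|t|^{-1/2}K(N)^{-1/2}\|P_Nf\|_{L^1}$ is needed, since $\langle N\rangle\ge1$.

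\textbf{Step 2: interpolation and summation.} Interpolate with unitarity on $L^2$ (Riesz--Thorin, $\theta=1-2/q$) to get
\[
\|S_0(t)P_Nf\|_{L^q}\lesssim |t|^{1/q-1/2}K(N)^{1/q-1/2}\|P_Nf\|_{L^{q'}}.
\]
Because $K(N)^{s}P_N\approx P_NK(D)^{s}$ up to a harmless multiplier, squaring and summing over $N$ yields \eqref{ee201}.

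\textbf{Step 3: Strichartz.} Set $T=K(D)^{1/a}S_0(t)$, understood at each dyadic piece. By $TT^*$, with $2/a=1-2/q$ so that $|t-s|^{1/q-1/2}=|t-s|^{-2/a}$ and $K^{1/q-1/2}=K^{-2/a}$, Step 2 gives
\[
\Big\|\int K(D)^{2/a}S_0(t-s)P_NF(s)\,ds\Big\|_{L^q}\lesssim\int|t-s|^{-2/a}\|P_NF(s)\|_{L^{q'}}\,ds .
\]
Hardy--Littlewood--Sobolev in time, valid since $a>2$, bounds this in $L^a_t$ by $\|P_NF\|_{L^{a'}_tL^{q'}_x}$. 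This gives the dyadic Strichartz estimate
\[
\|S_0P_Nf\|_{L^a_tL^q_x}\lesssim\|K(D)^{-1/a}P_Nf\|_{L^2}.
\]
To sum over $N$ in $\ell^2$ and reach $L^a_t\dot B^0_{q,2}$, use Minkowski, since $a\ge2$. Then \eqref{ee202} follows.

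\textbf{Step 4: inhomogeneous estimate.} Write the dual of Step 3 and compose the two. The non-retarded integral over $s\in\mathbb R$ in \eqref{ee203} factorizes as $T_1T_2^*$, so no Christ--Kiselev lemma is required. The weights combine as $K^{-1/a_1}K^{-1/a_2}$, and Besov summation is handled as before, using $a_2'\le2$ on the dual side.

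\textbf{Main obstacle.} The delicate point is the endpoint and low-frequency bookkeeping. The Besov frame with dyadic weights $K(N)$ is chosen precisely to avoid summing singular weights in $L^q$, so the summations must be done dyadically. The cases $q=\infty$ and $a=\infty$ are also excluded from $\Omega$ and need care. These are routine, so I would cite \cite{ref Ch2021,ref Gu2006} for the van der Corput computation and present the rest briefly.
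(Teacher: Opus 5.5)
Your argument is correct. The paper gives no proof of this lemma and simply cites \cite{ref Ch2021,ref Gu2006}; what you wrote is the standard route to such estimates: a frequency-localized van der Corput bound, interpolation with unitarity, a dyadic $TT^*$ argument with Hardy--Littlewood--Sobolev in time, and composition of (ii) with its dual, which suffices because the inhomogeneous integral in (iii) runs over all of $\mathbb R$.

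Two slips should be corrected, though neither affects the bounds you actually use:
\begin{itemize}
\item $|\omega''(\xi)|\sim K(\xi)$, not $K(N)\langle N\rangle=N$. Your ``more precise'' bound with the extra factor $N^{-1/2}$ is false for $N\gg1$, where $\omega(\xi)=\xi^2+\tfrac12+O(\xi^{-2})$ is Schr\"odinger-like. The cruder bound $|t|^{-1/2}K(N)^{-1/2}$ you actually use follows directly from van der Corput, so you do not need to derive it from the stronger one.
\item The admissibility relation in $\Omega(L^2)$ is $2/a=\tfrac12-\tfrac1q$, not $2/a=1-2/q$. The exponent identities $|t-s|^{1/q-1/2}=|t-s|^{-2/a}$ and $K^{1/q-1/2}=K^{-2/a}$ that you then use are the correct ones.
\end{itemize}
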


\begin{lem}\label{le302}
The following estimates hold.

\noindent (i) If $0\leq s<1/2$ and $(a,q)\in\Omega(\dot H^s)$, then
\begin{equation}\label{e202}
\|S_0(t)f\|_{L_t^aL_x^q}
\lesssim\|K(D)^{-1/a}D^sf\|_{L^{2}}.
\end{equation}
\noindent (ii) For the same $(a,q)$ and $b=a/(1+as)$,
\begin{equation}\label{e203}
\left\|\int_{\mathbb R}S_0(t-\tau)F(\tau)\,d\tau\right\|_{L_t^aL_x^q}
\lesssim\|K(D)^{1/q-1/2}F\|_{L_t^{b'}L_x^{q'}}.
\end{equation}
Here $(b,q)\in\Omega(\dot H^{-s})$ and
$1/b'=1-1/a-s$. The same bound holds for integration over
$\tau<t$ or $\tau>t$, and for sources restricted to any time interval.

\noindent (iii) For $(a,q)\in\Omega(L^2)$,
\begin{equation}\label{e204}
\left\|\int_{\mathbb R}S_0(t)F(t)\,dt\right\|_{L^{2}}
\lesssim\|K(D)^{-1/a}F\|_{L_t^{a'}L_x^{q'}}.
\end{equation}
\end{lem}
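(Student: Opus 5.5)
Lemma~\ref{le302} follows from Lemma~\ref{le301} by combining homogeneous Sobolev embedding with the Littlewood--Paley embeddings $\dot B^0_{q,2}\hookrightarrow L^q$ for $q\ge2$ and $L^{q'}\hookrightarrow\dot B^0_{q',2}$ for $q'\le2$. All operators involved ($S_0$, $K(D)$, $D^s$) are Fourier multipliers, so they commute.

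\emph{Part (i).} Let $(a,q)\in\Omega(\dot H^s)$ and define $r$ by $\frac1r=\frac1q+s$. Then $\frac2a+\frac1r=\frac12$, so $(a,r)\in\Omega(L^2)$; the conditions $0\le s<\frac12$ and $a<\infty$ give $2\le r<\infty$. The one-dimensional Sobolev embedding $\dot W^{s,r}\hookrightarrow L^q$ (Hardy--Littlewood--Sobolev) gives
\[
\|S_0(t)f\|_{L^q_x}\lesssim\|D^sS_0(t)f\|_{L^r_x}\lesssim\|S_0(t)D^sf\|_{\dot B^0_{r,2}}.
\]
Taking the $L^a_t$ norm and applying \eqref{ee202} with the pair $(a,r)$ to $D^sf$ yields \eqref{e202}.

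\emph{Part (iii).} This is the dual of \eqref{ee202}. For $g\in L^2$,
\[
\Bigl\langle\int S_0(t)F(t)\,dt,\,g\Bigr\rangle=\int\langle K(D)^{-1/a}F(t),\,K(D)^{1/a}S_0(-t)g\rangle\,dt.
\]
Apply H\"older in $L^{a'}_tL^{q'}_x$ against $L^a_tL^q_x$, bound $L^q$ by $\dot B^0_{q,2}$, and use \eqref{ee202} (time reversal is harmless). This gives the bound $\|K(D)^{-1/a}F\|_{L^{a'}L^{q'}}\|g\|_{L^2}$, which proves \eqref{e204}.

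\emph{Part (ii).} I would follow the standard inhomogeneous argument in the style of Foschi/Kato for non-admissible pairs. Interpolate the dispersive estimate \eqref{ee201} at the exponent $q$ against the Besov--Lebesgue embeddings:
\[
\|S_0(t-\tau)F(\tau)\|_{L^q_x}\lesssim|t-\tau|^{1/q-1/2}\|K(D)^{1/q-1/2}F(\tau)\|_{L^{q'}_x}.
\]
Then apply Hardy--Littlewood--Sobolev in time. The kernel $|t-\tau|^{-(1/2-1/q)}$ maps $L^{b'}_t\to L^a_t$ provided $1+\frac1a=\frac1{b'}+\frac12-\frac1q$. Using $\frac2a+\frac1q=\frac12-s$, this condition rewrites as $\frac1{b'}=1-\frac1a-s$, which is exactly the stated relation. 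One checks that $(b,q)\in\Omega(\dot H^{-s})$ via $\frac2b+\frac1q=\frac12+s$. HLS also requires $0<\frac12-\frac1q<1$ and $1<b'<a<\infty$; these follow from $q>2$ and the exponent constraints.

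The endpoint $q=2$ forces $s=\frac12-\frac2a$, and there one should instead combine (i) with (iii) through a $TT^*$/Christ--Kiselev argument; this case is not needed for the non-sharp range considered here. Because the bound is pointwise in $\tau$ followed by HLS with a positive kernel, restricting to $\tau<t$, $\tau>t$, or to a subinterval only decreases the left-hand side, so those variants follow immediately.

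\textbf{Main obstacle.} The delicate point is keeping track of the singular low-frequency multiplier $K(D)^{1/q-1/2}$ in the pointwise-in-time step. It is harmless because the dispersive bound is stated exactly with this weight on the right. A secondary check is the HLS admissibility range for the time exponents (in particular $b'>1$, i.e.\ $\frac1a+s<1$); I would verify this directly from $(a,q)\in\Omega(\dot H^s)$.
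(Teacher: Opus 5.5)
Your proposal is correct and follows the paper's proof essentially line by line. For (i) you use the same auxiliary exponent $1/r=1/q+s$ with the Besov and Sobolev embeddings; for (ii) the same pointwise-in-time dispersive bound followed by Minkowski and Hardy--Littlewood--Sobolev in time; and for (iii) the same duality argument with Lemma~\ref{le301}(ii).

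Two small slips do not affect the argument. First, the case $q=2$ never arises, since $1/2-1/q=2/a+s>0$ for $a<\infty$, so the Christ--Kiselev aside is unnecessary. Second, $b'>1$ is equivalent to $1/a+s>0$; the condition $1/a+s<1$ is what gives $b'<\infty$.
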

\begin{proof}
For (i), choose $r$ by $1/r=1/q+s$. Then
$(a,r)\in\Omega(L^2)$ and $2\leq r<\infty$. Apply
Lemma~\ref{le301}(ii) to $D^sf$, and use
$\dot B^0_{r,2}\hookrightarrow L^r$ and
$\dot W^{s,r}\hookrightarrow L^q$.

For (ii), the Besov embeddings on the two sides of
\eqref{ee201} give
\[
\|S_0(t-\tau)F(\tau)\|_{L^{q}}
\lesssim |t-\tau|^{1/q-1/2}
\|K(D)^{1/q-1/2}F(\tau)\|_{L^{q'}}.
\]
Since $1/2-1/q=2/a+s\in(0,1)$, Minkowski's inequality and the
Hardy--Littlewood--Sobolev inequality in time imply
\begin{equation}\label{e205}
\begin{aligned}
\left\|\int_{\mathbb R}S_0(t-\tau)F(\tau)\,d\tau\right\|_{L_t^aL_x^q}
&\lesssim\left\|\int_{\mathbb R}|t-\tau|^{-2/a-s}
\|K(D)^{1/q-1/2}F(\tau)\|_{L^{q'}}\,d\tau\right\|_{L_t^a}\\
&\lesssim\|K(D)^{1/q-1/2}F\|_{L_t^{b'}L_x^{q'}}.
\end{aligned}
\end{equation}
Restricting the nonnegative scalar integral to $\tau<t$ or $\tau>t$
gives the asserted one-sided versions. Extending a source by zero
gives the interval versions.

Finally, use the complex $L^2$ inner product, linear in the first
argument. Unitarity gives the identity
\begin{equation}\label{e206}
\left(\int_{\mathbb R}S_0(t)F(t)\,dt,g\right)_{L^2}
=\int_{\mathbb R}\bigl(F(t),S_0(-t)g\bigr)_{L^2}\,dt.
\end{equation}
Move $K(D)^{1/a}$ to the second factor and apply H\"older's inequality
and Lemma~\ref{le301}(ii). We obtain
\begin{equation}\label{e207}
\begin{aligned}
\left\|\int_{\mathbb R}S_0(t)F(t)\,dt\right\|_{L^{2}}
&\leq\sup_{\|g\|_{L^{2}}=1}
\|K(D)^{-1/a}F\|_{L_t^{a'}L_x^{q'}}
\|S_0(-t)K(D)^{1/a}g\|_{L_t^aL_x^q}\\
&\lesssim\|K(D)^{-1/a}F\|_{L_t^{a'}L_x^{q'}}.
\end{aligned}
\end{equation}
The identities are first justified for smooth functions with frequency
support away from zero and then extended by density.
\end{proof}

The component formulas for the system are
\[
\begin{cases}
P\U(t)\vec u=\cos(t\omega(D))u
-\mathcal H\Lambda^{-1}\sin(t\omega(D))v,\\
P^c\U(t)\vec u=-\mathcal H\Lambda\sin(t\omega(D))u
+\cos(t\omega(D))v.
\end{cases}
\]
Thus the corresponding estimates for $\U(t)$ follow from
Lemma~\ref{le302}; the Hilbert transform is bounded on the finite
Lebesgue exponents used below.

We now turn to the scattering estimates. For $p>6$, set $s=\frac16$ and choose the $\dot H^{1/6}$-admissible pair
\[
(\tilde p,\tilde q)=\left(\frac{7p-6}{p},\frac{21p-18}{p-6}\right).
\]
For any time interval $I$, write
\[
S(I)=L_t^{\tilde p}L_x^{\tilde q}(I\times\mathbb R),\qquad
N(I)=L_t^{\tilde r'}L_x^{\tilde q'}(I\times\mathbb R),\qquad
\frac1{\tilde r'}=\frac56-\frac1{\tilde p}.
\]
Here $S(I)$ is a scalar function space; thus
$\|P\vec u\|_{S(I)}=\|u\|_{S(I)}$. We reserve $S=S(\mathbb R)$
and $N=N(\mathbb R)$ for the whole time axis. We also write
$I_+=[0,\infty)$ and $I_-=(-\infty,0]$.
A finite scattering norm, together with a uniform energy-space bound, implies scattering on either time half-line.
\begin{lem}(scattering criterion)
\label{le303}
Let $\sigma\in\{+,-\}$, and let $\vec u$ solve \eqref{eq02} on
$I_\sigma$. If
\[
\sup_{t\in I_\sigma}\|\vec u(t)\|_{X^1}<\infty,
\qquad \|P\vec u\|_{S(I_\sigma)}<\infty,
\]
then $\vec u$ scatters in that time direction: there is
$\vec u_\sigma\in X^1$ such that
\[
\lim_{t\to\sigma\infty}\|\vec u(t)-\U(t)\vec u_\sigma\|_{X^1}=0.
\]
\end{lem}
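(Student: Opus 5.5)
The plan is the standard Cook-type argument. Work with $\sigma=+$; the case $\sigma=-$ is symmetric. Write Duhamel's formula in the form
\[
\U(-t)\vec u(t)=\vec u(0)+\int_0^t\U(-s)J\mathcal N(\vec u(s))\,ds .
\]
It then suffices to show that $\U(-t)\vec u(t)$ is Cauchy in $X^1$ as $t\to+\infty$, since $\U$ is unitary on $X^1$. We will set $\vec u_+=\vec u(0)+\int_0^\infty\U(-s)J\mathcal N(\vec u(s))\,ds$, so that $\|\vec u(t)-\U(t)\vec u_+\|_{X^1}=\|\int_t^\infty\U(-s)J\mathcal N\,ds\|_{X^1}$. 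The task is therefore to show that the tail integral tends to zero.

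The key is to estimate $\int_{t_1}^{t_2}\U(-s)J\mathcal N(\vec u(s))\,ds$ in $X^1$. We will pass to the diagonal variable $z=u-i\Lambda^{-1}\mathcal H v$, for which $\|z\|_{H^1}=\|\vec u\|_{X^1}$ and the nonlinear source is $D\Lambda^{-1}(|u|^pu)$. The $X^1$-norm of the tail is then comparable to
\[
\Bigl\|\int_{t_1}^{t_2}S_0(-s)\,D\,(|u|^pu)(s)\,ds\Bigr\|_{L^2},
\]
because $\Lambda\cdot D\Lambda^{-1}=D$. We apply the dual estimate Lemma~\ref{le302}(iii) with an $L^2$-admissible pair $(a,q)$, which gives the bound $\|K(D)^{-1/a}D(|u|^pu)\|_{L^{a'}_tL^{q'}_x}$. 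Since $K(D)^{-1/a}D=D^{1-1/a}\Lambda^{1/a}$ has symbol of order one, and vanishes to order $1-1/a$ at zero frequency, the low-frequency singularity is compensated. The expression is controlled by $\|\Lambda(|u|^pu)\|_{L^{a'}L^{q'}}$, up to a homogeneous Sobolev step at low frequencies. The fractional chain rule then gives a bound by $\|u\|^{p}_{L^{\alpha}_tL^{\beta}_x}\,\|\langle\partial_x\rangle u\|_{L^\infty_tL^2_x}$ for suitable exponents.

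The main obstacle is matching the exponents. We need to choose $(a,q)$ so that the factor $\|u\|^p$ can be interpolated between the scattering norm $S(I)=L^{\tilde p}_tL^{\tilde q}_x$ and the energy bound $L^\infty_tH^1_x\hookrightarrow L^\infty_tL^r_x$ for $2\le r\le\infty$. The resulting bound should have the form $\|u\|_{S(t_1,\infty)}^{\theta p}\,(\sup\|\vec u\|_{X^1})^{(1-\theta)p+1}$ with $\theta>0$. The constraint $p>6$, which fixes $\tilde p=(7p-6)/p$, is exactly what should make the time exponents close: we need $p\theta/\tilde p\cdot$ (time Hölder) $=1/a'$. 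A natural first try is $a=\infty$, $q=2$. Then the factor $K(D)^{-1/a}$ is trivial, and the requirement becomes $D(|u|^pu)\in L^1_tL^2_x$. This amounts to $\||u|^p\|_{L^1_tL^\infty_x}\,\|\partial_xu\|_{L^\infty L^2}$, i.e. $u\in L^p_tL^\infty_x$. By Gagliardo--Nirenberg interpolation between $L^{\tilde q}_x$ and $H^1$, $\|u\|_{L^\infty}\lesssim\|u\|_{L^{\tilde q}}^{\theta}\|u\|_{H^1}^{1-\theta}$ with $\theta=\frac{2}{\tilde q+2}$. We then need $p\theta\le\tilde p$ with room for Hölder in time, i.e. $u\in L^{p\theta}_t$-type control from $L^{\tilde p}_t$ on an infinite interval. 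This requires $p\theta=\tilde p$ exactly, or else a further interpolation. I expect to have to adjust $(a,q)$ slightly off the endpoint to balance this, and that is where $p>6$ enters.

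Once such an estimate holds, monotone convergence gives $\|u\|_{S(t,\infty)}\to0$ as $t\to\infty$, so the tail integral tends to zero. The same bound shows that the full integral converges, which also defines $\vec u_+\in X^1$. The identities are justified first for smooth solutions and then extended by density and the local well-posedness theory.
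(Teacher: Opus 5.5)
Your Cook-type reduction is the same as the paper's. However, the only substantive step of the lemma is never established: bounding the Duhamel tail $\int_{t_1}^{t_2}\U(-s)J\mathcal N(\vec u(s))\,ds$ in $X^1$ by a positive power of $\|u\|_{S((t_1,t_2))}$ times powers of $A=\sup_t\|\vec u(t)\|_{X^1}$. The computation you offer toward it goes wrong in two places.

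First, the one-dimensional Gagliardo--Nirenberg inequality is $\|u\|_{L^\infty}\lesssim\|u\|_{L^{q}}^{q/(q+2)}\|\partial_xu\|_{L^2}^{2/(q+2)}$. So the available power on $\|u\|_{L^{\tilde q}}$ is $\theta=\tilde q/(\tilde q+2)$. Your $2/(\tilde q+2)$ is the power of the derivative factor, and with it $p\theta$ falls far below $\tilde p$ for $p$ near $6$ (e.g.\ $p\theta=14/131$ at $p=7$).

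Second, the condition you need is $p\theta\geq\tilde p$, not $p\theta\leq\tilde p$ or exact equality. The energy bound gives $\sup_t\|u(t)\|_{L^{\tilde q}}\lesssim A$, so surplus powers of $\|u(t)\|_{L^{\tilde q}}$ can always be traded for powers of $A$. From the given information, however, the time exponent on an infinite interval cannot be lowered below $\tilde p$.

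Having misidentified both points, you end by deferring to an unspecified off-endpoint choice of $(a,q)$. As written, the lemma is therefore not proved.

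The gap closes at your own endpoint once these are fixed. For $p>6$,
\[
\frac{p\tilde q}{\tilde q+2}-\tilde p=\frac{(p-6)(21p^2-53p+30)}{p(23p-30)}>0 .
\]
Hence, using only unitarity of $\U$ on $X^1$ and Minkowski's inequality,
\[
\Bigl\|\int_{t_1}^{t_2}\U(-s)J\mathcal N(\vec u(s))\,ds\Bigr\|_{X^1}
\leq(p+1)\int_{t_1}^{t_2}\|u\|_{L^\infty}^p\|\partial_xu\|_{L^2}\,ds
\lesssim A^{p+1-\tilde p}\|u\|_{S((t_1,t_2))}^{\tilde p}.
\]
The paper takes a different route:
\begin{itemize}
\item it uses the dual Strichartz estimate with $(a,q)=(6,6)$;
\item it removes the low-frequency singularity via $K(D)^{-1/6}=m(D)(1+D^{-1/6})$ and the embedding $W^{1,6/5}\hookrightarrow\dot W^{5/6,6/5}$;
\item it bounds the source by $\|u\|_{H^1}\|u\|_{L^{3p}}^p$;
\item it interpolates $L_t^{6p/5}L_x^{3p}$ between $S$ and $L_t^\infty L_x^{m_1}$ with $\theta=(35p-30)/(6p^2)$, chosen so that $5/(6p)=\theta/\tilde p$ holds exactly.
\end{itemize}
Either completion gives the Cauchy property of the interaction-picture integral and hence the scattering state. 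The corrected endpoint version is more elementary, since it needs no Strichartz estimate at all.
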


\begin{proof}
We prove forward scattering; the backward argument is identical.
Write $f(u)=|u|^pu$ and $A=\sup_{t\in I_+}\|\vec u(t)\|_{X^1}$.
Duhamel's formula is
\begin{equation}\label{e2008}
\vec u(t)=\U(t)\vec u_0+
\int_0^t \U(t-s)J\mathcal N(\vec u(s))\,ds.
\end{equation}
We first show that the integral in the interaction representation is
Cauchy in $X^1$. For $t_2>t_1\geq0$, the dual Strichartz estimate of
Lemma~\ref{le302}, with the $L^2$-admissible pair $(6,6)$, gives
\[
\begin{aligned}
\left\|\int_{t_1}^{t_2}\U(-s)J\mathcal N(\vec u(s))\,ds\right\|_{X^1}
&\lesssim\|K(D)^{-1/6}\partial_xf(u)\|_{L_t^{6/5}L_x^{6/5}(I)}\\
&\quad+\|\partial_x\Lambda^{-1}K(D)^{-1/6}\partial_xf(u)\|_{L_t^{6/5}L_x^{6/5}(I)}\\
&\lesssim\|K(D)^{-1/6}\partial_xf(u)\|_{L_t^{6/5}L_x^{6/5}(I)},
\end{aligned}
\]
where $I=(t_1,t_2)$ and $\partial_x\Lambda^{-1}$ is a bounded Mihlin
multiplier. To control the low-frequency factor, write
\[
K(D)^{-1/6}=m(D)(1+D^{-1/6}),\qquad
m(\xi)=\frac{(1+\xi^2)^{1/12}}{1+|\xi|^{1/6}}.
\]
The symbol $m$ satisfies the Mihlin bounds. Since
$D^{-1/6}\partial_x$ is, up to a Hilbert transform, $D^{5/6}$,
the embedding $W^{1,6/5}\hookrightarrow\dot W^{5/6,6/5}$ and
H\"older's inequality imply
\begin{equation}\label{e211}
\|K(D)^{-1/6}\partial_xf(u)\|_{L^{6/5}} \lesssim\|f(u)\|_{L^{6/5}}+\|\partial_xf(u)\|_{L^{6/5}} \lesssim(\|u\|_{L^{2}}+\|\partial_xu\|_{L^{2}})\|u\|_{L^{3p}}^{p} \lesssim\|u\|_{H^1}\|u\|_{L^{3p}}^{p}.
\end{equation}
This is the nonlinear estimate needed for the scattering criterion.

Set
\[
\theta=\frac{35p-30}{6p^2},\qquad
m_1=\frac{3(6p^2-35p+30)}{p+30}.
\]
For $p>6$, $0<\theta<1$ and $m_1>2$, and direct substitution gives
\[
\frac5{6p}=\frac\theta{\tilde p},\qquad
\frac1{3p}=\frac\theta{\tilde q}+\frac{1-\theta}{m_1}.
\]
Interpolation and $H^1(\mathbb R)\hookrightarrow L^{m_1}(\mathbb R)$
therefore yield
\begin{equation}\label{e212}
\|u\|_{L_t^{6p/5}L_x^{3p}(I)}^p
\leq\|u\|_{S(I)}^{p\theta}
\|u\|_{L_t^\infty L_x^{m_1}(I)}^{p(1-\theta)}
\lesssim A^{p(1-\theta)}\|u\|_{S(I)}^{p\theta}.
\end{equation}
Combining the preceding bounds, we obtain
\begin{equation}\label{e213}
\left\|\int_{t_1}^{t_2}\U(-s)J\mathcal N(\vec u(s))\,ds\right\|_{X^1}
\lesssim A^{1+p(1-\theta)}\|u\|_{S((t_1,t_2))}^{p\theta}.
\end{equation}
The right-hand side tends to zero as $t_1\to\infty$, uniformly in
$t_2>t_1$, because the forward $S$ norm is finite. Hence
\[
\vec u_+=\vec u_0+\int_0^\infty
\U(-s)J\mathcal N(\vec u(s))\,ds
\]
is well defined in $X^1$. Subtracting its free evolution from
\eqref{e2008}, using unitarity and then \eqref{e213}, gives
\[
\|\vec u(t)-\U(t)\vec u_+\|_{X^1}
\lesssim A^{1+p(1-\theta)}\|u\|_{S((t,\infty))}^{p\theta}
\longrightarrow0.
\]
This proves the assertion.
\end{proof}
\begin{Remark}
\label{Re001}
The restriction $p>6$ is sufficient for the energy-space estimates used
here. For a $\dot H^{1/6}$-admissible pair $(a,b)$, the homogeneous
estimate contains $K(D)^{-1/a}D^{1/6}$. Its low-frequency singularity
is controlled on the full energy space when $1/6-1/a\geq0$.
Our pair satisfies $\tilde p\in(6,7)$, $\tilde q\in(21,\infty)$,
and the interpolation exponents in \eqref{e212} are admissible.
It also gives $\tilde p(1-2/\tilde q)>6$, which is used in the strict
threshold estimate of Proposition~\ref{prop:strict-gap}. We make no
claim here that this range is optimal.
\end{Remark}
\begin{lem}(Small data scattering)
\label{le304}
There exists a small constant $\delta>0$ such that if $\|\vec{u}_{0}\|_{X^{1}}<\delta$, then the corresponding solution $\vec{u}(t)$ scatters as $t\to\pm\infty$.
\end{lem}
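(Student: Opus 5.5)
We will run a standard contraction-mapping argument for the Duhamel formulation
\[
\vec u(t)=\U(t)\vec u_0+\int_0^t\U(t-s)J\mathcal N(\vec u(s))\,ds
\]
on the whole time axis. After global existence and a global scattering-norm bound are obtained, scattering follows from Lemma~\ref{le303}. The solution space is
\[
Y=\{\vec u\in C(\mathbb R;X^1):\ \|\vec u\|_{L_t^\infty X^1}\le 2C_0\delta,\ \|u\|_{S(\mathbb R)}\le 2C_0\delta\},
\]
with the metric given by the same norms. Here $C_0$ is the constant in the linear estimates. The linear part is controlled by unitarity of $\U(t)$ on $X^1$ for the $L_t^\infty X^1$ component. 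For the $S$ component we use Lemma~\ref{le302}(i) with $s=1/6$ and the $\dot H^{1/6}$-admissible pair $(\tilde p,\tilde q)$. This gives $\|P\U(t)\vec u_0\|_{S}\lesssim\|K(D)^{-1/\tilde p}D^{1/6}u_0\|_{L^2}+\|K(D)^{-1/\tilde p}D^{1/6}\mathcal H\Lambda^{-1}v_0\|_{L^2}$. By Remark~\ref{Re001}, $1/6-1/\tilde p\ge0$ since $\tilde p>6$. Hence the symbol $|\xi|^{1/6-1/\tilde p}(1+\xi^2)^{1/(2\tilde p)}$ is bounded at low frequency and at most $\langle\xi\rangle$ at high frequency, so this is $\lesssim\|\vec u_0\|_{X^1}$.

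For the nonlinear term, the $X^1$ bound comes from the dual estimate already carried out in the proof of Lemma~\ref{le303}. The bounds \eqref{e211}--\eqref{e213} give
\[
\Big\|\int_0^t\U(t-s)J\mathcal N(\vec u)\,ds\Big\|_{L_t^\infty X^1}\lesssim A^{1+p(1-\theta)}\|u\|_S^{p\theta},
\]
with $A=\|\vec u\|_{L^\infty_tX^1}$. This is $\lesssim\delta^{1+p}$ on $Y$. For the $S$ bound we write $P\U(t-s)J\mathcal N$ through the component formula. Using $\U(t)=\U(t-\tau)\U(\tau)$ in the retarded form, we estimate it by the same $L^2$-level quantity: $\|P\U(t)\vec w\|_S\lesssim\|\vec w\|_{X^1}$ for each fixed vector. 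Then a Christ--Kiselev type argument, or more simply Minkowski in time followed by the homogeneous estimate applied to $\int_0^t\U(-s)J\mathcal N\,ds$, bounds the retarded $S$ norm by $\|K(D)^{-1/6}\partial_xf(u)\|_{L_t^{6/5}L_x^{6/5}}$. Since $\tilde p>6/5$, the Christ--Kiselev lemma applies. This is then controlled by \eqref{e211}--\eqref{e212}, giving again $\lesssim\delta^{1+p}$. If the Christ--Kiselev step is unwelcome, one can instead use Lemma~\ref{le302}(ii) directly with $F=\partial_x f(u)$ (after a frequency splitting as in \eqref{e211}), estimating $\|K(D)^{1/\tilde q-1/2}\partial_xf(u)\|_{L^{b'}_tL^{\tilde q'}_x}$ by H\"older and interpolation between $S$ and $L^\infty_tH^1$.

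The contraction estimate uses the same bounds applied to $f(u)-f(w)$, with $|f(u)-f(w)|\lesssim(|u|^p+|w|^p)|u-w|$ and the analogous derivative bound. Since $p>6$ is not an integer in general, the derivative difference $|u|^p\partial_xu-|w|^p\partial_xw$ only gives a Hölder-type modulus. We therefore perform the contraction in the weaker metric $L_t^\infty L^2$ plus $S$, in which the derivative falls on no difference. The ball $Y$ is closed in that metric by weak lower semicontinuity. Smallness of $\delta$ closes both the self-mapping and the contraction. The resulting solution has $\sup_t\|\vec u(t)\|_{X^1}<\infty$ and $\|u\|_{S(\mathbb R)}<\infty$, so Lemma~\ref{le303} yields scattering as $t\to\pm\infty$.

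The main obstacle I expect is the inhomogeneous $S$-estimate. The low-frequency singularity of $K(D)^{1/\tilde q-1/2}$ must be absorbed using the derivative $\partial_x$ present in $J\mathcal N$. This works because $1/2-1/\tilde q<1$, so $D^{1/\tilde q-1/2}\partial_x$ is a nonnegative-order operator of order at most one. The exponents in Hölder then have to be matched with \eqref{e212}. I would first try reusing the $(6,6)$ dual route as above, since it is already verified in the paper.
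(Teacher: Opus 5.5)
Your argument is correct in substance but takes a different route from the paper.

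\textbf{The paper's route.} The paper never constructs the solution. It takes the local solution as given and gets $\sup_t\|\vec u(t)\|_{X^1}\le2\|\vec u_0\|_{X^1}$, hence global existence, from energy conservation, Sobolev embedding and a continuity argument. It then bootstraps $Y(T)=\|u\|_{S([0,T])}$ by applying Lemma~\ref{le302}(ii) directly. The operator in front of $f(u)$ there is $\mathcal H\Lambda^{-1}\partial_x$, which combines with $K(D)^{1/\tilde q-1/2}$ into the Mihlin multiplier $K(D)^{1/2+1/\tilde q}$. So the low-frequency obstacle you anticipate disappears in one line: $f(u)$ is estimated in $N(I)$ with no derivative, and interpolation between $S$ and $L^\infty_tH^1$ closes the bootstrap before Lemma~\ref{le303} is applied. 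In your fallback you dropped this $\Lambda^{-1}$, which is why it looked as if a derivative must fall on $f(u)$.

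\textbf{Your route.} You run a global contraction in a small ball of $L^\infty_tX^1\cap S$. The $X^1$ part comes from \eqref{e213}, and the retarded $S$ part from Christ--Kiselev applied to the composition of the $(6,6)$ dual estimate with the homogeneous $\dot H^{1/6}$ estimate. This is legitimate since $6/5<\tilde p$.

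\textbf{What each buys.} Your route does not use the conservation law and gives existence, uniqueness in the ball and Lipschitz dependence at once. The cost is an external lemma and difference estimates. The paper's route is shorter and derivative-free in the $S$ part. Its a priori bootstrap is also exactly what Remark~\ref{lRe302} reuses when only the free scattering norm is small and the energy bound is not.

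\textbf{Two side remarks.}

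First, the ``more simply Minkowski'' alternative does not give what you state. The homogeneous estimate cannot be applied to the $t$-dependent vector $\int_0^t\U(-s)J\mathcal N\,ds$. Minkowski in $s$ instead produces $\|J\mathcal N(\vec u)\|_{L^1_tX^1}$, not the $L^{6/5}_{t,x}$ quantity. The bound you actually use therefore rests on Christ--Kiselev, or on Lemma~\ref{le302}(ii).

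Second, your concern about non-integer $p$ is unfounded. For $p>6$ we have $f\in C^2$ and $|\partial_x(f(u)-f(w))|\lesssim(|u|^p+|w|^p)|\partial_x(u-w)|+(|u|^{p-1}+|w|^{p-1})|u-w|\,|\partial_xw|$, as in Proposition~\ref{le305}. So the contraction closes in the full metric. If you do keep the weaker metric, the $S$ part of the difference must go through Lemma~\ref{le302}(ii) and \eqref{eee006}. Your Christ--Kiselev route, passing through $X^1$, puts $\partial_x$ on $f(u)-f(w)$.
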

\begin{proof}
Let $a_0=\|\vec u_0\|_{X^1}$. Conservation of energy and the Sobolev
embedding give, throughout the lifespan,
\[
\|\vec u(t)\|_{X^1}^2
=2E(\vec u_0)+\frac2{p+2}\|u(t)\|_{L^{p+2}}^{p+2}
\leq a_0^2+C\|\vec u(t)\|_{X^1}^{p+2}.
\]
If $a_0$ is sufficiently small, continuity improves the bootstrap bound
$\|\vec u(t)\|_{X^1}\leq2a_0$ to a strictly smaller bound.
Thus $\sup_t\|\vec u(t)\|_{X^1}\leq2a_0$, and the continuation
criterion gives global existence in both time directions.

We next prove a uniform scattering norm bound. On $I=[0,T]$,
Duhamel's formula and Lemma~\ref{le302} imply
\[
\begin{aligned}
\|u\|_{S(I)}\lesssim{}&
\|K(D)^{-1/\tilde p}D^{1/6}u_0\|_{L^{2}}
+\|K(D)^{-1/\tilde p}D^{1/6}\Lambda^{-1}v_0\|_{L^{2}}\\
&+\|K(D)^{1/\tilde q-1/2}\partial_x\Lambda^{-1}f(u)\|_{N(I)}.
\end{aligned}
\]
The symbol $K(\xi)^{-1/\tilde p}|\xi|^{1/6}/\sqrt{1+\xi^2}$ is
bounded since $\tilde p>6$, so the two homogeneous terms are bounded
by $Ca_0$. The inhomogeneous multiplier has size
$K(\xi)^{1/2+1/\tilde q}$ and satisfies the Mihlin bounds; it is
therefore bounded on $L^{\tilde q'}$. Consequently
\begin{equation}\label{e217}
\|u\|_{S(I)}\lesssim a_0+\|f(u)\|_{N(I)}
=a_0+\|u\|_{L_t^{(p+1)\tilde r'}L_x^{(p+1)\tilde q'}(I)}^{p+1}.
\end{equation}
Put
\[
\alpha=\frac{29p-30}{6p}>1,\qquad
\theta=\frac\alpha{p+1},\qquad
\frac1{\tilde q'}=\frac\alpha{\tilde q}+\frac{p+1-\alpha}{m}.
\]
Here $0<\theta<1$ and $2<m<\infty$. The time-exponent identity is
$1/\tilde r'=\alpha/\tilde p$. Interpolating with
$L_t^\infty L_x^m$ and using the energy bound gives
\[
\|f(u)\|_{N(I)}
\lesssim\|u\|_{S(I)}^\alpha
\|u\|_{L_t^\infty H_x^1(I)}^{p+1-\alpha}
\lesssim a_0^{p+1-\alpha}\|u\|_{S(I)}^\alpha.
\]
Thus, with constants independent of $T$, the quantity
$Y(T):=\|u\|_{S([0,T])}$ satisfies
\[
Y(T)\leq C_0a_0+C_1a_0^{p+1-\alpha}Y(T)^\alpha.
\]
Since $Y(0)=0$ and $Y$ is continuous on finite intervals, choosing
$\delta$ so that $C_1(2C_0)^\alpha\delta^p<C_0$ closes the bootstrap
$Y(T)\leq2C_0a_0$. Therefore
\begin{equation}\label{e218}
\|u\|_{S([0,\infty))}\lesssim\|\vec u_0\|_{X^1}.
\end{equation}
The same argument on negative time intervals yields the backward bound.
Lemma~\ref{le303} now proves scattering in both directions.
\end{proof}

\begin{Remark}
\label{lRe302}
The small-data argument also applies when the free scattering norm
is small relative to a fixed energy-space bound. More precisely, for
each $A>0$ there is $\delta=\delta(A,p)>0$ such that, if
\[
\sup_{t\in I_\sigma}\|\vec u(t)\|_{X^1}\leq A,
\qquad \eta:=\|P\U(t)\vec u_0\|_{S(I_\sigma)}\leq\delta,
\]
then $\|P\vec u\|_{S(I_\sigma)}\leq2\eta$, and the solution scatters
in that direction. Indeed, \eqref{e217} and Duhamel's formula give
$Y\leq\eta+C_A Y^\alpha$ on each compact subinterval, where
$\alpha=(29p-30)/(6p)>1$. Choosing
$C_A(2\delta)^{\alpha-1}\leq1/2$ closes the continuity argument.
The dependence on $A$ is needed when applying this observation to
bounded sequences of non-scattering solutions.
\end{Remark}

\subsection{Variational structure}
\indent
\par
Consider the following functionals defined on $H^{1}(\mathbb{R})\times L^{2}(\mathbb{R})$:
\[ K_{1}(u,v):=\frac{d}{d\lambda}E(\lambda\vec{u})|_{\lambda=1}
=\|u\|^{2}_{H^{1}}+\|v\|^{2}_{L^{2}}-\|u\|^{p+2}_{L^{p+2}},\]
\[
K_{2}(u,v):=\frac{d}{d\lambda}E\big(\lambda\vec{u}(\lambda x)\big)|_{\lambda=1}=\frac{3}{2}\|\partial_{x}u\|^{2}_{L^{2}}+\frac{1}{2}\|u\|^{2}_{L^{2}}
+\frac{1}{2}\|v\|^{2}_{L^{2}}-\frac{p+1}{p+2}\|u\|^{p+2}_{L^{p+2}},\]
\[G(u,v):=E(u,v)-\frac{1}{p+2}K_{1}(u,v)=\frac{p}{2p+4}\|u\|^{2}_{H^{1}}
+\frac{p}{2p+4}\|v\|^{2}_{L^{2}}.\]

We begin with the ground-state variational characterization and recall the argument from \cite{ref Ca2003}.
\begin{lem}
\label{le101}
For the unique positive even solution $Q$ of the stationary equation
\begin{center}
 $-\partial_x^2Q+Q-Q^{p+1}=0,\quad p>2. $
\end{center}
the following hold:
\begin{equation*}
\begin{aligned}
 E(\pm Q,0)&=\inf\{E(u,v):K_{1}(u,v)=0,(u,v)\in X^{1}\setminus\{\mathbf{0}\}\}
 \\&=\inf\{E(u,v):K_{2}(u,v)=0,(u,v)\in X^{1}\setminus\{\mathbf{0}\}\}\\&=
 \inf\{G(u,v):K_{1}(u,v)\leq0,(u,v)\in X^{1}\setminus\{\mathbf{0}\}\}
 \end{aligned}
\end{equation*}
and
\begin{equation*}
\begin{aligned}
&\{(u,v)\in X^{1}:E(u,v)=E(Q,0), K_{1}(u,v)=0\}
 \\&=\{(u,v)\in X^{1}:E(u,v)=E(Q,0), K_{2}(u,v)=0\}
 \\&=\{(\pm Q(x+x_{0}),0):x_{0}\in \mathbb{R}\}.
 \end{aligned}
\end{equation*}
In particular,
\begin{equation*}
\begin{aligned}
&\{(u,v)\in H^{1}_{e}(\mathbb{R})\times L^{2}_{o}(\mathbb{R}):E(u,v)=E(Q,0), K_{1}(u,v)=0\}
 \\&=\{(u,v)\in H^{1}_{e}(\mathbb{R})\times L^{2}_{o}(\mathbb{R}):E(u,v)=E(Q,0), K_{2}(u,v)=0\}
 \\&=\{(\pm Q,0)\}.
 \end{aligned}
\end{equation*}
\end{lem}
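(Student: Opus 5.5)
The plan is to reduce everything to the scalar Weinstein/Gagliardo--Nirenberg minimization for $u$, since $v$ enters $E$, $K_1$, $K_2$ and $G$ only through $\|v\|_{L^2}^2$ with the favorable sign. Set
\[
d=\inf\{S(u):u\in H^1\setminus\{0\},\ N(u)=0\},\qquad S(u)=\tfrac12\|u\|_{H^1}^2-\tfrac1{p+2}\|u\|_{L^{p+2}}^{p+2},\quad N(u)=\|u\|_{H^1}^2-\|u\|_{L^{p+2}}^{p+2}.
\]
By the classical argument of Cazenave, $d=S(Q)$, and every minimizer is $\pm Q(\cdot+x_0)$; I will take this as known. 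It follows from the sharp Gagliardo--Nirenberg inequality and the uniqueness of positive solutions of $-u''+u=u^{p+1}$ up to translation (obtained via rearrangement, ODE uniqueness, and the sign argument for minimizers).

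\textbf{Step 1 ($K_1$ level).} If $K_1(u,v)=0$ with $(u,v)\neq 0$, then $u\neq0$ and $N(u)=-\|v\|^2\le0$. Choose $\lambda\in(0,1]$ with $N(\lambda u)=0$; this works because $\lambda\mapsto N(\lambda u)/\lambda^2$ is decreasing. Then
\[
E(u,v)\ge G(u,v)\ge G(\lambda u,0)=S(\lambda u)\ge d,
\]
since $G$ is a positive quadratic form and, on $K_1=0$, $E=G$. This proves the first and third equalities simultaneously, because the third infimum runs over $K_1\le 0$, where the same $\lambda\le1$ rescaling applies and $G$ decreases under the rescaling. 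Equality in the chain forces $v=0$, $\lambda=1$, and $u$ a minimizer, which gives the characterization $\{\pm Q(\cdot+x_0)\}$.

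\textbf{Step 2 ($K_2$ level).} I would use the two-parameter family $u_{a,b}=a u(b\,\cdot)$ and Pohozaev. If $K_2(u,v)=0$, then $E-\tfrac{1}{p+2}\cdot\tfrac{2(p+2)}{2(p+1)}K_2$ (the combination of $E$ and $K_2$ killing the $L^{p+2}$ term) equals
\[
\frac{p-1}{2(p+1)}\|\partial_xu\|^2+\frac{p+2}{2(p+1)}\cdot\tfrac{p}{p+2}\bigl(\|u\|^2+\|v\|^2\bigr),
\]
a positive form. Then I minimize it on $\{K_2\le0\}$ exactly as in Step 1, now rescaling along $\lambda\mapsto \lambda u(\lambda x)$, which scales $K_2$ with an $L^{p+2}$ term of higher order. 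This shows the infimum over $K_2=0$ equals $\inf\{E:K_2=0,v=0\}$. For $v=0$, the scalar infimum on the Pohozaev-type constraint is again $S(Q)$: it is attained, by concentration-compactness or by the same Gagliardo--Nirenberg route (compute the infimum as a function of the ratios of $\|u'\|$, $\|u\|$, $\|u\|_{p+2}$, whose extremal configuration is exactly the optimizer of GN), and the Lagrange multiplier argument shows that a minimizer solves the ground-state equation with the multiplier equal to zero, because $Q$ satisfies both $K_1=K_2=0$.

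\textbf{Main obstacle.} The hard part is the identification of the minimizing sets at the $K_2$ level, namely that equality with $E=E(Q)$, $K_2=0$ forces $v=0$ and $u=\pm Q(\cdot+x_0)$. Here I would argue that at such a point, moving along the amplitude scaling $\lambda\vec u$ keeps $E<E(Q)$ near the constraint. In detail: if $K_1\neq0$, the one-parameter curve crosses $\{K_2=0\}$ again at lower energy, contradicting minimality, so $K_1=0$ and Step 1's rigidity applies. The even-odd statement then follows immediately, since among $\pm Q(\cdot+x_0)$ only $x_0=0$ gives an even function, and $v=0$ is odd.
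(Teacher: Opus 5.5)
Your architecture is the paper's: subtract a multiple of $K_j$ from $E$ to get a positive quadratic form, rescale onto the scalar constraint, and invoke the scalar ground-state characterization. Step~1 and the inequality half of Step~2 are fine; your rescaling $\lambda u(\lambda\cdot)$ works as well as the paper's amplitude rescaling $su$. One correction: $H_2:=E-\frac1{p+1}K_2=\frac{p-2}{2(p+1)}\|\partial_xu\|_{L^2}^2+\frac{p}{2(p+1)}(\|u\|_{L^2}^2+\|v\|_{L^2}^2)$. The coefficient is $p-2$, not $p-1$, and this is exactly where $p>2$ enters.

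The genuine gap is the step you flag as the main obstacle. You claim that if $K_1(\vec u)\neq0$, the amplitude curve crosses $\{K_2=0\}$ again at lower energy. This is false, because
\[
\lambda^{-2}K_2(\lambda\vec u)=\tfrac12\bigl(3\|\partial_xu\|_{L^2}^2+\|u\|_{L^2}^2+\|v\|_{L^2}^2\bigr)-\tfrac{p+1}{p+2}\lambda^p\|u\|_{L^{p+2}}^{p+2}
\]
is strictly decreasing, so $\lambda\vec u$ meets $\{K_2=0\}$ only at $\lambda=1$. The nearby points with $E(\lambda\vec u)<E(\vec Q)$ contradict nothing:
\begin{itemize}
\item for $\lambda<1$ they have $K_2>0$, where no energy lower bound holds;
\item for $\lambda>1$ they have $K_2<0$, but $H_2(\lambda\vec u)=\lambda^2E(\vec Q)>E(\vec Q)$, which is consistent with $\inf\{H_2:K_2\le0\}=E(\vec Q)$.
\end{itemize}
So your proof of rigidity at the $K_2$ level does not go through as written.

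No such argument is needed, because rigidity is the equality case of your own Step~2 chain. Set $u_\lambda=\lambda u(\lambda\cdot)$ and choose $\lambda\in(0,1]$ with $K_2(u_\lambda,0)=0$. Then
\[
E(\vec Q)\le E(u_\lambda,0)=H_2(u_\lambda,0)\le H_2(u,0)\le H_2(u,v)=E(u,v).
\]
Hence $E(u,v)=E(\vec Q)$ forces $v=0$, $\lambda=1$ (by strict monotonicity of $H_2$ along $u_\lambda$), and $u$ a minimizer of the scalar problem on $\{\widetilde K_2=0\}$. This is exactly the paper's argument, which then cites the scalar fact that these minimizers are $\pm Q(\cdot-a)$.

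If you prove that scalar fact instead, your justification of the vanishing multiplier is circular: you cannot use that $Q$ satisfies $K_1=K_2=0$ before knowing the minimizer is $Q$. The correct reason is to pair the Euler--Lagrange equation $\partial_uE(u,0)=\mu\,\widetilde K_2'(u)$ with the generator of $\lambda\mapsto u_\lambda$. The left side gives $\widetilde K_2(u)=0$. Using the constraint, the right side equals $-\frac{\mu}{2}\bigl(3(p-2)\|\partial_xu\|_{L^2}^2+p\|u\|_{L^2}^2\bigr)$. The same computation shows $\widetilde K_2'(u)\neq0$, so the multiplier rule applies. Hence $\mu=0$, $u$ solves $-u''+u=|u|^pu$, and the one-dimensional ODE classification gives $u=\pm Q(\cdot-a)$. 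Together with existence of a minimizer, this also identifies the infimum as $E(\vec Q)$.
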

\begin{proof}
We use the scalar ground-state characterization (see \cite{ref Ca2003}):
\[
E(\vec Q)=\inf_{h\ne0,\ K_j(h,0)=0}E(h,0),\qquad j=1,2,
\]
whose minimizers are $h=\pm Q(\cdot-a)$. The corresponding minimizing
sequences are precompact in $H^1$ modulo translation.
To pass from scalar functions to $X^1$, introduce the positive quadratic forms
\[
H_1=E-\frac{K_1}{p+2}=G,\qquad
H_2=E-\frac{K_2}{p+1}
=\frac{p-2}{2(p+1)}\|\partial_xu\|_{L^{2}}^2+
\frac{p}{2(p+1)}(\|u\|_{L^{2}}^2+\|v\|_{L^{2}}^2).
\]
If $K_j(u,v)=0$ and $(u,v)\ne0$, then $u\ne0$ and $K_j(u,0)\leq0$.
There is a unique $s\in(0,1]$ with $K_j(su,0)=0$, and
\[
E(\vec Q)\leq E(su,0)=H_j(su,0)\leq H_j(u,v)=E(u,v).
\]
Equality forces $s=1$, $v=0$, and $u=\pm Q(\cdot-a)$.
The same comparison for a minimizing sequence forces $v_n\to0$ and
$s_n\to1$, so scalar compactness also gives compactness in $X^1$ modulo
translation. If the sequence is even-odd, an unbounded translation would
produce two disjoint reflected copies of the nonzero limit, contradicting
strong convergence to one copy. The translations are therefore bounded;
evenness of a translate of $Q$ then places its center at zero.

Finally, if $K_1(\vec u)\leq0$ and $\vec u\ne0$, amplitude scaling gives
$s\in(0,1]$ such that $K_1(s\vec u)=0$. Hence
$G(\vec u)\geq G(s\vec u)=E(s\vec u)\geq E(\vec Q)$.
Testing with $\pm\vec Q$ proves the remaining infimum identity.
\end{proof}
We next prove coercivity of $\mathcal L$ under a variational orthogonality condition. For $\vec u=(u,v)^T\in X^1_{\mathrm{eo}}$, the quadratic form splits as
\begin{center}
 $\langle\mathcal{L}\vec{u},\vec{u}\rangle
 =\langle Lu,u\rangle+\|v\|^{2}_{L^{2}}$.
\end{center}
It therefore suffices to consider $L$ on $H^1_e(\mathbb R)$. This restriction has exactly one negative eigenvalue. Indeed, a direct computation gives
\begin{center}
 $LQ^{m}=(1-m^{2})Q^{m}$,
\end{center}
where $m>1$ satisfies $m(2m+p)=(p+1)(p+2)$. Moreover, $L$ is non-degenerate on $H^{1}_{e}(\mathbb{R})$.

Now consider two auxiliary functionals
\begin{center}
$\widetilde{K}_{1}(u)=\|\partial_{x}u\|^{2}_{L^{2}}+\|u\|^{2}_{L^{2}}-\|u\|^{p+2}_{L^{p+2}}$,
\end{center}
\begin{center}
$\widetilde{K}_{2}(u)=\frac{3}{2}\|\partial_{x}u\|^{2}_{L^{2}}+\frac{1}{2}\|u\|^{2}_{L^{2}}
-\frac{p+1}{p+2}\|u\|^{p+2}_{L^{p+2}}$.
\end{center}
Their variational derivatives at $Q$ are given by
\begin{center}
$\widetilde{K}'_{1}(Q)=-2\partial_x^2Q+2Q-(p+2
)Q^{p+1}=-pQ^{p+1}$,
\end{center}
\begin{center}
$\widetilde{K}'_{2}(Q)=-3\partial_x^2Q+Q-(p+1)Q^{p+1}=-2Q-(p-2)Q^{p+1}$.
\end{center}

\begin{lem}
\label{le102}
Assume $p>2$. Then there exists a constant $C>0$, independent of $u$, such that if $u\in H^{1}_{e}(\mathbb{R})$ satisfies
\begin{center}
 $\langle \widetilde{K}'_{1}(Q),u\rangle=0$ \quad or \quad $\langle \widetilde{K}'_{2}(Q),u\rangle=0$,
\end{center}
then we have
\begin{center}
 $\langle Lu,u\rangle\geq C\|u\|^{2}_{H^{1}}$.
\end{center}
\end{lem}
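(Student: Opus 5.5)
The plan is to use the standard one-negative-direction argument for self-adjoint operators, restricted to the even subspace. On $H^1_e(\mathbb R)$, $L$ has exactly one negative eigenvalue $-\kappa^2$, with an even, positive, exponentially decaying eigenfunction $\chi_0$. This is stated just before the lemma, with $\chi_0=Q^m$ and $\kappa^2=m^2-1$. Also $L$ has trivial kernel on $H^1_e$, since $\ker L=\operatorname{span}\{\partial_xQ\}$ and $\partial_xQ$ is odd. Finally, $\sigma_{\mathrm{ess}}(L)=[1,\infty)$ by Lemma~\ref{le11}. Hence there is $\lambda_1>0$ with $\langle Lw,w\rangle\geq\lambda_1\|w\|_{L^2}^2$ for all $w\in H^1_e$ with $w\perp\chi_0$.

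First I upgrade this $L^2$ coercivity to $H^1$ coercivity. From $\langle Lw,w\rangle=\|w\|_{H^1}^2-(p+1)\int Q^pw^2\geq\|w\|_{H^1}^2-C\|w\|_{L^2}^2$, a convex combination of this bound with the $L^2$ bound gives $\langle Lw,w\rangle\gtrsim\|w\|_{H^1}^2$ on $\chi_0^\perp\cap H^1_e$.

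Next comes the key sign computation: I need $\langle L^{-1}\psi_j,\psi_j\rangle<0$ for $\psi_j=\widetilde K_j'(Q)$. For $j=1$, differentiate amplitude scaling: $L Q=-pQ^{p+1}=\psi_1$, so $\langle L^{-1}\psi_1,\psi_1\rangle=\langle Q,-pQ^{p+1}\rangle=-p\int Q^{p+2}<0$. For $j=2$, use the scaling generator $\Lambda_s Q=\tfrac{d}{d\lambda}\lambda Q(\lambda x)|_{\lambda=1}=Q+xQ'$, which is even. The identities $L(xQ')=-2Q''$ and $LQ=-pQ^{p+1}$ give $L(Q+xQ')=-pQ^{p+1}-2Q''$, and using $Q''=Q-Q^{p+1}$ this equals $-2Q-(p-2)Q^{p+1}=\psi_2$. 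Thus $\langle L^{-1}\psi_2,\psi_2\rangle=\langle Q+xQ',-2Q-(p-2)Q^{p+1}\rangle$. I evaluate it with $\int xQ'Q=-\tfrac12\int Q^2$ and $\int xQ'Q^{p+1}=-\tfrac1{p+2}\int Q^{p+2}$, which gives $-\int Q^2-(p-2)\tfrac{p+1}{p+2}\int Q^{p+2}<0$ for $p>2$. Both functions are even, so these computations stay inside $H^1_e$, where $L$ is invertible. I expect this sign verification to be the main obstacle; the $j=2$ identity in particular needs care, and the rest is standard.

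With the sign in hand, the standard lemma applies. If $\langle Lu,u\rangle\leq0$ for some nonzero $u\perp\psi$ with $\psi=\psi_j$, then on $\operatorname{span}\{u,L^{-1}\psi\}$ the form would be nonpositive. Indeed, $\langle L(aL^{-1}\psi+bu),aL^{-1}\psi+bu\rangle=a^2\langle L^{-1}\psi,\psi\rangle+b^2\langle Lu,u\rangle$, because the cross term is $2ab\langle\psi,u\rangle=0$. This span is two-dimensional unless $u\parallel L^{-1}\psi$, and that case is excluded since $\langle L^{-1}\psi,\psi\rangle\neq0=\langle u,\psi\rangle$. A two-dimensional nonpositive subspace contradicts the single negative eigenvalue together with the trivial kernel on $H^1_e$. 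So $\langle Lu,u\rangle>0$ on $\psi^\perp\setminus\{0\}$.

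To get the uniform bound $\langle Lu,u\rangle\geq C\|u\|_{H^1}^2$, I argue by contradiction with a sequence $\|u_n\|_{H^1}=1$, $u_n\perp\psi$, $\langle Lu_n,u_n\rangle\to0$. Passing to a weak limit $u$, the term $\int Q^pu_n^2$ converges by local compactness and the decay of $Q$, and weak lower semicontinuity of $\|\cdot\|_{H^1}^2$ then gives $\langle Lu,u\rangle\leq0$ with $u\perp\psi$. Hence $u=0$ by the strict positivity just shown, so $\int Q^pu_n^2\to0$. But then $\langle Lu_n,u_n\rangle\to1$, a contradiction.
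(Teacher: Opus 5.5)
Your proposal is correct and follows the same route as the paper. Both take $\varphi_1=Q$ and $\varphi_2=Q+x\partial_xQ$ as the even preimages $L\varphi_j=\widetilde K_j'(Q)$, check $\langle L\varphi_j,\varphi_j\rangle<0$, and invoke the single negative eigenvalue and non-degeneracy of $L$ on $H^1_e$. Your value $-\|Q\|_{L^2}^2-\frac{(p-2)(p+1)}{p+2}\|Q\|_{L^{p+2}}^{p+2}$ agrees with the paper's $-\frac{2p^2-p}{p+4}\|Q\|_{L^2}^2$ via the Pohozaev identities. You also supply details the paper leaves implicit: the two-dimensional nonpositive-subspace argument and the weak-compactness upgrade to a uniform $H^1$ bound.
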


\begin{proof}
For the first constraint, take $\varphi_1=Q$, so that $L\varphi_1=\widetilde K_1'(Q)$. Since
\begin{center}
 $\langle L\varphi_{1},\varphi_{1}\rangle=-p\|Q\|^{p+2}_{L^{p+2}}<0$
\end{center}
and $L$ has only one negative eigenvalue and is non-degenerate on $H^{1}_{e}(\mathbb{R})$, we conclude that
\begin{center}
 $\langle Lu,u\rangle\gtrsim\|u\|^{2}_{H^{1}}$ whenever $\langle\widetilde{K}'_{1}(Q),u\rangle=0$.
\end{center}
Similarly, taking $\varphi_{2}\in H^{1}_{e}(\mathbb{R})$ with $L\varphi_{2}=\widetilde{K}'_{2}(Q)$ yields $\varphi_{2}=Q+x\partial_{x}Q$. Noting that
\begin{center}
 $\langle L\varphi_{2},\varphi_{2}\rangle=-\frac{2p^{2}-p}{p+4}\|Q\|^{2}_{L^{2}}<0$,
\end{center}
the same argument gives
\begin{center}
 $\langle Lu,u\rangle\gtrsim\|u\|^{2}_{H^{1}}$ for $\langle\widetilde{K}'_{2}(Q),u\rangle=0$.
\end{center}
This proves the lemma.
\end{proof}

For $p>2$ and $\vec u\in X^1_{\mathrm{eo}}$, we next fix the local
coordinates and the distance to the two standing waves. These definitions
will be used in the variational estimates as
well as in the dynamical analysis. Set
\[
d_0(\vec u):=\min_{\varsigma=\pm1}
\|\vec u-\varsigma\vec Q\|_{X^1}.
\]
If $d_0(\vec u)$ is sufficiently small, the closest sign
$\varsigma$ is unique. Write $\vec\eta=\varsigma\vec u-\vec Q$,
so that $\|\vec\eta\|_{X^1}=d_0(\vec u)$. With the normalization
of Proposition~\ref{le13}, define
\begin{equation}\label{distance:coordinates}
\begin{gathered}
\lambda_+=\langle\mathcal L\vec\phi^-,\vec\eta\rangle,
\qquad
\lambda_-=\langle\mathcal L\vec\phi^+,\vec\eta\rangle,\\
\vec\eta=\lambda_+\vec\phi^++\lambda_-\vec\phi^-+\vec\gamma,
\qquad
\lambda_1=\frac{\lambda_++\lambda_-}{2},\quad
\lambda_2=\frac{\lambda_+-\lambda_-}{2}.
\end{gathered}
\end{equation}
Then $\langle\mathcal L\vec\phi^\pm,\vec\gamma\rangle=0$, and
the spectral coercivity gives
\[
\|\vec\eta\|_{X^1}^2\sim
\lambda_1^2+\lambda_2^2+\langle\mathcal L\vec\gamma,\vec\gamma\rangle.
\]
Since $E'(\vec Q)=0$ and $E''(\vec Q)=\mathcal L$, Taylor expansion
and $E(\varsigma\vec u)=E(\vec u)$ yield
\begin{equation}\label{distance:energy-expansion}
E(\vec u)-E(\vec Q) =\frac12\langle\mathcal L\vec\eta,\vec\eta\rangle +O(\|\vec\eta\|_{X^1}^3) =\lambda_1^2-\lambda_2^2 +\frac12\langle\mathcal L\vec\gamma,\vec\gamma\rangle +O(\|\vec\eta\|_{X^1}^3).
\end{equation}
Here the quadratic expression follows from
$\langle\mathcal L\vec\phi^\pm,\vec\phi^\pm\rangle=0$ and
$\langle\mathcal L\vec\phi^+,\vec\phi^-\rangle=1$.
Consequently, for $d_0$ small,
\[
E(\vec u)-E(\vec Q)+2\lambda_2^2
=\lambda_1^2+\lambda_2^2
+\frac12\langle\mathcal L\vec\gamma,\vec\gamma\rangle
+O(d_0^3)\sim d_0^2.
\]
Choose a fixed $\delta_{\mathrm{loc}}>0$ small enough that this
equivalence holds for $d_0<2\delta_{\mathrm{loc}}$. Let
$\mathcal X$ be a smooth function with $0\leq\mathcal X\leq1$,
$\mathcal X=1$ on $[0,1]$, and $\mathcal X=0$ on $[2,\infty)$.
For $d_0<2\delta_{\mathrm{loc}}$, define
\begin{equation}\label{distance:definition}
d_Q(\vec u)=
\mathcal X\!\left(\frac{d_0(\vec u)}{\delta_{\mathrm{loc}}}\right)
\sqrt{E(\vec u)-E(\vec Q)+2\lambda_2^2}
+\left[1-\mathcal X\!\left(\frac{d_0(\vec u)}{\delta_{\mathrm{loc}}}\right)\right]
d_0(\vec u),
\end{equation}
and set $d_Q=d_0$ outside this neighborhood. Thus the square root
is used only where its argument is nonnegative. The resulting
continuous distance satisfies
\begin{equation}\label{distance:equivalence}
d_Q(\vec u)\sim d_0(\vec u),\qquad
d_Q(\vec u)=0\ \Longleftrightarrow\ \vec u=\pm\vec Q.
\end{equation}
In particular, when $d_Q$ is sufficiently small, the cutoff is one
and $d_Q^2=E(\vec u)-E(\vec Q)+2\lambda_2^2$.

Lemma~\ref{le102} yields the following local control of the distance by the energy and the variational functionals.
\begin{lem}
\label{le103}
For $p>2$, there exist constants $C>0$ and $0<\delta\ll1$ such that, for every $\vec{u}=(u,v)^{T}\in H^{1}_{e}(\mathbb{R})\times L^{2}_{o}(\mathbb{R})$ with $d_{Q}(\vec{u})<\delta$, the lower bound
\begin{center}
 $E(\vec{u})-E(\vec{Q})+CK^{2}_{j}(\vec{u})\gtrsim d^{2}_{Q}(\vec{u})$,\quad $j=1,2,$
\end{center}
holds.
\end{lem}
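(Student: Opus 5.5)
The plan is to work in the local coordinates \eqref{distance:coordinates}. For $\delta$ small the cutoff in \eqref{distance:definition} is one, so $d_Q^2=E(\vec u)-E(\vec Q)+2\lambda_2^2$. Set $\vec\eta=\varsigma\vec u-\vec Q$, so that $\|\vec\eta\|_{X^1}=d_0\sim d_Q$. By \eqref{distance:energy-expansion},
\[
E(\vec u)-E(\vec Q)=\tfrac12\langle\mathcal L\vec\eta,\vec\eta\rangle+O(\|\vec\eta\|_{X^1}^3).
\]
It therefore suffices to prove the quadratic estimate
\[
\tfrac12\langle\mathcal L\vec\eta,\vec\eta\rangle+C\langle K_j'(\vec Q),\vec\eta\rangle^2\gtrsim\|\vec\eta\|_{X^1}^2 .
\]
Indeed, since $K_j(\vec Q)=0$ and $K_j$ is $C^2$ near $\vec Q$ for $p>2$, we have $K_j(\vec u)=K_j(\varsigma\vec u)=\langle K_j'(\vec Q),\vec\eta\rangle+O(\|\vec\eta\|^2)$. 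Hence $\langle K_j'(\vec Q),\vec\eta\rangle^2\le 2K_j(\vec u)^2+O(\|\vec\eta\|^4)$. The cubic and quartic errors are absorbed for $\delta$ small, using $d_Q\sim\|\vec\eta\|_{X^1}$ from \eqref{distance:equivalence}.

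To prove the quadratic estimate, write $\vec\eta=(\eta_1,\eta_2)$ with $\eta_1$ even and $\eta_2$ odd. Then $\langle\mathcal L\vec\eta,\vec\eta\rangle=\langle L\eta_1,\eta_1\rangle+\|\eta_2\|_{L^2}^2$, and $K_j'(\vec Q)=(\widetilde K_j'(Q),0)$, so that $\langle K_j'(\vec Q),\vec\eta\rangle=\langle\widetilde K_j'(Q),\eta_1\rangle$. The problem thus reduces to the scalar inequality on $H^1_e$:
\[
\langle Lw,w\rangle+C\langle\widetilde K_j'(Q),w\rangle^2\gtrsim\|w\|_{H^1}^2 .
\]
Decompose $w=a\varphi_j+w^\perp$, with $\varphi_1=Q$ and $\varphi_2=Q+x\partial_xQ$ as in Lemma~\ref{le102}, choosing $a$ so that $\langle\widetilde K_j'(Q),w^\perp\rangle=0$. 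This requires $\langle\widetilde K_j'(Q),\varphi_j\rangle=\langle L\varphi_j,\varphi_j\rangle\neq0$, which holds because this quantity is negative by the computation in Lemma~\ref{le102}. Then $a=\langle\widetilde K_j'(Q),w\rangle/\langle L\varphi_j,\varphi_j\rangle$, and the cross term satisfies $\langle L\varphi_j,w^\perp\rangle=\langle\widetilde K_j'(Q),w^\perp\rangle=0$. Consequently
\[
\langle Lw,w\rangle=a^2\langle L\varphi_j,\varphi_j\rangle+\langle Lw^\perp,w^\perp\rangle\ge -c_ja^2+c\|w^\perp\|_{H^1}^2,
\]
where the last step uses Lemma~\ref{le102}. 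Since $a^2\lesssim\langle\widetilde K_j'(Q),w\rangle^2$ and $\|w\|_{H^1}^2\lesssim a^2+\|w^\perp\|_{H^1}^2$, taking $C$ large gives the scalar inequality.

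The main point needing care is the error bookkeeping. One must check that the remainder in the Taylor expansion of $E$ is $O(\|\vec\eta\|^3)$ and that $K_j$ is $C^2$, both of which follow from $H^1\hookrightarrow L^\infty$ and $p>2$. One must also ensure that $\delta$ is chosen below $\delta_{\mathrm{loc}}$ so the cutoff identity holds. With these checks in place, the argument is a standard rank-one perturbation of the coercivity in Lemma~\ref{le102}; the parity restriction is what removes $\partial_x\vec Q$ from the kernel.
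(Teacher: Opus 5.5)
Your proposal is correct and follows essentially the same route as the paper. You Taylor-expand $E$ and $K_j$ about $\vec Q$, reduce to the scalar estimate $\langle Lw,w\rangle+C\langle\widetilde K_j'(Q),w\rangle^2\gtrsim\|w\|_{H^1}^2$ on $H^1_e$ via Lemma~\ref{le102}, and your decomposition $w=a\varphi_j+w^\perp$ supplies the rank-one perturbation step that the paper only asserts with the word ``consequently.''
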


\begin{proof}
By the sign symmetry, we may assume $\|\vec u-\vec Q\|_{X^1}\leq\|\vec u+\vec Q\|_{X^1}$. The energy expansion gives
\begin{equation}
\label{equ005}
E(u,v)-E(Q,0)=\frac{1}{2}\langle L(u-Q),u-Q\rangle+\frac{1}{2}\|v\|^{2}_{L^{2}}+O(\|\vec{u}-\vec{Q}\|^{3}_{X^{1}}).
\end{equation}
Writing $c_1=1$ and $c_2=1/2$, we likewise expand $K_j$:
\begin{equation}
\label{equ006}
\begin{aligned}
K^{2}_{j}(u,v)&=\big(\widetilde{K}_{j}(u)+c_j\|v\|^{2}_{L^{2}}\big)^{2}\\&=\big(\langle \widetilde{K}'_{j}(Q),u-Q\rangle+c_j\|v\|^{2}_{L^{2}}+O(\|u-Q\|^{2}_{H^{1}})\big)^{2}
\\&=\langle \widetilde{K}'_{j}(Q),u-Q\rangle^{2}+O(\|\vec{u}-\vec{Q}\|^{3}_{X^{1}}).
\end{aligned}
\end{equation}
Furthermore, Lemma \ref{le102} ensures that if $\langle \widetilde{K}'_{j}(Q),u-Q\rangle=0$, then
\begin{equation}
\label{equ007}
\langle L(u-Q),u-Q\rangle\gtrsim\|(u-Q)\|^{2}_{H^{1}}.
\end{equation}
Consequently, there is a constant $C>0$, independent of $u$, such that every $u\in H^{1}_{e}(\mathbb R)$ satisfies
\begin{equation}
\label{equ008}
\langle L(u-Q),u-Q\rangle+C\langle \widetilde{K}'_{j}(Q),u-Q\rangle^{2}\gtrsim\|u-Q\|^{2}_{H^{1}}.
\end{equation}
Combining (\ref{equ005}), (\ref{equ006}) and (\ref{equ008}) completes the proof.
\end{proof}

\begin{lem}\label{le104}
Let $p>2$ and $\delta>0$. There exist constants $c_\delta,C_\delta>0$
such that every $\vec u\in X^1_{\mathrm{eo}}$ with
$\|\vec u\|_{X^1}\geq\delta$ satisfies
\[
E(\vec u)-E(\vec Q)+C_\delta K_j(\vec u)^2
\geq c_\delta d_Q(\vec u)^2,\qquad j=1,2.
\]
\end{lem}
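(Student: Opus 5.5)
We argue by splitting according to the size of $d_Q(\vec u)$, using Lemma~\ref{le103} near $\pm\vec Q$ and a variational argument away from them. Fix $\delta_1>0$ as in Lemma~\ref{le103}. For vectors with $d_Q(\vec u)<\delta_1$, Lemma~\ref{le103} already gives the estimate with constants independent of $\delta$. The lower bound $\|\vec u\|_{X^1}\ge\delta$ is needed only to exclude a neighborhood of $\vec 0$, where $d_Q\sim\|\vec Q\|_{X^1}$ while $E(\vec u)-E(\vec Q)\approx-E(\vec Q)$ and $K_j\approx0$; the estimate genuinely fails there. So it remains to treat the region
$$\Omega_\delta=\{\vec u\in X^1_{\mathrm{eo}}:\ \|\vec u\|_{X^1}\ge\delta,\ d_Q(\vec u)\ge\delta_1\}.$$

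On $\Omega_\delta$ we argue by contradiction. Suppose there are $\vec u_n\in\Omega_\delta$ with
$$E(\vec u_n)-E(\vec Q)+nK_j(\vec u_n)^2<\tfrac1n d_Q(\vec u_n)^2.$$

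\emph{Step 1: normalize the distance.} Since $d_Q\sim d_0\le\|\vec u\|_{X^1}+\|\vec Q\|_{X^1}$, we have $d_Q(\vec u_n)^2\lesssim 1+\|\vec u_n\|_{X^1}^2$.

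\emph{Step 2: unbounded case.} If $\|\vec u_n\|_{X^1}\to\infty$, we use the identity $E=H_j+K_j/c_j$, where $c_1=p+2$, $c_2=p+1$, and $H_j\gtrsim\|\vec u\|_{X^1}^2$ (the positive quadratic forms from the proof of Lemma~\ref{le101}). Then
$$E+nK_j^2\ \ge\ H_j-\frac{C}{n}\ \gtrsim\ \|\vec u_n\|_{X^1}^2-\frac Cn,$$
after minimizing $K_j/c_j+nK_j^2$ over $K_j$. Hence $E(\vec u_n)-E(\vec Q)+nK_j^2\ge c\|\vec u_n\|^2-C$, which eventually exceeds $\frac1n d_Q^2\lesssim\frac1n\|\vec u_n\|^2$. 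This is a contradiction. The same computation shows that $\|\vec u_n\|_{X^1}$ stays bounded in general.

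\emph{Step 3: bounded case.} Now $\|\vec u_n\|_{X^1}$ is bounded. Then $nK_j(\vec u_n)^2\le C$, so $K_j(\vec u_n)\to0$, and
$$\limsup_n E(\vec u_n)\le E(\vec Q).$$
Because $\|\vec u_n\|\ge\delta$ and $K_j(\vec u_n)\to0$, the nonlinear term $\|u_n\|_{L^{p+2}}^{p+2}$ is bounded below. Otherwise $K_j\gtrsim\|\vec u_n\|^2-o(1)\ge\delta^2/2$, which is impossible.

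Scaling slightly, we get $s_n\to1$ with $K_j(s_n\vec u_n)=0$ (amplitude scaling for $j=1$, and for $j=2$ the scaling $\lambda\vec u(\lambda x)$ or amplitude applied to the $u$-component). Then $s_n\vec u_n$ is an even-odd minimizing sequence for the variational problem in Lemma~\ref{le101}. The compactness in the proof of Lemma~\ref{le101} (minimizing sequences precompact modulo translation, translations bounded by parity, center forced to zero) gives, along a subsequence, $s_n\vec u_n\to\pm\vec Q$ in $X^1$. Hence $d_Q(\vec u_n)\to0$, contradicting $d_Q(\vec u_n)\ge\delta_1$.

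The main obstacle is Step 3: justifying that an even-odd sequence with $K_j\to0$, $\|\vec u_n\|\ge\delta$ and $\limsup E\le E(\vec Q)$ is minimizing and converges to $\pm\vec Q$. This requires the quantitative rescaling (showing $s_n\to1$ from $K_j\to0$ and the lower bound on $\|u_n\|_{L^{p+2}}$), and excluding the reflected-pair dichotomy exactly as in Lemma~\ref{le101}. The constants $c_\delta,C_\delta$ then come from the contradiction argument, and are combined with the local constants of Lemma~\ref{le103} by taking the minimum and maximum respectively.
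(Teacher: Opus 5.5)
Your proposal is correct and follows essentially the same route as the paper: completing the square with $H_j=E-K_j/q_j$ for large norm, Lemma~\ref{le103} near $\pm\vec Q$, and on the bounded intermediate region a contradiction argument that rescales onto $\{K_j=0\}$ with $s_n\to1$ and then uses the even-odd compactness from Lemma~\ref{le101} to force $d_Q(\vec u_n)\to0$. The differences are cosmetic: you use a single contradiction sequence with $C=n$, $c=1/n$ instead of the paper's explicit gap claim and constant bookkeeping. Also, plain amplitude scaling $s\vec u$ already suffices for $j=2$, since $K_2(s\vec u)=s^2\bigl(\tfrac32\|\partial_xu\|_{L^2}^2+\tfrac12\|u\|_{L^2}^2+\tfrac12\|v\|_{L^2}^2\bigr)-\tfrac{p+1}{p+2}s^{p+2}\|u\|_{L^{p+2}}^{p+2}$, so no dilation is needed.
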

\begin{proof}
Fix $j\in\{1,2\}$. Let $q_1=p+2$, $q_2=p+1$, and write
$H_j=E-K_j/q_j$ as in the proof of Lemma~\ref{le101}.
For $p>2$, these positive quadratic forms satisfy
$H_j(\vec u)\geq h_p\|\vec u\|_{X^1}^2$ for some $h_p>0$.
Completing the square gives
\[
E(\vec u)-E(\vec Q)+K_j(\vec u)^2 =H_j(\vec u)-E(\vec Q)+K_j(\vec u)/q_j+K_j(\vec u)^2 \geq h_p\|\vec u\|_{X^1}^2-E(\vec Q)-\frac1{4q_j^2}.
\]
Since $d_Q\sim d_0\leq\|\vec u\|_{X^1}+\|\vec Q\|_{X^1}$,
this proves the required inequality whenever
$\|\vec u\|_{X^1}\geq M$, for a fixed sufficiently large $M>\delta$.
When $d_Q(\vec u)<r_0$, for a fixed sufficiently small $r_0>0$,
the result follows from Lemma~\ref{le103}.

It remains to treat
\[
\mathcal C_{\delta,M,r_0}
=\{\vec u\in X^1_{\mathrm{eo}}:
\delta\leq\|\vec u\|_{X^1}\leq M,\ d_Q(\vec u)\geq r_0\}.
\]
The distance and the absolute value of the energy difference are
bounded on this set, say $d_Q\leq D_M$ and
$|E-E(\vec Q)|\leq B_M$. We claim that there exist
$\kappa,\eta>0$ such that
\begin{equation}\label{le104:annular-gap}
\vec u\in\mathcal C_{\delta,M,r_0},\quad |K_j(\vec u)|\leq\kappa
\quad\Longrightarrow\quad E(\vec u)-E(\vec Q)\geq\eta.
\end{equation}
Otherwise there is a sequence in this set with
$K_j(\vec u_n)\to0$ and
$\limsup_n E(\vec u_n)\leq E(\vec Q)$.
Write $K_j(\vec u_n)=A_{j,n}-b_jB_n$, where
\[
\begin{aligned}
A_{1,n}&=\|\vec u_n\|_{X^1}^2,& b_1&=1,\\
A_{2,n}&=\tfrac32\|\partial_xP\vec u_n\|_{L^{2}}^2
+\tfrac12\bigl(\|P\vec u_n\|_{L^{2}}^2+\|P^c\vec u_n\|_{L^{2}}^2\bigr),
&b_2&=\frac{p+1}{p+2},
\end{aligned}
\qquad B_n=\|P\vec u_n\|_{L^{p+2}}^{p+2}.
\]
The lower bound on $\|\vec u_n\|_{X^1}$ gives
$A_{j,n}\geq\delta^2/2$. Therefore $b_jB_n/A_{j,n}\to1$, and
\[
s_n=\left(\frac{A_{j,n}}{b_jB_n}\right)^{1/p}\longrightarrow1,
\qquad K_j(s_n\vec u_n)=0.
\]
The energy is uniformly continuous on bounded subsets of $X^1$, so
$E(s_n\vec u_n)-E(\vec u_n)\to0$. Lemma~\ref{le101} now shows
$E(s_n\vec u_n)\to E(\vec Q)$. The compactness of minimizing sequences
established in its proof gives, after taking a subsequence,
$s_n\vec u_n\to\varsigma\vec Q$ in $X^1$ for some
$\varsigma\in\{+1,-1\}$; even-odd symmetry fixes the spatial center.
It follows that $d_Q(\vec u_n)\to0$, contradicting $d_Q\geq r_0$.
This proves \eqref{le104:annular-gap}.

Choose $c>0$ with $cD_M^2\leq\eta$. For $|K_j|\leq\kappa$,
\eqref{le104:annular-gap} implies the desired bound on the remaining
set. For $|K_j|>\kappa$, choose $C$ so large that
$C\kappa^2\geq B_M+cD_M^2$; then
$E-E(\vec Q)+CK_j^2\geq c d_Q^2$ there as well.
Taking the smaller lower constant and the larger coefficient among
these three regions, and then among $j=1,2$, proves the lemma.
\end{proof}

We conclude this section with a simultaneous sign and coercivity
statement for the two variational functionals.
\begin{pro}[Variational sign alternatives away from solitary waves]\label{pro106}
Let $p>2$. There are constants $R_0,c_0,c_1>0$ such that, whenever
$0<R<R_0$, $0<\alpha\leq c_0R$, and $\vec u\in X^1_{\mathrm{eo}}$ satisfies
\[
E(\vec u)<E(\vec Q)+\alpha^2,\qquad d_Q(\vec u)>R,
\]
one of the following two alternatives holds simultaneously for $j=1,2$:
\[
K_j(\vec u)\geq c_1\min\{\|\vec u\|_{X^1}^2,d_Q(\vec u)\}
\quad\hbox{or}\quad
K_j(\vec u)\leq-c_1d_Q(\vec u).
\]
The constants may depend on $p$ and the fixed definition of $d_Q$, but
are independent of $R$, $\alpha$, and $\vec u$. The zero vector belongs
to the first alternative; every other vector has $K_1$ and $K_2$ of the
same strict sign.
\end{pro}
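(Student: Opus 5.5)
The plan is to split according to the size of $d_Q(\vec u)$ and combine the coercivity estimates of Lemmas~\ref{le103} and~\ref{le104} with the amplitude-scaling structure of $K_1,K_2$. First, the zero vector and the small-norm regime. If $\|\vec u\|_{X^1}\leq\delta_1$ for a fixed small $\delta_1$, Sobolev embedding gives $K_1(\vec u)\geq\|\vec u\|_{X^1}^2-C\|\vec u\|_{X^1}^{p+2}\geq\frac12\|\vec u\|_{X^1}^2$. Likewise $K_2\geq\frac12\|\vec u\|_{X^1}^2-C\|\vec u\|^{p+2}_{X^1}\geq\frac14\|\vec u\|_{X^1}^2$. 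This is the first alternative with $c_1\leq1/4$. Since $d_Q\sim\|\vec Q\|_{X^1}$ there, the $\min$ equals $\|\vec u\|^2$ up to constants.

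Second, the regime $\|\vec u\|_{X^1}\geq\delta_1$. Lemma~\ref{le104} yields $E(\vec u)-E(\vec Q)+C_{\delta_1}K_j^2\geq c_{\delta_1}d_Q^2$. Using $E-E(\vec Q)<\alpha^2\leq c_0^2R^2<c_0^2d_Q^2$ and choosing $c_0^2\leq c_{\delta_1}/2$, we obtain $|K_j(\vec u)|\geq c\,d_Q(\vec u)$ for both $j$. For $d_Q$ bounded this is the required magnitude: $d_Q\gtrsim d_Q^{1/2}$ fails only for large $d_Q$, where $|K_j|\gtrsim d_Q\geq c\min\{\|\vec u\|^2,d_Q\}$ anyway. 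For the negative alternative with large $d_Q$ I would instead use $K_j=q_j(E-H_j)$ together with $H_j\gtrsim\|\vec u\|^2\gtrsim d_Q^2$ whenever $\|\vec u\|$ is large, which gives $K_j\leq-c\,d_Q$ directly. So magnitude is controlled for each $j$ separately; what remains is the sign.

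Third, and this is the main obstacle, I must show that $K_1$ and $K_2$ share their sign on the set $\mathcal S=\{\vec u\neq0:E<E(\vec Q)+\alpha^2,\ d_Q>R\}$. I would use a connectedness and scaling argument. Along the ray $s\mapsto s\vec u$, $s>0$, the functions $K_j(s\vec u)=s^2A_j-b_js^{p+2}B$ each have a unique positive zero $s_j$, with $K_j(s\vec u)$ positive before it and negative after it. Since $E(s\vec u)$ is increasing in $s$ up to the $K_1$-zero, the argument has two cases.

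If $K_1(\vec u)>0$ but $K_2(\vec u)<0$ (or the reverse), then some $s$ between $1$ and the relevant zero satisfies $K_2(s\vec u)=0$ while $E(s\vec u)\leq E(\vec u)<E(\vec Q)+\alpha^2$, because $E$ increases along the ray while $K_1>0$. The variational characterization (Lemma~\ref{le101}) then forces $E(s\vec u)\geq E(\vec Q)$. Lemma~\ref{le104} (or Lemma~\ref{le103} near $\pm\vec Q$) applied at $s\vec u$ with $K_2=0$ gives $d_Q(s\vec u)\lesssim\alpha$. Meanwhile $E(\vec u)-E(s\vec u)\leq\alpha^2$, and $\frac{d}{ds}E(s\vec u)=K_1(s\vec u)/s$ is bounded below by the magnitude estimate of the second step, $\gtrsim d_Q$. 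This will force $|1-s|\lesssim\alpha^2/R$, hence $d_Q(\vec u)\lesssim\alpha+\alpha^2/R\lesssim c_0R$. For $c_0$ small this contradicts $d_Q>R$.

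In the reverse ordering, $K_1<0<K_2$, I would argue through $\vec u\mapsto s\vec u$ on the other side of $s_1$, using $H_2$ monotonicity instead of $E$. The delicate point is keeping the scaled points inside the region where Lemma~\ref{le104}'s constants (depending on $\delta_1$ and $M$) apply uniformly, which is why $R_0$ must be small and $\alpha\leq c_0R$.
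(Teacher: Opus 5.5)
Your strategy is the paper's: positivity of $K_1,K_2$ on a small ball, Lemma~\ref{le104} to get $|K_j(\vec u)|\geq c\,d_Q(\vec u)$ outside it, and, for mixed signs, rescaling to $\vec w=s\vec u$ with $K_2(\vec w)=0$. Lemmas~\ref{le101} and~\ref{le104} then place $\vec w$ within $O(\alpha)$ of $\pm\vec Q$, and it remains to control $s$. Your detour in the second step about $d_Q^{1/2}$ is unnecessary, since $|K_j|\geq c\,d_Q$ is already the required form of both alternatives.

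The gap is in controlling $s$. You bound $\frac{d}{d\theta}E(\theta\vec u)=\theta^{-1}K_1(\theta\vec u)$ from below on the segment between $s$ and $1$ by ``the magnitude estimate of the second step''. That estimate requires $d_Q(\theta\vec u)>R$, which fails near $\theta=s$: there $\theta\vec u$ is $O(\alpha)$-close to $\pm\vec Q$, where $K_1$ vanishes. So the second step gives nothing there, and $|1-s|\lesssim\alpha^2/R$ is not justified as written. What is missing is the exact amplitude identity
\[
\theta^{-2}K_1(\theta\vec u)=K_1(\vec u)-(\theta^p-1)\|u\|_{L^{p+2}}^{p+2}.
\]
If $K_1(\vec u)>0>K_2(\vec u)$, then $s<1$ and the identity gives $K_1(\theta\vec u)\geq\theta^2K_1(\vec u)$ on $[s,1]$. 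If $K_1(\vec u)<0<K_2(\vec u)$, then $s>1$ and it gives $K_1(\theta\vec u)\leq\theta^2K_1(\vec u)$ on $[1,s]$. Combine this with $|K_1(\vec u)|\geq c\,d_Q(\vec u)>cR$, applied only at $\vec u$ itself, and with $0\leq E(\vec u)-E(\vec w)<\alpha^2$. This yields $|1-s^2|\lesssim\alpha^2/R$, and your conclusion $d_Q(\vec u)\lesssim\alpha+\alpha^2/R$ follows. The paper controls $s$ differently: it writes $K_1(\vec u)=s^{-2}(M_w-s^{-p}N_w)$ and uses $N_w/M_w=1+O(\alpha)$ to get $|s^p-1|\lesssim\alpha$. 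Either works once the identity is in place.

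Your plan for the reverse ordering $K_1<0<K_2$, ``on the other side of $s_1$, using $H_2$ monotonicity'', does not produce the needed energy bound. Moving from $\vec u$ to $s_1\vec u$, where $s_1<1$ is the zero of $K_1$, increases $E$, because $K_1<0$ on $[s_1,1]$. At the $K_2$-zero $s>1$ one has $E(\vec w)=H_2(\vec w)=s^2H_2(\vec u)$, which is not controlled by $E(\vec u)$. No new device is needed: by the identity above $K_1<0$ on $[1,s]$, so $E$ decreases from $\vec u$ to $\vec w$, exactly as in the first configuration. The paper treats both configurations this one way.

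The ``delicate point'' you flag is also not an issue. Coercivity is applied only at $\vec u$ and at $\vec w$. The vector $\vec w$ is nonzero with $K_2(\vec w)=0$, so by your first step its norm exceeds $\delta_1$. The constants of Lemma~\ref{le104} depend only on that lower norm bound.
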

\begin{proof}
We first separate the small-norm region, where both signs are positive.
For $\vec v=(f,g)^T$, Sobolev embedding gives
\[
\|f\|_{L^{p+2}}^{p+2}\leq C_p\|\vec v\|_{X^1}^{p+2}.
\]
The quadratic parts of $K_1$ and $K_2$ are bounded below by
$\frac12\|\vec v\|_{X^1}^2$. Hence we may fix $\rho>0$, depending
only on $p$, so small that
\begin{equation}\label{equ0016}
K_j(\vec v)\geq\tfrac14\|\vec v\|_{X^1}^2,
\qquad \|\vec v\|_{X^1}\leq\rho,\quad j=1,2.
\end{equation}
In particular, every nonzero vector with $K_j=0$ has norm greater
than $\rho$. Lemma~\ref{le104}, applied with this fixed lower bound,
provides constants $c_\rho,C_\rho>0$ such that
\begin{equation}\label{prop42:coercivity}
c_\rho d_Q(\vec v)^2
\leq E(\vec v)-E(\vec Q)+C_\rho K_j(\vec v)^2,
\qquad \|\vec v\|_{X^1}\geq\rho,\quad j=1,2.
\end{equation}
If a nonzero $\vec u$ satisfying the hypotheses had $K_j(\vec u)=0$,
this would imply
$c_\rho d_Q(\vec u)^2<\alpha^2\leq c_0^2R^2$, contrary to
$d_Q(\vec u)>R$ once $c_0^2<c_\rho$. Thus both functionals are
nonzero on the nonzero vectors under consideration.

We next prove that their signs agree. Suppose otherwise, and put
\[
A=\|\partial_xu\|_{L^{2}}^2,\qquad B=\|u\|_{L^{2}}^2+\|v\|_{L^{2}}^2,
\qquad N_u=\|u\|_{L^{p+2}}^{p+2}.
\]
Here $N_u>0$, since $u=0$ would make both functionals positive.
Along the amplitude path $\theta\vec u$, $\theta>0$, we have
\[
\begin{aligned}
K_1(\theta\vec u)&=\theta^2(A+B-\theta^pN_u),\\
K_2(\theta\vec u)&=\theta^2\left(\frac{3A+B}{2}
-\frac{p+1}{p+2}\theta^pN_u\right),\\
\frac{d}{d\theta}E(\theta\vec u)&=\theta^{-1}K_1(\theta\vec u).
\end{aligned}
\]
There is therefore a unique $s>0$ for which $\vec w=s\vec u$ lies
on the second constraint, namely
\[
s^p=\frac{(p+2)(3A+B)}{2(p+1)N_u},\qquad K_2(\vec w)=0.
\]
The sign of $K_1$ along the relevant interval follows from the
mixed-sign assumption and the identities
\[
\begin{aligned}
\theta^{-2}K_1(\theta\vec u)
&=K_1(\vec u)-(\theta^p-1)N_u,\\
0=s^{-2}K_2(s\vec u)
&=K_2(\vec u)-\frac{p+1}{p+2}(s^p-1)N_u.
\end{aligned}
\]
If $K_2(\vec u)>0>K_1(\vec u)$, the second identity gives $s>1$.
For every $\theta\in[1,s]$, the first then yields
\[
\theta^{-2}K_1(\theta\vec u)\leq K_1(\vec u)<0.
\]
Consequently,
\[
E(\vec w)-E(\vec u)
=\int_1^s \theta^{-1}K_1(\theta\vec u)\,d\theta\leq0.
\]
If $K_2(\vec u)<0<K_1(\vec u)$, then $s<1$, and for every
$\theta\in[s,1]$,
\[
\theta^{-2}K_1(\theta\vec u)\geq K_1(\vec u)>0.
\]
It follows that
\[
E(\vec u)-E(\vec w)
=\int_s^1 \theta^{-1}K_1(\theta\vec u)\,d\theta\geq0.
\]
Thus in either case Lemma~\ref{le101} gives
\[
E(\vec Q)\leq E(\vec w)\leq E(\vec u)<E(\vec Q)+\alpha^2.
\]
Since $\vec w\ne0$ and $K_2(\vec w)=0$, its norm exceeds $\rho$.
Applying \eqref{prop42:coercivity} to $\vec w$ and using
$d_Q\sim d_0$, we obtain a sign $\varsigma\in\{+1,-1\}$ such that
\begin{equation}\label{prop42:scaled-near-Q}
d_Q(\vec w)\leq C\alpha,
\qquad \|\vec w-\varsigma\vec Q\|_{X^1}\leq C\alpha.
\end{equation}

We must also control the scaling factor $s$. Write
$M_w=\|\vec w\|_{X^1}^2$ and
$N_w=\|P\vec w\|_{L^{p+2}}^{p+2}$. By \eqref{prop42:scaled-near-Q},
Sobolev embedding, and $\|\vec Q\|_{X^1}^2=\|Q\|_{L^{p+2}}^{p+2}$,
\[
\big|M_w-\|\vec Q\|_{X^1}^2\big|
+\big|N_w-\|Q\|_{L^{p+2}}^{p+2}\big|\leq C\alpha.
\]
For all sufficiently small $\alpha$, it follows that
\[
M_w\geq\tfrac12\|\vec Q\|_{X^1}^2,
\qquad \left|\frac{N_w}{M_w}-1\right|\leq C\alpha.
\]
On the other hand,
\[
K_1(\vec u)=s^{-2}\bigl(M_w-s^{-p}N_w\bigr).
\]
In the first sign configuration this implies
$1<s^p<N_w/M_w$; in the second it implies
$N_w/M_w<s^p<1$. In both cases $|s^p-1|\leq C\alpha$.
After reducing the upper bound on $\alpha$, the numbers $s$ and
$s^{-1}$ are uniformly bounded, and the mean value theorem yields
$|s-1|\leq C\alpha$. Consequently,
\[
d_Q(\vec u)\lesssim\|s^{-1}\vec w-\varsigma\vec Q\|_{X^1}
\leq s^{-1}\|\vec w-\varsigma\vec Q\|_{X^1}
+|s^{-1}-1|\|\vec Q\|_{X^1}\leq C\alpha.
\]
The constant $C$ is independent of $R$, $\alpha$, and $\vec u$.
Choosing $c_0$ so that $Cc_0<1$ contradicts $d_Q(\vec u)>R$.
This proves the agreement of signs.

It remains to obtain the stated quantitative bounds. If
$\|\vec u\|_{X^1}\geq\rho$, the energy hypothesis and
\eqref{prop42:coercivity} give
\[
C_\rho K_j(\vec u)^2
\geq c_\rho d_Q(\vec u)^2-\alpha^2
\geq\tfrac12c_\rho d_Q(\vec u)^2,
\]
provided $c_0^2\leq c_\rho/2$. Therefore
\begin{equation}\label{equ0017}
|K_j(\vec u)|\geq c\,d_Q(\vec u),\qquad j=1,2.
\end{equation}
Together with the common sign, this gives the negative alternative
or the positive alternative in this region. If
$\|\vec u\|_{X^1}\leq\rho$, \eqref{equ0016} gives the positive
alternative directly, including $\vec u=0$. Taking the smaller of
these fixed lower constants defines $c_1$.
Finally, fix $c_0$ to satisfy the preceding restrictions and choose
$R_0>0$ so small that $c_0R_0$ meets all the required upper bounds
on $\alpha$ and $R_0<d_Q(0)$. All constants are then independent
of the particular $R$, $\alpha$, and $\vec u$, as claimed.
\end{proof}

\section{Dynamics of solutions below solitary wave energy}
\indent
\par
Throughout this section, $p>6$. We prove Theorem~\ref{thm003}. The variational characterization in Lemma~\ref{le101} gives invariance of $\mathcal K^+$ and $\mathcal K^-$ below $E(\vec Q)$. We then treat scattering in the positive well and blow-up in the negative well.
\subsection{Scattering theory and wave operators}

\begin{pro}[Long-time perturbation theory]\label{pr001}
For every $A,B>0$ there exist $\epsilon_0=\epsilon_0(A,B)>0$ and
$C=C(A,B)>0$ with the following property. Let $I$ be a time interval
containing $0$, and suppose that $\vec w$ is an approximate solution of
\eqref{eq02} with scalar error $e$:
\[
\partial_t\vec w=J\mathcal A\vec w+J\mathcal N(\vec w)+J(e,0)^T,
\qquad
\sup_{t\in I}\|\vec w(t)\|_{X^1}<A,\quad
\|P\vec w\|_{S(I)}<B,\quad \|e\|_{N(I)}<\epsilon.
\]
If $0<\epsilon<\epsilon_0$ and a solution $\vec u$ satisfies
\[
\sup_{t\in I}\|\vec u(t)\|_{X^1}<A,\qquad
\|P\U(t)(\vec u(0)-\vec w(0))\|_{S(I)}<\epsilon,
\]
then
\[
\|P(\vec u-\vec w)\|_{S(I)}\leq C\epsilon.
\]
The spaces $S(I)$ and $N(I)$ are as defined above, and
$\U(t)=e^{tJ\mathcal A}$.
\end{pro}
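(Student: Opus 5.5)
The plan is the standard long-time perturbation argument: subdivide $I$ into finitely many pieces on which $\vec w$ has small scattering norm, and run a short-time bootstrap on each piece. Set $\vec r=\vec u-\vec w$ and $r=P\vec r$. Then $\vec r$ solves
\[
\partial_t\vec r=J\mathcal A\vec r+J\bigl(\mathcal N(\vec w+\vec r)-\mathcal N(\vec w)\bigr)-J(e,0)^T .
\]
Applying the inhomogeneous estimate of Lemma~\ref{le302}(ii), with the same multiplier bookkeeping as in \eqref{e217}, on any subinterval $I_k=[t_k,t_{k+1}]\subset I$ gives
\[
\|r\|_{S(I_k)}\lesssim\|P\U(t-t_k)\vec r(t_k)\|_{S(I_k)}+\|f(w+r)-f(w)\|_{N(I_k)}+\|e\|_{N(I_k)} .
\]
The multiplier $K(D)^{1/\tilde q-1/2}\partial_x\Lambda^{-1}$ is Mihlin bounded on $L^{\tilde q'}$, which is why only $N$ norms appear on the right.

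For the nonlinear difference I would use $|f(w+r)-f(w)|\lesssim(|w|^p+|r|^p)|r|$ together with the interpolation of the small-data lemma. The exponents $\alpha=(29p-30)/(6p)$ and $m$ there are chosen so that one factor carries the $S$ norm and the remaining $p+1-\alpha$ powers are absorbed into $L_t^\infty H^1_x$. Since $\|\vec u\|_{X^1}$ and $\|\vec w\|_{X^1}$ are both bounded by $A$, this should give
\[
\|f(w+r)-f(w)\|_{N(I_k)}\le C_A\bigl(\|w\|_{S(I_k)}^{\alpha-1}+\|r\|_{S(I_k)}^{\alpha-1}\bigr)\|r\|_{S(I_k)} .
\]
To justify it, I would apply Hölder in $x$ by splitting each of the $p+1$ factors in the same way as in the $f(u)$ estimate.

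Next I would partition $I$ (forward and backward from $0$) into $n\le C(B/\eta)^{\tilde p}$ intervals with $\|w\|_{S(I_k)}\le\eta$, where $\eta=\eta(A)$ is small enough that $C_A\eta^{\alpha-1}\le1/4$. The free term propagates through Duhamel's formula written from time $0$:
\[
P\U(t-t_{k})\vec r(t_{k})=P\U(t)\vec r(0)+\text{(Duhamel integrals over }[0,t_k]) .
\]
By the one-sided, interval-restricted version of Lemma~\ref{le302}(ii), the $S(I_k)$ norm of this quantity is at most $\epsilon+C\sum_{j<k}\bigl(\|\text{nonlinear difference}\|_{N(I_j)}+\|e\|_{N(I_j)}\bigr)$. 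A continuity bootstrap on each $I_k$ (with $\|r\|_{S(I_k)}$ small, using $\alpha>1$) then yields $\|r\|_{S(I_k)}\le C\bigl(\epsilon+\sum_{j<k}\|r\|_{S(I_j)}\bigr)$, and induction gives $\|r\|_{S(I_k)}\le(2C)^k\epsilon$. Summing over $k\le n(A,B)$ gives $C(A,B)\epsilon$, and $\epsilon_0$ is chosen so that each step stays within the smallness regime of the bootstrap.

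The main obstacle is the inductive control of the free evolution of $\vec r(t_k)$. There is no $X^1$-smallness of $\vec r(t_k)$ available, since only $A$-boundedness is given. So I cannot restart Strichartz from the data at $t_k$ through the homogeneous estimate: the factor $K(D)^{-1/\tilde p}D^{1/6}$ would only return $\|\vec r(t_k)\|_{X^1}\lesssim A$. The fix is to never restart. Instead I would write $\U(t-t_k)\vec r(t_k)$ via Duhamel from $0$ and rely solely on the retarded estimate restricted to past intervals, which Lemma~\ref{le302}(ii) explicitly allows. A second point needing care is that the interpolation in the difference estimate must use the $L^\infty_tH^1$ bounds of both $\vec u$ and $\vec w$ rather than of $\vec r$, which is legitimate because both are at most $A$.
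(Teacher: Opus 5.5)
Your proposal is correct and follows essentially the same route as the paper: the same H\"older difference estimate $\|f(u)-f(w)\|_{N(J)}\le C_A\bigl(\|w\|_{S(J)}^{\alpha-1}+\|r\|_{S(J)}^{\alpha-1}\bigr)\|r\|_{S(J)}$, a partition into $O\bigl((B/\eta)^{\tilde p}\bigr)$ intervals on which $\|w\|_{S}$ is small, and propagation of the free part $P\U(t-t_k)\vec r(t_k)$ through the group property together with the source-restricted inhomogeneous estimate. The paper's recursion $L_{j+1}\le 2(L_j+C\epsilon)$ is simply the unrolled form of your Duhamel-from-$0$ bound, and, as you note, this avoids restarting the homogeneous estimate from $\vec r(t_k)$, which would only give the non-small bound $A$.
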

\begin{proof}
Write $w=P\vec w$, $u=P\vec u$, $\vec d=\vec w-\vec u$,
and $d=P\vec d$. The iteration rests on the following nonlinear estimate. Set
\[
\alpha=\frac{\tilde p}{\tilde r'}=\frac{29p-30}{6p}>1,
\qquad \beta=p+1-\alpha>0,\qquad
\frac1{\tilde q'}=\frac\alpha{\tilde q}+\frac\beta m.
\]
For $p>6$, this determines a finite $m\geq2$. Since
$|f(w)-f(u)|\lesssim |d||w|^p+|d|^{p+1}$, H\"older's inequality
on a subinterval $J\subset I$ gives
\[
\begin{aligned}
\|\,|d||w|^p\|_{N(J)}
&\leq\|d\|_{S(J)}\|w\|_{S(J)}^{\alpha-1}
\|w\|_{L_t^\infty L_x^m(J\times\mathbb R)}^\beta,\\
\||d|^{p+1}\|_{N(J)}
&\leq\|d\|_{S(J)}^\alpha
\|d\|_{L_t^\infty L_x^m(J\times\mathbb R)}^\beta.
\end{aligned}
\]
The last factors are controlled by the energy-space bounds and the
embedding $H^1(\mathbb R)\hookrightarrow L^m(\mathbb R)$. Thus
\begin{equation}\label{eee006}
\|f(w)-f(u)\|_{N(J)}
\leq C_A\bigl(\|w\|_{S(J)}^{\alpha-1}
+\|d\|_{S(J)}^{\alpha-1}\bigr)\|d\|_{S(J)}.
\end{equation}

We work first on $I\cap[0,\infty)$; the same argument in reverse time
handles the negative half. Partition this interval into consecutive intervals
$I_j=[t_j,t_{j+1}]$, $1\leq j\leq N_0$, with $t_1=0$ and
\[
\|w\|_{S(I_j)}\leq\delta,\qquad
N_0\leq1+(B/\delta)^{\tilde p}.
\]
The last endpoint may be infinite. We choose $\delta>0$ so that
$C_A\delta^{\alpha-1}\leq1/4$. On $I_j$, Duhamel's formula reads
\[
\vec d(t)=\U(t-t_j)\vec d(t_j)
+\int_{t_j}^t\U(t-s)J
\bigl(\mathcal N(\vec w(s))-\mathcal N(\vec u(s))+(e(s),0)^T\bigr)\,ds.
\]
Put
\[
D_j=\|d\|_{S(I_j)},\qquad
L_j=\|P\U(t-t_j)\vec d(t_j)\|_{S(I)}.
\]
The inhomogeneous estimate of Lemma~\ref{le302} and
\eqref{eee006} imply
\begin{equation}\label{stab:interval}
D_j\leq L_j+C\epsilon+\tfrac14D_j+C_A D_j^\alpha.
\end{equation}
Choose $\eta>0$ so that $C_A(4\eta)^{\alpha-1}\leq1/4$.
If $L_j+C\epsilon\leq\eta$, apply \eqref{stab:interval} first on
$[t_j,t]\subset I_j$. The norm on this interval starts at zero and is
continuous in $t$. Under the bound $D_j\leq4(L_j+C\epsilon)$, the
right-hand side improves it to
\begin{equation}\label{stab:local}
D_j\leq2(L_j+C\epsilon).
\end{equation}
Continuity therefore proves this estimate on all of $I_j$.

To carry the estimate to the next interval, use the group property:
\[
\begin{aligned}
\U(t-t_{j+1})\vec d(t_{j+1})
={}&\U(t-t_j)\vec d(t_j)\\
&+\int_{I_j}\U(t-s)J
\bigl(\mathcal N(\vec w)-\mathcal N(\vec u)+(e,0)^T\bigr)(s)\,ds.
\end{aligned}
\]
The full inhomogeneous estimate, with the source restricted to $I_j$,
is valid in $S(I)$. Enlarging $C_A$ at the preceding choices if needed,
it gives
\[
L_{j+1}\leq L_j+\tfrac14D_j+C_A D_j^\alpha+C\epsilon
\leq2(L_j+C\epsilon).
\]
Since $L_1<\epsilon$, induction yields
$L_j+C\epsilon\leq C_0 3^{j-1}\epsilon$ for a fixed $C_0$.
Choose $\epsilon_0$ so that $C_0 3^{N_0}\epsilon_0<\eta$.
Then \eqref{stab:local} applies on every interval and
\[
\|d\|_{S(I\cap[0,\infty))}
\leq\sum_{j=1}^{N_0}D_j\leq C(A,B)\epsilon.
\]
Repeating the same argument on $I\cap(-\infty,0]$ proves the result.
\end{proof}

\begin{pro}[Existence of wave operators]
\label{le305}
Suppose that $\vec{\psi}^{+}=(\psi^{+}_{1},\psi^{+}_{2})^{T}\in X^{1}$ satisfies
\[
0<\|\vec{\psi}^{+}\|^{2}_{X^{1}}<2E(\vec{Q}).
\]
Then there exists $\vec{v}_{0}\in X^{1}$ such that the corresponding solution $\vec{v}(t)$ of (\ref{eq02}) exists globally and
\[
\lim_{t\to+\infty}\big\|\vec{v}(t)-\U(t)\vec{\psi}^{+}\big\|_{X^{1}}=0.
\]
\end{pro}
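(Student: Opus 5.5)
The plan is the standard final-state construction: solve the Duhamel problem from $t=+\infty$ on a half-line $[T,\infty)$, identify the energy and the sign of $K_1$ of the resulting solution from its free asymptotics, and then use the variational structure of Lemma~\ref{le101} to continue it globally backward.

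\emph{Step 1 (solution near $t=+\infty$).} Set $\vec\psi(t)=\U(t)\vec\psi^+$ and $A_0=\|\vec\psi^+\|_{X^1}$. By Lemma~\ref{le302}(i) and the symbol bound used in Lemma~\ref{le304} (this uses $\tilde p>6$), we have $\|P\vec\psi\|_{S(\mathbb R)}\lesssim A_0<\infty$. Hence $\eta(T):=\|P\vec\psi\|_{S([T,\infty))}\to0$ as $T\to\infty$. We look for a fixed point of
\[
\Phi(\vec v)(t)=\U(t)\vec\psi^+-\int_t^{\infty}\U(t-s)J\mathcal N(\vec v(s))\,ds,\qquad t\geq T,
\]
in the set
\[
\{\vec v:\ \|\vec v\|_{L^\infty_tX^1([T,\infty))}\leq 2A_0,\ \|P\vec v\|_{S([T,\infty))}\leq 2\eta(T)\},
\]
with the metric given by the $S$-norm of the first component.
\begin{itemize}
\item The $X^1$ bound of the Duhamel term follows exactly as in \eqref{e211}--\eqref{e213}. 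It gives $\lesssim (2A_0)^{1+p(1-\theta)}(2\eta)^{p\theta}$, which is small.
\item The $S$ bound follows from the one-sided version of Lemma~\ref{le302}(ii) together with the interpolation behind \eqref{e217}. It gives $\|P\Phi(\vec v)\|_{S}\leq\eta+C_{A_0}\eta^{\alpha}$ with $\alpha>1$.
\item The difference estimate is \eqref{eee006}.
\end{itemize}
The ball is closed under the weak metric, by Fatou's lemma and weak lower semicontinuity. So for $T$ large, $\Phi$ is a contraction. Its fixed point $\vec v$ solves \eqref{eq02} on $[T,\infty)$ and satisfies
\[
\|\vec v(t)-\U(t)\vec\psi^+\|_{X^1}\lesssim \|P\vec v\|_{S((t,\infty))}^{p\theta}\to0 .
\]

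\emph{Step 2 (energy and sign).} The key claim is
\[
\|P\U(t)\vec\psi^+\|_{L^{p+2}}\to0\qquad (t\to\infty).
\]
For $\vec\psi^+$ Schwartz with Fourier support away from $\xi=0$, this follows from the dispersive estimate \eqref{ee201} with $q=p+2$: the factor $K(D)^{1/q-1/2}$ is harmless on such data. For general $\vec\psi^+$, we use density in $X^1$ and the uniform bound $\|P\U(t)\vec\phi\|_{L^{p+2}}\lesssim\|\vec\phi\|_{X^1}$ from unitarity and Sobolev embedding. Combining this with Step 1 and conservation of energy gives
\[
E(\vec v)=\lim_{t\to\infty}E(\vec v(t))=\tfrac12\|\vec\psi^+\|_{X^1}^2<E(\vec Q).
\]
The same limit gives $K_1(\vec v(t))\to\|\vec\psi^+\|_{X^1}^2>0$. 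Thus $\vec v(t)$ lies in the positive well $\{E<E(\vec Q),\,K_1>0\}$ for large $t$.

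\emph{Step 3 (backward continuation).} Lemma~\ref{le101} is stated on all of $X^1$, without parity. It shows that $K_1$ cannot vanish on a nonzero state of energy below $E(\vec Q)$. By continuity of the flow, $K_1(\vec v(t))>0$ on the whole maximal interval. Then
\[
\frac{p}{2p+4}\|\vec v(t)\|_{X^1}^2=G(\vec v(t))<E(\vec v)<E(\vec Q)
\]
is a uniform bound. The energy-space blow-up alternative from local well-posedness then gives global existence in both directions. Taking $\vec v_0=\vec v(0)$ finishes the proof.

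I expect the main obstacle to be Step 2, namely the decay of $\|P\U(t)\vec\psi^+\|_{L^{p+2}}$. The zero-frequency singularity of $K(D)^{1/q-1/2}$ in \eqref{ee201} prevents applying it directly to arbitrary $X^1$ data, so the density argument with a frequency cutoff away from $0$ must be set up carefully. A secondary point is the closedness of the ball in Step 1 under the weaker $S$-metric.
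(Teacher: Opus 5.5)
Your proposal is correct and follows the paper's proof. It uses the same final-state Duhamel fixed point on $[T,\infty)$ built from \eqref{e211}--\eqref{e213}, \eqref{e217} and \eqref{eee006}. It proves the decay of $\|P\U(t)\vec\psi^+\|_{L^{p+2}}$ by the same density argument, namely data with Fourier support away from $\xi=0$ plus the dispersive estimate. It then uses Lemma~\ref{le101} in the same way to keep $K_1>0$ and continue globally.

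The only technical difference is the contraction metric. You contract in the weaker $S$-metric and close the ball by lower semicontinuity, which spares you the differentiated difference estimate for $\partial_x(f(v)-f(w))$ that the paper needs for its $L^\infty_tX^1+S$ metric. Since $\Phi$ depends on $\vec v$ only through $P\vec v$, run that iteration on the scalar first component so that the metric is a genuine one rather than a pseudometric on pairs.
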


\begin{proof}
Fix $M>\|\vec\psi^+\|_{X^1}$ and write $I_T=[T,\infty)$.
The homogeneous estimate gives
\[
\|P\U(t)\vec\psi^+\|_{S(\mathbb R)}\lesssim\|\vec\psi^+\|_{X^1},
\]
so its norm on $I_T$ tends to zero as $T\to\infty$.
We solve
\begin{equation}\label{e0222}
\vec v(t)=\U(t)\vec\psi^+
-\int_t^\infty\U(t-s)J\mathcal N(\vec v(s))\,ds
=:\Phi(\vec v)(t)
\end{equation}
in the closed ball
\[
\|\vec v\|_{L_t^\infty X^1([T,\infty))}\leq M,\qquad
\|P\vec v\|_{S([T,\infty))}\leq\sigma,
\]
with the sum of these two norms as metric. Write
$\alpha=(29p-30)/(6p)>1$ and $\beta=(35p-30)/(6p)>1$.
Estimates \eqref{e213} and \eqref{e217} imply
\[
\|\Phi(\vec v)\|_{L_t^\infty X^1(I_T)}\leq\|\vec\psi^+\|_{X^1}+C_M\sigma^\beta,
\quad
\|P\Phi(\vec v)\|_{S(I_T)}\leq\|P\U(t)\vec\psi^+\|_{S(I_T)}+C_M\sigma^\alpha.
\]
Their difference versions give
\[
\begin{aligned}
&\|\Phi(\vec v)-\Phi(\vec w)\|_{L_t^\infty X^1(I_T)}
+\|P(\Phi(\vec v)-\Phi(\vec w))\|_{S(I_T)}\\
&\qquad\leq C_M(\sigma^{\alpha-1}+\sigma^{\beta-1})
\big(\|\vec v-\vec w\|_{L_t^\infty X^1(I_T)}+\|P(\vec v-\vec w)\|_{S(I_T)}\big).
\end{aligned}
\]
For completeness, the differentiated term is bounded using
\[
|\partial_x(f(v)-f(w))|
\lesssim (|v|^p+|w|^p)|\partial_x(v-w)|
+(|v|^{p-1}+|w|^{p-1})|v-w|\,|\partial_x w|.
\]
For the term with $\partial_x(v-w)$, use $L_t^\infty L_x^2$
and \eqref{e212}. For the other term, put
\[
\frac1{m_*}=\frac{1/3-\beta/\tilde q}{p-\beta},\qquad
m_* =\frac{3(6p^2-35p+30)}{p+30}>2.
\]
Since $1/(6/5)=\beta/\tilde p$ and
$1/(6/5)=1/2+\beta/\tilde q+(p-\beta)/m_*$, H\"older gives
\[
\|\,|v-w|\,|w|^{p-1}\partial_xw\|_{L_t^{6/5}L_x^{6/5}}
\lesssim\|v-w\|_{S(I_T)}\|w\|_{S(I_T)}^{\beta-1}
\|w\|_{L_t^\infty L_x^{m_*}}^{p-\beta}
\|\partial_xw\|_{L_t^\infty L_x^2}.
\]
The term with $|v|^{p-1}$ is estimated in the same way. The
undifferentiated term follows by replacing the last derivative factor
by $w$ (or $v$). The $S$ estimate follows from \eqref{eee006}.
Choosing $\sigma$ small and then $T$ large makes $\Phi$ a contraction.
The resulting solution scatters forward by \eqref{e213}.

We will also use the following decay of the free flow:
\begin{equation}\label{free-decay}
\|P\U(t)\vec\psi\|_{L^q}\longrightarrow0
\quad (|t|\to\infty),\qquad \vec\psi\in X^1,\quad 2<q<\infty.
\end{equation}
Approximate $\vec\psi$ in $X^1$ by smooth data with Fourier support in
a compact annulus. Lemma~\ref{le301} gives decay for these data, while
unitarity and Sobolev embedding uniformly control the approximation error.
It follows that
\[
E(\vec v)=\tfrac12\|\vec\psi^+\|_{X^1}^2<E(\vec Q),\qquad
K_1(\vec v(t))\longrightarrow\|\vec\psi^+\|_{X^1}^2>0.
\]
The variational barrier of Lemma~\ref{le101} keeps $K_1$ positive on the
entire lifespan. Thus
\[
\|\vec v(t)\|_{X^1}^2
=\frac{2(p+2)}p\left(E(\vec v)-\frac{K_1(\vec v(t))}{p+2}\right)
<\frac{p+2}{p}\|\vec\psi^+\|_{X^1}^2.
\]
The continuation criterion extends $\vec v$ to all $t\in\mathbb R$.
\end{proof}

The same construction at negative infinity gives a global solution $\vec{w}(t)$ such that
\[
\lim_{t\to-\infty}\|\vec{w}(t)-\U(t)\vec{\psi}^{+}\|_{X^{1}}=0.
\]
\subsection{Compactness analysis}
\indent
\par
The extraction and iteration below follow the concentration--compactness
scheme in \cite{ref Ho2008,ref Du2008}. We give the points needed for the
Boussinesq flow, particularly the reflection pairing in the even-odd space.

\begin{lem}[Quantitative extraction]\label{lee301}
Let $\{h_n\}$ be bounded in $H^{1/3}(\mathbb R)$, and suppose
\[
A=\limsup_{n\to\infty}\|h_n\|_{H^{1/3}}>0,\qquad
B=\limsup_{n\to\infty}\|h_n\|_{L^{3}}>0.
\]
After passage to a subsequence there exist $x_n\in\mathbb R$ and
$h\in H^{1/3}$ such that
\[
h_n(\cdot+x_n)\rightharpoonup h,\qquad
\|h\|_{L^{2}}\geq cB^6A^{-5}>0,
\]
where $c>0$ is an absolute constant.
\end{lem}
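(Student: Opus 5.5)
The plan is a Littlewood--Paley localization followed by a pigeonhole-in-frequency argument, the standard inverse-Strichartz/refined Sobolev scheme in one dimension. Fix a smooth partition $1=\chi(D)+\sum_{N\geq1\ \mathrm{dyadic}}P_N$, where $\chi(D)$ localizes to $|\xi|\lesssim1$ and $P_N$ to $|\xi|\sim N$. We will first show that for some frequency block the $L^3$ norm is essentially captured by one piece, then find a point where that piece is large, and finally convert pointwise size into an $L^2$ lower bound for the weak limit.

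\textbf{Step 1: reduction to one frequency block.} Since $H^{1/3}(\mathbb R)\hookrightarrow L^{3}$ only barely misses being sharp at high frequency, I will use the refined Sobolev/Besov bound
\[
\|h\|_{L^3}\lesssim \|h\|_{B^{0}_{\infty,\infty}}^{1/3}\,\|h\|_{H^{1/3}}^{2/3}
\quad\text{(inhomogeneous version: } \sup_N N^{-1/2}\|P_N h\|_{L^\infty}\text{ and the low part).}
\]
This is proved in the usual way. Write $\|h\|_{L^3}^3$ via the square function, or more directly split at a threshold frequency. Then bound the low part in $L^\infty$ and the high part in $L^2$ using $\|P_Nh\|_{L^2}\leq N^{-1/3}\|h\|_{H^{1/3}}$, and optimize. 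The exact exponents matter only up to what yields $B^6A^{-5}$. The target is to produce some block $P$ (either $\chi(D)$ or some $P_N$) and points $x_n$ with
\[
|P h_n(x_n)|\gtrsim N^{1/2}\,B^{3}A^{-2}\quad\text{(with }N=1\text{ for the low block)},
\]
for $n$ along a subsequence, by applying the refined inequality to $h_n$ and taking the sup over blocks.

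\textbf{Step 2: the frequency must be bounded.} I will check that the selected $N_n$ stays bounded. Indeed $\|P_Nh\|_{L^\infty}\lesssim N^{1/2}\|P_Nh\|_{L^2}\lesssim N^{1/6}A$, while we also need $N^{-1/2}\|P_Nh\|_\infty\gtrsim B^3A^{-2}$. This gives $N^{-1/3}A\gtrsim B^3A^{-2}$, i.e.\ $N\lesssim (A^3/B^3)^{3}$. So $N_n$ ranges over finitely many dyadics, and passing to a subsequence fixes $N_n=N$ with $N\lesssim A^9B^{-9}$. This is the step where the missing modulation/scaling symmetry of the equation matters. Because $H^{1/3}$ is inhomogeneous, there is no dilation freedom, and the bound on $N$ substitutes for it.

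\textbf{Step 3: extracting the profile and the $L^2$ lower bound.} By Banach--Alaoglu, along a further subsequence $h_n(\cdot+x_n)\rightharpoonup h$ in $H^{1/3}$. Write $P h_n(x_n)=\langle h_n(\cdot+x_n),\check\psi_N\rangle$, where $\check\psi_N$ is the (Schwartz) kernel of $P$ at the origin. Weak convergence then passes the lower bound to the limit, $|\langle h,\check\psi_N\rangle|\gtrsim N^{1/2}B^3A^{-2}$. Cauchy--Schwarz with $\|\check\psi_N\|_{L^2}\sim N^{1/2}$ gives $\|h\|_{L^2}\gtrsim B^3A^{-2}$ in the crudest version. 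To reach the stated power $B^6A^{-5}$ I will track the exponents honestly. If the refined Sobolev step only gives $\sup_N N^{-1/2}\|P_Nh\|_\infty\gtrsim B^{3}A^{-2}$ and Cauchy--Schwarz loses nothing, then $B^3A^{-2}\geq B^6A^{-5}$ up to constants, since $B\lesssim A$ by Sobolev. So the stated bound follows a fortiori. The main obstacle I expect is getting a clean one-dimensional refined Sobolev inequality $L^3\leftarrow H^{1/3}$ with the low-frequency block included. If the interpolation produces a weaker exponent (e.g.\ $\|h\|_{L^3}^3\lesssim \sup_N\|P_Nh\|_{L^\infty}\,\|h\|_{H^{1/3}}^2$ with an $N$-weight), I will redo the threshold optimization, which is where the $B^6A^{-5}$ form naturally arises.
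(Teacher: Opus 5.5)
Your argument is correct, but it reaches the lemma by a different route than the paper.

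\textbf{The paper's route.} The paper uses no dyadic decomposition and no refined Sobolev inequality. It fixes one smooth annular cutoff $\chi_r$ at scale $r=C(A/B)^6$. The plain embedding $\dot H^{1/6}\hookrightarrow L^3$ shows that discarding frequencies outside $[r^{-1},r]$ costs at most $Ar^{-1/6}\le B/4$. H\"older in the form $\|g_n\|_{L^3}^3\le\|g_n\|_{L^2}^2\|g_n\|_{L^\infty}$ then gives $x_n$ with $|g_n(x_n)|\gtrsim B^3A^{-2}$. Finally, evaluation of $\chi_r(D)h$ at the origin is an $L^2$ functional of norm $O(r^{1/2})$. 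The loss $r^{-1/2}=(B/A)^3$ is exactly the source of $B^6A^{-5}$. Since the cutoff is fixed in advance, nothing like your Step~2 (bounding the selected frequency) is needed.

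\textbf{What your route buys.} Your Littlewood--Paley/pigeonhole route costs a refined Sobolev inequality and a selection argument. In exchange it gives the stronger bound $\|h\|_{L^2}\gtrsim B^3A^{-2}$.

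\textbf{The inequality you actually need.} The obstacle you anticipate is not real, but you must use the right inequality. The displayed bound with $\|h\|_{B^0_{\infty,\infty}}$ only gives $\|P_Nh_n\|_{L^\infty}\gtrsim B^3A^{-2}$. Combined with $\|P_Nh_n\|_{L^\infty}\lesssim N^{1/6}A$, this bounds $N$ from below, not above, so Step~2 would fail.

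What Steps~2--3 use is $\mu=\max\{\|\chi(D)h\|_{L^\infty},\sup_{N\ge1}N^{-1/2}\|P_Nh\|_{L^\infty}\}$. The inequality $\|h\|_{L^3}^3\lesssim\mu\|h\|_{H^{1/4}}^2$ does hold. In the layer-cake formula, take a level-dependent threshold $N_0(\lambda)\sim(\lambda/\mu)^2$. Then $\|P_{\le N_0(\lambda)}h\|_{L^\infty}\le\lambda/2$ and $|\{|h|>\lambda\}|\le4\lambda^{-2}\|P_{>N_0(\lambda)}h\|_{L^2}^2$. Integrating in $\lambda$ gives $\mu\int|\xi|^{1/2}|\mathcal F h(\xi)|^2\,d\xi$, plus $\mu\|h\|_{L^2}^2$ from the levels $\lambda\lesssim\mu$.

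A single global threshold with Sobolev on the high part would instead give only $\mu\gtrsim B^6A^{-5}$ and $N\lesssim(A/B)^{18}$. That is still enough for the lemma, and it is essentially the paper's argument written in dyadic blocks.
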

\begin{proof}
Pass to a subsequence on which $\|h_n\|_{L^{3}}\to B$ and
$\|h_n\|_{H^{1/3}}\leq2A$. Let $\chi_r$ be a smooth Fourier cutoff
equal to one on $r^{-1}\leq|\xi|\leq r$, supported where
$(2r)^{-1}\leq|\xi|\leq2r$, and bounded by one. Sobolev embedding gives
\[
\|h_n-\chi_r(D)h_n\|_{L^{3}}\lesssim Ar^{-1/6}.
\]
Choose $r=C(A/B)^6\geq1$ so that this is at most $B/4$.
Writing $g_n=\chi_r(D)h_n$, the inequality
$\|g_n\|_{L^{3}}^3\leq\|g_n\|_{L^{2}}^2\|g_n\|_{L^{\infty}}$
provides $x_n$ with $|g_n(x_n)|\gtrsim B^3/A^2$.
Take a weakly convergent subsequence of $h_n(\cdot+x_n)$.
Evaluation of $\chi_r(D)h$ at zero is a continuous $L^2$ functional,
whose norm is $O(r^{1/2})$. Thus
$B^3/A^2\lesssim r^{1/2}\|h\|_{L^{2}}$, proving the claim.
\end{proof}

\begin{pro}[Linear profile decomposition]\label{proo1}
Let $\{\vec u_n\}$ be bounded in $X^1$.
After passage to a subsequence, for every finite $L$,
\[
\vec u_n=\sum_{j=1}^L\U(-t_n^j)T_{x_n^j}\vec\psi^j+\vec w_n^L,
\qquad \vec\psi^j\in X^1,
\]
where, for $j\ne k$,
\[
|t_n^j-t_n^k|+|x_n^j-x_n^k|\to\infty,
\qquad T_{-x_n^j}\U(t_n^j)\vec w_n^L\rightharpoonup0\quad(j\leq L).
\]
Moreover,
\begin{equation}\label{eee029}
\|\vec u_n\|_{X^1}^2=\sum_{j=1}^L\|\vec\psi^j\|_{X^1}^2
+\|\vec w_n^L\|_{X^1}^2+o_n(1),
\end{equation}
and, for $2<q<\infty$,
\begin{equation}\label{profile-small}
\lim_{L\to\infty}\limsup_{n\to\infty}
\left(\|P\U(t)\vec w_n^L\|_{L_t^\infty L_x^q}
+\|P\U(t)\vec w_n^L\|_{S(\mathbb R)}\right)=0.
\end{equation}

If $\vec u_n\in X^1_{\mathrm{eo}}$, the profiles can instead be grouped
into reflection-invariant packets:
\begin{equation}\label{paired-profiles}
\vec u_n=\sum_{j=1}^L\vec\Phi_n^j+\vec r_n^L,\qquad
\vec\Phi_n^j=
\begin{cases}
\U(-t_n^j)\vec\psi^j,&\vec\psi^j\in X^1_{\mathrm{eo}},\ x_n^j=0,\\
\U(-t_n^j)(T_{x_n^j}\vec\psi^j+\mathscr R T_{x_n^j}\vec\psi^j),
&|x_n^j|\to\infty.
\end{cases}
\end{equation}
Here $\vec r_n^L\in X^1_{\mathrm{eo}}$, all distinct constituent profiles
are asymptotically orthogonal, and \eqref{profile-small} holds for
$\vec r_n^L$. With $m_j=1$ for centered packets and $m_j=2$ for escaping
packets, \eqref{eee029} becomes
\[
\|\vec u_n\|_{X^1}^2=\sum_{j=1}^L m_j\|\vec\psi^j\|_{X^1}^2
+\|\vec r_n^L\|_{X^1}^2+o_n(1).
\]
Bounded time parameters may be absorbed into the profiles, so each
$t_n^j$ can be chosen identically zero or tending to $+\infty$ or $-\infty$.
\end{pro}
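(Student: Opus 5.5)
We will follow the standard iterative extraction scheme (Bahouri--Gérard, Keraani, Duyckaerts--Holmer--Roudenko) adapted to the Boussinesq group $\U(t)$. We will work with the diagonal variable $z=u-i\Lambda^{-1}\mathcal H v$, so that $\U(t)$ becomes the scalar unitary group $S_0(t)$ on $H^1$ and $\|z\|_{H^1}=\|\vec u\|_{X^1}$. This reduces the decomposition to scalar $H^1$ functions, with symmetries given by space translation and time translation along $S_0$. The quantity driving the extraction is
\[
\mu(\{\vec w_n\})=\limsup_n\|P\U(t)\vec w_n\|_{L_t^\infty L_x^{3}}.
\]

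\textbf{Extraction step.} Given $\vec w_n^{L-1}$ with $\mu>0$, we choose times $t_n^L$ with $\|P\U(t_n^L)\vec w_n^{L-1}\|_{L^3}\geq\mu/2$. We then apply Lemma~\ref{lee301} to $h_n=P\U(t_n^L)\vec w_n^{L-1}$, which is bounded in $H^1\subset H^{1/3}$. This produces $x_n^L$ and a weak limit $\vec\psi^L$ of $T_{-x_n^L}\U(t_n^L)\vec w_n^{L-1}$ in $X^1$ satisfying $\|\vec\psi^L\|_{X^1}\gtrsim\mu^6A^{-5}$. Set $\vec w_n^L=\vec w_n^{L-1}-\U(-t_n^L)T_{x_n^L}\vec\psi^L$. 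Since $\U$ is unitary on $X^1$ and commutes with translations, weak convergence gives the Pythagorean expansion \eqref{eee029} via the usual cross-term argument. Orthogonality of parameters is proved by contradiction: if $(t_n^j-t_n^k,x_n^j-x_n^k)$ stayed bounded for some $j<k$, then passing to a limit and using strong continuity of $\U$ on compact time sets would force $\vec\psi^k=0$, because $T_{-x_n^j}\U(t_n^j)\vec w_n^{k-1}\rightharpoonup0$. The same limiting argument absorbs bounded time parameters into the profiles.

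\textbf{Smallness of the remainder.} Summability of $\|\vec\psi^j\|_{X^1}^2$ forces $\mu(\{\vec w_n^L\})\to0$, which gives the $L_t^\infty L_x^3$ part. For $2<q<\infty$ we interpolate between $L^3$ and $L^2$ or $L^\infty$, using the uniform $H^1$ bound. For the $S$ norm we interpolate $S=L_t^{\tilde p}L_x^{\tilde q}$ between $L_t^\infty L_x^{r}$ and another Strichartz norm $L_t^{a}L_x^{b}$ with $\tilde p>a\geq6$. By Lemma~\ref{le302}(i) and Remark~\ref{Re001}, that norm is bounded by $C\|\vec w_n^L\|_{X^1}$, because the low-frequency factor $K(D)^{-1/a}D^{s}$ is harmless when $s\geq1/a$. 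We must check that a pair with $\dot H^{s}$ scaling and $1/6\geq s\geq1/a$ exists with $\tilde p$ strictly between $a$ and $\infty$; the condition $\tilde p>6$ leaves room for this.

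\textbf{Even-odd grouping.} This step is the main obstacle. If $\vec u_n\in X^1_{\mathrm{eo}}$, then $\mathscr R$ commutes with $\U(t)$ and is an isometry. Hence whenever $(t_n^j,x_n^j,\vec\psi^j)$ is extracted, $(t_n^j,-x_n^j,\mathscr R\vec\psi^j)$ is a legitimate profile of the same sequence. We will modify the extraction to preserve symmetry.

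If $x_n^L$ is bounded, we pass to a limit, shift it to $0$, and symmetrize. Since $\vec w_n^{L-1}$ is even-odd and $\mathscr R$ fixes it, the weak limit is automatically even-odd.

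If $|x_n^L|\to\infty$, we subtract both $T_{x_n^L}\vec\psi^L$ and its reflection simultaneously. We then verify that $T_{x_n^L}\U(t_n^L)\mathscr R\vec w_n^{L}\rightharpoonup0$ as well; this uses $|2x_n^L|\to\infty$ to decouple the two constituents. With this verified, the energy expansion gains the factor $m_j=2$, and all constituents remain mutually orthogonal.

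The delicate points are twofold. First, each pair remainder must stay in $X^1_{\mathrm{eo}}$. Second, profiles extracted at different stages must not coincide with the reflection partner of an earlier one. We handle both by using the previously established weak vanishing with translation parameter $-x_n^j$, which follows from reflection invariance of the remainder.
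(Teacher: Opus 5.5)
Your proposal is correct and follows essentially the paper's proof: the same $L_t^\infty L_x^3$-driven extraction through Lemma~\ref{lee301}, the unitarity-based Pythagorean expansion, parameter orthogonality by contradiction, interpolation of the $S$ norm between $L_t^\infty L_x^3$ and a $\dot H^{1/6}$-admissible pair with time exponent in $(6,\tilde p)$ (the paper takes $p_0=(13p-6)/(2p)$), and simultaneous subtraction of reflected pairs in the even-odd case; the diagonal variable $z$ is only a cosmetic reformulation. The one ingredient you use without stating it is the dislocation property $\U(t_n)T_{x_n}\vec\psi\rightharpoonup0$ for $|t_n|+|x_n|\to\infty$, which the paper proves by an oscillatory-integral (dispersive) argument; it is what keeps the remainder weakly null in all earlier frames after later profiles are subtracted, as your orthogonality step assumes.
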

\begin{proof}
We follow the usual iterative extraction argument
\cite{ref Ho2008,ref Du2008}, keeping track of the reflection symmetry.
Let $A$ bound $\|\vec u_n\|_{X^1}$ and set $\vec w_n^0=\vec u_n$.
Suppose that $j$ profiles have been extracted. After taking a subsequence,
put
\[
B_j=\lim_{n\to\infty}
\|P\U(t)\vec w_n^j\|_{L_t^\infty L_x^3(\mathbb R^2)}.
\]
If $B_j=0$, the extraction stops. Otherwise choose $t_n^{j+1}$ such that
\[
\|P\U(t_n^{j+1})\vec w_n^j\|_{L^{3}}\geq B_j/2.
\]
The energy bound controls the first component in $H^{1/3}$.
Lemma~\ref{lee301}, followed by weak compactness of both components,
therefore gives $x_n^{j+1}$ and a nonzero $\vec\psi^{j+1}\in X^1$ with
\begin{equation}\label{profile:extraction}
T_{-x_n^{j+1}}\U(t_n^{j+1})\vec w_n^j
\rightharpoonup\vec\psi^{j+1},\qquad
\|\vec\psi^{j+1}\|_{X^1}\gtrsim B_j^6/A^5.
\end{equation}
Subtract this profile by defining
\[
\vec w_n^{j+1}=\vec w_n^j
-\U(-t_n^{j+1})T_{x_n^{j+1}}\vec\psi^{j+1}.
\]
The new remainder converges weakly to zero in this frame. Since the
translations and the free group are unitary on $X^1$, expansion of the
squared norm gives
\[
\begin{aligned}
\|\vec w_n^j\|_{X^1}^2
={}&\|\vec\psi^{j+1}\|_{X^1}^2+\|\vec w_n^{j+1}\|_{X^1}^2\\
&+2\bigl\langle\vec\psi^{j+1},
T_{-x_n^{j+1}}\U(t_n^{j+1})\vec w_n^{j+1}\bigr\rangle_{X^1}\\
={}&\|\vec\psi^{j+1}\|_{X^1}^2+\|\vec w_n^{j+1}\|_{X^1}^2+o_n(1).
\end{aligned}
\]
In particular, the remainders stay uniformly bounded, and summation
gives \eqref{eee029}.

To prove parameter orthogonality, we use the following weak convergence
property of the free group:
\begin{equation}\label{profile:dislocation}
\U(t_n)T_{x_n}\vec\psi\rightharpoonup0\quad\hbox{in }X^1
\quad\hbox{if }|t_n|+|x_n|\to\infty.
\end{equation}
For bounded $t_n$, this follows from weak convergence of translations
and strong continuity of $\U$. For $|t_n|\to\infty$, approximate the
Fourier transforms of the profile and of a test function by smooth
functions supported on compact annuli away from zero. The phases of
the diagonalized group are $x_n\xi\pm t_n\omega(\xi)$.
On each such annulus $|\partial_\xi^2\omega(\xi)|$ is bounded below, so the second
derivative estimate for oscillatory integrals gives convergence to
zero uniformly in $x_n$. Unitarity removes the Fourier cutoffs and
proves \eqref{profile:dislocation}.

Suppose now that the new frame has bounded relative parameters with
an earlier frame $k\leq j$. Passing to limits in these parameters,
strong continuity would turn the weakly zero remainder in frame $k$
into a zero weak limit in the new frame, contradicting
\eqref{profile:extraction}. Thus the relative parameters diverge.
Conversely, \eqref{profile:dislocation} shows that subtracting the new
profile preserves weak convergence to zero in every earlier frame.
This proves both parameter orthogonality and the asserted weak
convergence of the final remainder. A diagonal subsequence makes the
construction valid for every finite $L$.

By \eqref{eee029} and \eqref{profile:extraction},
\[
\sum_{j\geq0}B_j^{12}
\lesssim A^{10}\sum_{j\geq1}\|\vec\psi^j\|_{X^1}^2
\lesssim A^{12}.
\]
Hence $B_j\to0$. Interpolation with the uniform $L_t^\infty L_x^2$
bound gives the remainder estimate for $2<q<3$; interpolation with the
uniform $L_t^\infty L_x^\infty$ bound gives it for $3\leq q<\infty$.
For the scattering norm, use
\[
\|P\U(t)\vec w_n^L\|_{S(\mathbb R)}
\leq\|P\U(t)\vec w_n^L\|_{L_t^{p_0}L_x^{q_0}(\mathbb R^2)}^{\vartheta}
\|P\U(t)\vec w_n^L\|_{L_t^\infty L_x^3(\mathbb R^2)}^{1-\vartheta},
\]
where
\[
p_0=\frac{13p-6}{2p},\qquad q_0=\frac{39p-18}{p-6},\qquad
\vartheta=\frac{13p-6}{14p-12}\in(0,1).
\]
The first factor is uniformly bounded by Lemma~\ref{le302}; the second
tends to zero in the iterated limit. This proves \eqref{profile-small}.

For $\vec u_n\in X^1_{\mathrm{eo}}$, we preserve the reflection symmetry
at each extraction step.
The identities
\[
\mathscr R\U(t)=\U(t)\mathscr R,\qquad
\mathscr R T_a=T_{-a}\mathscr R
\]
give the two alternatives in \eqref{paired-profiles}. If $x_n$ is
bounded, pass to its limit and absorb that translation into the profile.
The weak limit of $\U(t_n)\vec r_n$ is then even-odd, and it can be
subtracted in the centered frame $x_n=0$. If $|x_n|\to\infty$ and
$T_{-x_n}\U(t_n)\vec r_n\rightharpoonup\vec\psi$, symmetry gives
\[
T_{x_n}\U(t_n)\vec r_n
=\mathscr R T_{-x_n}\U(t_n)\vec r_n
\rightharpoonup\mathscr R\vec\psi.
\]
These two frames are orthogonal, since their spatial separation is
$2|x_n|\to\infty$. We therefore subtract them together:
\[
\vec r_n^{\,\mathrm{new}}=\vec r_n
-\U(-t_n)\bigl(T_{x_n}\vec\psi+T_{-x_n}\mathscr R\vec\psi\bigr).
\]
The remainder is even-odd and weakly zero in both frames, and the norm
expansion gives
\[
\|\vec r_n\|_{X^1}^2
=2\|\vec\psi\|_{X^1}^2
+\|\vec r_n^{\,\mathrm{new}}\|_{X^1}^2+o_n(1).
\]
For two distinct escaping packets, the same argument gives
$|t_n^j-t_n^k|+\min_{\sigma=\pm1}|x_n^j-\sigma x_n^k|\to\infty$;
the corresponding condition also holds between a centered and an
escaping packet. Thus each centered profile is counted once and each
escaping pair twice. The preceding summability and interpolation
arguments apply without change and prove the paired decomposition.
If a time sequence has a finite limit, strong continuity allows that
limit to be absorbed into its profile, with a vanishing error added to
the remainder. All other time sequences tend, after extraction, to
$+\infty$ or $-\infty$. This completes the proof.
\end{proof}

\begin{lem}[Decoupling of the variational functionals]\label{lee302}
For the general decomposition in Proposition~\ref{proo1} and fixed $L$,
\[
G(\vec u_n)=\sum_{j=1}^LG(\vec\psi^j)+G(\vec w_n^L)+o_n(1),
\]
and, for $F=E,K_1$,
\[
F(\vec u_n)=\sum_{j=1}^LF(\U(-t_n^j)\vec\psi^j)
+F(\vec w_n^L)+o_n(1).
\]
For \eqref{paired-profiles}, the corresponding terms carry multiplicity
$m_j$, and $\vec w_n^L$ is replaced by $\vec r_n^L$.
\end{lem}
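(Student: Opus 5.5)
The plan is to split every functional into its quadratic part and its potential part $\|P\cdot\|_{L^{p+2}}^{p+2}$, and to decouple the two separately. Write $\vec u_n=\sum_{j\le L}\U(-t_n^j)T_{x_n^j}\vec\psi^j+\vec w_n^L$.

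\emph{Quadratic parts.} The pieces $\|\cdot\|_{X^1}^2$, $\|\partial_xP\cdot\|_{L^2}^2$ and $\|P\cdot\|_{L^2}^2+\|P^c\cdot\|_{L^2}^2$ are squares of norms. The full $X^1$ norm decouples by \eqref{eee029}. This already gives the statement for $G$, because $G=\frac{p}{2p+4}\|\cdot\|_{X^1}^2$ and $\|\U(-t)T_x\vec\psi\|_{X^1}=\|\vec\psi\|_{X^1}$ by unitarity and translation invariance. For $E$ and $K_1$ the only quadratic part is again the full $X^1$ norm, since $E=\frac12\|\vec u\|_{X^1}^2-\frac1{p+2}\|u\|_{L^{p+2}}^{p+2}$ and $K_1=\|\vec u\|_{X^1}^2-\|u\|_{L^{p+2}}^{p+2}$. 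So it suffices to prove
\[
\|P\vec u_n\|_{L^{p+2}}^{p+2}=\sum_{j=1}^L\|P\U(-t_n^j)\vec\psi^j\|_{L^{p+2}}^{p+2}+\|P\vec w_n^L\|_{L^{p+2}}^{p+2}+o_n(1).
\]
Translations are invariant in $L^{p+2}$, so $T_{x_n^j}$ can be dropped on the right.

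\emph{Potential part.} This step carries the real content, and I would handle it as follows.

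First, profiles with $t_n^j\to\pm\infty$. By the free decay \eqref{free-decay} with $q=p+2$, $\|P\U(-t_n^j)\vec\psi^j\|_{L^{p+2}}\to0$. Such profiles therefore contribute $o_n(1)$ on both sides and may be moved into the remainder without changing its $L^{p+2}$ norm asymptotically. The elementary inequality $\bigl||a+b|^{p+2}-|a|^{p+2}\bigr|\lesssim |b|(|a|^{p+1}+|b|^{p+1})$, together with the uniform $H^1$ bounds, justifies this.

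Second, the profiles with $t_n^j\equiv0$. Their translation parameters satisfy $|x_n^j-x_n^k|\to\infty$ pairwise. For fixed $\psi,\phi\in H^1$ with $|x_n|\to\infty$, I would use the standard estimate
\[
\int|\psi(\cdot-x_n)|^{a}|\phi|^{b}\,dx\to0,\qquad a,b>0,\ a+b=p+2.
\]
It follows by approximating with compactly supported functions and using the $L^\infty$ bounds from $H^1$. Expanding $\bigl|\sum_j f_j\bigr|^{p+2}$ then gives the splitting $\|\sum_j f_j\|_{L^{p+2}}^{p+2}=\sum_j\|f_j\|_{L^{p+2}}^{p+2}+o_n(1)$ for these profiles.

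Third, and this is the main obstacle, the cross term between the profiles and $w_n^L=P\vec w_n^L$. The remainder is only weakly small in each frame, but it is not small in $L^{p+2}$ for fixed $L$. In a frame with $t_n^j\equiv0$ we have $T_{-x_n^j}\vec w_n^L\rightharpoonup0$ in $X^1$. By Rellich on compact sets, $P T_{-x_n^j}\vec w_n^L\to0$ in $L^\infty_{loc}$ and in $L^{r}_{loc}$. For any $\epsilon>0$, approximate $\psi^j$ by a compactly supported function in $H^1$. On that compact set the remainder tends to zero locally, and off it the profile is $\epsilon$-small. Hence
\[
\int\bigl||f+w_n|^{p+2}-|f|^{p+2}-|w_n|^{p+2}\bigr|\,dx\to0,
\qquad f=\textstyle\sum_{t_n^j\equiv0}P T_{x_n^j}\vec\psi^j .
\]
To make this work for several well-separated profiles at once, I would localize around each center $x_n^j$ on a large ball, apply the pointwise inequality $\bigl||a+b|^{p+2}-|a|^{p+2}-|b|^{p+2}\bigr|\lesssim |a||b|^{p+1}+|a|^{p+1}|b|$, and use Hölder with the uniform $H^1$ bounds. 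This is a Brezis–Lieb type splitting, and combined with the first two steps it proves the display.

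\emph{Paired decomposition.} For \eqref{paired-profiles} the argument is identical. An escaping packet consists of two constituents $T_{x_n^j}\vec\psi^j$ and $\mathscr RT_{x_n^j}\vec\psi^j$ separated by $2|x_n^j|\to\infty$. Reflection preserves $X^1$ norms, $L^{p+2}$ norms and commutes with $\U$. Each constituent therefore contributes equally, which produces the multiplicity $m_j=2$, while centered packets contribute once. The remainder $\vec r_n^L$ is weakly zero in every constituent frame, so the third step applies unchanged.
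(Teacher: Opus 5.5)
Your proposal is correct and follows essentially the same route as the paper. The quadratic parts come from \eqref{eee029}, profiles with $|t_n^j|\to\infty$ are discarded via the free decay \eqref{free-decay}, and the $t_n^j\equiv0$ profiles are split off by a Br\'ezis--Lieb argument using local compactness of $H^1(\mathbb R)$ in each translation frame, with the paired case handled constituent by constituent. The only difference is that you write out the cross-term estimates (between separated profiles, and between a profile and the remainder) explicitly, where the paper simply applies the Br\'ezis--Lieb lemma successively.
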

\begin{proof}
The quadratic terms follow from \eqref{eee029}. For the nonlinear term,
profiles with $|t_n^j|\to\infty$ vanish in $L^{p+2}$ by
\eqref{free-decay}. The remaining profiles have $t_n^j=0$ and mutually
divergent translations. In each such frame, local compactness of
$H^1(\mathbb R)$ gives almost-everywhere convergence to its profile.
The Br\'ezis--Lieb lemma \cite{ref BL1983}, applied successively to the
finitely many profiles, yields
\begin{equation}\label{e2030}
\|P\vec u_n\|_{L^{p+2}}^{p+2}
=\sum_{j=1}^L\|P\U(-t_n^j)\vec\psi^j\|_{L^{p+2}}^{p+2}
+\|P\vec w_n^L\|_{L^{p+2}}^{p+2}+o_n(1).
\end{equation}
This proves both identities. Applying the same argument to the constituent
profiles in each packet proves the even-odd version.
\end{proof}
\subsection{Proof of Theorem \ref{thm003}}
\indent
\par
We first establish the strict general scattering threshold and then prove the scattering and blow-up assertions of Theorem~\ref{thm003}. We begin with two virial functionals used in these arguments and in the later dynamical analysis.

First, for a solution $\vec{u}=(u,v)^{T}$ of (\ref{eq02}), we define
\[
\mathcal{I}_{\varphi}(u,v)=\int_{\mathbb{R}}\varphi uv\,dx,\qquad \varphi\in C^{\infty}(\mathbb{R}).
\]
Here and throughout, choose $\varphi_R(x)=\phi(Rx)/R$, abbreviated to
$\varphi$ when $R$ is fixed, where $\phi$ is smooth, bounded, and odd, with
\begin{equation}\label{eeqqq02}
\phi(x)=x\quad(|x|\leq1),\qquad
0<\phi'(x)\leq1,\qquad
|\phi''(x)|+|\phi'''(x)|\leq C\phi'(x).
\end{equation}
Such a function is obtained by choosing an even positive derivative
equal to one on $[-1,1]$ and with smooth exponential tails, and then
integrating from zero. In particular,
$|\varphi''|\lesssim R\varphi'$ and
$|\varphi'''|\lesssim R^2\varphi'$, while $\varphi'=1$ on $|x|\leq1/R$.
Integration by parts gives
\[
\partial_t\mathcal{I}_{\varphi}(u,v)(t)
=\int_{\mathbb{R}}
\left[-\frac{3}{2}\varphi'(\partial_x u)^2
-\frac{1}{2}(\varphi'-\varphi''')u^2
-\frac{1}{2}\varphi'v^2
+\frac{p+1}{p+2}\varphi'|u|^{p+2}\right]dx.
\]

Second, for $r>0$, define the frequency projections
\begin{equation}\label{equu0015}
\Pi_r f:=\mathcal{F}^{-1}\bigl[\mathbf1_{[-r,r]}(\mathcal{F}f)\bigr],
\qquad \Pi_r^\perp f:=f-\Pi_r f.
\end{equation}
For a solution with initial first component $u_0$, we write
\begin{equation}\label{equu00015}
u_r(t):=\Pi_r u(t),\qquad u_{0,r}:=\Pi_r u_0=u_r(0).
\end{equation}
These notations will be used in all the regularized virial functionals
below.

For a solution $\vec{u}=(u,v)^{T}$ of (\ref{eq02}), define
\[
h_r(t)=\frac{1}{2}\big\|\partial_x^{-1}\bigl(u(t)-u_r(0)\bigr)\big\|_{L^2}^2.
\]
Note that the system (\ref{eq02}) can be written as
\begin{equation}
\label{eqq02}
\begin{cases}
\partial_t u = \partial_x v,\\
\partial_t v = \partial_x(-\partial_x^2 u + u - |u|^p u),
\end{cases}
\end{equation}
which implies
\begin{equation}
\label{eqqq02}
\partial_x^{-1}\bigl[u(t)-u_r(0)\bigr]
= \int_0^t v(s)\,ds + \partial_x^{-1}\Pi_r^\perp u_0.
\end{equation}
On every compact time interval in the lifespan, continuity in $X^1$ and Minkowski's inequality give
\begin{equation}
\label{eqqq03}
\left\|\int_0^t v(s)\,ds\right\|_{L^2}
\le \left|\int_0^t \|v(s)\|_{L^2}\,ds\right| < \infty.
\end{equation}
Furthermore,
\begin{equation}
\label{eeqqq03}
\|\partial_x^{-1}\Pi_r^\perp u_0\|_{L^{2}}\leq r^{-1}\|u_0\|_{L^{2}},\qquad u_0\in L^2(\mathbb R).
\end{equation}
Therefore,
\[
\partial_x^{-1}\bigl[u(t)-u_r(0)\bigr]\in L^2(\mathbb{R}),
\]
so $h_r$ is well defined throughout the lifespan.

Recall the uniform scattering quantities introduced in Section~1.3. For maximal-lifespan solutions, define
\[
A_{\mathrm{gen}}(E)=\sup \Bigl \{\| P\vec{u}\|_{S(\mathbb R)}:\vec{u} _{0}\in X^1,E(\vec{u} _{0})< E,K_1(\vec{u}_{0}) > 0\Bigr \} ,
\]
\[
A_{\mathrm{eo}}(E)=\sup \Bigl \{\|P \vec{u}\|_{S(\mathbb R)}:\vec{u} _{0}\in X^1_{\mathrm{eo}},E(\vec{u} _{0})< E,K_1(\vec{u} _{0}) > 0\Bigr \} .
\]
A nonglobal solution is assigned norm $+\infty$. The corresponding energy thresholds are
\begin{equation}
\label{eeqqqa03}
E_c^{\mathrm{gen}}=\sup \{E:A_{\mathrm{gen}}(E)< \infty \} ,\qquad E_c^{\mathrm{eo}}=\sup \{E:A_{\mathrm{eo}}(E)< \infty \} .
\end{equation}
Small-data scattering gives
\[
E_c^{\mathrm{gen}} > 0,\qquad E_c^{\mathrm{eo}} > 0.
\]
\begin{pro}[A strict lower bound for the general scattering threshold]
\label{prop:strict-gap}
There exists $\varepsilon_*>0$, independent of $p>6$, such that
$c_*:=\frac12+\varepsilon_*$ and every real initial datum
$\vec u_0\in X^1$ satisfying
\[
E(\vec u_0)<c_*E(\vec Q),\qquad K_1(\vec u_0)>0
\]
generates a global solution which scatters as $t\to\pm\infty$.
Moreover,
\begin{equation}\label{gap:uniform}
\sup_t\|\vec u(t)\|_{X^1}
+\|u\|_{L_t^6L_x^\infty(\mathbb R^2)}
+\|u\|_{S(\mathbb R)}\leq C_p,
\end{equation}
uniformly over this initial-data class. In particular,
\begin{equation}\label{gap:threshold}
E_c^{\mathrm{gen}}\geq c_*E(\vec Q)>\tfrac12E(\vec Q).
\end{equation}
\end{pro}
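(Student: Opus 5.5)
The plan is to follow the interaction-Morawetz scheme sketched in the introduction. I will build a bounded monotone quartic functional on the diagonalized variable and show that its time derivative controls $\mathscr D(z)=\|\partial_x|z|^2\|_{L^2}^2$ whenever the energy lies slightly above $\frac12E(\vec Q)$.

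\textbf{Step 1: uniform energy-space bound.} Invariance of $\{K_1>0\}$ below $E(\vec Q)$ follows from Lemma~\ref{le101}. On this set the identity $G=E-K_1/(p+2)$ gives
\[
\|\vec u(t)\|_{X^1}^2<\tfrac{2(p+2)}{p}E(\vec u_0)<\tfrac{2(p+2)}{p}c_*E(\vec Q).
\]
Hence the solution is global, with the bound uniform in the data class.

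\textbf{Step 2: diagonalization.} Set $z=u-i\Lambda^{-1}\mathcal H v$, so that $i\partial_tz=D\Lambda z-D\Lambda^{-1}(|u|^pu)$ and $\|z\|_{H^1}=\|\vec u\|_{X^1}$.

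\textbf{Step 3: the monotone functional.} I first prove the free bound
\[
\int_{\mathbb R}\mathscr D(S_0(s)z)\,ds\le\tfrac23\|\Lambda^{1/2}z\|_{L^2}^4 .
\]
To do so, write $\widehat{|S_0z|^2}(\eta)$ as a convolution over $\xi_1-\xi_2=\eta$ with phase $s(\omega(\xi_1)-\omega(\xi_2))$. Integrating in $s$ by Plancherel produces a delta on $\omega(\xi_1)-\omega(\xi_2)=\omega(\xi_3)-\omega(\xi_4)$. Changing variables and applying Cauchy--Schwarz with the Jacobian $|\omega'|\ge\Lambda$-type weights gives the bound. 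With this in hand, $\mathfrak M$ and $\mathfrak V=\mathfrak M/\|\Lambda^{1/2}z\|^4$ are well defined, and $|\mathfrak V|\le\frac13$.

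Next I differentiate $\mathfrak V(z(t))$. For the linear flow, $\partial_t\mathfrak M=-\mathscr D(z)$, since it is the derivative of a signed time integral along the orbit. The nonlinear term $F=D\Lambda^{-1/2}(|u|^pu)$ splits into a part parallel to $\Lambda^{1/2}z$ and a part orthogonal to it. The parallel part changes $\mathfrak M$ and $\|\Lambda^{1/2}z\|^4$ by the same multiplicative factor, so it cancels in the quotient. The orthogonal part yields $\mathcal R_\perp$, which is bounded by Cauchy--Schwarz on the bilinear variation of $\mathfrak M$ (again using the Fourier delta structure), together with a fractional chain rule. This gives $|\mathcal R_\perp|\le a_p\|z\|_{H^1}^p\mathscr D(z)$.

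\textbf{Step 4: absorption (main obstacle).} I need $a_p\|z\|_{H^1}^p\le1-\delta$ uniformly in $p>6$ when $E<c_*E(\vec Q)$. Step 1 gives $\|z\|_{H^1}^2\le\frac{2(p+2)}{p}c_*E(\vec Q)$, and
\[
E(\vec Q)=\tfrac{p}{2(p+2)}\|Q\|_{H^1}^2 .
\]
So the requirement becomes $a_p(c_*\|Q\|_{H^1}^2)^{p/2}\le1-\delta$. I would first try the sharp Gagliardo--Nirenberg constant, whose extremizer is $Q$, to show $a_p(\frac12\|Q\|_{H^1}^2)^{p/2}\le1-2\delta$ at half energy. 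The ratio at $c_*$ then differs by the factor $(2c_*)^{p/2}$, and uniformity as $p\to\infty$ needs care: the explicit asymptotics $\|Q\|_{H^1}^2\sim$ const$\cdot p^{-1}\cdot(p/2)^{2/p}$ must beat this growth. This is the step I expect to be delicate, and I would defer the explicit constants to the appendix.

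\textbf{Step 5: spacetime bounds and scattering.} Once absorption holds, $\partial_t\mathfrak V\le-\delta\mathscr D/\|\Lambda^{1/2}z\|^4$. Integrating, and using that $\mathfrak V$ is bounded and $\|\Lambda^{1/2}z\|$ is uniformly bounded, gives $\int\mathscr D(z)\,dt\le C_p$. Since $\||u|^2\|_{L^\infty}^2\le\||u|^2\|_{L^2}\|\partial_x|u|^2\|_{L^2}$ and $|u|\le|z|$, I obtain $\|u\|_{L_t^6L_x^\infty}\le C_p$. Interpolating this with the $L^\infty_tH^1$ bound puts $u$ in $L_t^{6p/5}L_x^{3p}$, so \eqref{e211} yields $J\mathcal N\in L_t^1X^1$-type control. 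The Strichartz estimate then bounds $\|u\|_{S}$ (using $\tilde p(1-2/\tilde q)>6$ from Remark~\ref{Re001}), and Lemma~\ref{le303} gives scattering in both time directions. Finally, \eqref{gap:threshold} follows from the definition of $E_c^{\mathrm{gen}}$ together with the uniform bound \eqref{gap:uniform}.
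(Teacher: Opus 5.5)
Your overall architecture (diagonalization, a bounded quartic functional, parallel cancellation) matches the paper's. The gap is in Step 4, which is the real content of the proposition, and the mechanism you propose there cannot close it.

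\textbf{The absorption step.} The sharp Gagliardo--Nirenberg constant says nothing about $a_p$, which is a product of Morawetz, density and chain-rule constants. With only the crude well bound of your Step 1, $\|z\|_{H^1}^2<c_*S_Q$ with $S_Q=\|Q\|_{H^1}^2$, the half-energy inequality you hope for fails near $p=6$ for the constant this method produces. At $p=6$ one has $S_Q\approx3.56$. Careful bookkeeping gives $a_6=\frac{7}{3\sqrt{13}}\cdot\frac{64}{81\sqrt3}\approx0.295$, so $a_6(S_Q/2)^3\approx1.67>1$.

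The missing idea is to use the sharp inequality on the energy side instead. Since $M=\|z\|_{H^1}^2<cS_Q$, one has $\|u\|_{L^{p+2}}^{p+2}\le M(M/S_Q)^{p/2}<c^{p/2}M\le c^3M$. Hence $E(\vec u_0)\ge(\frac12-\frac{c^3}{p+2})M$, which gives $M<F(c,p)=cpS_Q/(p+2-2c^3)$. Testing the sharp quotient on $e^{-b|x|}$ then yields $F(1/2,p)\le F(1/2,6)<3/2$.

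After that, every constant in the error bound must be kept sharp:
\begin{itemize}
\item the free estimate with constant $2/3$;
\item the complex orthogonality $(b_\perp,W)_{\mathbb C}=0$, which kills the cross term, so that $\|\delta h\|_{L^2}=\sqrt{2k}\|b_\perp\|_{L^2}$ rather than $2\sqrt k\|b_\perp\|_{L^2}$;
\item the density bounds $\|z\|_{L^\infty}^6\le\frac9{16}m\mathscr D$ and $\int|z|^{26}\le\frac1{64}m^5\mathscr D^4$;
\item the chain constant $(p+1)/\sqrt{2p+1}$;
\item the optimization of $\sigma^2(1-\sigma)^{1/4}$ using $k^2\le mM$, $\|\partial_xu\|_{L^2}^2\le M-m$ and $\|u\|_{L^\infty}^2\le M/2$.
\end{itemize}
This yields $\sup_{p\ge6}a_p(3/2)^{p/2}=56/(9\sqrt{39})\approx0.9964$. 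The margin is so thin that losing the factor $\sqrt2$ from orthogonality alone destroys it, and your Step 3 does not mention this point. The strict step beyond half energy then comes from the fixed level $L=3/2$ and continuity of $(c,1/p)\mapsto F(c,p)$ on $[\frac12,c_1]\times[0,\frac16]$. It does not come from comparing factors $(2c_*)^{p/2}$.

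\textbf{Where the difficulty lies, and Step 5.} You have put the difficulty at the wrong end of the $p$-range. Large $p$ is harmless: $Q\to e^{-|x|}$, so $\|Q\|_{H^1}^2\to2$, not $\sim p^{-1}$. The factor $2^{-(p-6)/2}$ in $a_p$, coming from $\|u\|_{L^\infty}^{p-6}\le(M/2)^{(p-6)/2}$, makes $a_pL^{p/2}\to0$. The binding case is $p\downarrow6$.

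There is also a smaller problem in Step 5. The quantity $\mathscr D$ is built from $\rho=|z|^2$, not $|u|^2$. The Agmon inequality $\|\rho\|_{L^\infty}^2\le\|\rho\|_{L^2}\|\partial_x\rho\|_{L^2}$ only gives $\|z\|_{L^\infty}^8\lesssim\mathscr D$, that is, $u\in L_t^8L_x^\infty$. Combined with the $L^\infty_tH^1$ bound, this does not control the nonlinearity in $L_t^1X^1$ (or in the dual Strichartz norm) for $p$ close to $6$. You need the mass version $\|z\|_{L^\infty}^6\lesssim m\mathscr D$, obtained by one more interpolation, $\|\rho\|_{L^2}^2\le\|\rho\|_{L^1}\|\rho\|_{L^\infty}$. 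This is where the exponent $6$ in the stated $L_t^6L_x^\infty$ bound comes from.
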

\begin{proof}
The proof combines a quantitative potential-well bound with a bounded
functional whose time derivative controls the spacetime norm. After
diagonalizing the system, we keep track of the constants closely enough
to extend scattering to an open energy interval beyond half the
ground-state energy. Appendix~\ref{app:gap} contains the longer
auxiliary calculations.

\medskip\noindent\textit{Step 1: diagonalization and the quantitative potential well.}
Use the unitary Fourier transform and the operators $D$ and $\Lambda$
defined before Lemma~\ref{le301}. Let $\mathcal H$ be the Hilbert
transform, with symbol $-i\operatorname{sgn}\xi$, and set
\[
z=u-i\Lambda^{-1}\mathcal H v,\qquad
(u,v)=(\operatorname{Re}z,\mathcal H\Lambda\operatorname{Im}z).
\]
The identities $\mathcal H^2=-1$, $\mathcal H\partial_x=D$, and
$D\mathcal H=-\partial_x$ hold on $L^2$ in the multiplier sense;
the value at the single frequency $\xi=0$ is irrelevant. In particular,
this change of variables requires no zero-mean condition. It is a
real-linear isometry from $X^1$ to complex $H^1$:
\[
\|z\|_{H^1}^2=\|u\|_{H^1}^2+\|v\|_{L^{2}}^2.
\]
Indeed, the real and imaginary parts are real-valued and the Hilbert
transform is an $L^2$ isometry. Direct substitution in \eqref{eq02}
gives
\[
i\partial_t z=i\partial_xv+D\Lambda u-D\Lambda^{-1}f(u)
=D\Lambda z-D\Lambda^{-1}f(u),\qquad f(u)=|u|^pu.
\]
Thus
\begin{equation}\label{gap:diag}
i\partial_t z=\omega(D)z-r,\qquad
\omega(D)=D\Lambda,\qquad r=K(D)f(u).
\end{equation}
Write $S_0(t)=e^{-it\omega(D)}$ and introduce
\[
M=\|z\|_{H^1}^2,\quad m=\|z\|_{L^{2}}^2,\quad
k=\|\Lambda^{1/2}z\|_{L^{2}}^2,\quad a=\|\partial_x u\|_{L^{2}}^2,\quad
\mathscr D(z)=\|\partial_x|z|^2\|_{L^{2}}^2.
\]
These are time-dependent scalar quantities; no conservation of $M$,
$m$, or $k$ is assumed.

Let $S_Q=\|Q\|_{H^1}^2$. Lemma~\ref{le101} gives the sharp inequality
\begin{equation}\label{gap:sharp}
\|h\|_{L^{p+2}}^{p+2}\leq S_Q^{-p/2}\|h\|_{H^1}^{p+2},\qquad
E(\vec Q)=\frac{p}{2(p+2)}S_Q.
\end{equation}
For example, for $h\ne0$ choose $s>0$ such that
$K_1(sh,0)=0$. Then $s^p=\|h\|_{H^1}^2/\|h\|_{L^{p+2}}^{p+2}$ and
$s^2\|h\|_{H^1}^2\geq S_Q$ by the variational characterization;
rearranging proves \eqref{gap:sharp}, with equality at $Q$.

Temporarily suppose $E_0:=E(\vec u_0)<cE(\vec Q)$ for $0<c<1$.
If $K_1$ first vanished at some time, the variational characterization
in Lemma~\ref{le101} would give $E_0\geq E(\vec Q)$, unless the
solution were zero at that time. The latter is excluded by uniqueness
for nonzero initial data. Thus $K_1>0$ throughout the lifespan.
Using
\[
E(\vec u)=\frac{p}{2(p+2)}\|\vec u\|_{X^1}^2
+\frac1{p+2}K_1(\vec u),
\]
we obtain
\[
0<\frac{p}{2(p+2)}M(t)<E_0\leq\frac12M(t),\qquad M(t)<cS_Q.
\]
The uniform energy-space bound and the continuation criterion give
global existence. Using \eqref{gap:sharp} and $p\geq6$ once more,
\[
\|u\|_{L^{p+2}}^{p+2}\leq M(M/S_Q)^{p/2}<c^{p/2}M\leq c^3M,
\]
so $E_0\geq[1/2-c^3/(p+2)]M$. We obtain the improved bound
\begin{equation}\label{gap:F}
M<F(c,p):=\frac{cp\,U(p)}{p+2-2c^3},\qquad
U(p)=\frac{2(p+2)}{\sqrt{p(p+4)}}
\left(\frac{(p+2)^2}{p+4}\right)^{2/p}.
\end{equation}
Here $S_Q\leq U(p)$ follows by testing the sharp quotient on
$e^{-b|x|}$ with $b^2=p/(p+4)$; the calculation is included in
Lemma~\ref{lem:gap-constants}.

\medskip\noindent\textit{Step 2: the free density bound and a bounded quartic functional.}
Lemma~\ref{lem:gap-free} proves
\[
\int_{\mathbb R}\mathscr D(S_0(s)z)\,ds\leq\frac23k^2.
\]
The essential Fourier estimate is
\[
\frac{(\xi-\eta)^2}{|\partial_\xi\omega(\xi)-\partial_\eta\omega(\eta)|}
\leq\frac23\sqrt{1+\xi^2}\sqrt{1+\eta^2}.
\]
To see how this estimate enters the argument, write
$\lambda(\xi)=\sqrt{1+\xi^2}$ and, for a function $h$ of two frequency
variables, define
\[
(\mathcal Bh)(s,q)=\frac{iq}{\sqrt{2\pi}}
\int_{\mathbb R}e^{-is[\omega(\eta+q)-\omega(\eta)]}
\frac{h(\eta+q,\eta)}{\lambda(\eta+q)^{1/2}\lambda(\eta)^{1/2}}\,d\eta.
\]
For fixed $q\ne0$, the change of variables
$\nu=\omega(\eta+q)-\omega(\eta)$ is one-to-one, with Jacobian
$|\partial_\eta\omega(\eta+q)-\partial_\eta\omega(\eta)|$. Plancherel in $s$, followed by
$\xi=\eta+q$, gives
\[
\|\mathcal Bh\|_{L_{s,q}^2}^2
=\iint\frac{(\xi-\eta)^2|h(\xi,\eta)|^2}
{|\partial_\xi\omega(\xi)-\partial_\eta\omega(\eta)|\lambda(\xi)\lambda(\eta)}\,d\xi\,d\eta
\leq\frac23\|h\|_{L_{\xi,\eta}^2}^2.
\]
The behavior at zero frequency and the multiplier inequality are
checked in Lemma~\ref{lem:gap-free}. Thus $\mathcal B$ extends to a
bounded linear operator on these two-variable $L^2$ spaces. In particular, with
\[
W=\Lambda^{1/2}z,\qquad
h_W(\xi,\eta)=(\mathcal{F}W)(\xi)\overline{(\mathcal{F}W)(\eta)},
\]
we have $\mathcal Bh_W=\mathcal{F}_x(\partial_x|S_0(s)z|^2)$ and
$\|h_W\|_{L^{2}}=k$. Define
\[
\mathfrak M(z)=\frac12\int_{\mathbb R}\operatorname{sgn}(s)
\mathscr D(S_0(s)z)\,ds,\qquad
\mathfrak V(z)=\frac{\mathfrak M(z)}{k(z)^2}\quad(z\ne0).
\]
The integral is absolutely convergent, and
\begin{equation}\label{gap:bounded-functional}
|\mathfrak M(z)|\leq\tfrac13k^2,
\qquad |\mathfrak V(z)|\leq\tfrac13.
\end{equation}
If $\Sigma F(s,q)=\operatorname{sgn}(s)F(s,q)$, then
\[
\mathfrak M(z)=\tfrac12(\mathcal Bh_W,\Sigma\mathcal Bh_W)_{\mathbb C}.
\]
This representation also justifies differentiation of the functional.
Indeed, for $Y\in L^2$, the Fourier product satisfies
\[
\begin{aligned}
h_{W+Y}(\xi,\eta)-h_W(\xi,\eta)
={}&(\mathcal{F}Y)(\xi)\overline{(\mathcal{F}W)(\eta)}
+(\mathcal{F}W)(\xi)\overline{(\mathcal{F}Y)(\eta)}\\
&+(\mathcal{F}Y)(\xi)\overline{(\mathcal{F}Y)(\eta)}.
\end{aligned}
\]
The last term has $L^2_{\xi,\eta}$ norm $\|Y\|_{L^{2}}^2$. Consequently,
$\mathfrak M$ is a continuous real quartic polynomial on $H^{1/2}$,
and its differential is obtained by applying $\mathcal B$ to the two
linear terms above. Multiplication of $z$ by a constant complex phase
does not change $h_W$, so $\mathfrak M$ is phase invariant.

The free group identity, obtained by shifting the integration variable,
is
\begin{equation}\label{gap:free-increment}
\mathfrak M(S_0(h)z)-\mathfrak M(z)
=-\int_0^h\mathscr D(S_0(s)z)\,ds.
\end{equation}
For $z\in H^1$, the integrand is continuous near zero, since $H^1$ is
an algebra in one dimension. Hence
\[
\left.\frac{d}{ds}\mathfrak M(S_0(s)z)\right|_{s=0}=-\mathscr D(z).
\]

\medskip\noindent\textit{Step 3: the nonlinear increment and exact parallel cancellation.}
Along a nonzero solution set
\[
b=\Lambda^{1/2}r,\qquad
\alpha=\frac{(b,W)_{\mathbb C}}{k},\qquad
b_\perp=b-\alpha W,\qquad (b_\perp,W)_{\mathbb C}=0,
\]
where $(f,g)_{\mathbb C}=\int f\overline g$ is linear in its first
argument. The projection is well defined: a nonzero solution cannot
vanish at any time by uniqueness, and $k$ has a positive minimum on
each compact time interval.

To justify differentiation at energy regularity, note that $r(t)$ is
continuous in $H^1$. Duhamel's formula for \eqref{gap:diag} gives
\[
z(t+h)=S_0(h)z(t)
+i\int_0^h S_0(h-s)r(t+s)\,ds
=S_0(h)z(t)+ih\,r(t)+o_{H^1}(h).
\]
The last equality follows from strong continuity of $S_0$ and
continuity of $r$. Since $S_0$ preserves $k$,
\[
\partial_t k(t)=2\operatorname{Re}(ib,W)_{\mathbb C}
=-2\operatorname{Im}(b,W)_{\mathbb C}
=-2k\operatorname{Im}\alpha.
\]
Using the real Fr\'echet derivative of $\mathfrak M$ and
\eqref{gap:free-increment}, we similarly get
\[
\partial_t\mathfrak M=-\mathscr D+d\mathfrak M(z)[ir]
=-\mathscr D+d\mathfrak M(z)[i\alpha z]
+d\mathfrak M(z)[i\Lambda^{-1/2}b_\perp].
\]
Write $\alpha=\alpha_1+i\alpha_2$. Phase invariance gives
$d\mathfrak M(z)[iz]=0$, while quartic homogeneity gives
$d\mathfrak M(z)[z]=4\mathfrak M(z)$. Therefore
\[
d\mathfrak M(z)[i\alpha z]=-4\alpha_2\mathfrak M(z),\qquad
\partial_t\mathfrak M=-\mathscr D-4(\operatorname{Im}\alpha)\mathfrak M
+\mathcal R_\perp,
\]
where $\mathcal R_\perp=d\mathfrak M(z)[i\Lambda^{-1/2}b_\perp]$.
The quotient rule now gives the exact cancellation
\begin{equation}\label{gap:derivative}
\partial_t\mathfrak V
=\frac{-\mathscr D-4(\operatorname{Im}\alpha)\mathfrak M+\mathcal R_\perp}{k^2}
-\frac{2\mathfrak M(-2k\operatorname{Im}\alpha)}{k^3}
=\frac{-\mathscr D+\mathcal R_\perp}{k^2}.
\end{equation}
The mild formulation thus justifies \eqref{gap:derivative} at energy regularity.

The following product calculation gives the required error bound.
The variation of $h_W(\xi,\eta)$ in the direction $ib_\perp$ is
\[
\delta h(\xi,\eta)
=i(\mathcal{F}b_\perp)(\xi)\overline{(\mathcal{F}W)(\eta)}
-i(\mathcal{F}W)(\xi)\overline{(\mathcal{F}b_\perp)(\eta)}.
\]
Each of the two products has squared $L^2_{\xi,\eta}$ norm
$k\|b_\perp\|_{L^{2}}^2$. Their cross term is zero, since
\[
\begin{aligned}
&\iint (\mathcal{F}b_\perp)(\xi)\overline{(\mathcal{F}W)(\xi)}
(\mathcal{F}b_\perp)(\eta)\overline{(\mathcal{F}W)(\eta)}\,d\xi\,d\eta\\
&\quad=\left(\int(\mathcal{F}b_\perp)(\xi)
\overline{(\mathcal{F}W)(\xi)}\,d\xi\right)^2
=(b_\perp,W)_{\mathbb C}^2=0.
\end{aligned}
\]
Hence $\|\delta h\|_{L^{2}}=\sqrt{2k}\|b_\perp\|_{L^{2}}$, and differentiation
of the quadratic expression in $\mathcal Bh_W$ yields
\begin{equation}\label{gap:projected-bound}
|\mathcal R_\perp| =|\operatorname{Re}(\mathcal B\delta h,\Sigma\mathcal Bh_W)_{\mathbb C}| \leq\frac23\|\delta h\|_{L^{2}}\|h_W\|_{L^{2}} =\frac{2\sqrt2}{3}k^{3/2}\|b_\perp\|_{L^{2}} \leq\frac{2\sqrt2}{3}k^{3/2}\|D^{1/2}f(u)\|_{L^{2}}.
\end{equation}
For the last inequality, orthogonal projection gives
$\|b_\perp\|_{L^{2}}\leq\|b\|_{L^{2}}$, and the multiplier of
$b=D\Lambda^{-1/2}f(u)$ satisfies
$\xi^2/\sqrt{1+\xi^2}\leq|\xi|$; see also
Lemma~\ref{lem:gap-product}. The vanishing cross term is essential
for the strict constant in the next step.

\medskip\noindent\textit{Step 4: absorption with explicit constants.}
Since $z\in H^1(\mathbb R)$, the density $\rho=|z|^2$ belongs to
$H^1\cap L^1$: indeed,
$|\partial_x \rho|\leq2|z||\partial_x z|$ and $H^1\hookrightarrow L^\infty$.
Its mass and derivative norm are $\|\rho\|_{L^{1}}=m$ and
$\|\partial_x \rho\|_{L^{2}}^2=\mathscr D$. Lemma~\ref{lem:gap-density} therefore gives
\[
\|z\|_{L^{\infty}}^6\leq\tfrac9{16}m\mathscr D,
\qquad \int|z|^{26}\leq\tfrac1{64}m^5\mathscr D^4.
\]
The fractional chain estimate in Lemma~\ref{lem:gap-chain} is
\[
\|D^{1/2}f(u)\|_{L^{2}}
\leq\frac{p+1}{\sqrt{2p+1}}
\left(\int|u|^{4p+2}\right)^{1/4}a^{1/4}.
\]
Since $|u|\leq|z|$ and $p\geq6$,
\[
\int|u|^{4p+2}
\leq\|u\|_{L^{\infty}}^{4(p-6)}\int|z|^{26}
\leq\frac1{64}\|u\|_{L^{\infty}}^{4(p-6)}m^5\mathscr D^4.
\]
Combining these estimates with \eqref{gap:projected-bound}, including
the factor $64^{-1/4}=1/(2\sqrt2)$, yields
\[
|\mathcal R_\perp|
\leq\frac{p+1}{3\sqrt{2p+1}}
k^{3/2}m^{5/4}a^{1/4}\|u\|_{L^{\infty}}^{p-6}\mathscr D.
\]
Cauchy--Schwarz in Fourier variables gives $k^2\leq mM$.
Also $a\leq M-m$ and
$\|u\|_{L^{\infty}}^2\leq\|u\|_{L^{2}}\|\partial_x u\|_{L^{2}}\leq M/2$.
Consequently, putting $\sigma=m/M\in[0,1]$,
\[
k^{3/2}m^{5/4}a^{1/4}
\leq M^{3/4}m^2(M-m)^{1/4}
=M^3\sigma^2(1-\sigma)^{1/4}.
\]
The maximum of $\sigma^2(1-\sigma)^{1/4}$ is $64/(81\sqrt3)$,
attained at $\sigma=8/9$. We have therefore proved
\begin{equation}\label{gap:absorb}
|\mathcal R_\perp|\leq a_pM^{p/2}\mathscr D,\qquad
a_p=\frac{p+1}{3\sqrt{2p+1}}\frac{64}{81\sqrt3}\,2^{-(p-6)/2}.
\end{equation}

\medskip\noindent\textit{Step 5: a uniform open interval beyond half energy.}
Lemma~\ref{lem:gap-constants} verifies, with $L=3/2$,
\begin{equation}\label{gap:margin}
\sup_{p\geq6}F(1/2,p)=F(1/2,6)<L,
\qquad
\sup_{p\geq6}a_pL^{p/2}=\theta_*:=\frac{56}{9\sqrt{39}}<1.
\end{equation}
Both inequalities are strict, with margins independent of $p$.
To preserve the first after increasing $c$, observe from \eqref{gap:F}
that $F(c,p)\to2c$ as $p\to\infty$, uniformly for $c$ in compact
subintervals of $(0,1)$. Thus $\widetilde F(c,x)=F(c,1/x)$ for $x>0$,
extended by $\widetilde F(c,0)=2c$, is continuous on
$[1/2,c_1]\times[0,1/6]$ for any fixed $1/2<c_1<1$.
Let $\eta=L-F(1/2,6)>0$. Uniform continuity gives $\varepsilon_*>0$
such that $c_*=1/2+\varepsilon_*<c_1$ and
\[
\sup_{p\geq6}|F(c_*,p)-F(1/2,p)|<\eta/2.
\]
Hence $M(t)<F(c_*,p)<L$ for all the data in the proposition, and
\eqref{gap:derivative} and \eqref{gap:absorb} give
\begin{equation}\label{gap:monotonicity}
\partial_t\mathfrak V(z(t))\leq-(1-\theta_*)\frac{\mathscr D(z(t))}{k(t)^2}.
\end{equation}
Thus the same absorption estimate remains valid on an open interval
of energies extending beyond $E(\vec Q)/2$.

\medskip\noindent\textit{Step 6: spacetime control, scattering, and the uniform threshold.}
Integrate \eqref{gap:monotonicity} on $[T_1,T_2]$. Using
\eqref{gap:bounded-functional} and $k\leq M<L$, we obtain
\[
\int_{T_1}^{T_2}\mathscr D(z(t))\,dt
\leq\frac{L^2}{1-\theta_*}
\bigl(\mathfrak V(z(T_1))-\mathfrak V(z(T_2))\bigr)
\leq\frac{2L^2}{3(1-\theta_*)}.
\]
Letting $T_1\to-\infty$ and $T_2\to\infty$ and using the density
bound from Step 4 gives
\[
\int_{\mathbb R}\|u(t)\|_{L^{\infty}}^6\,dt
\leq\frac{9L}{16}\int_{\mathbb R}\mathscr D(z(t))\,dt<\infty.
\]
Now $J\mathcal N(\vec u)=(0,-\partial_xf(u))^T$, so
\[
\|J\mathcal N(\vec u)\|_{L_t^1X^1} \leq(p+1)\int\|u(t)\|_{L^{\infty}}^p\|\partial_x u(t)\|_{L^{2}}\,dt \leq(p+1)\sqrt L(L/2)^{(p-6)/2} \int\|u(t)\|_{L^{\infty}}^6\,dt\leq C_p.
\]
The scattering states
\[
\vec u_\pm=\vec u_0+\int_0^{\pm\infty}
\U(-s)J\mathcal N(\vec u(s))\,ds
\]
are therefore defined by absolutely convergent $X^1$ integrals.
For example, subtraction of the forward scattering state from
Duhamel's formula gives
\[
\vec u(t)-\U(t)\vec u_+
=-\int_t^\infty\U(t-s)J\mathcal N(\vec u(s))\,ds,
\]
and hence
\[
\|\vec u(t)-\U(t)\vec u_+\|_{X^1}
\leq\int_t^\infty\|J\mathcal N(\vec u(s))\|_{X^1}\,ds\longrightarrow0.
\]
The integral over $(-\infty,t]$ proves the analogous backward limit.

It remains to obtain the uniform scattering-norm bound in the
definition of $A_{\mathrm{gen}}$. Put
$\gamma=\tilde p(1-2/\tilde q)=6+(p-6)/(3p)>6$.
Interpolation in space gives
\[
\|u(t)\|_{L^{\tilde q}}^{\tilde p}
\leq\|u(t)\|_{L^{2}}^{2\tilde p/\tilde q}\|u(t)\|_{L^{\infty}}^\gamma
\leq L^{\tilde p/\tilde q}(L/2)^{(\gamma-6)/2}\|u(t)\|_{L^{\infty}}^6.
\]
The uniform bound just obtained proves \eqref{gap:uniform}, hence
$A_{\mathrm{gen}}(c_*E(\vec Q))<\infty$ and \eqref{gap:threshold}.
\end{proof}

\begin{pro}[Scattering below the ground-state energy in the even-odd space]
\label{proo001}
Let $p>6$. If $\vec u_0\in X^1_{\mathrm{eo}}$ satisfies
$E(\vec u_0)<E(\vec Q)$ and $K_1(\vec u_0)>0$, then the corresponding
solution is global and scatters as $t\to\pm\infty$. More precisely,
$A_{\mathrm{eo}}(E)<\infty$ for every $E<E(\vec Q)$.
\end{pro}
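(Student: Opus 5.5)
We argue by contradiction along the Kenig--Merle scheme, using the paired profile decomposition of Proposition~\ref{proo1} and the strict general threshold of Proposition~\ref{prop:strict-gap}. Suppose $E_c^{\mathrm{eo}}<E(\vec Q)$. By Proposition~\ref{prop:strict-gap}, $E_c^{\mathrm{eo}}\geq E_c^{\mathrm{gen}}>\frac12E(\vec Q)$. Throughout, Lemma~\ref{le101} keeps $K_1>0$ along the flow, and the identity $E=\frac{p}{2(p+2)}\|\vec u\|_{X^1}^2+\frac{1}{p+2}K_1$ bounds $\sup_t\|\vec u(t)\|_{X^1}^2<\frac{2(p+2)}{p}E(\vec u_0)$. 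Hence every solution in the class is global with a uniform energy-space bound, and finiteness of $\|u\|_{S}$ gives scattering by Lemma~\ref{le303}.

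\emph{Step 1: a critical element.} Take even-odd data $\vec u_{0,n}$ with $K_1>0$, $E(\vec u_{0,n})\uparrow E_c^{\mathrm{eo}}$ and $\|u_n\|_{S(\mathbb R)}\to\infty$, and apply the paired decomposition \eqref{paired-profiles}. Lemma~\ref{lee302}, together with positivity of $G$ and of $K_1$ in the small-norm region, shows that each profile, and the remainder, has $K_1>0$ and energy at most $E_c^{\mathrm{eo}}$. Each such profile is replaced by a nonlinear profile: for $t_n^j\to\pm\infty$ we use Proposition~\ref{le305}, whose hypothesis $\|\vec\psi^j\|_{X^1}^2<2E(\vec Q)$ follows from energy decoupling, and for $t_n^j\equiv0$ we use the solution itself.

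\emph{Step 2: excluding escaping pairs.} Consider an escaping packet with $|x_n^j|\to\infty$. It contributes each constituent twice, so each constituent has energy at most $\frac12E_c^{\mathrm{eo}}<\frac12E(\vec Q)<c_*E(\vec Q)$. By Proposition~\ref{prop:strict-gap}, the corresponding nonlinear profiles scatter with uniformly bounded $S$-norm. This holds without any parity assumption on the constituent, so the two reflected copies are separately controlled. Now suppose either that there are at least two nonzero packets, or that there is a single centered packet with energy strictly below $E_c^{\mathrm{eo}}$. Then every nonlinear profile has finite $S$-norm: centered ones by the definition of $E_c^{\mathrm{eo}}$, escaping ones by the gap. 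Asymptotic orthogonality of the parameters gives the approximate-solution error bound, and the long-time perturbation result Proposition~\ref{pr001}, with \eqref{profile-small} for the remainder, then bounds $\|u_n\|_S$, which is a contradiction. Therefore there is exactly one centered profile $\vec\psi^1\in X^1_{\mathrm{eo}}$ with $E=E_c^{\mathrm{eo}}$, the remainder tends to $0$ in $X^1$, and $t_n^1\equiv0$. If instead $t_n^1\to\pm\infty$, small-data-type smallness of the free evolution on a half-line would give a contradiction via Remark~\ref{lRe302}. The resulting critical solution $\vec u_c$ has $\|u_c\|_{S(I_\pm)}=\infty$ in both directions (after the usual reduction). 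Repeating the argument along the sequence $\vec u_c(t_n)$ shows that $\{\vec u_c(t):t\in\mathbb R\}$ is precompact in $X^1$ with \emph{no} translation parameter, since escaping pairs are excluded again by the same gap argument.

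\emph{Step 3: rigidity.} Precompactness, $K_1>0$, and $E(\vec u_c)<E(\vec Q)$ should give $\inf_tK_2(\vec u_c(t))\geq\kappa>0$. This follows from the sign agreement argument of Proposition~\ref{pro106}, through the scaling path to the $K_2=0$ constraint and Lemma~\ref{le101}; compactness together with the nonvanishing of $\vec u_c$ upgrades positivity to a uniform lower bound. Next apply the localized virial identity for $\mathcal I_\varphi$ with $\varphi(x)=\phi(x/\rho)\rho$, a large-scale cutoff. Its derivative is $-K_2(\vec u_c)$ plus error terms supported in $|x|\gtrsim\rho$ and of size $O(\rho^{-2})$. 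By compactness with fixed center, these errors are uniformly small in $t$ once $\rho$ is large. Thus $\partial_t\mathcal I_\varphi\leq-\kappa/2$, while $|\mathcal I_\varphi|\lesssim\rho$ uniformly. Integrating over $[0,T]$ with $T\gg\rho/\kappa$ gives a contradiction. Hence $E_c^{\mathrm{eo}}\geq E(\vec Q)$, and $A_{\mathrm{eo}}(E)<\infty$ for all $E<E(\vec Q)$.

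The main obstacle is Step 2. We must check that energy decoupling with multiplicity $m_j=2$, combined with $K_1>0$ for each constituent, places every constituent of an escaping pair strictly inside the region covered by Proposition~\ref{prop:strict-gap}. We must also check that the nonlinear profile attached to a constituent (not even-odd by itself) interacts with its reflected partner only through an asymptotically vanishing error in $N$. The second issue requires orthogonality estimates in the non-Strichartz space $N(I)$ with the singular low-frequency weights of Lemma~\ref{le302}.
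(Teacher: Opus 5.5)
Your proposal is correct and follows the paper's proof essentially step for step. The shared steps are the paired even-odd profile decomposition, escaping constituents placed below the strict general threshold of Proposition~\ref{prop:strict-gap}, a single centered critical element with precompact orbit and no translation, and then a uniform lower bound on $K_2$ with the localized virial contradiction; your derivation of $\inf_t K_2>0$ from sign agreement on the compact closure is a harmless variant of the paper's use of Proposition~\ref{pro106} with $d_*>0$.

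The one step to make explicit is the exclusion of $t_n^1\to\pm\infty$ via Remark~\ref{lRe302}. That argument needs the scattering norms of $\vec u_n$ to diverge on \emph{both} half-lines, which the paper arranges beforehand by re-centering time at a point $\tau_n$ that splits the $S$-norm in half. With only $\|u_n\|_{S(\mathbb R)}\to\infty$, the case $t_n^1\to+\infty$ bounds only the backward norm and gives no contradiction.
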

\begin{proof}
The potential-well barrier gives global existence and a uniform $X^1$
bound at every fixed energy below $E(\vec Q)$. It remains to prove
$E_c^{\mathrm{eo}}\geq E(\vec Q)$. Suppose, to the contrary, that
$E_c^{\mathrm{eo}}<E(\vec Q)$. Proposition~\ref{prop:strict-gap}
and inclusion of the data classes imply
\begin{equation}\label{critical:gap}
0<\tfrac12E_c^{\mathrm{eo}}<\tfrac12E(\vec Q)<E_c^{\mathrm{gen}}\leq E_c^{\mathrm{eo}}.
\end{equation}
The definition of the threshold gives
$A_{\mathrm{eo}}(E)<\infty$ for every $E<E_c^{\mathrm{eo}}$ and
$A_{\mathrm{eo}}(E_c^{\mathrm{eo}}+\delta)=\infty$ for every $\delta>0$.
Choose $\delta_n\downarrow0$ with
$E_c^{\mathrm{eo}}+\delta_n<E(\vec Q)$, and even-odd solutions
$\vec v_n$ such that
\[
K_1(\vec v_n(0))>0,\qquad
E(\vec v_n)<E_c^{\mathrm{eo}}+\delta_n,\qquad
\|P\vec v_n\|_{S(\mathbb R)}>2^{1/\tilde p}n.
\]
These solutions are global and uniformly bounded in $X^1$.
Their energies converge to $E_c^{\mathrm{eo}}$: otherwise a subsequence
would have energy below a fixed $E<E_c^{\mathrm{eo}}$, contradicting
the finite bound $A_{\mathrm{eo}}(E)$.

We choose the time origins so that the scattering norms diverge in
both directions. By local well-posedness and the energy bound, the
scattering integral is finite on every compact time interval. For each $n$, choose finite
$a_n<b_n$ with
\[
\int_{a_n}^{b_n}\|P\vec v_n(t)\|_{L^{\tilde q}}^{\tilde p}\,dt>2n^{\tilde p},
\]
and then $\tau_n\in(a_n,b_n)$ such that
\[
\int_{a_n}^{\tau_n}\|P\vec v_n(t)\|_{L^{\tilde q}}^{\tilde p}\,dt
=\int_{\tau_n}^{b_n}\|P\vec v_n(t)\|_{L^{\tilde q}}^{\tilde p}\,dt
>n^{\tilde p}.
\]
Set $\vec u_n(t)=\vec v_n(t+\tau_n)$. Time translation preserves the
energy, the even-odd symmetry and the potential well, and now
\begin{equation}\label{critical:balanced}
\|P\vec u_n\|_{S(( -\infty,0))}\longrightarrow\infty,
\qquad
\|P\vec u_n\|_{S((0,\infty))}\longrightarrow\infty.
\end{equation}
Both limits in \eqref{critical:balanced} are needed to exclude divergent
profile time parameters.

\medskip\noindent\textit{Step 1: even-odd profile decomposition.}
Fix a common energy-space bound $A$ for these solutions.
The free scattering norm of $\vec u_n(0)$ cannot tend to zero: the
small-free-norm argument in Remark~\ref{lRe302}, with its smallness
threshold chosen in terms of $A$, would then bound the full nonlinear
scattering norms, contradicting \eqref{critical:balanced}. The interpolation
used in Proposition~\ref{proo1}, namely
\[
\|P\U(t)\vec f\|_{S(\mathbb R)}
\leq\|P\U(t)\vec f\|_{L_t^{p_0}L_x^{q_0}}^{\vartheta}
\|P\U(t)\vec f\|_{L_t^\infty L_x^3}^{1-\vartheta},
\quad
(p_0,q_0)=\left(\frac{13p-6}{2p},\frac{39p-18}{p-6}\right),
\]
with $\vartheta=(13p-6)/(14p-12)$, therefore ensures a nonzero
profile. Apply the paired decomposition \eqref{paired-profiles}:
\[
\vec u_n(0)=\sum_{j=1}^L\vec\Phi_n^j+\vec r_n^L.
\]
Let $\mathcal J_c$ denote the centered packets and $\mathcal J_e$ the
escaping packets. With $m_j=1$ on $\mathcal J_c$ and $m_j=2$ on
$\mathcal J_e$, the quadratic and nonlinear decoupling identities are
\begin{equation}\label{critical:decouple}
\begin{aligned}
G(\vec u_n(0))&=\sum_{j=1}^Lm_jG(\vec\psi^j)+G(\vec r_n^L)+o_n(1),\\
E(\vec u_n(0))&=\sum_{j=1}^Lm_jE(\U(-t_n^j)\vec\psi^j)
+E(\vec r_n^L)+o_n(1),\\
K_1(\vec u_n(0))&=\sum_{j=1}^Lm_jK_1(\U(-t_n^j)\vec\psi^j)
+K_1(\vec r_n^L)+o_n(1).
\end{aligned}
\end{equation}
Here centered profiles are themselves even-odd; a factor of two occurs
only for a spatially escaping reflected pair, including when its time
parameter also diverges.

Since
\[
G(\vec u_n(0))=E(\vec u_n)-\frac{K_1(\vec u_n(0))}{p+2}
<E_c^{\mathrm{eo}}+o_n(1)<E(\vec Q),
\]
the first identity places each nonzero constituent and remainder
strictly below the variational barrier. Indeed, if
$S_Q=\|Q\|_{H^1}^2$, then
$G(\vec f)=p\|\vec f\|_{X^1}^2/[2(p+2)]<E(\vec Q)$ implies
$\|\vec f\|_{X^1}^2<S_Q$. The sharp inequality \eqref{gap:sharp}
then gives
\[
K_1(\vec f)\geq\|\vec f\|_{X^1}^2
\left[1-\left(\frac{\|\vec f\|_{X^1}^2}{S_Q}\right)^{p/2}\right]>0.
\]
Their energies are consequently nonnegative, with
$E(\vec f)\geq G(\vec f)$.
Define the limiting constituent energy by
\[
e_j=\begin{cases}
E(\vec\psi^j),&t_n^j=0,\\
\frac12\|\vec\psi^j\|_{X^1}^2,&|t_n^j|\to\infty.
\end{cases}
\]
The second case follows from \eqref{free-decay}. Passing to the limit
in \eqref{critical:decouple} yields
\begin{equation}\label{critical:budget}
\sum_{j=1}^L m_je_j\leq E_c^{\mathrm{eo}}.
\end{equation}

\medskip\noindent\textit{Step 2: reduction to one nontrivial profile.}
We associate a nonlinear profile to each constituent of the linear
decomposition.

If $t_n^j=0$ and $j\in\mathcal J_c$, let $\vec z^j$ be the even-odd
solution with initial data $\vec\psi^j$. If $e_j<E_c^{\mathrm{eo}}$, this solution
scatters by the definition of $E_c^{\mathrm{eo}}$. If $t_n^j=0$ and
$j\in\mathcal J_e$, take the general solution with the same initial
data. Now \eqref{critical:budget} and \eqref{critical:gap} give
\[
e_j\leq E_c^{\mathrm{eo}}/2<E(\vec Q)/2<E_c^{\mathrm{gen}},
\]
so it scatters by Proposition~\ref{prop:strict-gap}; its reflected
copy scatters as well.

If $|t_n^j|\to\infty$, use Proposition~\ref{le305} to construct
$\vec z^j$ such that
\[
\|\vec z^j(-t_n^j)-\U(-t_n^j)\vec\psi^j\|_{X^1}\to0.
\]
Its conserved energy is $e_j$, and its $K_1$ is positive near the
scattering end, hence throughout its evolution by the potential-well
barrier. For a centered profile it is even-odd, by uniqueness of the
wave operator, and scatters in both directions if $e_j<E_c^{\mathrm{eo}}$. For an
escaping profile, $e_j\leq E_c^{\mathrm{eo}}/2$, so the strict general threshold
again gives two-sided scattering. Thus all nonlinear profiles scatter
unless one centered profile has energy exactly $E_c^{\mathrm{eo}}$.

Suppose every profile scatters and define
\[
\vec Z_n^L(t)=
\sum_{\substack{j\leq L\\j\in\mathcal J_c}}\vec z^j(t-t_n^j)
+\sum_{\substack{j\leq L\\j\in\mathcal J_e}}
\left[T_{x_n^j}\vec z^j(t-t_n^j)
+\mathscr R T_{x_n^j}\vec z^j(t-t_n^j)\right].
\]
The wave-operator approximation and smallness of the linear remainder
imply
\[
\lim_{L\to\infty}\limsup_{n\to\infty}
\|P\U(t)(\vec u_n(0)-\vec Z_n^L(0))\|_{S(\mathbb R)}=0.
\]
For each fixed $L$, the components satisfy the first equation of the
system exactly, and their error in the second is a sum of nonlinear
cross terms. Spacetime orthogonality gives
\[
\partial_t\vec Z_n^L-J\mathcal A\vec Z_n^L
-J\mathcal N(\vec Z_n^L)=J\vec e_n^L,
\qquad \|P\vec e_n^L\|_{N(\mathbb R)}\longrightarrow0,
\]
where $\vec e_n^L=(e_n^L,0)^T$. Indeed, each profile belongs to
$L_t^{(p+1)\tilde r'}L_x^{(p+1)\tilde q'}$ by interpolation with its
energy bound. Approximation in this space by compactly supported
functions, followed by parameter orthogonality and H\"older's
inequality, makes every cross term vanish.

To apply perturbation theory, we need bounds for $\vec Z_n^L$ that are
uniform in $L$.
Every nonlinear profile remains in the positive potential well, so
\[
\sup_t\|\vec z^j(t)\|_{X^1}^2
\leq\frac{2(p+2)}p e_j.
\]
For sufficiently small $e_j$, Lemma~\ref{le304} consequently gives
$\|P\vec z^j\|_{S(\mathbb R)}^2\lesssim e_j$.
There are only finitely many remaining profiles, since
$\sum_jm_je_j\leq E_c^{\mathrm{eo}}$, and each of their scattering norms
is finite under the present assumption. Hence
\begin{equation}\label{critical:profile-summability}
\sum_jm_j\left(\sup_t\|\vec z^j(t)\|_{X^1}^2
+\|P\vec z^j\|_{S(\mathbb R)}^2\right)<\infty.
\end{equation}

Regard the two members of each escaping pair as distinct constituents,
and write $z_n^j$ for their first components. For $j\ne k$, approximation
by compactly supported functions in $S(\mathbb R)$ and parameter
orthogonality give
\[
\|z_n^jz_n^k\|_{L_t^{\tilde p/2}L_x^{\tilde q/2}(\mathbb R^2)}\to0.
\]
Since $\tilde p,\tilde q>2$, expansion of the square followed by
Minkowski's inequality yields, for every fixed $L$,
\[
\limsup_n\|P\vec Z_n^L\|_{S(\mathbb R)}^2
\leq\sum_{j=1}^Lm_j\|P\vec z^j\|_{S(\mathbb R)}^2.
\]
The corresponding energy inner products vanish uniformly in time.
Indeed, put $\vec h^j(s)=\U(-s)\vec z^j(s)$. Two-sided scattering makes
$\{\vec h^j(s):s\in\mathbb R\}$ precompact in $X^1$:
the two tails converge to the scattering states, and the image of
each bounded time interval is compact. For distinct constituents,
their energy inner product can be written
\[
\bigl\langle\vec h^j(t-t_n^j),
\U(t_n^j-t_n^k)T_{x_n^k-x_n^j}\vec h^k(t-t_n^k)\bigr\rangle_{X^1}.
\]
By \eqref{profile:dislocation}, this tends to zero uniformly on the
two compact sets, hence uniformly in $t$. Consequently
\[
\limsup_n\sup_t\|\vec Z_n^L(t)\|_{X^1}^2
\leq\sum_{j=1}^Lm_j\sup_t\|\vec z^j(t)\|_{X^1}^2.
\]
Together with \eqref{critical:profile-summability}, these inequalities
give constants $A_0,B_0$ independent of $L$ for the perturbation
argument. Choose $L$ so that the free initial discrepancy is smaller
than the required tolerance, and then choose $n$ so large that the
wave-operator discrepancies and the nonlinear error are also small.
Proposition~\ref{pr001} gives a uniform bound for
$\|P\vec u_n\|_{S(\mathbb R)}$, contradicting
\eqref{critical:balanced}. This proves the required nonlinear profile
approximation; compare \cite{ref Ho2008,ref Du2008}.

It follows that one centered profile carries energy $E_c^{\mathrm{eo}}$. All other
profiles vanish by the nonnegativity in \eqref{critical:budget}.
The remainder energy tends to zero, and hence
\[
\frac{p}{2(p+2)}\|\vec r_n^1\|_{X^1}^2
=G(\vec r_n^1)\leq E(\vec r_n^1)\longrightarrow0.
\]
Consequently, for some $\vec\psi\in X^1_{\mathrm{eo}}$,
\begin{equation}\label{critical:single}
\vec u_n(0)=\U(-t_n)\vec\psi+o_{X^1}(1).
\end{equation}
In particular, even a single escaping pair is excluded: its two
constituents would still be below the strict general threshold.

\medskip\noindent\textit{Step 3: the critical element and its compact orbit.}
We show that $t_n$ in \eqref{critical:single} is bounded. If
$t_n\to+\infty$, its nonlinear profile $\vec z$ scatters as
$t\to-\infty$, and
$\vec z(t-t_n)$ approximates $\vec u_n(t)$ on the backward half-line.
More precisely, the initial difference tends to zero in $X^1$ and
$\|P\vec z\|_{S(( -\infty,-t_n))}\to0$; one-sided perturbation
therefore bounds $\|P\vec u_n\|_{S(( -\infty,0))}$, a contradiction.
If $t_n\to-\infty$, the forward wave operator and the same argument
bound the forward norm, again contradicting \eqref{critical:balanced}.
Absorbing a finite limit of $t_n$ into $\vec\psi$, we obtain
$\vec u_n(0)\to\vec u_c(0)$ strongly in $X^1$ and
\begin{equation}\label{critical:element}
E(\vec u_c)=E_c^{\mathrm{eo}},\qquad K_1(\vec u_c(t))>0,\qquad
\|P\vec u_c\|_{S(( -\infty,0))}
=\|P\vec u_c\|_{S((0,\infty))}=\infty.
\end{equation}
The solution is global by the potential-well bound. If either norm in
\eqref{critical:element} were finite, continuous dependence on a
compact time interval followed by one-sided stability would bound the
corresponding norms in \eqref{critical:balanced}.

To prove compactness, apply Steps 1 and 2 to $\vec u_c(s_n)$ for any
sequence $s_n\in\mathbb R$. The shifted solutions still have infinite
scattering norm in both time directions. Escaping packets are excluded
by \eqref{critical:gap}, and any nontrivial energy remainder would
leave a centered profile strictly below $E_c^{\mathrm{eo}}$. Thus a single centered
profile carries all the energy. The two-sided divergence excludes
unbounded profile time parameters exactly as above, and the remainder
tends to zero in $X^1$. Hence every sequence in the orbit has a strongly
convergent subsequence:
\[
\mathcal C=\{\vec u_c(t):t\in\mathbb R\}
\quad\text{is precompact in }X^1.
\]
No spatial translation is needed. In particular,
\begin{equation}\label{critical:tails}
\lim_{L\to\infty}\sup_t\int_{|x|>L}
\bigl(|\partial_xu_c|^2+|u_c|^2+|v_c|^2+|u_c|^{p+2}\bigr)\,dx=0.
\end{equation}

\medskip\noindent\textit{Step 4: the localized virial contradiction.}
Since $E(\vec u_c)=E_c^{\mathrm{eo}}\in(0,E(\vec Q))$, the compact
closure of $\mathcal C$ contains neither $\vec Q$ nor $-\vec Q$.
Continuity of $d_Q$ therefore gives
\[
d_*:=\inf_{t\in\mathbb R}d_Q(\vec u_c(t))>0,
\qquad \|\vec u_c(t)\|_{X^1}^2\geq2E_c^{\mathrm{eo}}>0.
\]
Choose $0<r<\min\{R_0,d_*\}$ and $\alpha=c_0r/2$.
The energy and distance hypotheses of Proposition~\ref{pro106}
then hold at every time with $R=r$. Since $K_1(\vec u_c(t))>0$,
its positive alternative yields
\[
\kappa:=\inf_{t\in\mathbb R}K_2(\vec u_c(t))
\geq c_1\min\{2E_c^{\mathrm{eo}},d_*\}>0.
\]
Take $\varphi_R(x)=\phi(Rx)/R$ as in \eqref{eeqqq02}. The virial
identity gives
\[
\begin{aligned}
\partial_t\mathcal I_{\varphi_R}(\vec u_c)
={}&-K_2(\vec u_c)
+\frac32\int(1-\varphi'_R)|\partial_xu_c|^2
+\frac12\int(1-\varphi'_R+\varphi'''_R)|u_c|^2\\
&+\frac12\int(1-\varphi'_R)|v_c|^2
-\frac{p+1}{p+2}\int(1-\varphi'_R)|u_c|^{p+2}.
\end{aligned}
\]
The cutoff bounds and \eqref{critical:tails} give
\[
|\mathrm{error}(t)|\lesssim
\int_{|x|>1/R}\bigl(|\partial_xu_c|^2+|u_c|^2+|v_c|^2+|u_c|^{p+2}\bigr)\,dx
+R^2\|u_c(t)\|_{L^{2}}^2.
\]
Choose $R>0$ sufficiently small that this is at most $\kappa/2$
uniformly in $t$. Then
$\partial_t\mathcal I_{\varphi_R}(\vec u_c(t))\leq-\kappa/2$, whereas
\[
|\mathcal I_{\varphi_R}(\vec u_c(t))|
\leq\|\varphi_R\|_{L^{\infty}}\|u_c(t)\|_{L^{2}}\|v_c(t)\|_{L^{2}}
\lesssim R^{-1}
\]
uniformly in time. Integration over $[0,T]$ and $T\to\infty$ give a
contradiction. Thus $E_c^{\mathrm{eo}}\geq E(\vec Q)$; by the
definition of the threshold and Lemma~\ref{le303}, the proposition
follows.
\end{proof}

To complete the proof of Theorem~\ref{thm003}, we next show that any solution $\vec{u}(t)$ satisfying
\[
E\big(\vec{u}(t)\big)<E(\vec{Q}),\quad K_{1}\big(\vec{u}(t)\big)<0
\]
blows up in both finite positive and negative time directions. We use the classical concavity criterion of Levine \cite{ref Le1974,ref Lee1974}, stated below without repeating its proof.

\begin{lem}[Concavity criterion, \cite{ref Le1974,ref Lee1974}]
\label{le805}
Let $f$ be a nonnegative $C^2$ function on a time interval starting at
$t_0$, with $f(t_0)>0$ and $\partial_t f(t_0)>0$. Suppose that, for some $\alpha>1$,
\[
f(t)\partial_t^2f(t)-\alpha(\partial_t f(t))^2\geq0.
\]
Then, for as long as $f$ exists and $t<T_0$,
\[
f(t)\geq f(t_0)
\left(1-(\alpha-1)\frac{\partial_t f(t_0)}{f(t_0)}(t-t_0)\right)^{-1/(\alpha-1)},
\qquad
T_0=t_0+\frac{f(t_0)}{(\alpha-1)\partial_t f(t_0)}.
\]
In particular, $f$ cannot remain finite and $C^2$ throughout $[t_0,T_0]$;
the concavity argument forces blow-up no later than $T_0$.
\end{lem}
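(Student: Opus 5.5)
The statement is Levine's concavity lemma, and the proof reduces to a monotonicity statement for a negative power of $f$. Set $\beta=\alpha-1>0$ and
\[
g(t)=f(t)^{-\beta}.
\]
This is well defined and $C^2$ on any subinterval where $f>0$. A direct computation gives
\[
\partial_t g=-\beta f^{-\beta-1}\partial_t f,\qquad
\partial_t^2 g=-\beta f^{-\beta-2}\bigl(f\partial_t^2f-\alpha(\partial_t f)^2\bigr).
\]
The hypothesis therefore yields $\partial_t^2 g\le0$, so $g$ is concave wherever $f>0$.

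The next step is to show that $f$ stays positive and that $\partial_t f$ stays positive. Let $[t_0,t_1)$ be the maximal interval on which $f>0$; it is nonempty since $f(t_0)>0$. On this interval the inequality gives $f\partial_t^2 f\ge\alpha(\partial_t f)^2\ge0$, hence $\partial_t^2 f\ge0$. Consequently $\partial_t f$ is nondecreasing, so $\partial_t f\ge\partial_t f(t_0)>0$. It follows that $f$ is increasing and bounded below by $f(t_0)$. This rules out $f$ reaching zero at any $t_1$ inside the existence interval, so $f>0$ throughout.

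With positivity secured, concavity of $g$ places it below its tangent line at $t_0$:
\[
g(t)\le g(t_0)+\partial_t g(t_0)(t-t_0)=f(t_0)^{-\beta}\Bigl(1-\beta\frac{\partial_t f(t_0)}{f(t_0)}(t-t_0)\Bigr).
\]
The right-hand side is positive for $t<T_0$. Since $g>0$, raising both sides to the power $-1/\beta$ reverses the inequality and gives exactly the claimed lower bound for $f$.

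Finally, suppose $f$ were finite and $C^2$ on all of $[t_0,T_0]$. Then $g(T_0)>0$, because $f(T_0)$ is finite and positive. The tangent bound, however, gives $g(T_0)\le0$, a contradiction. Hence $f$ must cease to exist, or blow up, no later than $T_0$.

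The only delicate point is the positivity bootstrap for $f$, which is handled by the monotonicity of $\partial_t f$ above. The rest is routine calculus.
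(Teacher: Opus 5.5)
Your proof is correct and is the classical Levine argument: concavity of $f^{-(\alpha-1)}$, positivity of $f$ from the monotonicity of $\partial_t f$, and the tangent-line bound at $t_0$. The paper does not reprove this lemma; it only cites Levine, whose proof is exactly this one.
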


Recall the notation $u_{0,r}=\Pi_r u_0$ from
\eqref{equu0015}--\eqref{equu00015}.

\begin{lem}[Virial identity]
\label{le806}
For initial data $\vec{u}_0=(u_0,v_0)^T\in X^1$, let $\vec{u}(t)=(u,v)^T(t)$ be the corresponding solution to (\ref{eq02}) defined on its maximal interval of existence $(T^-_{\max},T^+_{\max})$. Define
\[
h_r(t):=\frac{1}{2}\|\partial_x^{-1}(u-u_{0,r})\|_{L^2}^2,\quad t\in(T^-_{\max},T^+_{\max}).
\]
Then we have
\[
\partial_t h_r(t)=\bigl\langle \partial_x^{-1}(u-u_{0,r}),\, v\bigr\rangle,
\]
and
\[
\partial_t^2 h_r(t)=-\|u\|_{H^1}^2+\|u\|_{L^{p+2}}^{p+2}+\|v\|_{L^2}^2
+\int_{\mathbb R} u_{0,r}\bigl(-\partial_x^2 u+u-|u|^p u\bigr)\,dx.
\]
\end{lem}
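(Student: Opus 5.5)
The plan is to differentiate $h_r$ directly, using the representation \eqref{eqqq02}, and then to justify the formal computation by a regularization argument.

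\emph{First derivative.} By \eqref{eqqq02},
\[
\partial_x^{-1}(u(t)-u_{0,r})=\int_0^t v(s)\,ds+\partial_x^{-1}\Pi_r^\perp u_0 .
\]
This is the sum of a time-independent $L^2$ function (bounded by \eqref{eeqqq03}) and a $C^1$ function of $t$ with values in $L^2$. Its derivative is $v(t)$, since $v\in C((T^-_{\max},T^+_{\max});L^2)$. Consequently $h_r$ is $C^1$, and the chain rule for the squared $L^2$ norm gives
\[
\partial_t h_r=\langle \partial_x^{-1}(u-u_{0,r}),v\rangle .
\]
The point is that we never differentiate $\partial_x^{-1}u$ separately, which need not lie in $L^2$. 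We only use the combined object, which is in $L^2$ thanks to the low-frequency subtraction.

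\emph{Second derivative.} Write $w=\partial_x^{-1}(u-u_{0,r})$, so that $\partial_t w=v$. Formally,
\[
\partial_t^2 h_r=\|v\|_{L^2}^2+\langle w,\partial_t v\rangle
=\|v\|_{L^2}^2+\langle w,\partial_x g\rangle,\qquad g=-\partial_x^2u+u-|u|^pu .
\]
Integrating by parts moves $\partial_x$ onto $w$ and produces
\[
-\langle u-u_{0,r},\,g\rangle=-\|u\|_{H^1}^2+\|u\|_{L^{p+2}}^{p+2}+\int u_{0,r}\,g\,dx .
\]
This is exactly the claimed formula. Note that $u_{0,r}\in H^\infty$ because it has compact Fourier support, so the pairing $\int u_{0,r}\,g$ is well defined even though $g\in H^{-1}$.

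\emph{Justification at energy regularity.} This is the main obstacle: $\partial_t v=\partial_x g$ holds only in $H^{-2}$, while $w$ lies only in $L^2$ with $\partial_x w=u-u_{0,r}\in H^1$. I would handle it in one of two ways.

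\begin{enumerate}
\item[(a)] Integrate in time instead of differentiating. Pair the identity $v(t_2)-v(t_1)=\int_{t_1}^{t_2}\partial_x g\,ds$ with $w$ in the $H^{1}$–$H^{-1}$ duality, which is legitimate since $w\in \dot H^1$ modulo $L^2$. Then use continuity of $s\mapsto \langle u(s)-u_{0,r},g(s)\rangle$, which is continuous because $u\in C(H^1)$ and $g\in C(H^{-1})$. This shows that $\partial_t h_r$ is $C^1$, with the stated derivative.
\item[(b)] Run the computation for smooth data in $H^{3}\times H^{2}$, where every step is classical. Then pass to the limit using continuous dependence in $X^1$: each term on the right-hand side is continuous in $X^1$, uniformly on compact time intervals, for fixed $r$.
\end{enumerate}

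I would present (b), because it also makes the integration by parts at spatial infinity routine. The only care needed is that $w(t)\in L^2$ stays uniformly controlled for the approximating data. This follows from \eqref{eqqq03} and \eqref{eeqqq03}, and the bound is uniform since $r$ is fixed.
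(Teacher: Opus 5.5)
Your proposal is correct and follows the paper's own proof. The first derivative comes from the representation \eqref{eqqq02}, the second from integration by parts with the second-order term read in $H^{-1}$--$H^1$ duality, and the identity is justified at energy regularity by approximating with smooth solutions and passing to the limit in $C(I,X^1)$ on compact time intervals. Your alternative (a) also works, and your stated justification for it is weaker than necessary: since $w\in L^2$ and $\partial_x w=u-u_{0,r}\in H^1$, in fact $w\in C(I;H^2)$, so $\langle w,\partial_t v\rangle$ is a genuine $H^2$--$H^{-2}$ pairing and the product rule applies directly.
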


\begin{proof}
Equations~\eqref{eqqq02}--\eqref{eeqqq03} show that $h_r$ is well defined
on the lifespan. Differentiation gives
\[
\partial_t h_r(t)=\bigl\langle \partial_x^{-1}(u-u_{0,r}),\, \partial_x^{-1}\partial_t u\bigr\rangle
=\bigl\langle \partial_x^{-1}(u-u_{0,r}),\, v\bigr\rangle,
\]
and
\[
\begin{aligned}
\partial_t^2 h_r(t)
&=\bigl\langle \partial_x^{-1}\partial_t u,\, v\bigr\rangle
+\bigl\langle \partial_x^{-1}(u-u_{0,r}),\, \partial_t v\bigr\rangle \\
&=\|v\|_{L^2}^2-\bigl\langle u-u_{0,r},\, -\partial_x^2 u+u-|u|^p u\bigr\rangle \\
&=-\|u\|_{H^1}^2+\|u\|_{L^{p+2}}^{p+2}+\|v\|_{L^2}^2
+\int_{\mathbb R} u_{0,r}\bigl(-\partial_x^2 u+u-|u|^p u\bigr)\,dx.
\end{aligned}
\]
The computation is first made for smooth solutions. Approximation in
$C(I,X^1)$ on compact time intervals gives the identity at energy
regularity, with the second-order spatial term understood in
$H^{-1}$--$H^1$ duality.
\end{proof}

\begin{lem}
\label{lee802}
Suppose that a solution $\vec{u}(t,x)\in C([t_1,t_2],X^1)$ of (\ref{eq02}) satisfies, for some $\sigma>0$,
\[
K_1\big(\vec{u}(t)\big)\le -\sigma,\quad \forall\,t\in[t_1,t_2].
\]
Then there exist $r_0>0$ and $\sigma_0>0$ such that, for every $r\in(0,r_0)$,
\[
\partial_t^2 h_r(t)\ge \sigma_0,\qquad \forall\,t\in[t_1,t_2].
\]
\end{lem}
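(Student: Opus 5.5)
The plan is to start from the virial identity of Lemma~\ref{le806} and rewrite $\partial_t^2 h_r$ so that the main part is $-K_1$ plus a nonnegative quantity, with the remaining term made small by the choice of $r$. Since
\[
-\|u\|_{H^1}^2+\|u\|_{L^{p+2}}^{p+2}+\|v\|_{L^2}^2=-K_1(\vec u)+2\|v\|_{L^2}^2,
\]
Lemma~\ref{le806} gives
\[
\partial_t^2 h_r(t)=-K_1(\vec u(t))+2\|v(t)\|_{L^2}^2+\int_{\mathbb R}u_{0,r}\bigl(-\partial_x^2u+u-|u|^pu\bigr)\,dx
\geq\sigma+\mathcal E_r(t),
\]
where $\mathcal E_r(t)$ denotes the last integral. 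It then suffices to show that $|\mathcal E_r(t)|\leq\sigma/2$ uniformly on $[t_1,t_2]$ for all small $r$. This gives the conclusion with $\sigma_0=\sigma/2$.

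To bound $\mathcal E_r$, first set $A:=\sup_{t\in[t_1,t_2]}\|\vec u(t)\|_{X^1}$. This is finite because $\vec u\in C([t_1,t_2],X^1)$ and the interval is compact. Next, write the pairing in $H^1$--$H^{-1}$ duality and integrate by parts once:
\[
|\mathcal E_r|\leq\|\partial_xu_{0,r}\|_{L^2}\|\partial_xu\|_{L^2}+\|u_{0,r}\|_{L^2}\|u\|_{L^2}+\|u_{0,r}\|_{L^2}\|u\|_{L^{2p+2}}^{p+1}.
\]
Each factor involving $u$ is bounded by a power of $A$, using $H^1\hookrightarrow L^{2p+2}$. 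So the whole estimate reduces to showing that $\|u_{0,r}\|_{H^1}\to0$ as $r\to0$. Here $u_0$ means the initial datum of the solution at the reference time used in the definition of $h_r$. Since $\|u_{0,r}\|_{H^1}^2=\int_{|\xi|\leq r}(1+\xi^2)|\hat u_0(\xi)|^2\,d\xi$ and $\hat u_0\in L^2$, dominated convergence gives this limit. Hence we can choose $r_0$ with $C(A)\|u_{0,r}\|_{H^1}\leq\sigma/2$ for $r<r_0$.

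The only step needing care is that the smallness must be uniform in $t$ while $u(t)$ varies. The low-frequency cutoff acts only on the fixed datum $u_0$, not on $u(t)$, so this uniformity comes directly from the uniform bound $A$. That is why $r_0$ depends on $\sigma$, on $A$ (hence on $[t_1,t_2]$), and on $u_0$. If we want to avoid the duality subtlety, we can instead integrate by parts to $\langle -\partial_x^2u_{0,r},u\rangle$. This is harmless because $u_{0,r}$ is band-limited, and gives the bound $\leq r^2\|u_{0,r}\|_{L^2}\|u\|_{L^2}$.
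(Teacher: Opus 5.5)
Your argument proves the lemma as literally stated, but it takes a different route from the paper. The difference matters for how the lemma is used later.

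\textbf{Your route versus the paper's.} You bound the error term $\int u_{0,r}(-\partial_x^2u+u-|u|^pu)\,dx$ by $C(A)\|u_{0,r}\|_{H^1}$, with $A=\sup_{[t_1,t_2]}\|\vec u(t)\|_{X^1}$. This $A$ is finite only because $[t_1,t_2]$ is compact, so your $r_0$ depends on the interval.

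The paper uses no a priori $X^1$ bound. It bounds the error by $C\|u_{0,r}\|_{H^1}\bigl(\|u\|_{H^1}+\|u\|_{L^{p+2}}^{p+1}\bigr)$ and splits according to the size of $\|u(t)\|_{L^{p+2}}$.
\begin{itemize}
\item If $\|u\|_{L^{p+2}}\le M_0$, energy conservation gives $\|u\|_{H^1}^2\le 2E+\frac{2}{p+2}M_0^{p+2}$. The error is then absorbed into $\sigma$.
\item If $\|u\|_{L^{p+2}}>M_0$, the identity $-K_1(\vec u)=\frac{p}{p+2}\|u\|_{L^{p+2}}^{p+2}-2E(\vec u)$ shows the main term grows like $\|u\|_{L^{p+2}}^{p+2}$ with a fixed coefficient. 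This absorbs the error, which is small in $\|u_{0,r}\|_{H^1}$ times something growing at most linearly in $\|u\|_{L^{p+2}}^{p+2}$.
\end{itemize}
Hence the paper's $r_0,\sigma_0$ depend only on $\sigma$, the conserved energy, and the low-frequency part of $u_0$, not on $t_1,t_2$. The paper states this explicitly at the end of its proof.

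\textbf{Why this matters downstream.} In Proposition~\ref{proo002} one assumes global existence on $[0,\infty)$. One then needs a single fixed $r$ with $\partial_t^2h_r\ge\sigma_0$ for all $t>0$, in order to get $\partial_th_r(t)\ge\sigma_0t+\partial_th_r(0)$. The blow-up-after-exit argument at the end of the paper likewise uses one low-frequency cutoff for the entire remaining lifespan.

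In both places $\sup_{t\ge0}\|\vec u(t)\|_{X^1}$ is not known to be finite; it is precisely what is expected to blow up. With your version, $r_0$ could shrink to $0$ as $t_2\to\infty$, and no single cutoff would serve the concavity argument. Your proof is simpler, while the paper's buys interval-independence. To make your argument usable in the application, replace the bound $A$ by the case split above.
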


\begin{proof}
By Lemma \ref{le806}, we have
\[
\partial_t^2 h_r(t)=-K_1(\vec{u})+2\|v\|_{L^2}^2
+\int_{\mathbb R} u_{0,r}\bigl(-\partial_x^2 u+u-|u|^p u\bigr)\,dx.
\]
Consequently,
\begin{equation}
\label{eqqq801}
\partial_t^2 h_r(t)\ge -K_1(\vec{u})+2\|v\|_{L^2}^2
-C\|u_{0,r}\|_{H^1}\bigl(\|u\|_{H^1}+\|u\|_{L^{p+2}}^{p+1}\bigr).
\end{equation}
Fix $M_0$ sufficiently large in terms of $1+|E(u_0,v_0)|$. From (\ref{eqqq801}) we derive the following two cases.

\medskip\noindent
\textbf{Case 1:} $\|u\|_{L^{p+2}}\le M_0$. Then
\begin{equation}
\label{eqqq802}
\begin{aligned}
\partial_t^2 h_r(t)&\ge -K_1(\vec{u})+2\|v\|_{L^2}^2
-C\|u_{0,r}\|_{H^1}\bigl(\|u\|_{H^1}+M_0^{p+1}\bigr) \\
&\ge \sigma
-C\|u_{0,r}\|_{H^1}
\left(\sqrt{2E(u_0,v_0)+\frac{2}{p+2}M_0^{p+2}}+M_0^{p+1}\right).
\end{aligned}
\end{equation}

\medskip\noindent
\textbf{Case 2:} $\|u\|_{L^{p+2}}>M_0$. Then
\begin{equation}
\label{eqqq803}
\begin{aligned}
\partial_t^2 h_r(t)&\ge -K_1(\vec{u})+2\|v\|_{L^2}^2
-C\|u_{0,r}\|_{H^1}\bigl(\|u\|_{H^1}^2+\|u\|_{L^{p+2}}^{p+2}\bigr) \\
&\ge -\left(2E(u_0,v_0)-\frac{p}{p+2}\|u\|_{L^{p+2}}^{p+2}\right)+2\|v\|_{L^2}^2 \\
&\quad -C\|u_{0,r}\|_{H^1}
\left(2E(u_0,v_0)+\frac{p+4}{p+2}\|u\|_{L^{p+2}}^{p+2}\right) \\
&\ge \frac{p+4}{p+2}\left(\frac{p}{p+4}-C\|u_{0,r}\|_{H^1}\right)M_0^{p+2}
-\bigl(2+2C\|u_{0,r}\|_{H^1}\bigr)E(u_0,v_0).
\end{aligned}
\end{equation}
Observe that $\|u_{0,r}\|_{H^1}\to0$ as $r\to0$. Therefore, by (\ref{eqqq802}) and (\ref{eqqq803}), we can choose $r_0>0$ and
\[
\sigma_0:=\min\left\{\frac{\sigma}{2},\;\frac{p}{2p+4}M_0^{p+2}\right\}
\]
such that for any $r\in(0,r_0)$,
\[
\partial_t^2 h_r(t)\ge \sigma_0,\qquad \forall\,t\in[t_1,t_2].
\]
The choices depend only on $\sigma$, the conserved energy and
the low-frequency tail of the initial datum, and not on the endpoints
$t_1,t_2$. Thus the same $r_0,\sigma_0$ work on any interval where
$K_1\leq-\sigma$. This completes the proof.
\end{proof}
\begin{pro}
\label{proo002}
For any initial value $\vec{u}_{0}\in X^{1}$ satisfying
\begin{equation*}
E(\vec{u}_{0})<E(\vec{Q}),\quad K_{1}(\vec{u}_{0})<0,
\end{equation*}
the corresponding solution $\vec{u}(t)$ blows up at both finite positive and negative times.
\end{pro}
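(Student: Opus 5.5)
We plan a Levine-type concavity argument applied to the regularized virial functional $h_r$, with invariance of the negative well providing a uniform margin.

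First, we will show that $K_1(\vec u(t))<0$ throughout the lifespan, with a quantitative gap. If $K_1$ vanished at a first time $t_*$, then $\vec u(t_*)\neq0$: otherwise uniqueness would force $\vec u\equiv 0$, contradicting $K_1(\vec u_0)<0$. Lemma~\ref{le101} would then give $E(\vec u_0)=E(\vec u(t_*))\geq E(\vec Q)$, which is impossible. For the quantitative bound we use the third characterization in Lemma~\ref{le101}. Since $K_1<0$, we have $G(\vec u)\geq E(\vec Q)$, and therefore
\[
-K_1(\vec u)=(p+2)\bigl(G(\vec u)-E(\vec u)\bigr)\geq(p+2)\bigl(E(\vec Q)-E(\vec u_0)\bigr)=:\sigma>0
\]
for all $t$ in the lifespan. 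This gives the hypothesis of Lemma~\ref{lee802} on every compact subinterval, with the same $\sigma$. By the remark at the end of that proof, $r_0$ and $\sigma_0$ do not depend on the interval, so $\partial_t^2h_r\geq\sigma_0$ on the whole forward lifespan for a fixed $r<r_0$.

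Next, we argue by contradiction and assume $T^+_{\max}=\infty$. Then $h_r(t)\to\infty$ and $\partial_th_r(t)\to\infty$ as $t\to\infty$, so there is $t_0$ with $h_r(t_0)>0$ and $\partial_th_r(t_0)>0$. To apply Lemma~\ref{le805} with $f=h_r$ and some $\alpha>1$, we need
\[
h_r\,\partial_t^2h_r\geq\alpha(\partial_th_r)^2 .
\]
Cauchy--Schwarz gives $(\partial_th_r)^2\leq 2h_r\|v\|_{L^2}^2$. It therefore suffices to show
\[
\partial_t^2h_r\geq(2\alpha)\|v\|_{L^2}^2,
\]
at least for large $t$. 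By Lemma~\ref{le806},
\[
\partial_t^2h_r=-K_1+2\|v\|^2_{L^2}+\text{error}.
\]
We rewrite $-K_1$ using the energy:
\[
-K_1=\frac{p}{2}\Bigl(\|u\|_{H^1}^2+\|v\|_{L^2}^2\Bigr)-(p+2)E(\vec u_0).
\]
This yields
\[
\partial_t^2h_r\geq\Bigl(2+\frac p2\Bigr)\|v\|_{L^2}^2+\frac p2\|u\|_{H^1}^2-(p+2)E(\vec u_0)-\text{error}.
\]
The error is bounded by $C\|u_{0,r}\|_{H^1}\bigl(\|u\|_{H^1}+\|u\|_{L^{p+2}}^{p+1}\bigr)$. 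We control it as in the two cases of Lemma~\ref{lee802}, using the energy identity to bound $\|u\|_{L^{p+2}}^{p+2}$ by $\|u\|_{H^1}^2+\|v\|^2_{L^2}$ plus a constant. Taking $r$ small, the error is absorbed by a small fraction of $\|u\|_{H^1}^2+\|v\|_{L^2}^2$ plus a constant.

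The main obstacle is the constant term $-(p+2)E(\vec u_0)$, which matters when $E(\vec u_0)>0$. We plan to combine the sub-threshold margin with the inequality just proved. Since $\partial_t^2h_r\geq\sigma_0$, the term $\|u\|_{H^1}^2+\|v\|_{L^2}^2\geq 2(p+2)E(\vec u_0)/p+2\sigma/p$ already dominates $(p+2)E(\vec u_0)$ up to the fixed margin. We therefore keep a fraction $\epsilon\|v\|^2_{L^2}$ back, and use $-K_1\geq\sigma$ together with a convex combination of the two lower bounds to get
\[
\partial_t^2h_r\geq\Bigl(2+\frac{p}{2}\theta\Bigr)\|v\|_{L^2}^2+\theta'\sigma\geq 2\alpha\|v\|_{L^2}^2
\]
with $\alpha=1+p\theta/8>1$ for some small $\theta\in(0,1)$. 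Lemma~\ref{le805} then forces $h_r$ to blow up at a finite time $T_0$. This contradicts global existence, because $h_r$ is finite on compact subintervals by \eqref{eqqq02}--\eqref{eeqqq03}. Hence $T^+_{\max}<\infty$, and the blow-up criterion of the local theory gives $\|\vec u(t)\|_{X^1}\to\infty$. For negative times we apply the same argument to the time-reversed solution $(u(-t),-v(-t))$, which solves \eqref{eq02} with the same $E$ and $K_1$.
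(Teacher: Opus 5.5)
Your proposal is correct and follows the paper's proof. Both use Levine's concavity criterion for $h_r$, the uniform margin $-K_1(\vec u(t))\geq\sigma=(p+2)(E(\vec Q)-E(\vec u_0))$ from the third characterization in Lemma~\ref{le101}, Lemma~\ref{lee802}, the Cauchy--Schwarz bound $(\partial_t h_r)^2\leq 2h_r\|v\|_{L^2}^2$, and time reversal for the backward direction.

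The one difference is how you absorb the constant $-(p+2)E(\vec u_0)$. You take a convex combination of $-K_1\geq\sigma$ with the energy identity $-K_1=\frac p2(\|u\|_{H^1}^2+\|v\|_{L^2}^2)-(p+2)E(\vec u_0)$. Choosing $\theta$ small and then $r$ small, this yields $\partial_t^2h_r\geq(2+p\theta/4)\|v\|_{L^2}^2$ at every time. The paper instead splits each time into a large-norm case, with $\alpha=(p+8)/8$, and a bounded-norm case, with $\alpha=1/(1-\eta_0)$. Your single inequality is slightly cleaner. Because it also retains a positive multiple of $\sigma$, it makes Lemma~\ref{lee802} unnecessary.
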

\begin{proof}
Let $(T^{-}_{\max},T^{+}_{\max})$ be the maximal existence interval of $\vec{u}(t)$. By Lemma~\ref{le101} and energy conservation, $K_1$ cannot vanish
on this interval. Moreover, the same variational lemma gives
$G(\vec u(t))\geq E(\vec Q)$, so
\[
K_1(\vec u(t))=(p+2)\big(E(\vec u_0)-G(\vec u(t))\big)
\leq-\sigma,\qquad
\sigma:=(p+2)\big(E(\vec Q)-E(\vec u_0)\big)>0.
\]
We prove forward blow-up first and then use time reversal for the backward
direction.

\medskip
\noindent\textbf{Case 1: Forward blow-up.}

Consider the functional
\[
h_{r}(t)=\frac12\|\partial_{x}^{-1}(u-u_{0,r})\|^{2}_{L^{2}},\qquad t\in(0,T^{+}_{\max}).
\]
Lemma~\ref{le806} and Cauchy--Schwarz give
\begin{equation}
\label{eqq8006}
(\partial_t h_{r})^{2}\leq 2h_{r}\|v\|^{2}_{L^{2}} .
\end{equation}
Also, since
\[
\partial_t^2 h_{r}=-K_{1}(\vec{u})+2\|v\|^{2}_{L^{2}}+\int_{\mathbb{R}}u_{0,r}(-\partial_{x}^{2} u+u-|u|^{p}u)\,dx,
\]
we obtain
\begin{equation}
\label{eq8006}
(\partial_t h_{r})^{2}\leq h_{r}\bigl[\partial_t^2 h_{r}+K_{1}(\vec{u})+C\|u_{0,r}\|_{H^{1}}\bigl(\|u\|_{H^{1}}+\|u\|^{p+1}_{L^{p+2}}\bigr)\bigr].
\end{equation}
Assume that $\vec{u}(t)$ exists globally on $[0,+\infty)$. By Lemma \ref{lee802}, there is a $\sigma_{0}>0$ such that for any $t>0$,
\begin{equation}
\label{eq8007}
\partial_t h_{r}(t)\geq \sigma_{0}t+\partial_t h_{r}(0).
\end{equation}
Thus there exists $\tilde{t}_{0}>0$ such that $\partial_t h_{r}(t)>0$ for all $t\geq \tilde{t}_{0}$. We will prove that, for some fixed $\alpha>1$ and all $t\geq\tilde t_0$,
\begin{equation}
\label{eq8008}
\alpha\,\big(\partial_t h_{r}(t)\big)^{2}\leq h_{r}(t)\partial_t^2 h_{r}(t).
\end{equation}
Using
\[
\partial_t^2 h_{r}(t)\geq \frac{p}{2}\|u\|^{2}_{H^{1}}-(p+2)E(u_{0},v_{0})+\left(\frac{p}{2}+2\right)\|v\|^{2}_{L^{2}}
 -C\|u_{0,r}\|_{H^{1}}\bigl(\|u\|_{H^{1}}+\|u\|^{p+1}_{L^{p+2}}\bigr),
\]
we distinguish two possibilities at each time:

\smallskip
\noindent (1) If
\[
\frac{p}{2}\|u\|^{2}_{H^{1}}-(p+2)E(u_{0},v_{0})+\frac{p}{4}\|v\|^{2}_{L^{2}}
 -C\|u_{0,r}\|_{H^{1}}\bigl(\|u\|_{H^{1}}+\|u\|^{p+1}_{L^{p+2}}\bigr)\geq 0,
\]
then from \eqref{eqq8006} we deduce
\begin{equation}
\label{eq8009}
\frac{p+8}{8}\,\big(\partial_t h_{r}(t)\big)^{2}\leq h_{r}(t)\partial_t^2 h_{r}(t).
\end{equation}

\smallskip
\noindent (2) If
\[
\frac{p}{2}\|u\|^{2}_{H^{1}}-(p+2)E(u_{0},v_{0})+\frac{p}{4}\|v\|^{2}_{L^{2}}
 -C\|u_{0,r}\|_{H^{1}}\bigl(\|u\|_{H^{1}}+\|u\|^{p+1}_{L^{p+2}}\bigr)<0,
\]
then for sufficiently small $r>0$ we have
\[
\begin{aligned}
\frac{p}{2}\|u\|^{2}_{H^{1}}+\frac{p}{4}\|v\|^{2}_{L^{2}}
&\leq (p+2)E(u_{0},v_{0})+C\|u_{0,r}\|_{H^{1}}\|u\|_{H^{1}}\\
&\quad +C\|u_{0,r}\|_{H^{1}}\bigl[\|u\|^{2}_{H^{1}}+\|v\|^{2}_{L^{2}}-2E(u_{0},v_{0})\bigr]^{\frac{p+1}{p+2}}\\
&\leq (p+2)E(u_{0},v_{0})+\|u\|_{H^{1}}
+\bigl[\|u\|^{2}_{H^{1}}+\|v\|^{2}_{L^{2}}-2E(u_{0},v_{0})\bigr]^{\frac{p+1}{p+2}}.
\end{aligned}
\]
Consequently,
\[
\max\bigl\{\|\vec{u}(t)\|^{2}_{X^{1}},\ \partial_t^2 h_{r}(t)\bigr\}<C(\|\vec{u}_{0}\|_{X^{1}})<+\infty.
\]
Since $K_1(\vec u(t))\leq-\sigma$ for $t>\widetilde{t}_{0}$, one can choose $0<\eta_{0}\ll1$ depending only on the initial data such that for sufficiently small $r>0$,
\begin{equation*}
0<\eta_{0} \partial_t^2 h_{r}\leq -K_{1}\big(\vec{u}(t)\big)-C\|u_{0,r}\|_{H^{1}}\bigl(\|u\|_{H^{1}}+\|u\|^{p+1}_{L^{p+2}}\bigr).
\end{equation*}
Therefore, using \eqref{eq8006} gives
\begin{equation}
\label{eq8010}
\frac{1}{1-\eta_{0}}\,\big(\partial_t h_{r}(t)\big)^{2}\leq h_{r}(t)\partial_t^2 h_{r}(t).
\end{equation}
Fix $r$ small enough for both cases, and then choose $\tilde t_0$ using
\eqref{eq8007}. Combining \eqref{eq8009} and \eqref{eq8010}, take
\[
\alpha=\min\left\{\frac{p+8}{8},\frac{1}{1-\eta_{0}}\right\}>1,
\]
then for all $t\geq \tilde{t}_{0}$,
\begin{equation*}
\alpha\,\big(\partial_t h_{r}(t)\big)^{2}\leq h_{r}(t)\partial_t^2 h_{r}(t).
\end{equation*}
Since $h_{r}(\tilde{t}_{0})>0$ and $\partial_t h_{r}(\tilde{t}_{0})>0$, Lemma \ref{le805} implies that for some $\tilde{t}_{0}<T<+\infty$, $\lim\limits_{t\to T}h_{r}(t)=+\infty$.
Moreover, from
\begin{equation*}
h_{r}(t)\leq \left(\int_{0}^{t}\|v\|_{L^{2}}\,ds\right)^{2}+\frac{C}{r^{2}}\|u_{0}\|^{2}_{L^{2}},
\end{equation*}
we obtain
\begin{equation}
\label{eq8011}
\lim\limits_{t\to T}\int_{0}^{t}\|v\|_{L^{2}}\,ds=+\infty.
\end{equation}
This is impossible for a global energy-space solution, since
$v\in C([0,T],L^2)$. Thus $T^+_{\max}<\infty$, and the continuation
criterion gives
$\lim_{t\uparrow T^+_{\max}}\|\vec u(t)\|_{X^1}=+\infty$.

\medskip
\noindent\textbf{Case 2: Backward blow-up.}

The time-reversed pair
\[
\vec{w}(t):=(u,-v)^{T}(-t)=(\tilde{u},\tilde{v})^{T}(t)
\]
also solves \eqref{eq02}. Moreover,
\[
K_1(\vec w(t))=K_1(\vec u(-t))\leq-\sigma<0.
\]
Define
\[
\widetilde{h}_{r}(t)=\frac{1}{2}\|\partial_{x}^{-1}(\tilde{u}-\tilde{u}_{0,r})\|^{2}_{L^{2}},\qquad t\in(0,-T^{-}_{\max}).
\]
Suppose $T^{-}_{\max}=-\infty$. Then Lemma \ref{lee802} gives
\[
\partial_t^2 \widetilde{h}_{r}(t)\geq \widetilde\sigma>0,\qquad t\in(0,+\infty).
\]
The forward argument \eqref{eq8007}--\eqref{eq8011}, applied to $\vec w$,
then gives some $0<\tilde{T}<+\infty$ such that
\[
\lim\limits_{t\to \tilde{T}^{-}}\|\vec{w}(t)\|_{X^{1}}=+\infty.
\]
Consequently,
\[
\lim\limits_{t\to -\tilde{T}}\|\vec{u}(t)\|_{X^{1}}=+\infty,
\]
which contradicts $T^{-}_{\max}=-\infty$. Hence $\vec{u}(t)$ blows up in finite negative time.
\end{proof}
Propositions~\ref{proo001} and \ref{proo002} complete the proof of Theorem~\ref{thm003}.
\section{Local dynamics near the standing wave}
\subsection{Evolution of the local coordinates}
\indent
\par
We use the coordinates and distance defined in
\eqref{distance:coordinates}--\eqref{distance:equivalence}.
On a connected time interval where $d_0(\vec u)$ is sufficiently
small, the closest sign of $\vec Q$ is fixed. Replacing $\vec u$
by $-\vec u$ if necessary, we work near $\vec Q$ and write
\[
\vec u=\vec Q+\vec\eta,\qquad
\vec\eta=\lambda_+\vec\phi^++\lambda_-\vec\phi^-+\vec\gamma.
\]
The two sets of coordinates are related by
\begin{equation}\label{local:coordinate-relations}
\lambda_1=\frac{\lambda_++\lambda_-}{2},\qquad
\lambda_2=\frac{\lambda_+-\lambda_-}{2},\qquad
\lambda_+=\lambda_1+\lambda_2,\quad
\lambda_-=\lambda_1-\lambda_2.
\end{equation}
Thus $\lambda_\pm$ are the coefficients of the unstable and stable
eigenfunctions, respectively, whereas $(\lambda_1,\lambda_2)$ are
the coordinates used in the local energy and distance formulas.
The perturbation satisfies
\[
\partial_t\vec\eta=J\mathcal L\vec\eta
+J\widetilde{\mathcal N}(\vec\eta),
\qquad
\widetilde{\mathcal N}(\vec\eta)=
\begin{pmatrix}
Q^{p+1}+(p+1)Q^p\eta-|Q+\eta|^p(Q+\eta)\\[0.3ex]
0
\end{pmatrix}.
\]
The local energy formula is \eqref{distance:energy-expansion}.
We choose the ejection radius $\delta^*>0$ sufficiently small that
$d_Q\leq\delta^*$ implies that the cutoff in
\eqref{distance:definition} equals one.

Write $\vec\phi^+=(f,g)^T$ and $\vec\phi^-=(-f,g)^T$, as in
Proposition~\ref{le13}. The eigenfunction equations give
\[
\partial_x g=\mu f,\qquad \partial_x(Lf)=\mu g.
\]
With $n(x,\eta)=P\widetilde{\mathcal N}(\vec\eta)$, set
\[
r(\vec\eta):=-\int_{\mathbb R}(\partial_x g)(x)n(x,\eta(x))\,dx
=-\mu\int_{\mathbb R}f(x)n(x,\eta(x))\,dx.
\]
The second components of $\mathcal L\vec\phi^+$ and
$\mathcal L\vec\phi^-$ are both $g$. Hence the two nonlinear
projections coincide:
\[
\langle J\widetilde{\mathcal N}(\vec\eta),
\mathcal L\vec\phi^\pm\rangle
=\langle\partial_xn,g\rangle=r(\vec\eta).
\]
The equations for the spectral coefficients are therefore
\begin{equation}\label{equ001}
\begin{cases}
\partial_t\lambda_+=\mu\lambda_++r(\vec\eta),\\[0.5ex]
\partial_t\lambda_-=-\mu\lambda_-+r(\vec\eta).
\end{cases}
\end{equation}
Adding and subtracting these equations gives
\begin{equation}\label{equ002}
\begin{cases}
\partial_t\lambda_1=\mu\lambda_2+r(\vec\eta),\\[0.5ex]
\partial_t\lambda_2=\mu\lambda_1.
\end{cases}
\end{equation}
In particular, the nonlinear terms cancel exactly from the second
equation. For $\vec\lambda=(\lambda_1,\lambda_2)^T$, we may write
\begin{equation}\label{equ003}
\partial_t\vec\lambda=\mu\sigma_1\vec\lambda
+\tilde R(\vec\eta),\qquad
\sigma_1=\begin{pmatrix}0&1\\1&0\end{pmatrix},\quad
\tilde R(\vec\eta)=\begin{pmatrix}r(\vec\eta)\\0\end{pmatrix}.
\end{equation}
Since $n(x,0)=\partial_\eta n(x,0)=0$ and $\partial_x g=\mu f\in L^\infty$,
Taylor's formula yields
$|r(\vec\eta)|\lesssim\|\vec\eta\|_{X^1}^2$ in the fixed small
neighborhood. The scalar equations remain valid at energy
regularity, as verified in the proof below.

\subsection{Local analysis--one pass theory}
\indent
\par
Fix $\delta^*>0$ as above and choose smaller parameters with
\[
0<\varepsilon\ll R\ll\delta^*\ll1.
\]
We prove three properties of a solution after it exits the
$R$-neighborhood of the solitary waves:
\begin{enumerate}
\item[(1)] Its distance from the solitary waves grows exponentially until it reaches $\delta^*$.

\item[(2)] If it does not scatter forward, it cannot return to the $R$-neighborhood of either $\vec Q$ or $-\vec Q$.

\item[(3)] The signs of $K_1$ and $K_2$ then remain fixed and agree, determining the subsequent dynamics.
\end{enumerate}
\begin{pro}[Ejection from a small neighborhood]\label{pro105}
There exist $\delta^*>0$ and $c_{\mathrm{ej}}>0$ such that the
following holds for every $0<R<\delta^*$. Let $\vec u$ be an even-odd
solution satisfying
\[
E(\vec u)-E(\vec Q)\leq c_{\mathrm{ej}}R^2.
\]
Suppose that, at some time $t_0$ in its lifespan,
\[
d_Q(\vec u(t_0))=R,\qquad
\sigma\frac{d}{dt}d_Q(\vec u(t_0))^2\geq0,
\qquad \sigma\in\{1,-1\}.
\]
Then there is $\tau_*>0$ such that
$d_Q(\vec u(t_0+\sigma\tau_*))=\delta^*$, and the distance is
strictly increasing as a function of $\tau\in(0,\tau_*)$.
Uniformly for $0\leq\tau\leq\tau_*$,
\begin{equation}\label{equ00012}
d_Q(\vec u(t_0+\sigma\tau))
\sim|\vec\lambda(t_0+\sigma\tau)|
\sim|\lambda_2(t_0+\sigma\tau)|
\sim R e^{\mu\tau},
\end{equation}
and
\[
\min_{j=1,2}|K_j(\vec u(t_0+\sigma\tau))|
\gtrsim R e^{\mu\tau}.
\]
All comparison constants are independent of $R$, $t_0$ and $\sigma$.
The energy hypothesis holds for solutions starting in
$U_\varepsilon(\pm\vec Q)\cap X^1_{\mathrm{eo}}$ when
$\varepsilon/R$ is sufficiently small. The outgoing condition is
satisfied if $d_Q(\vec u(t_0+\sigma\tau))>R$ for all sufficiently
small $\tau>0$.
\end{pro}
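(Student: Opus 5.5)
By time reversal ($\vec u(t)\mapsto(u,-v)(-t)$, which swaps $\vec\phi^\pm$, $\lambda_\pm$, flips the sign of $\lambda_2$ and preserves $E$ and $d_Q$), I may take $\sigma=+1$. The plan is to run a standard hyperbolic ODE bootstrap on the two-dimensional system \eqref{equ002}, with $\vec\gamma$ controlled through the energy identity rather than through its own equation. Throughout the bootstrap interval, where $R\le d_Q\le\delta^*$, the cutoff equals one and
\[
d_Q^2=E(\vec u)-E(\vec Q)+2\lambda_2^2,\qquad
\lambda_1^2+\lambda_2^2+\tfrac12\langle\mathcal L\vec\gamma,\vec\gamma\rangle=d_Q^2+O(d_Q^3).
\]
The hypothesis $E-E(\vec Q)\le c_{\mathrm{ej}}R^2$ with $c_{\mathrm{ej}}$ small then gives $2\lambda_2^2\ge d_Q^2-c_{\mathrm{ej}}R^2\ge\tfrac12 d_Q^2$. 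Hence $|\lambda_2|\sim d_Q\sim|\vec\lambda|\sim\|\vec\eta\|_{X^1}$, and $\|\vec\gamma\|_{X^1}\lesssim d_Q$.

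\textbf{Monotonicity.} Since $E$ is conserved, $\frac{d}{dt}d_Q^2=4\lambda_2\dot\lambda_2=4\mu\lambda_1\lambda_2$. I would use the quadratic quantity $\lambda_1\lambda_2$ and compute
\[
\frac{d}{dt}(\lambda_1\lambda_2)=\mu(\lambda_1^2+\lambda_2^2)+\lambda_2 r(\vec\eta),\qquad |r|\lesssim\|\vec\eta\|_{X^1}^2\lesssim d_Q^2.
\]
Because $|\lambda_2|\sim d_Q$ and $d_Q\le\delta^*\ll1$, this yields $\frac{d}{dt}(\lambda_1\lambda_2)\ge\frac{\mu}{2}\lambda_2^2>0$. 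Starting from $\lambda_1\lambda_2(t_0)\ge0$ (the outgoing condition), $\lambda_1\lambda_2$ becomes and stays positive, so $d_Q$ is strictly increasing. This closes the bootstrap that $d_Q$ does not drop back below $R$.

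\textbf{Growth rate.} Write $\lambda_+=\lambda_1+\lambda_2$. Since $\lambda_1\lambda_2\ge0$, we have $|\lambda_+|\ge|\lambda_2|\sim d_Q$, and
\[
\dot\lambda_+=\mu\lambda_++r,\qquad |r|\lesssim d_Q^2\lesssim\delta^*|\lambda_+|.
\]
This gives $|\lambda_+(t_0+\tau)|\sim|\lambda_+(t_0)|e^{\mu\tau}$ with rate in $[\mu-C\delta^*,\mu+C\delta^*]$. The error in the rate does not obviously wash out: I would refine it by writing $\frac{d}{dt}\log|\lambda_+|=\mu+O(d_Q)$ and noting that $\int_0^{\tau}d_Q\,ds\lesssim\int_0^{\tau}Re^{(\mu-C\delta^*)s}\,ds\lesssim d_Q(\tau)\le\delta^*$ is bounded. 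This gives the sharp $\sim Re^{\mu\tau}$ with constants independent of $R$, provided $|\lambda_+(t_0)|\sim R$, which holds since $|\lambda_+(t_0)|\ge|\lambda_2(t_0)|\sim R$ and $|\lambda_+|\le2|\vec\lambda|\lesssim R$. Also $|\lambda_-|$ is bounded by $|\lambda_+|$ plus a small error, consistent with \eqref{equ00012}. Unboundedness of $Re^{\mu\tau}$ then forces $d_Q$ to reach $\delta^*$ at a finite $\tau_*$, with the solution existing up to then since it stays in a bounded $X^1$ neighborhood. The $K_j$ lower bound should follow from Proposition~\ref{pro106}, since $E<E(\vec Q)+\alpha^2$ with $\alpha\sim\sqrt{c_{\mathrm{ej}}}R\le c_0R$ and $d_Q\ge R$, giving $|K_j|\ge c_1d_Q$ (the positive alternative gives $\min\{\|\vec u\|^2,d_Q\}=d_Q$ near $\vec Q$). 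Alternatively I could expand $K_j$ to first order in $\lambda_2$. Regularity of the ODE at energy level is handled by Duhamel and pairing against the smooth $\mathcal L\vec\phi^\pm$.

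\textbf{Remaining claims and main obstacle.} The energy hypothesis for data in $U_\varepsilon$ holds because $|E(\vec u_0)-E(\vec Q)|\lesssim\varepsilon^2$. The outgoing remark follows since $d_Q>R$ just after $t_0$ forces a nonnegative one-sided derivative. The step I expect to be the main obstacle is the uniformity of the exponential comparison as $\tau$ grows to $\tau_*\sim\mu^{-1}\log(\delta^*/R)$, which is unbounded as $R\to0$. This is handled by the integrated-error argument above, using that $\int d_Q$ is dominated by its endpoint value.
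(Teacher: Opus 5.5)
Your proposal is correct and is essentially the paper's proof. Your $\frac{d}{dt}(\lambda_1\lambda_2)=\frac1{4\mu}\partial_t^2d_Q^2$ is exactly the convexity estimate \eqref{ejection:convex}, your $\lambda_+$ is the paper's $w=a+b$ with the same integrated-error argument, and obtaining $|K_j|\gtrsim d_Q$ from Proposition~\ref{pro106} (with radius $R/2$) is a valid substitute for the paper's direct use of Lemma~\ref{le103}.

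There are two minor slips. First, time reversal flips $\lambda_1$ (it sends $\lambda_\pm\mapsto-\lambda_\mp$), not $\lambda_2$. Second, $\int_0^\tau Re^{(\mu-C\delta^*)s}\,ds$ is not an upper bound for $\int_0^\tau d_Q\,ds$. You should obtain $\int_0^\tau d_Q\,ds\lesssim d_Q(\tau)$ by running the lower growth rate backward from $\tau$, $d_Q(s)\lesssim d_Q(\tau)e^{-(\mu-C\delta^*)(\tau-s)}$, which is what the paper does via $\partial_\tau h\geq\frac\mu2h$.
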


\begin{proof}
We first fix a sufficiently small neighborhood in which the local
energy formula and Lemma~\ref{le103} hold, and then choose
$\delta^*$ inside that neighborhood. On a connected interval where
$d_Q\leq\delta^*$, the nearest sign of $\vec Q$ is fixed. The
symmetry $\vec u\mapsto-\vec u$ therefore allows us to work near
$\vec Q$ and to write $\vec\eta=(\eta,\zeta)^T=\vec u-\vec Q$.
Put
\[
\rho=E(\vec u)-E(\vec Q),\qquad
F(t)=d_Q(\vec u(t))^2=\rho+2\lambda_2(t)^2.
\]
The quantity $\rho$ is constant by energy conservation. On the
annulus $R\leq d_Q\leq\delta^*$, choose
$c_{\mathrm{ej}}\leq1/2$ to obtain
\[
2\lambda_2^2=F-\rho
\geq F-c_{\mathrm{ej}}R^2\geq\tfrac12F.
\]
Conversely, the projections in \eqref{distance:coordinates} are
bounded linear functionals of $\vec\eta$, and
$d_Q\sim\|\vec\eta\|_{X^1}$. Thus
\begin{equation}\label{equa001}
d_Q(\vec u)\sim|\vec\lambda|\sim|\lambda_2|,
\qquad \|\vec\eta\|_{X^1}\sim d_Q(\vec u).
\end{equation}
The comparison constants depend only on the fixed local coordinates
and are independent of the inner radius. Only the upper bound on
$\rho$ is needed.

We justify the scalar evolution at energy regularity before
differentiating $F$. Recall that $\vec\phi^+=(f,g)^T$ and
$\vec\phi^-=(-f,g)^T$. The projection formulas give
\[
\lambda_1=\int_{\mathbb R}g\zeta\,dx,
\qquad
\lambda_2=-\langle Lf,\eta\rangle.
\]
The eigenfunction identities $\partial_x g=\mu f$, $\partial_x(Lf)=\mu g$ imply
$g\in H^2$ and $f\in H^3$, hence $Lf\in H^1$. Indeed, the first
identity and $f\in H^1$ give $g\in H^2$, while the second, together
with the bounded smooth potential in $L$, gives $\partial_x^3 f\in L^2$.
These are precisely the regularities needed to test the two
equations
\[
\partial_t\eta=\partial_x\zeta,\qquad
\partial_t\zeta=\partial_x\bigl(L\eta+n(x,\eta)\bigr).
\]
For an energy solution their right-hand sides are continuous in
$H^{-1}$ and $H^{-2}$, respectively. Testing with $Lf\in H^1$
and $g\in H^2$ is therefore legitimate in distributions in time.
Using $\partial_x g=\mu f$ and $\partial_x(Lf)=\mu g$, we find
\[
\begin{aligned}
\partial_t\lambda_1
&=-\langle L\eta,\partial_x g\rangle-\int_{\mathbb R}n(x,\eta)(\partial_x g)\,dx
=\mu\lambda_2+r(t),\\
\partial_t\lambda_2
&=-\langle\partial_x\zeta,Lf\rangle
=\int_{\mathbb R}\zeta\,\partial_x(Lf)\,dx
=\mu\lambda_1,
\end{aligned}
\]
where $r(t)=r(\vec\eta(t))$ is the common nonlinear projection
defined above. To check its continuity and size explicitly,
Taylor's formula in the fixed small neighborhood gives
\[
|n(x,\eta)|\leq C|\eta|^2,\qquad
\|n(\cdot,\eta)-n(\cdot,\widetilde\eta)\|_{L^{1}}
\leq C(\|\eta\|_{L^{2}}+\|\widetilde\eta\|_{L^{2}})
\|\eta-\widetilde\eta\|_{L^{2}}.
\]
Since $\partial_x g=\mu f\in L^\infty$, it follows that $r$ is continuous
in time and
\[
|r(t)|\leq C\|\eta(t)\|_{L^{2}}^2\leq C d_Q(\vec u(t))^2.
\]
The scalar identities thus hold classically with
$\lambda_1,\lambda_2\in C^1$. Moreover, the exact second equation
gives $\lambda_2\in C^2$ and
\[
\partial_t^2\lambda_2=\mu\partial_t\lambda_1
=\mu^2\lambda_2+\mu r(t).
\]
In particular, differentiating $F$ requires no derivative of the
nonlinear term. We obtain
\begin{equation}\label{ejection:convex}
\partial_t^2F=4(\partial_t\lambda_2)^2+4\lambda_2\partial_t^2\lambda_2 =4\mu^2(\lambda_1^2+\lambda_2^2)+4\mu\lambda_2r(t) \geq\mu^2F-Cd_Q^3 \geq\tfrac12\mu^2F>0.
\end{equation}
Here \eqref{equa001} bounds the error, and the last inequality
holds after decreasing the fixed $\delta^*$.

Now use the oriented time $t=t_0+\sigma\tau$ and set
$F_\sigma(\tau)=F(t_0+\sigma\tau)$. We have
\[
F_\sigma(0)=R^2,\qquad \partial_\tau F_\sigma(0)\geq0,
\qquad \partial_\tau^2 F_\sigma(\tau)=(\partial_t^2F)(t_0+\sigma\tau).
\]
If the initial derivative is positive, the solution enters the
annulus immediately. If it vanishes, \eqref{ejection:convex} at
$t_0$ gives $\partial_\tau^2 F_\sigma(0)>0$, again implying immediate outward
motion. On the maximal subsequent interval where
$R\leq d_Q\leq\delta^*$, the same inequality gives
$\partial_\tau F_\sigma(\tau)>0$ for every $\tau>0$. Thus the solution cannot
leave through the inner boundary.

For completeness, put $\omega_0=\mu/\sqrt2$ and
$q(\tau)=\partial_\tau^2 F_\sigma(\tau)-\omega_0^2F_\sigma(\tau)\geq0$.
Variation of constants gives
\[
\begin{aligned}
F_\sigma(\tau)
&=R^2\cosh(\omega_0\tau)
+\frac{\partial_\tau F_\sigma(0)}{\omega_0}\sinh(\omega_0\tau)\\
&\quad+\int_0^\tau
\frac{\sinh(\omega_0(\tau-s))}{\omega_0}q(s)\,ds
\geq R^2\cosh(\omega_0\tau).
\end{aligned}
\]
This excludes remaining in the annulus for all positive $\tau$.
A finite endpoint of the lifespan there is also impossible, since
$\|\vec u\|_{X^1}\leq\|\vec Q\|_{X^1}+C\delta^*$ and the
continuation criterion applies in both time directions. Continuity
therefore gives a finite first time $\tau_*>0$ at which
$d_Q(\vec u(t_0+\sigma\tau_*))=\delta^*$, with strictly increasing
distance on $(0,\tau_*)$.

We next refine this growth bound to obtain the precise rate $\mu$. In the same oriented time, let
\[
a(\tau)=\lambda_2(t_0+\sigma\tau),\qquad
b(\tau)=\sigma\lambda_1(t_0+\sigma\tau),\qquad
w(\tau)=a(\tau)+b(\tau).
\]
The exact equations \eqref{equ002} become
\[
\partial_\tau a=\mu b,\qquad
\partial_\tau b=\mu a+r(t_0+\sigma\tau),\qquad
\partial_\tau w=\mu w+r(t_0+\sigma\tau).
\]
In particular, $\partial_\tau F_\sigma=4\mu ab\geq0$, so $a$ and $b$ have
the same sign whenever both are nonzero. Since
$|a|\sim d_Q\geq R$ by \eqref{equa001},
\[
|w|=|a|+|b|\sim d_Q.
\]
Thus $w$ has a fixed nonzero sign on $[0,\tau_*]$.
Set $h=|w|$. Its equation and the bound on $r$ imply
\[
\left|\frac{\partial_\tau h}h-\mu\right|
\leq\frac{|r(t_0+\sigma\tau)|}{h}
\leq C d_Q,
\qquad \partial_\tau h\geq\frac\mu2h,
\]
where the last inequality follows by taking
$C\delta^*\leq\mu/2$. This lower growth bound makes the error
integrable with a constant independent of the ejection time:
\[
\int_0^\tau d_Q(\vec u(t_0+\sigma s))\,ds
\leq C\int_0^\tau h(s)\,ds
\leq\frac{2C}{\mu}\bigl(h(\tau)-h(0)\bigr)
\leq C\delta^*.
\]
Consequently,
\[
\left|\log\frac{h(\tau)}{h(0)}-\mu\tau\right|
\leq C\delta^*,\qquad 0\leq\tau\leq\tau_*,
\]
or equivalently,
\[
e^{-C\delta^*}h(0)e^{\mu\tau}
\leq h(\tau)\leq e^{C\delta^*}h(0)e^{\mu\tau}.
\]
Since $h(0)\sim R$ and $h\sim d_Q$, this proves
\eqref{equ00012}. All comparison constants depend only on the
fixed neighborhood and the eigenfunctions. Moreover,
$w=\lambda_+$ for $\sigma=1$, whereas $w=-\lambda_-$ for
$\sigma=-1$. Hence the forward unstable mode and the backward
stable mode have the corresponding lower bounds. These estimates hold for either sign of the hyperbolic coordinates.

Finally, Lemma~\ref{le103} gives fixed constants $c_2,C_2>0$ such
that
\begin{equation}\label{equ0012}
c_2d_Q(\vec u)^2\leq\rho+C_2K_j(\vec u)^2,
\qquad j=1,2.
\end{equation}
Taking $c_{\mathrm{ej}}\leq c_2/2$ and using $d_Q\geq R$, we
absorb the energy term:
\[
C_2K_j(\vec u)^2
\geq c_2d_Q^2-c_{\mathrm{ej}}R^2
\geq\tfrac12c_2d_Q^2.
\]
The lower bounds for $|K_j|$ follow from \eqref{equ00012}.
For initial data in $U_\varepsilon(\pm\vec Q)$, the energy
expansion gives $|\rho|\leq C\varepsilon^2$, so the hypothesis
holds when $\varepsilon/R$ is sufficiently small. The stated
outgoing condition implies $\partial_\tau F_\sigma(0)\geq0$ by
differentiability.

The choices of $\delta^*$ and $c_{\mathrm{ej}}$ are independent of
the inner radius. The argument therefore applies with any
$m\in(0,\delta^*)$ in place of $R$ whenever
$\rho\leq c_{\mathrm{ej}}m^2$. In particular, the original energy
bound suffices for every $m\in[R,\delta^*)$. At a local minimum
with $d_Q=m$, the derivative of $d_Q^2$ vanishes; applying the
argument with $\sigma=1$ and $\sigma=-1$ gives ejection in both
time directions until the distance reaches $\delta^*$. The
constants remain the same even when $m$ is close to $\delta^*$.
\end{proof}

To prove that an outgoing solution cannot return, we use the
regularized virial functional
\begin{center}
$\mathcal{T}_{r}(u,v)=\langle\partial_{x}^{-1}[u(t)-u_{r}(t_{0})],\, v(t)-v(t_{1})\rangle,\qquad t_{0}, t_{1}\in (T^{-}_{\max},T^{+}_{\max})$,
\end{center}
where $(T^{-}_{\max},T^{+}_{\max})$ is the maximal lifespan.
Integration by parts gives
\begin{equation}
\label{eqq004}
\frac{d}{dt}\mathcal{T}_{r}(u,v)(t)=\int_{\mathbb{R}}\left[-(\partial_{x}u)^{2}-u^{2}+v^{2}+|u|^{p+2}-v(t_{1})v+u_{r}(t_{0})(-\partial_{x}^{2} u+u-|u|^{p}u)\right]dx.
\end{equation}

Both virial identities extend from smooth solutions to energy
solutions by approximation, since their right-hand sides are continuous
in $X^1$ on compact time intervals.

\begin{pro}[One-pass]\label{pro001}
Let $p>6$. There exist $0<\varepsilon\ll R\ll1$ such that every
even-odd solution with initial data in $U_\varepsilon(\vec Q)$ which
does not scatter forward has one of the following properties:

(1) It is global forward and $d_Q(\vec u(t))\lesssim\varepsilon$
for every $t\geq0$.

(2) There is a first exit time $t_0\in(0,T^+_{\max})$ such that
\[
d_Q(\vec u(t))<R\quad(0\leq t<t_0),\qquad
d_Q(\vec u(t))>R\quad(t_0<t<T^+_{\max}).
\]
For an exit in the negative region $K_1<0$, the no-return conclusion
holds without the non-scattering assumption.
\end{pro}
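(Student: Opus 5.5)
The plan is to fix $\delta^*$ and $c_{\mathrm{ej}}$ from Proposition~\ref{pro105}, and then choose $R\ll\delta^*$ and $\varepsilon\ll R$. The smallness conditions are: $R\le\delta_0/2$, so that Theorem~\ref{thm001} applies; $R<R_0$ from Proposition~\ref{pro106}; and $\varepsilon/R$ small enough that $|E(\vec u)-E(\vec Q)|\le C\varepsilon^2\le\min\{c_{\mathrm{ej}},c_0^2\}R^2$.

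\emph{Case (1).} Suppose the solution never satisfies $d_Q(\vec u(t))>R$ on its forward lifespan. Then $\|\vec u(t)-\vec Q\|_{X^1}\lesssim R$ for all $t$, so the continuation criterion gives global forward existence. Theorem~\ref{thm001}(2) then places $\vec u(t)$ in $\mathcal M$, and Theorem~\ref{thm001}(1) gives $d_Q(\vec u(t))\lesssim\|\vec u_0-\vec Q\|_{X^1}\lesssim\varepsilon$. Closeness to $-\vec Q$ is excluded by continuity, since $d_0(\vec u(0))\le\varepsilon$ near $\vec Q$.

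\emph{Case (2), setup.} Otherwise let $t_0$ be the first time with $d_Q(\vec u(t_0))=R$ after which $d_Q>R$ for small positive times. Suppose, for contradiction, that there is a first return time $t_1>t_0$ with $d_Q(\vec u(t_1))=R$ (return to either $\pm\vec Q$ is covered, since $d_Q$ measures distance to both).
\begin{itemize}
\item On $(t_0,t_1)$ the hypotheses of Proposition~\ref{pro106} hold with $\alpha\sim\varepsilon$. Hence $K_1$ and $K_2$ never vanish, share a fixed sign by continuity, and satisfy $|K_j|\gtrsim d_Q$ in the negative case.
\item Proposition~\ref{pro105} with $\sigma=1$ at $t_0$ gives exponential growth $d_Q\sim Re^{\mu(t-t_0)}$ up to a time $t_0+\tau_0$ where $d_Q=\delta^*$.
\item At $t_1$ the solution is incoming, so $-\frac{d}{dt}d_Q^2(t_1)\ge0$. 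Proposition~\ref{pro105} with $\sigma=-1$ then gives $d_Q\sim Re^{\mu(t_1-t)}$ on a backward interval ending at $t_1-\tau_1$, where $d_Q=\delta^*$.
\item In particular $t_1-t_0\ge\tau_0+\tau_1\gtrsim\log(\delta^*/R)$, and the total time spent with $d_Q\le\delta^*$ carries $\int d_Q\,dt\sim\delta^*$ on each ejection leg.
\end{itemize}

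\emph{Negative channel, $K_1<0$.} Here I would use $\mathcal T_r$ with $r$ tied to $R$. In \eqref{eqq004} the leading part is $-K_1+2\|v\|^2$ minus the cross term $\langle v(t_1),v\rangle$. The term $u_r(t_0)$ contributes $O(\|u_r(t_0)\|_{H^1})$, which is small because near $\vec Q$ the low-frequency part is controlled by $\Pi_r Q$ plus $O(R)$. The cross term is handled by the $\lambda$-structure: near $t_1$, $v\approx v(t_1)$, and I aim for $\partial_t\mathcal T_R\ge c(-K_1)-C\sqrt R$. Integrating over the ejection legs, where $-K_1\gtrsim d_Q\sim Re^{\mu\tau}$, gives
\[
|\mathcal T_R(t_1)-\mathcal T_R(t_0)|\gtrsim\delta^*.
\]
Both endpoint values are $O(\sqrt R)$, because $v(t)-v(t_1)$ and $\partial_x^{-1}(u-u_r(t_0))$ are small at the ends. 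This yields the contradiction $\delta^*\lesssim\sqrt R$. This argument uses no non-scattering assumption, which matches the last sentence of the statement.

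\emph{Positive channel, $K_1>0$.} Here I would use $\mathcal I_\varphi$ with $\varphi=\phi(R\,\cdot)/R$, whose endpoint values are $O(R)$ since $\vec u(t_{0,1})$ is $R$-close to $\pm\vec Q$ with $Qv$-type products. The derivative is $-K_2+\mathrm{err}$.
\begin{itemize}
\item Where $d_Q\le\delta^*$, the solution stays near $\pm\vec Q$, which is exponentially localized, so $\mathrm{err}\lesssim R^2+\delta^{*2}\ll K_2\sim d_Q$. Summed over both legs this gives a drop $\gtrsim\delta^*$.
\item Where $d_Q>\delta^*$, I need $\partial_t\mathcal I_\varphi\le0$. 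This is the main obstacle. I would argue by contradiction over a sequence of such solutions with $\varepsilon_n,R_n\to0$ and times in this middle region, using the non-scattering hypothesis and the fact that the final leg near $\pm\vec Q$ has length $\to\infty$, so that the scattering norm diverges. Then run the even-odd profile decomposition (Proposition~\ref{proo1}) at energy $\to E(\vec Q)$.
\item Escaping pairs have constituent energy at most $E(\vec Q)/2<c_*E(\vec Q)$ and scatter by Proposition~\ref{prop:strict-gap}. Centered profiles strictly below $E(\vec Q)$ scatter by Proposition~\ref{proo001}. Hence a single centered profile carries everything, which gives precompactness with uniformly small tails and no spatial center.
\item Together with the positive alternative of Proposition~\ref{pro106}, this gives $K_2\ge c\,\delta^*$ and tail errors $\le K_2/2$ (this is the content of the compactness lemma cited in the outline). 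Hence $\partial_t\mathcal I_\varphi\le0$ there.
\end{itemize}
Integrating, $\delta^*\lesssim|\mathcal I_\varphi(t_1)-\mathcal I_\varphi(t_0)|\lesssim R$, a contradiction. Hence no return occurs, and $d_Q>R$ on $(t_0,T^+_{\max})$.
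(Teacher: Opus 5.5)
Your overall architecture is the paper's: a first exit $t_0$ and a hypothetical first return $t_1$, the regularized functional $\mathcal T_r$ with $r=R$ when $K_1<0$, and $\mathcal I_\varphi$ with $\varphi=\phi(R\,\cdot)/R$ plus a threshold-compactness argument on $\{d_Q>\delta^*\}$ when $K_1>0$. There is, however, a genuine gap in the negative channel. Your bound $\partial_t\mathcal T_R\geq c(-K_1)-C\sqrt R$ is guaranteed nonnegative only where $d_Q\gtrsim\sqrt R$. Proposition~\ref{pro106} gives no more than $-K_1\gtrsim d_Q$, and during the excursion $d_Q$ can be as small as just above $R\ll\sqrt R$. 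You integrate only over the two ejection legs. But the middle part $[T_0,T_1]$ has no a priori length bound, and it may re-enter the annulus $R<d_Q\leq\delta^*$ any finite number of times, each time with minimum close to $R$. There the $-C\sqrt R$ error accumulates, and nothing in your argument prevents it from cancelling the $\gtrsim\delta^*$ gained on the legs. The paper handles this by splitting $[T_0,T_1]$ into two parts:
\begin{itemize}
\item $I_{\mathrm{out}}=\{d_Q>\delta^*\}$, where $-K_1\gtrsim\delta^*\gg\sqrt R$ pointwise;
\item $I_{\mathrm{in}}$, which by the convexity \eqref{ejection:convex} is a finite union of intervals $[\tau_n^-,\tau_n^+]$, each with a unique interior minimum $m_n>R$.
\end{itemize}
It then applies Proposition~\ref{pro105} at each minimum, in both time directions, with inner radius $m_n$. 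This is admissible since $E(\vec u)-E(\vec Q)\leq C\varepsilon^2\leq c_{\mathrm{ej}}m_n^2$. The result is two exponential legs per visit. Hence either $m_n>R^{1/4}$ and the integrand is pointwise positive, or $\int_{\tau_n^-}^{\tau_n^+}d_Q\,dt\gtrsim\delta^*$ against an error $\lesssim\sqrt R\log(\delta^*/R)$. This is where the conditions $R^{1/4}\ll\delta^*$ and $\sqrt R(1+\log(\delta^*/R))\ll\delta^*$ enter. Both are missing from your parameter choice, and the second is needed even on your two ejection legs.

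Several further steps need repair.
\begin{itemize}
\item The low-frequency term $\int u_r(t_0)(-\partial_x^2u+u-|u|^pu)\,dx$ is bounded by $C(R+\sqrt r)\|u\|_{H^1}+C\sqrt r\|u\|_{L^{p+1}}^{p+1}$. In the negative channel $\|\vec u(t)\|_{X^1}$ has no a priori bound during the excursion. The paper absorbs these norms into the $-K_1$ term via $\|\vec u\|_{X^1}^2+\|u\|_{L^{p+2}}^{p+2}=(4+\frac8p)E-(1+\frac4p)K_1$.
\item The cross term is controlled by $\|v(t_1)\|_{L^2}\lesssim R$, not by $v\approx v(t_1)$ near $t_1$.
\item At the endpoints, $\mathcal T_r(t_1)=0$ exactly. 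At $t_0$ the factor $\partial_x^{-1}\Pi_r^\perp u(t_0)$ has size $r^{-1/2}$, so only the product is $O(\sqrt R)$.
\item In the positive channel, the tail error where $d_Q\leq\delta^*$ is $\lesssim d_Q^2+e^{-\beta/R}$. Your bound $R^2+\delta^{*2}$ is not $\ll d_Q$ near $d_Q=R$, since your own negative-channel contradiction forces $R\ll\delta^{*2}$.
\item Lemma~\ref{lem:threshold-compactness} gives only $\vec u_n(t_n)=\U(-s_n)\vec\psi+o_{X^1}(1)$, not precompactness. You must also exclude $|s_n|\to\infty$; the paper uses free decay of $\|P\vec u_n(t_n)\|_{L^\infty}$ to make the virial derivative nonpositive directly.
\item Applying that lemma requires the gap $G(\vec u_n(t_n))\leq E(\vec Q)-\kappa$, which follows from $K_1\geq c\,\delta^*$; you should state this.
\end{itemize}
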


\begin{proof}
Fix $\delta^*>0$ as in Proposition~\ref{pro105}, decreasing it so that
$\delta^*<R_0/2$, where $R_0$ is from Proposition~\ref{pro106}. We will choose
$R$ sufficiently small, and then $\varepsilon$ sufficiently small
relative to $R$. In particular, we require
\[
R^{1/4}\ll\delta^*,\qquad
\sqrt R\bigl(1+\log(\delta^*/R)\bigr)\ll\delta^*,\qquad
\varepsilon\ll R.
\]
The constants in the estimates below are independent of the exit and
return times. Theorem~\ref{thm001} gives the first alternative for
initial data in $\mathcal M$. If the initial data lie outside
$\mathcal M$, the solution must leave the set $\{d_Q<R\}$: otherwise
the energy norm stays bounded, the solution extends globally and
remains in a sufficiently small neighborhood of $\vec Q$, contrary
to the characterization of $\mathcal M$ in that theorem.

Let $t_0$ be the first exit time. By Proposition~\ref{pro105}, the
distance increases immediately after $t_0$. Suppose that $t_1>t_0$
is its first return to $d_Q=R$. Then
\[
d_Q(\vec u(t_0))=d_Q(\vec u(t_1))=R,\qquad
d_Q(\vec u(t))>R\quad(t_0<t<t_1).
\]
Because $t_0$ is the first exit and the two small neighborhoods of
$\pm\vec Q$ are disjoint, $\|\vec u(t_0)-\vec Q\|_{X^1}\lesssim R$.
Note that
\[|E(\vec u(t))-E(\vec Q)|=|E(\vec u_0)-E(\vec Q)|
\leq C\varepsilon^2\ll R^2.\]
Proposition~\ref{pro106} implies, throughout $(t_0,t_1)$,
\begin{center}
 $K_{j}(\vec{u})\gtrsim \min\{\|\vec{u}\|^{2}_{X^{1}},d_{Q}(\vec{u})\}$ or $K_j(\vec u)\leq-c\,d_Q(\vec u),\quad j=1,2.$
\end{center}
Since $E(\vec u)>0$ for small $\varepsilon$, the solution never
equals zero. Thus $K_1$ and $K_2$ have the same, constant sign on
this interval. Let $T_0$ be the first time after $t_0$ at which
$d_Q=\delta^*$, and $T_1$ the last such time before $t_1$.
Proposition~\ref{pro105} gives these times in both directions.
Moreover, $T_0<T_1$: otherwise $d_Q^2$ would be strictly convex
throughout the return interval, contradicting its equal endpoint
values and larger interior values. The ejection estimates give
\begin{center}
 $d_{Q}\big(\vec{u}(T_{0})\big)=\delta^{*},\quad d_{Q}\big(\vec{u}(t)\big)\sim Re^{\mu(t-t_{0})}$ for all $t\in(t_{0},T_{0})$,
\end{center}
and
\begin{center}
 $d_{Q}\big(\vec{u}(T_{1})\big)=\delta^{*},\quad d_{Q}\big(\vec{u}(t)\big)\sim Re^{\mu(t_{1}-t)}$ for all $t\in(T_{1},t_{1})$.
\end{center}
It follows that the time durations satisfy
\begin{center}
 $e^{\mu(T_{0}-t_{0})}\sim e^{\mu(t_{1}-T_{1})}\sim \delta^{*}/R$.
\end{center}
Write $d(t)=d_Q(\vec u(t))$ and divide $[T_0,T_1]$ into
\[
I_{\mathrm{in}}=\{t:d(t)\leq\delta^*\},\qquad
I_{\mathrm{out}}=\{t:d(t)>\delta^*\}.
\]
The set $\{t\in[T_0,T_1]:d(t)=\delta^*\}$ is finite. Indeed, near
any such time the closest sign of $\vec Q$ is fixed, $F=d^2$ is
$C^2$, and the convexity estimate \eqref{ejection:convex} gives
$\partial_t^2F\geq c(\delta^*)^2>0$ at the level set. Each level point is
therefore isolated, either because $\partial_tF\ne0$ or because it is a
strict local minimum. Compactness then gives finiteness.
Consequently, apart from finitely many isolated points,
\[
I_{\mathrm{in}}=\bigcup_{n\in\mathcal J}[\tau_n^-,\tau_n^+],
\qquad d(\tau_n^-)=d(\tau_n^+)=\delta^*,
\]
where $\mathcal J$ is finite and the intervals have disjoint
interiors. On each interval let $t_n^*$ be the unique minimum of
$d$, and set $m_n=d(t_n^*)>R$. The isolated points do not affect
the integrals below.
We treat the two signs separately.

\medskip
\noindent\textbf{Case 1}: $K_{1}(\vec{u})<0$.

Step 1: Use the regularized virial functional
\begin{center}
$\mathcal{T}_{r}(u,v)=\langle\partial_{x}^{-1}[u(t)-u_{r}(t_{0})],\, v(t)-v(t_{1})\rangle,\quad t_{0}, t_{1}\in (T^{-}_{\max},T^{+}_{\max})$.
\end{center}
For the solution $\vec{u}(t)$ with $d_{Q}[\vec{u}(t_{0})]=d_{Q}[\vec{u}(t_{1})]=R$, the virial identity \eqref{eqq004} gives
\begin{equation}
\label{eqq010}
\begin{aligned}
\frac{d}{dt}\mathcal{T}_{r}(u,v)(t)&=\int_{\mathbb{R}}\left[-(\partial_{x}u)^{2}-u^{2}+v^{2}+|u|^{p+2}-v(t_{1})v+u_{r}(t_{0})(-\partial_{x}^{2} u+u-|u|^{p}u)\right]dx\\
&=-K_{1}(\vec{u})+\int_{\mathbb{R}}\left[2v^{2}-v(t_{1})v+u_{r}(t_{0})(-\partial_{x}^{2} u+u-|u|^{p}u)\right]dx.
\end{aligned}
\end{equation}
Since $\|v(t_{1})\|_{L^{2}}\lesssim d_{Q}(\vec{u}(t_{1}))=R$, we have
\begin{equation}
\label{eqq011}
\frac{d}{dt}\mathcal{T}_{r}(u,v)(t)\geq -K_{1}(\vec{u})-CR\,d_{Q}(\vec{u})+\int_{\mathbb{R}}u_{r}(t_{0})(-\partial_{x}^{2} u+u-|u|^{p}u)\,dx.
\end{equation}
As $R\ll1$, we further obtain
\begin{equation}
\label{eqq012}
\frac{d}{dt}\mathcal{T}_{r}(u,v)(t)\geq -\frac{1}{2}K_{1}(\vec{u})+\int_{\mathbb{R}}u_{r}(t_{0})(-\partial_{x}^{2} u+u-|u|^{p}u)\,dx.
\end{equation}
Step 2: With the projection $\Pi_r$ from \eqref{equu0015}, we have
$u_r(t_0)=\Pi_r u(t_0)$.
By integration by parts and H\"older's inequality,
\begin{equation}\label{eqq013}
\left|\int_{\mathbb R}u_r(t_0) \bigl(-\partial_x^2 u+u-|u|^pu\bigr)\,dx\right| \leq\|\Pi_r u(t_0)\|_{H^1}\|u\|_{H^1} +\|\Pi_r u(t_0)\|_{L^{\infty}}\|u\|_{L^{p+1}}^{p+1} =:I+II.
\end{equation}
Since $t_0$ is the first exit from the neighborhood of $\vec Q$,
$\|\vec u(t_0)-\vec Q\|_{X^1}\lesssim R$. Plancherel's identity
and $Q\in L^1$ give, for $0<r<1$,
\begin{equation}\label{eqq014}
\|\Pi_r u(t_0)\|_{H^1} \leq\|u(t_0)-Q\|_{H^1}+\|\Pi_rQ\|_{H^1} \leq CR+C\left(\int_{-r}^r|(\mathcal{F}Q)(\xi)|^2\,d\xi\right)^{1/2} \leq C(R+\sqrt r).
\end{equation}
The inverse Fourier formula and Cauchy--Schwarz also yield
\begin{equation}\label{eqq015}
\|\Pi_r u(t_0)\|_{L^{\infty}}
\leq\frac1{\sqrt{2\pi}}\int_{-r}^r|(\mathcal{F}_x u)(t_0,\xi)|\,d\xi
\leq\sqrt{\frac r\pi}\|u(t_0)\|_{L^{2}}\leq C\sqrt r.
\end{equation}
Furthermore, interpolation and Young's inequality give
\[
\|u\|_{L^{p+1}}^{p+1}
\leq\|u\|_{L^{2}}^{2/p}\|u\|_{L^{p+2}}^{p+1-2/p}
\leq\frac1p\|u\|_{L^{2}}^2+\frac{p-1}{p}\|u\|_{L^{p+2}}^{p+2}.
\]
Consequently,
\begin{equation}\label{eqq016}
I+II\leq C(R+\sqrt r)\|u\|_{H^1}
+C\sqrt r\bigl(\|u\|_{L^{2}}^2+\|u\|_{L^{p+2}}^{p+2}\bigr).
\end{equation}
Combining this with \eqref{eqq012}, we obtain
\begin{equation}\label{eqq017}
\partial_t\mathcal T_r(t)\geq-\frac12K_1(\vec u(t))
-C(R+\sqrt r)\|u\|_{H^1}
-C\sqrt r\bigl(\|u\|_{L^{2}}^2+\|u\|_{L^{p+2}}^{p+2}\bigr).
\end{equation}
Step 3: Since $K_{1}(\vec{u})<0$, we have $\|u\|_{H^{1}}>C(p)>0$, which implies $\|u\|_{H^{1}}\lesssim \|u\|^{2}_{H^{1}}$. Thus, from (\ref{eqq017}),
\begin{equation}
\label{eqq018}
\frac{d}{dt}\mathcal{T}_{r}(u,v)(t)\geq -\frac{1}{2}K_{1}(\vec{u})-C(\sqrt{r}+R)\bigl(\|\vec{u}\|^{2}_{X^{1}}+\|u\|^{p+2}_{L^{p+2}}\bigr).
\end{equation}
Using the identity
\[
\|\vec{u}\|^{2}_{X^{1}}+\|u\|^{p+2}_{L^{p+2}}=\left(4+\frac{8}{p}\right)E(\vec{u})-\left(1+\frac{4}{p}\right)K_{1}(\vec{u}),
\]
we get
\begin{equation}
\label{eqq019}
\frac{d}{dt}\mathcal{T}_{r}(u,v)(t)\geq -\frac{1}{2}K_{1}(\vec{u})-C(\sqrt{r}+R)\left[\left(4+\frac{8}{p}\right)E(\vec{u})-\left(1+\frac{4}{p}\right)K_{1}(\vec{u})\right].
\end{equation}
Taking $r=R$ and using $R\ll1$, we obtain
\begin{equation}
\label{eqq019b}
\frac{d}{dt}\mathcal{T}_{r}(u,v)(t)\geq -\frac{1}{4}K_{1}(\vec{u})-C\sqrt{R}\left(4+\frac{8}{p}\right)E(\vec{u}).
\end{equation}
Since $\mathcal T_r(t_1)=0$, integration gives
\begin{equation}
\label{eqq020}
-\mathcal{T}_{r}(u,v)(t_{0})
\geq -\int_{t_{0}}^{t_{1}}\left[\frac{1}{4}K_{1}\big(\vec{u}(t)\big)+C\sqrt{R}\left(4+\frac{8}{p}\right)E(\vec{u}_{0})\right]dt.
\end{equation}
Now,
\begin{equation}
\label{eqq021}
\begin{aligned}
&-\int_{t_{0}}^{t_{1}}\left[\frac{1}{4}K_{1}\big(\vec{u}(t)\big)+C\sqrt{R}\left(4+\frac{8}{p}\right)E(\vec{u}_{0})\right]dt\\
&=-\int_{t_{0}}^{T_{0}}\left[\frac{1}{4}K_{1}\big(\vec{u}(t)\big)+C\sqrt{R}\left(4+\frac{8}{p}\right)E(\vec{u}_{0})\right]dt\\
&\quad-\int_{T_{1}}^{t_{1}}\left[\frac{1}{4}K_{1}\big(\vec{u}(t)\big)+C\sqrt{R}\left(4+\frac{8}{p}\right)E(\vec{u}_{0})\right]dt\\
&\quad-\int_{I_{\mathrm{out}}}\left[\frac{1}{4}K_{1}\big(\vec{u}(t)\big)+C\sqrt{R}\left(4+\frac{8}{p}\right)E(\vec{u}_{0})\right]dt\\
&\quad-\int_{I_{\mathrm{in}}}\left[\frac{1}{4}K_{1}\big(\vec{u}(t)\big)+C\sqrt{R}\left(4+\frac{8}{p}\right)E(\vec{u}_{0})\right]dt.
\end{aligned}
\end{equation}
For $t\in I_{\mathrm{out}}$, Proposition \ref{pro106} gives $K_1(\vec u)\leq-c\,d_Q(\vec u)\leq-c\delta^*$. Therefore,
\begin{equation}
\label{eqq022}
-\int_{I_{\mathrm{out}}}\left[\frac{1}{4}K_{1}\big(\vec{u}(t)\big)+C\sqrt{R}\left(4+\frac{8}{p}\right)E(\vec{u}_{0})\right]dt\geq0.
\end{equation}
On $I_{\mathrm{in}}$, we have
\begin{equation}
\label{eqq023}
\begin{aligned}
&-\int_{I_{\mathrm{in}}}\left[\frac{1}{4}K_{1}\big(\vec{u}(t)\big)+C\sqrt{R}\left(4+\frac{8}{p}\right)E(\vec{u}_{0})\right]dt\\
&=-\sum_{n\in \mathcal J}\int_{\tau_n^-}^{\tau_n^+}\left[\frac{1}{4}K_{1}\big(\vec{u}(t)\big)+C\sqrt{R}\left(4+\frac{8}{p}\right)E(\vec{u}_{0})\right]dt.
\end{aligned}
\end{equation}
If $d_{Q}[\vec{u}(t_n^*)]>R^{1/4}$, then
\begin{equation}
\label{eqq024}
-\int_{\tau_n^-}^{\tau_n^+}\left[\frac{1}{4}K_{1}\big(\vec{u}(t)\big)+C\sqrt{R}\left(4+\frac{8}{p}\right)E(\vec{u}_{0})\right]dt
\geq C\int_{\tau_n^-}^{\tau_n^+}d_{Q}[\vec{u}(t)]\,dt>0.
\end{equation}
If $d_{Q}[\vec{u}(t_n^*)]\leq R^{1/4}\ll\delta^{*}$, then
\begin{equation}
\label{eqq025}
-\int_{\tau_n^-}^{\tau_n^+}\left[\frac{1}{4}K_{1}\big(\vec{u}(t)\big)+C\sqrt{R}\left(4+\frac{8}{p}\right)E(\vec{u}_{0})\right]dt
\geq \int_{\tau_n^-}^{\tau_n^+}\left[C d_{Q}[\vec{u}(t)]-C\sqrt{R}\left(4+\frac{8}{p}\right)E(\vec{u}_{0})\right]dt.
\end{equation}
Apply Proposition~\ref{pro105} in both time directions from $t_n^*$,
with the inner radius $m_n$. Since $\varepsilon\ll R<m_n$, its
constants are uniform over all these intervals. We obtain
\begin{center}
$d_{Q}[\vec{u}(t)]\sim d_{Q}[\vec{u}(t_n^*)] e^{\mu(t_n^*-t)}$ for $t\in[\tau_n^-,t_n^*]$
\end{center}
and
\begin{center}
$d_{Q}[\vec{u}(t)]\sim d_{Q}[\vec{u}(t_n^*)] e^{\mu(t-t_n^*)}$ for $t\in[t_n^*,\tau_n^+]$.
\end{center}
Thus,
\begin{center}
$e^{\mu(t_n^*-\tau_n^-)}\sim e^{\mu(\tau_n^+-t_n^*)}\sim \delta^{*}/d_{Q}[\vec{u}(t_n^*)]$.
\end{center}
Since $d_{Q}[\vec{u}(t_n^*)]\leq R^{1/4}\ll1$, we get
\begin{equation}
\label{eqq026}
\int_{\tau_n^-}^{\tau_n^+}d_{Q}[\vec{u}(t)]\,dt\geq c\delta^*-CR^{1/4}\geq\tfrac c2\delta^*.
\end{equation}
Moreover,
\begin{equation}
\label{eqq027}
\int_{\tau_n^-}^{\tau_n^+}\sqrt{R}\left(4+\frac{8}{p}\right)E(\vec{u}_{0})\,dt
\lesssim \sqrt{R}(\tau_n^+-\tau_n^-)
\lesssim \sqrt{R}\log(\delta^{*}/R)\ll \delta^{*}.
\end{equation}
Therefore, from (\ref{eqq025})--(\ref{eqq027}), if $d_{Q}[\vec{u}(t_n^*)]\leq R^{1/4}$, then
\begin{equation}
\label{eqq028}
-\int_{\tau_n^-}^{\tau_n^+}\left[\frac{1}{4}K_{1}\big(\vec{u}(t)\big)+C\sqrt{R}\left(4+\frac{8}{p}\right)E(\vec{u}_{0})\right]dt>0.
\end{equation}
Combining (\ref{eqq023}), (\ref{eqq024}), and (\ref{eqq028}) gives
\begin{equation}
\label{eqq029}
-\int_{I_{\mathrm{in}}}\left[\frac{1}{4}K_{1}\big(\vec{u}(t)\big)+C\sqrt{R}\left(4+\frac{8}{p}\right)E(\vec{u}_{0})\right]dt\geq0.
\end{equation}
The initial and final ejection segments satisfy the same estimates:
\[
\int_{t_0}^{T_0}\left[-\frac14K_1(\vec u(t)) -C\sqrt R\left(4+\frac8p\right)E(\vec u_0)\right]dt \geq c\delta^*-CR-C\sqrt R\log(\delta^*/R) \geq\frac c2\delta^*,
\]
and likewise on $[T_1,t_1]$. Here $E(\vec u_0)$ stays in a fixed
bounded neighborhood of $E(\vec Q)$, and the choice of $R$ makes
both error terms small. Adding these two bounds and the nonnegative
contributions from $I_{\mathrm{in}}$ and $I_{\mathrm{out}}$ to
\eqref{eqq020}, we obtain
\begin{equation}
\label{eqq030}
-\mathcal{T}_{r}(u,v)(t_{0})
\gtrsim \delta^{*}.
\end{equation}
However, for $r=R\ll\delta^{*}\ll1$,
\begin{equation}
\label{eqq031}
\begin{aligned}
|\mathcal{T}_{r}(u,v)(t_{0})|
&=|\langle\partial_{x}^{-1}[u(t_{0})-u_{r}(t_{0})],\, v(t_{0})-v(t_{1})\rangle|\\
&\lesssim R\|\partial_{x}^{-1}[u(t_{0})-u_{r}(t_{0})]\|_{L^{2}}\\
&\lesssim R\|\partial_x^{-1}\Pi_r^\perp[u(t_0)-Q]\|_{L^{2}}
+R\|\partial_x^{-1}\Pi_r^\perp Q\|_{L^{2}}\\
&\lesssim R^{2}/r+R/\sqrt{r}\ll \delta^{*},
\end{aligned}
\end{equation}
For the ground-state term we used
$\int_{|\xi|>r}|(\mathcal{F}Q)(\xi)|^2\xi^{-2}\,d\xi
\leq2r^{-1}\|\mathcal{F}Q\|_{L^{\infty}}^2$; the other term is bounded by
$r^{-1}\|u(t_0)-Q\|_{L^{2}}$. With $r=R$, the right-hand side of
\eqref{eqq031} is $O(R+\sqrt R)=o(\delta^*)$, contradicting
\eqref{eqq030}. No scattering assumption was used in this case, so every exit in the negative region has the no-return property.

\medskip
\noindent\textbf{Case 2}: $K_{1}(\vec{u})>0$ and $\vec{u}(t)$ does not scatter.
\par\nobreak\noindent
Step 4: The localized virial identity is
\begin{equation}
\label{eqq005}
\begin{aligned}
\frac{d}{dt}\mathcal I_\varphi(u,v)(t)
={}&-\frac32\int_{\mathbb R}\varphi'(\partial_xu)^2\,dx
-\frac12\int_{\mathbb R}(\varphi'-\varphi''')u^2\,dx\\
&-\frac12\int_{\mathbb R}\varphi'v^2\,dx
+\frac{p+1}{p+2}\int_{\mathbb R}\varphi'|u|^{p+2}\,dx.
\end{aligned}
\end{equation}
Here $\varphi(x)=\phi(Rx)/R$, with $\phi$ as in \eqref{eeqqq02}. Equivalently,
\begin{equation}
\label{eqqe006}
\begin{aligned}
\frac{d}{dt}\mathcal{I}_{\varphi}(u,v)(t)
&=-K_{2}(\vec{u})+\frac{3}{2}\int_{\mathbb{R}}(1-\varphi')(\partial_{x}u)^{2}dx+\frac{1}{2}\int_{\mathbb{R}}(1-\varphi'
+\varphi''')u^{2}dx\\
&\quad+\frac{1}{2}\int_{\mathbb{R}}(1-\varphi')v^{2}dx-\frac{p+1}{p+2}\int_{\mathbb{R}}(1-\varphi')|u|^{p+2}dx.
\end{aligned}
\end{equation}
We first prove that $\partial_t\mathcal I_\varphi(\vec u(t))<0$
whenever $t\in[t_0,t_1]$ and $R\leq d_Q(\vec u(t))\leq\delta^*$.
The bounds from Proposition~\ref{pro106} extend to the two endpoints
by continuity. Observe that $1-\varphi'=\varphi'''=0$ for $|x|\le 1/R$, while $0\le 1-\varphi'\le 1$ and $|\varphi'''|\lesssim R^{2}$. Hence, by Lemma \ref{le104} and Proposition \ref{pro106}, we obtain
\begin{equation}
\label{eqq007}
\begin{aligned}
\frac{d}{dt}\mathcal{I}_{\varphi}(u,v)(t)
&\leq-c\,d_Q(\vec u)+C\int_{|x|>1/R}(\partial_{x}u)^{2}dx
+C\int_{|x|>1/R}u^{2}dx\\
&\quad+C\int_{|x|>1/R}v^{2}dx
+C\int_{|x|>1/R}|u|^{p+2}dx\\
&\leq-c\,d_Q(\vec u)+Cd^{2}_{Q}(\vec{u})+Cd^{p+2}_{Q}(\vec{u})
+C\int_{|x|>1/R}Q^{2}dx
+C\int_{|x|>1/R}Q^{p+2}dx.
\end{aligned}
\end{equation}
The exponential decay of $Q$ gives, for some $\beta>0$,
\begin{equation}
\label{eqq008}
\frac{d}{dt}\mathcal{I}_{\varphi}(u,v)(t)\leq-c\,d_Q(\vec u)+Cd^{2}_{Q}(\vec{u})+Cd^{p+2}_{Q}(\vec{u})+Ce^{-\beta/R}.
\end{equation}
Since $R\leq d_Q(\vec u(t))\leq\delta^*\ll1$ on this region,
\begin{equation}
\label{eqq009}
\frac{d}{dt}\mathcal{I}_{\varphi}(u,v)(t)\leq-c\,d_Q(\vec u)+Ce^{-\beta/R}\leq-cR+Ce^{-\beta/R}<0.
\end{equation}

Step 5: We claim that, for every sufficiently small $R>0$, one can
choose $\varepsilon_R>0$ with $\varepsilon_R\ll R$ such that every
positive-channel return excursion under consideration, with initial
data in $U_{\varepsilon_R}(\vec Q)$, satisfies
\[
\partial_t\mathcal I_\varphi(\vec u(t))\leq0\qquad(t\in I_{\mathrm{out}}).
\]
If this were false, there would be $R_n\to0$ for which no such
$\varepsilon_R$ exists. Taking $0<\varepsilon_n\leq R_n/n$, we
could choose initial data in $U_{\varepsilon_n}(\vec Q)$ with a
first return excursion $[t_0(n),t_1(n)]$ violating the claim.
The sign of $K_1$ is positive throughout this excursion. There are
times $t_n\in I_{\mathrm{out}}\subset[T_0(n),T_1(n)]$ such that
\[
\left.\partial_t\mathcal I_{\varphi_n}(\vec u_n(t))\right|_{t=t_n}>0,
\qquad \varphi_n(x)=\phi(R_nx)/R_n.
\]
Energy conservation and $K_1>0$ give a bound independent of $n$ on
the whole return interval:
\[
\sup_{t_0(n)\leq t\leq t_1(n)}\|\vec u_n(t)\|_{X^1}^2
\leq \frac{2(p+2)}p E(\vec u_n),
\qquad E(\vec u_n)\longrightarrow E(\vec Q).
\]
Moreover, $\|\vec u_n(t)\|_{X^1}^2\geq2E(\vec u_n)$.
Since $d_Q(\vec u_n(t_n))>\delta^*$, we apply
Proposition~\ref{pro106} with the fixed radius $\delta^*/2$.
The energy condition holds for large $n$, since
$|E(\vec u_n)-E(\vec Q)|\leq C\varepsilon_n^2$.
Together with the preceding lower norm bound, this gives a constant
$c_{\mathrm{out}}>0$ independent of $n$ such that
\[
K_j(\vec u_n(t_n))\geq c_{\mathrm{out}}\delta^*,\quad j=1,2,
\qquad
G(\vec u_n(t_n))\leq E(\vec Q)-\kappa
\]
for all sufficiently large $n$, with a fixed $\kappa>0$.

Consider the translated solutions on their return intervals:
\[
\vec w_n(t)=\vec u_n(t_n+t),\qquad
I_n=[0,t_1(n)-t_n].
\]
The final ejection segment $[T_1(n),t_1(n)]$ lies in $t_n+I_n$.
On that segment $d_Q(\vec u_n(t))\leq\delta^*$, so, after fixing
$\delta^*$ sufficiently small, Sobolev embedding gives
\[
\|P\vec u_n(t)\|_{L^{\tilde q}}
\geq\|Q\|_{L^{\tilde q}}-C\delta^*\geq c_Q>0.
\]
Proposition~\ref{pro105} also gives
$e^{\mu(t_1(n)-T_1(n))}\sim\delta^*/R_n$. Hence
\[
\|P\vec w_n\|_{S(I_n)}^{\tilde p}
\geq c_Q^{\tilde p}\big(t_1(n)-T_1(n)\big)
\longrightarrow\infty.
\]
Together with the uniform energy-space bound on $I_n$ and the
strict bound on $G(\vec w_n(0))$, this verifies the hypotheses of
Lemma~\ref{lem:threshold-compactness}. Thus
\[
\vec u_n(t_n)=\U(-s_n)\vec\psi+o_{X^1}(1),
\qquad \vec\psi\in X^1_{\mathrm{eo}}.
\]
If $|s_n|\to\infty$, the free decay \eqref{free-decay} and the
one-dimensional interpolation inequality
\[\|f\|_{L^{\infty}}\lesssim\|f\|_{L^{4}}^{2/3}\|\partial_x f\|_{L^{2}}^{1/3}\]
 imply
$\|P\vec u_n(t_n)\|_{L^{\infty}}\to0$. Using
$\varphi'_n>0$ and $|\varphi'''_n|\leq C R_n^2\varphi'_n$ in the
virial identity \eqref{eqq005}, we obtain, for large $n$,
\[
\left.\partial_t\mathcal I_{\varphi_n}(\vec u_n(t))\right|_{t=t_n}
\leq-\frac14\int_{\mathbb R}\varphi'_n
\big(|\partial_xP\vec u_n(t_n)|^2
+|P\vec u_n(t_n)|^2+|P^c\vec u_n(t_n)|^2\big)\,dx\leq0.
\]
The $\varphi'''_nu_n^2$ term and the nonlinear term are absorbed
into $\varphi'_nu_n^2$, contradicting the choice of $t_n$. Thus $s_n$ is bounded, and a subsequence satisfies
$\vec u_n(t_n)\to\vec u_*$ strongly in $X^1$.
In the identity
$\left.\partial_t\mathcal I_{\varphi_n}(\vec u_n(t))\right|_{t=t_n}=-K_2(\vec u_n(t_n))
+M_n(\vec u_n(t_n))$, the error obeys
\[
M_n(\vec u_n(t_n))\leq C\int_{|x|\geq1/R_n}
\big(|\partial_xP\vec u_n(t_n)|^2+|P\vec u_n(t_n)|^2
+|P^c\vec u_n(t_n)|^2\big)\,dx=o_n(1).
\]
Here we used $\varphi'_n=1$, $\varphi'''_n=0$ on
$|x|\leq1/R_n$, and discarded the nonpositive nonlinear error.
Strong convergence makes these tails small uniformly. Since
$K_2(\vec u_n(t_n))\geq c_{\mathrm{out}}\delta^*$, we again obtain
$\left.\partial_t\mathcal I_{\varphi_n}(\vec u_n(t))\right|_{t=t_n}<0$, a contradiction.
This proves the claim.

\medskip\noindent Step 6: Completion of the proof. We have
\begin{equation}
\label{eqqee006}
\begin{aligned}
\mathcal{I}_{\varphi}(u,v)(t_{1})-\mathcal{I}_{\varphi}(u,v)(t_{0})
&=\int_{t_{0}}^{T_{0}}\frac{d}{dt}\mathcal{I}_{\varphi}(u,v)(t)\,dt
+\int_{T_{1}}^{t_{1}}\frac{d}{dt}\mathcal{I}_{\varphi}(u,v)(t)\,dt\\
&\quad+\int_{I_{\mathrm{in}}}\frac{d}{dt}\mathcal{I}_{\varphi}(u,v)(t)\,dt
+\int_{I_{\mathrm{out}}}\frac{d}{dt}\mathcal{I}_{\varphi}(u,v)(t)\,dt.
\end{aligned}
\end{equation}
Using $\frac{d}{dt}\mathcal{I}_{\varphi}(u,v)(t)\leq0$ on $I_{\mathrm{out}}$, and the estimate (\ref{eqq009}) on $I_{\mathrm{in}}$, together with the bounds on the initial and final segments, we obtain
\begin{equation}
\label{eqqe007}
\begin{aligned}
\mathcal{I}_{\varphi}(u,v)(t_{1})-\mathcal{I}_{\varphi}(u,v)(t_{0})
&\leq-\int_{t_0}^{T_0}\left(c\,d_Q(\vec u)-Ce^{-\beta/R}\right)dt
-\int_{T_1}^{t_1}\left(c\,d_Q(\vec u)-Ce^{-\beta/R}\right)dt\\
&\leq-c\delta^*+Ce^{-\beta/R}\log(\delta^*/R).
\end{aligned}
\end{equation}
On the other hand, choose $\sigma_j\in\{\pm1\}$ such that
$\|\vec u(t_j)-\sigma_j\vec Q\|_{X^1}\lesssim R$, $j=0,1$.
Since $\|\varphi\|_{L^{\infty}}\lesssim R^{-1}$ and
$|\varphi(x)|\leq |x|$, we have separately at either endpoint
\[
|\mathcal I_\varphi(\vec u(t_j))|
\leq\|\varphi\|_{L^{\infty}}\|u(t_j)-\sigma_jQ\|_{L^{2}}\|v(t_j)\|_{L^{2}}
+\|xQ\|_{L^{2}}\|v(t_j)\|_{L^{2}}\lesssim R.
\]
Consequently,
$|\mathcal I_\varphi(\vec u(t_1))-\mathcal I_\varphi(\vec u(t_0))|
\lesssim R$.
For $0<R\ll\delta^{*}$, this contradicts (\ref{eqqe007}). Therefore, Proposition \ref{pro001} is proved.
\end{proof}
\section{Global dynamics near the standing wave}
\indent
\par
For a solution starting sufficiently close to $\vec Q$ that does not
scatter forward, the ejection and one-pass results show that, after
leaving the $R$-neighborhood of $\pm\vec Q$, it cannot return.
The two variational functionals then keep the same strict sign.
We prove scattering in the positive region and finite-time blow-up
in the negative region, completing the classification outside
$\mathcal M$.
\subsection{Scattering after exiting: $K_{1}(\vec{u})>0$}
\indent
\par
Fix the radius $R>0$ in Proposition~\ref{pro001}, and let
$\varepsilon_{\mathrm{op}}>0$ be an admissible initial-neighborhood
radius for that proposition. We prove that every sufficiently small
even-odd perturbation of $\vec Q$ that exits in the positive region
scatters forward. Suppose otherwise. There are
$0<\varepsilon_n\leq\min\{\varepsilon_{\mathrm{op}},R/n\}$ and
solutions $\vec u_n$ with initial data in
$U_{\varepsilon_n}(\vec Q)\setminus\mathcal M$ which do not scatter
forward and have first exit times $t_n$ satisfying
\begin{equation}\label{global:positive-exit}
d_Q(\vec u_n(t_n))=R,\qquad K_1(\vec u_n(t_n))>0,
\qquad E(\vec u_n)\longrightarrow E(\vec Q).
\end{equation}
The radius $R$ remains fixed throughout the argument. For large
$n$, the first alternative in Proposition~\ref{pro001} is excluded
because an $O(\varepsilon_n)$ orbit cannot reach this radius.
The one-pass theorem therefore gives $d_Q(\vec u_n(t))>R$ for
every $t>t_n$ in the lifespan.
Apply Proposition~\ref{pro106} with radius $R/2$ and
$\alpha_n=C\varepsilon_n$, taking $C$ large enough to dominate the
quadratic energy expansion and then taking $n$ large. Its energy
condition holds, and the strict distance condition also holds at
$t=t_n$. Since $E(\vec u_n)\geq E(\vec Q)/2$ for large $n$,
\[
\|\vec u_n(t)\|_{X^1}^2\geq2E(\vec u_n)\geq E(\vec Q).
\]
The positive exit sign therefore gives, for a fixed $\kappa_0>0$,
\[
K_j(\vec u_n(t_n))\geq\kappa_0,
\qquad j=1,2.
\]
Neither functional can vanish while $d_Q>R$, by
Proposition~\ref{pro106}. Continuity consequently keeps both signs
positive after exit. In particular,
\begin{equation}\label{global:energy-bound}
\|\vec u_n(t)\|_{X^1}^2
\leq\frac{2(p+2)}p E(\vec u_n),\qquad t\geq t_n,
\end{equation}
initially on the remaining lifespan. The continuation criterion
then makes every $\vec u_n$ global forward, with a bound independent
of $n$.

Set $\vec w_n(t)=\vec u_n(t_n+t)$. The scattering criterion and the
choice of $\vec u_n$ give
$\|P\vec w_n\|_{S([0,\infty))}=\infty$. Moreover,
\[
G(\vec w_n(0))
=E(\vec u_n)-\frac{K_1(\vec u_n(t_n))}{p+2}
\leq E(\vec Q)-\frac{\kappa_0}{2(p+2)}
\]
for large $n$. Choose finite $L_n>0$ such that
$\|P\vec w_n\|_{S([0,L_n])}\geq n$.
Lemma~\ref{lem:threshold-compactness}, applied on $[0,L_n]$ using
\eqref{global:energy-bound}, yields
\[
\vec w_n(0)=\U(-s_n)\vec\psi+o_{X^1}(1),
\qquad \vec\psi\in X^1_{\mathrm{eo}}.
\]
If $|s_n|\to\infty$, the right-hand side converges weakly to zero
by \eqref{profile:dislocation}. On the other hand, $t_n$ is the
first exit from the component containing $\vec Q$, so
$\|\vec w_n(0)-\vec Q\|_{X^1}\leq CR$. Choosing the fixed radius $R$ sufficiently small gives
\[
\langle\vec w_n(0),\vec Q\rangle_{X^1}
\geq\|\vec Q\|_{X^1}^2-CR\|\vec Q\|_{X^1}>0
\]
uniformly in $n$, a contradiction. Thus $s_n$ is bounded. Passing
to a subsequence and absorbing its limit into the profile, we obtain
\begin{equation}\label{global:threshold-limit}
\vec w_n(0)\longrightarrow\vec u_c(0)\quad\hbox{in }X^1,
\qquad E(\vec u_c)=E(\vec Q),\qquad
K_j(\vec u_c(0))\geq\kappa_0\quad(j=1,2).
\end{equation}

Continuity of $d_Q$ also gives $d_Q(\vec u_c(0))=R$.
Let $\vec u_c$ be the solution with this initial value. Its $K_1$
remains positive throughout its forward lifespan. Indeed, a first
zero at energy $E(\vec Q)$ would imply
$\vec u_c=\pm\vec Q$ by Lemma~\ref{le101}; uniqueness would make
the solution stationary, contrary to \eqref{global:threshold-limit}.
The energy bound in \eqref{global:energy-bound} therefore applies
to $\vec u_c$ as well, so it is global forward. The same
variational characterization excludes a zero of $K_2$. Hence
\[
K_1(\vec u_c(t))>0,\qquad K_2(\vec u_c(t))>0,
\qquad t\geq0.
\]
If $\|P\vec u_c\|_{S([0,\infty))}$ were finite, strong convergence
in \eqref{global:threshold-limit}, the linear Strichartz estimate
and Proposition~\ref{pr001} would give a finite forward $S$ norm
for $\vec w_n$ when $n$ is large. Here both exact and approximate
solutions have the uniform energy-space bounds already proved.
This contradiction shows that
\begin{equation}\label{global:forward-nonscattering}
\|P\vec u_c\|_{S([0,\infty))}=\infty.
\end{equation}

We next exclude degeneration of either positive functional at the
threshold energy. Suppose that, for some $j\in\{1,2\}$,
\[
\inf_{t\geq0}K_j(\vec u_c(t))=0.
\]
Choose times $T_m\geq0$ with $K_j(\vec u_c(T_m))\to0$.
Since $E(\vec u_c)=E(\vec Q)>0$,
$\|\vec u_c(t)\|_{X^1}^2\geq2E(\vec Q)$ for all $t$.
Lemma~\ref{le104} thus applies with a fixed lower norm bound and gives
\[
d_Q(\vec u_c(T_m))^2\leq C K_j(\vec u_c(T_m))^2
\longrightarrow0.
\]
Choose one finite time $T=T_m>0$ for which
$d_Q(\vec u_c(T))<R/2$, and keep this time fixed.
Continuous dependence on the compact interval $[0,T]$ gives
\[
\vec w_n(T)\longrightarrow\vec u_c(T)\quad\hbox{in }X^1,
\qquad d_Q(\vec u_n(t_n+T))<R
\]
for large $n$. This contradicts the no-return conclusion of
Proposition~\ref{pro001} for the non-scattering solutions
$\vec u_n$. This excludes degeneration and proves
\begin{equation}\label{global:strict-functional-gap}
\sigma_j:=\inf_{t\geq0}K_j(\vec u_c(t))>0,
\qquad j=1,2.
\end{equation}
The same finite-time convergence also gives
$d_Q(\vec u_c(t))\geq R$ for every fixed $t\geq0$.
In particular, the gap needed for the next compactness argument is
\begin{equation}\label{global:quadratic-gap}
G(\vec u_c(t))
=E(\vec Q)-\frac{K_1(\vec u_c(t))}{p+2}
\leq E(\vec Q)-\frac{\sigma_1}{p+2},\qquad t\geq0.
\end{equation}

We now prove that the forward orbit of $\vec u_c$ is precompact in
$X^1$. Let $\tau_n\to\infty$, and consider
$\vec v_n(t)=\vec u_c(\tau_n+t)$ on $I_n=[-\tau_n,\infty)$.
The uniform energy bound, \eqref{global:quadratic-gap}, and
\eqref{global:forward-nonscattering} allow an application of
Lemma~\ref{lem:threshold-compactness}. After a subsequence,
\[
\vec u_c(\tau_n)=\U(-\theta_n)\vec\psi_\infty+\vec\eta_n,
\qquad \|\vec\eta_n\|_{X^1}\longrightarrow0.
\]
If $\theta_n\to-\infty$, linear Strichartz estimates give
\[
\|P\U(t)\vec v_n(0)\|_{S([0,\infty))}
\leq\|P\U(t)\vec\psi_\infty\|_{S([-\theta_n,\infty))}
+C\|\vec\eta_n\|_{X^1}\longrightarrow0.
\]
Proposition~\ref{pr001}, with zero approximate solution on this
half-line, then gives a finite forward $S$ norm for $\vec v_n$,
contradicting \eqref{global:forward-nonscattering}.
If $\theta_n\to+\infty$, the corresponding estimate on the finite
backward interval is
\[
\|P\U(t)\vec v_n(0)\|_{S([-\tau_n,0])}
\leq\|P\U(t)\vec\psi_\infty\|_{S(( -\infty,-\theta_n])}
+C\|\vec\eta_n\|_{X^1}\longrightarrow0.
\]
The same perturbation argument on $[-\tau_n,0]$ would make
$\|P\vec v_n\|_{S([-\tau_n,0])}$ bounded. However,
\begin{equation}\label{global:past-interval-divergence}
\|P\vec v_n\|_{S([-\tau_n,0])}
=\|P\vec u_c\|_{S([0,\tau_n])}\longrightarrow\infty
\end{equation}
by monotone convergence and \eqref{global:forward-nonscattering}.
Thus $\theta_n$ is bounded, which gives a strongly convergent
subsequence of $\vec u_c(\tau_n)$. A bounded sequence of times also has a convergent subsequence
by continuity. Thus the entire forward orbit is precompact.

Finally, choose a sufficiently small $\rho>0$ and put
$\varphi(x)=\phi(\rho x)/\rho$. The uniform spatial-tail bound
from precompactness, together with \eqref{eqqe006}, gives
\[
\begin{aligned}
\partial_t\mathcal I_\varphi(\vec u_c(t))
&\leq-K_2(\vec u_c(t))
+C\int_{|x|\geq1/\rho}
\bigl(|\partial_xP\vec u_c(t)|^2+|P\vec u_c(t)|^2
+|P^c\vec u_c(t)|^2\bigr)\,dx\\
&\leq-\frac{\sigma_2}{2},\qquad t\geq0.
\end{aligned}
\]
Here the nonlinear localization error is nonpositive. On the other
hand, $|\mathcal I_\varphi(\vec u_c(t))|
\lesssim\rho^{-1}\|\vec u_c(t)\|_{X^1}^2$ is uniformly bounded.
Integrating the strictly negative derivative yields a contradiction.
This proves scattering after exit in the positive region.

\subsection{Blow-up after exiting: $K_{1}(\vec{u})<0$}
\indent
\par
Suppose that a solution starting in
$U_\varepsilon(\vec Q)\setminus\mathcal M$ leaves the $R$-neighborhood
in the negative region. The negative-region conclusion of
Proposition~\ref{pro001} rules out a return without any scattering
assumption. Proposition~\ref{pro106}, applied with radius $R/2$,
then gives $K_1(\vec u(t))\leq-cR$ throughout the remaining lifespan.
Translate the exit time to zero and denote the data at that time by
$\vec u_0$. We prove finite-time blow-up as in
Proposition~\ref{proo002}, using
\[
h_r(t)=\frac12\|\partial_x^{-1}(u(t)-u_{0,r})\|_{L^{2}}^2,
\qquad 0\leq t<T^+_{\max}.
\]
Lemma~\ref{lee802} supplies a single low-frequency cutoff for which
$\partial_t^2h_r\geq\sigma_0>0$ throughout the remaining lifespan.
The two alternatives in \eqref{eq8009}--\eqref{eq8010} use only
this uniform negative bound and energy conservation, so they also
apply at the present energy. The concavity criterion rules out a
global forward solution.
\appendix
\section{Auxiliary estimates for the strict scattering threshold}\label{app:gap}
\indent
\par
This appendix collects the estimates used in
Proposition~\ref{prop:strict-gap}. Throughout this appendix,
$\Lambda=(1+D^2)^{1/2}$, $\omega(D)=D\Lambda$,
$S_0(t)=e^{-it\omega(D)}$, $k=\|\Lambda^{1/2}z\|_{L^{2}}^2$, and
$\mathscr D(z)=\|\partial_x|z|^2\|_{L^{2}}^2$.
All Fourier transforms use the unitary convention stated in Section~1.

\subsection{The free density estimate}\label{app:free}
\begin{lem}[Free density estimate]\label{lem:gap-free}
For every complex $z\in H^1(\mathbb R)$,
\begin{equation}\label{gap:free}
\int_{\mathbb R}\|\partial_x|S_0(s)z|^2\|_{L^{2}}^2\,ds
\leq\frac23\|\Lambda^{1/2}z\|_{L^{2}}^4.
\end{equation}
More precisely, there is a bounded linear operator
$\mathcal B:L^2_{\xi,\eta}\to L^2_{s,q}$ with
$\|\mathcal B\|^2\leq2/3$ such that, for
$W=\Lambda^{1/2}z$ and
$h_W(\xi,\eta)=(\mathcal{F}W)(\xi)\overline{(\mathcal{F}W)(\eta)}$,
\[
\mathcal Bh_W=\mathcal{F}_x\bigl(\partial_x|S_0(s)z|^2\bigr).
\]
\end{lem}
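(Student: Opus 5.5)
The plan is to compute $\mathcal F_x(\partial_x|S_0(s)z|^2)$ explicitly, then estimate its $L^2_{s,q}$ norm with Plancherel in $s$ after a change of variables in the phase. Write $\hat z=\mathcal F z$. Since $S_0(s)z$ has Fourier transform $e^{-is\omega(\xi)}\hat z(\xi)$, the density $|S_0(s)z|^2$ has Fourier transform at frequency $q$ equal to $(2\pi)^{-1/2}\int e^{-is[\omega(\eta+q)-\omega(\eta)]}\hat z(\eta+q)\overline{\hat z(\eta)}\,d\eta$. This is the convolution of $\hat z$ with the reflected conjugate, with the normalization fixed by the unitary convention. Multiplying by $iq$ and writing $\hat z=\lambda^{-1/2}\mathcal F W$ gives exactly $\mathcal B h_W$ with the kernel displayed in Step 2 of Proposition~\ref{prop:strict-gap}. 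So the representation is algebraic, and everything reduces to the bound $\|\mathcal Bh\|^2\le\frac23\|h\|^2$.

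For fixed $q\neq0$, I will use the strict convexity of $\omega$ on $\mathbb R$. Here $\omega(\xi)=\xi\sqrt{1+\xi^2}$ is odd and $\omega'(\xi)=(1+2\xi^2)/\sqrt{1+\xi^2}$ is even and increasing in $|\xi|$. Strict monotonicity of $\eta\mapsto\nu=\omega(\eta+q)-\omega(\eta)$ needs $\omega'(\eta+q)\neq\omega'(\eta)$, which holds except where $\eta+q=-\eta$, i.e. at one point. Since $\omega'$ is even rather than increasing, this map is not globally monotone. The repair is to split $\eta$ at $\eta=-q/2$ and note that $\nu(\eta)$ is even about $-q/2$. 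So I would instead group the two branches, or more simply symmetrize $h$. The cleanest route is to note $\nu(-q-\eta)=\nu(\eta)$, so the integral over the two half-lines combines into one over $\eta>-q/2$ with integrand $h(\eta+q,\eta)+h(-\eta,-\eta-q)$ (up to weights). On that half-line the map is one-to-one. Plancherel in $s$ then gives
\[
\int|\mathcal Bh(s,q)|^2ds=q^2\int\frac{|H(\xi,\eta)|^2}{|\omega'(\xi)-\omega'(\eta)|\lambda(\xi)\lambda(\eta)}d\eta ,
\]
and $|a+b|^2\le2(|a|^2+|b|^2)$ then returns to the full line. I expect the factor two here to be absorbed by the precise multiplier bound; otherwise one must check the claimed constant $\frac23$ is for the combined kernel.

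The main obstacle is the multiplier inequality
\[
(\xi-\eta)^2\le\tfrac23\lambda(\xi)\lambda(\eta)\,|\omega'(\xi)-\omega'(\eta)|
\]
in the relevant configuration (after symmetrization, $\xi$ and $\eta$ on the branch where $|\xi|>|\eta|$, say). I would reduce it to a one-variable statement. Writing $\omega'(\xi)-\omega'(\eta)=\int_\eta^\xi\omega''$ with $\omega''(t)=t(2t^2+3)/(1+t^2)^{3/2}$, the inequality should follow by comparing $(\xi-\eta)^2$ to $\lambda(\xi)\lambda(\eta)\int_\eta^\xi\omega''$. I would try the substitution $\xi=\sinh a$, $\eta=\sinh b$, under which $\lambda$ and $\omega'$ become hyperbolic polynomials, and then verify the resulting elementary inequality, checking the extremal regimes $\xi,\eta\to0$ and $\eta=-\xi$ to pin the constant $\frac23$.

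Finally, the zero-frequency line $q=0$ has measure zero. Density of Schwartz functions with compactly supported Fourier transform, together with the $L^2$ bound, extends $\mathcal B$ and the identity to all $h\in L^2$. Summing over $q$ and using $\|h_W\|_{L^2}=\|W\|_{L^2}^2=k$ gives \eqref{gap:free}.
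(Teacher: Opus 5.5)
Your representation of $\mathcal F_x(\partial_x|S_0(s)z|^2)$ as $\mathcal Bh_W$ is fine. The change-of-variables step, however, rests on the wrong dispersion relation. Since $D=|\partial_x|$, the symbol of $\omega(D)=D\Lambda$ is $\omega(\xi)=|\xi|\sqrt{1+\xi^2}$, which is even and convex, not the odd function $\xi\sqrt{1+\xi^2}$. Your opening instinct, strict convexity, was the right one. Write $\partial_\xi\omega(\xi)=\operatorname{sgn}(\xi)\,g(|\xi|)$ with $g(x)=(1+2x^2)/\sqrt{1+x^2}$. This derivative is strictly increasing on each half-line and jumps upward from $-1$ to $1$ at the origin. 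Hence, for every $q\neq0$, the phase $\Phi_q(\eta)=\omega(\eta+q)-\omega(\eta)$ is strictly monotone on all of $\mathbb R$, with $|\partial_\eta\Phi_q|=|\partial_\xi\omega(\eta+q)-\partial_\eta\omega(\eta)|>0$ almost everywhere. The substitution $\nu=\Phi_q(\eta)$ is therefore globally one-to-one. Plancherel in $s$ gives an exact identity with weight $(\xi-\eta)^2/(|\partial_\xi\omega(\xi)-\partial_\eta\omega(\eta)|\lambda(\xi)\lambda(\eta))$, and neither a symmetrization nor a factor $2$ ever appears.

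With your odd symbol the argument cannot be rescued. There $\partial_\xi\omega(\xi)-\partial_\eta\omega(\eta)$ vanishes on the antidiagonal $\xi=-\eta$, while $(\xi-\eta)^2=4\xi^2\neq0$, so the multiplier inequality you aim for is false there. At the fold $\eta=-q/2$ the two branches add coherently, so the $\nu$-integrand behaves like $|\nu-\nu_0|^{-1/2}$ and its square is not integrable. For that symbol the estimate fails for generic $W$ with any constant. Nor could a factor $2$ be absorbed even for the correct symbol: $\tfrac23$ is exactly the supremum of the true weight, approached as $0<\eta\ll\xi\to0$.

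You also leave the multiplier inequality, which is the real content of the lemma, as an unexecuted plan. With the correct symbol it follows by splitting into sign cases; no hyperbolic substitution is needed.

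\emph{Same signs.} Take $\xi=\eta+q$ with $\eta\geq0$ and $q>0$. Convexity of $g$ on $[0,\infty)$ (indeed $g''(x)=3(1+x^2)^{-5/2}$) gives $g(\eta+q)-g(\eta)\geq g(q)-1$. Together with $\lambda(\eta+q)\lambda(\eta)\geq\lambda(q)$, this yields $[g(\eta+q)-g(\eta)]\lambda(\eta+q)\lambda(\eta)\geq 1+2q^2-\sqrt{1+q^2}\geq\tfrac32q^2$. Two nonpositive frequencies follow by reflection.

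\emph{Opposite signs.} Take $\xi=x\geq0$ and $\eta=-y\leq0$. The derivative difference is $g(x)+g(y)\geq2(x+y)$, and $\lambda(x)\lambda(y)\geq x+y$; the corresponding squared differences are $1/(1+x^2)$ and $(xy-1)^2$. Hence $(x+y)^2/(g(x)+g(y))\leq\tfrac12\lambda(x)\lambda(y)$, which is the better constant $\tfrac12$.

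So the regime $\eta=-\xi$ that you flagged as extremal is harmless for the true symbol; the constant $\tfrac23$ comes from same-sign frequencies near zero. After this, your density argument extending $\mathcal B$ and the identity to all of $L^2$ goes through as you describe.
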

\begin{proof}
Put $\lambda(\xi)=\sqrt{1+\xi^2}$. We begin with the symbol bound
\begin{equation}\label{gap:symbol}
\frac{(\xi-\eta)^2}{|\partial_\xi\omega(\xi)-\partial_\eta\omega(\eta)|}
\leq\frac23\lambda(\xi)\lambda(\eta)
\quad\text{for almost every }(\xi,\eta).
\end{equation}
For $x\geq0$, set $g(x)=(1+2x^2)/\sqrt{1+x^2}$. Then
$\partial_\xi\omega(\xi)=\operatorname{sgn}(\xi)g(|\xi|)$ away from zero and
\[
\partial_xg(x)=\frac{x(3+2x^2)}{(1+x^2)^{3/2}},\qquad
\partial_x^2g(x)=\frac3{(1+x^2)^{5/2}}>0.
\]
For $\xi=\eta+q$ with $\eta\geq0$, $q>0$, convexity gives
$g(\eta+q)-g(\eta)\geq g(q)-g(0)$, and
$\lambda(\eta+q)\lambda(\eta)\geq\lambda(q)$. Hence
\[
\bigl[g(\eta+q)-g(\eta)\bigr]\lambda(\eta+q)\lambda(\eta) \geq[g(q)-1]\lambda(q) =1+2q^2-\sqrt{1+q^2}\geq\tfrac32q^2,
\]
where the last inequality is $\sqrt{1+q^2}\leq1+q^2/2$.
This proves \eqref{gap:symbol} for nonnegative frequencies; reflection
gives the nonpositive case. For opposite signs write $\xi=x\geq0$,
$\eta=-y\leq0$. The elementary inequalities
\[
g(x)\geq2x,\qquad
\lambda(x)\lambda(y)\geq x+y
\]
follow by squaring, since the respective squared differences are
$1/(1+x^2)$ and $(xy-1)^2$. Thus
\[
\frac{(x+y)^2}{g(x)+g(y)}\leq\frac{x+y}{2}
\leq\frac12\lambda(x)\lambda(y),
\]
which is stronger than the required bound.

For smooth compactly supported $h$, define
\[
(\mathcal Bh)(s,q)=\frac{iq}{\sqrt{2\pi}}
\int_{\mathbb R}e^{-is[\omega(\eta+q)-\omega(\eta)]}
\frac{h(\eta+q,\eta)}{\lambda(\eta+q)^{1/2}\lambda(\eta)^{1/2}}\,d\eta.
\]
Fix $q\ne0$ and let $\Phi_q(\eta)=\omega(\eta+q)-\omega(\eta)$.
The function $\partial_\xi\omega$ is strictly increasing on each half-line and
has an upward jump at zero. Hence $\Phi_q$ is strictly monotone,
piecewise smooth, and
$|\partial_\eta\Phi_q(\eta)|=|\partial_\eta\omega(\eta+q)-\partial_\eta\omega(\eta)|>0$ almost everywhere.
Make the change of variables $\nu=\Phi_q(\eta)$ on its smooth pieces.
Plancherel in $s$ gives
\[
\int_{\mathbb R}|(\mathcal Bh)(s,q)|^2\,ds
=q^2\int_{\mathbb R}
\frac{|h(\eta+q,\eta)|^2}
{|\partial_\eta\Phi_q(\eta)|\lambda(\eta+q)\lambda(\eta)}\,d\eta.
\]
Integrating in $q$ and setting $\xi=\eta+q$, we obtain
\[
\|\mathcal Bh\|_{L^{2}}^2
=\iint\frac{(\xi-\eta)^2|h(\xi,\eta)|^2}
{|\partial_\xi\omega(\xi)-\partial_\eta\omega(\eta)|\lambda(\xi)\lambda(\eta)}\,d\xi\,d\eta
\leq\frac23\|h\|_{L^{2}}^2.
\]
The values at $q=0$ and at the finitely many nonsmooth points have no
effect on this identity. Density extends $\mathcal B$ to all of
$L^2(\mathbb R^2)$ with the asserted norm bound.

For smooth $z$, substitute
$(\mathcal Fz)(\xi)=\lambda(\xi)^{-1/2}(\mathcal FW)(\xi)$
in the Fourier product formula to obtain
\[
\mathcal F_x\bigl(\partial_x|S_0(s)z|^2\bigr)(q)
=(\mathcal Bh_W)(s,q).
\]
Since $\|h_W\|_{L^{2}}^2=\|W\|_{L^{2}}^4=k^2$, the operator bound proves
\eqref{gap:free}. Approximation of $z$ in $H^1$ preserves the
density expression on compact time intervals and the expression $\mathcal Bh_W$
in $L^2_{s,q}$, establishing the identity and estimate for all $H^1$
data. The same formula also defines the quartic functional
$\mathfrak M$ continuously on $H^{1/2}$.
\end{proof}

\subsection{Density and fractional chain estimates}\label{app:density}
\begin{lem}[Two density inequalities with explicit constants]\label{lem:gap-density}
Let $\rho\geq0$ belong to $H^1(\mathbb R)\cap L^1(\mathbb R)$ and put
$n=\|\rho\|_{L^{1}}$, $d=\|\partial_x\rho\|_{L^{2}}^2$, $H=\|\rho\|_{L^{\infty}}$. Then
\begin{equation}\label{gap:density}
H^3\leq\frac9{16}nd,\qquad
\int_{\mathbb R}\rho^{13}\,dx\leq\frac1{64}n^5d^4.
\end{equation}
\end{lem}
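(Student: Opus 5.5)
The first inequality comes from a one-sided fundamental theorem of calculus applied to a fractional power of $\rho$. The second needs a sharper argument than simply combining the first with $\int\rho^{13}\leq n H^{12}$, since that route only gives the constant $(9/16)^4\approx 0.10$, which is larger than $1/64$.

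For $H^3\leq\frac9{16}nd$: since $\rho\in H^1(\mathbb R)$, it is continuous and tends to zero at infinity, so it attains its maximum $H$ at some point $x_0$. I would first take $\rho$ smooth and compactly supported, then pass to the general case by density. Then $\partial_x\rho^{3/2}=\frac32\rho^{1/2}\partial_x\rho$. Integrating from $-\infty$ to $x_0$ and from $x_0$ to $+\infty$ and adding the two identities gives
\[
2H^{3/2}\leq\frac32\int_{\mathbb R}\rho^{1/2}|\partial_x\rho|\,dx\leq\frac32 n^{1/2}d^{1/2}
\]
by Cauchy--Schwarz. Squaring yields exactly $H^3\leq\frac9{16}nd$.

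For the $L^{13}$ bound I would go to level sets. First I would replace $\rho$ by its symmetric decreasing rearrangement $\rho^*$. This preserves $n$ and $\int\rho^{13}$, and by P\'olya--Szeg\H{o} it does not increase $d$, so it suffices to treat an even function that is nonincreasing in $|x|$. Let $\mu(t)=|\{\rho>t\}|$ for $0<t<H$. The coarea formula, together with the fact that each level set has two boundary points $\pm x(t)$ with $\mu(t)=2x(t)$, gives
\[
d\geq\int_0^H\frac{4}{|\mu'(t)|}\,dt,\qquad n=\int_0^H\mu\,dt,\qquad \int\rho^{13}=\int_0^H 13t^{12}\mu(t)\,dt=\int_0^H t^{13}|\mu'(t)|\,dt .
\]
These are the only properties of $\rho$ the rest of the argument will use.

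The remaining task is a one-variable optimization: maximize $\int_0^H t^{13}|\mu'|\,dt$ subject to fixed $n=\int_0^H\mu\,dt$ and fixed $\int_0^H 4/|\mu'|\,dt$. I would combine Cauchy--Schwarz in the form
\[
\Bigl(\int_0^H t^{13/2}\,dt\Bigr)^2\leq\int_0^H t^{13}|\mu'|\,dt\int_0^H\frac{dt}{|\mu'|}
\]
with a reverse estimate tying $\mu$ to $\mu'$ through $\mu(t)=\int_t^H|\mu'(s)|\,ds$. The plan is to obtain a bound of the form $\int\rho^{13}\leq C\,H^{a}n^{b}d^{c}$ and then eliminate $H$ using $H^3\leq\frac9{16}nd$ from the first part. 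An alternative route is to solve the Euler--Lagrange equation for the extremal profile $\mu$ directly. The main obstacle is this optimization step: matching the exponents to $n^5d^4$ is dictated by scaling and is routine, but reaching the precise constant $1/64$ requires choosing the weights in the Cauchy--Schwarz and H\"older steps carefully. If the simple split does not produce $1/64$, I would fall back on computing the extremal profile explicitly and evaluating the constant from it.
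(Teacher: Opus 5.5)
\textbf{First inequality.} Your argument for $H^3\le\frac9{16}nd$ is correct and is exactly the paper's.

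\textbf{Second inequality.} This one is not proved, and the route you sketch cannot close as written. The displayed Cauchy--Schwarz inequality points the wrong way. Combined with $\int_0^H dt/|\mu'|\le d/4$, it gives $\int\rho^{13}\ge\frac{16}{225}H^{15}/d$, which is a \emph{lower} bound. No auxiliary relation between $\mu$ and $\mu'$ turns that into an upper bound.

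Eliminating $H$ through $H^3\le\frac9{16}nd$ is also too lossy for the constant $1/64$. The constant the paper obtains, $1/(5^5I_0^8)\approx1/68.7$, is in fact sharp. Here $I_0=\int_0^1\sqrt{s-s^{13}}\,ds=\frac1{12}B(\tfrac18,\tfrac32)\approx0.6206$. Equality holds for the symmetric unimodal profile with $H=1$ and $|\rho'|=\sqrt{\rho-\rho^{13}}$, and for that profile $H^3=nd/(5I_0^2)\approx0.519\,nd$, not $0.5625\,nd$. Consequently, any intermediate bound $\int\rho^{13}\le CH^an^bd^c$ with $a\ge3$, after $H^3$ is replaced by $\frac9{16}nd$, yields a constant at least $1.083/68.7>1/64$. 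Your Euler--Lagrange fallback would identify this profile. It proves nothing, however, without a separate argument that the profile is a global maximizer.

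\textbf{The missing idea.} You need $\int\rho^{13}$ to enter with a negative sign against $n$. In your level-set variables, use $\int_0^Ht|\mu'|\,dt=n$ and apply Cauchy--Schwarz with the nonnegative weight $H^{12}t-t^{13}$:
\[
H^{15}I_0^2=\Bigl(\int_0^H\sqrt{H^{12}t-t^{13}}\,dt\Bigr)^2
\le\int_0^H(H^{12}t-t^{13})|\mu'|\,dt\int_0^H\frac{dt}{|\mu'|}
\le\Bigl(nH^{12}-\int\rho^{13}\Bigr)\frac d4 .
\]
This gives $\int\rho^{13}\le nH^{12}-4I_0^2H^{15}/d$. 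Then maximize over $y=H^3$ instead of substituting the first inequality; the maximum is $n^5d^4/(5^5I_0^8)$.

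The paper does the same thing without rearrangement or coarea. It reruns your two-sided fundamental-theorem argument with $\rho^{3/2}$ replaced by $F_H(\rho)$, where $F_H(r)=\int_0^r\sqrt{t-t^{13}/H^{12}}\,dt$ and $F_H(H)=H^{3/2}I_0$. Cauchy--Schwarz then produces $n-H^{-12}\int\rho^{13}$ directly.

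Because the margin to $1/64$ is only about $7\%$, you must also finish with a rigorous lower bound on $I_0$. The paper uses a truncated binomial series to get $I_0>8/13$, and then $13^8<5^5\,2^{18}$ gives $1/(5^5I_0^8)<1/64$.
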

\begin{proof}
The zero function is immediate. Otherwise $n,d,H>0$.
The continuous representative of $\rho$ tends to zero at both
infinities and attains its maximum $H$. Integrating from a maximum
point toward both ends gives
\[
\frac43H^{3/2}
\leq\int\sqrt\rho\,|\partial_x\rho|\,dx
\leq\left(\int\rho\right)^{1/2}\left(\int|\partial_x\rho|^2\right)^{1/2}.
\]
Squaring proves the first inequality.

For the second, write $P_{13}=\int\rho^{13}$ and
\[
I_0=\int_0^1\sqrt{s-s^{13}}\,ds,\qquad
F_H(r)=\int_0^r\sqrt{t-t^{13}/H^{12}}\,dt\quad(0\leq r\leq H).
\]
Since $F_H(H)=H^{3/2}I_0$, the same two-sided variation argument and
Cauchy--Schwarz give
\[
2H^{3/2}I_0
\leq\int\sqrt{\rho-\rho^{13}/H^{12}}\,|\partial_x\rho|\,dx
\leq\bigl(n-P_{13}/H^{12}\bigr)^{1/2}d^{1/2}.
\]
It follows that
\[
P_{13}\leq nH^{12}-\frac{4I_0^2}{d}H^{15}.
\]
To remove $H$, put $y=H^3$ and maximize
$ny^4-4I_0^2y^5/d$ over $y\geq0$. Its maximum occurs at
$y=nd/(5I_0^2)$ and equals $n^5d^4/(5^5I_0^8)$.

We record a rational lower bound for $I_0$, because a rough Sobolev
constant would not suffice for Proposition~\ref{prop:strict-gap}.
The binomial series for $\sqrt{1-t}$ has negative coefficients after
the constant term; after the cubic term the absolute coefficients
sum to $5/16$. As $t^j\leq t^4$ for $j\geq4$ and $0\leq t\leq1$,
\[
\sqrt{1-t}\geq1-\tfrac12t-\tfrac18t^2-\tfrac1{16}t^3-\tfrac5{16}t^4.
\]
Substitute $t=s^{12}$ and integrate against $s^{1/2}\,ds$ to obtain
\[
I_0\geq\frac23-\frac1{27}-\frac1{204}-\frac1{600}-\frac5{792}
>\frac8{13}.
\]
Since $13^8<5^5\,2^{18}$,
\[
\frac1{5^5I_0^8}
<\frac{13^8}{5^5\,2^{24}}<\frac1{64},
\]
which proves the second inequality in \eqref{gap:density}.
The argument applies directly to the absolutely continuous
representative of $\rho$; alternatively it follows by nonnegative
smooth approximation in $H^1\cap L^1$.
\end{proof}

\begin{lem}[An explicit fractional chain estimate]\label{lem:gap-chain}
For real $u\in H^1(\mathbb R)$ and $p>0$, let $f(u)=|u|^pu$. Then
\begin{equation}\label{gap:chain}
\|D^{1/2}f(u)\|_{L^{2}}
\leq\frac{p+1}{\sqrt{2p+1}}
\left(\int|u|^{4p+2}\right)^{1/4}\|\partial_xu\|_{L^{2}}^{1/2}.
\end{equation}
\end{lem}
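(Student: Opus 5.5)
The plan is to use the Gagliardo (difference-quotient) representation of the $\dot H^{1/2}$ seminorm together with a pointwise Cauchy--Schwarz inequality that compares increments of $f$ with increments of the auxiliary odd power $F(u)=|u|^{2p}u$.

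\emph{Step 1: difference-quotient form.} In one dimension,
\[
\|D^{1/2}g\|_{L^2}^2=c_0\iint\frac{|g(x)-g(y)|^2}{|x-y|^2}\,dx\,dy,
\]
with the corresponding polarized identity
\[
\langle D^{1/2}g,D^{1/2}h\rangle=c_0\iint\frac{(g(x)-g(y))(h(x)-h(y))}{|x-y|^2}\,dx\,dy
\]
for real $g,h\in H^{1/2}$. We apply these with $g=f(u)$ and $h=F(u)$. Both functions lie in $H^1$ when $u\in H^1$, since $H^1\hookrightarrow L^\infty$ in one dimension, so every quantity below is finite.

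\emph{Step 2: the pointwise inequality.} For real numbers $a,b$ we have $f(a)-f(b)=(p+1)\int_b^a|s|^p\,ds$. Cauchy--Schwarz in $s$ then gives
\[
|f(a)-f(b)|^2\leq(p+1)^2(a-b)\int_b^a|s|^{2p}\,ds=\frac{(p+1)^2}{2p+1}(a-b)\bigl(F(a)-F(b)\bigr).
\]
The right-hand side is nonnegative because $F$ is increasing. We insert this bound with $a=u(x)$, $b=u(y)$ into the Gagliardo integral. This yields
\[
\|D^{1/2}f(u)\|_{L^2}^2\leq\frac{(p+1)^2}{2p+1}\langle D^{1/2}u,D^{1/2}F(u)\rangle.
\]

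\emph{Step 3: moving the derivative.} The pairing equals $\langle Du,F(u)\rangle$. By the identity $D=\mathcal H\partial_x$ recorded before Lemma~\ref{le301}, this is $\langle\mathcal H\partial_xu,F(u)\rangle$. Since $\mathcal H$ is an $L^2$ isometry,
\[
\langle\mathcal H\partial_xu,F(u)\rangle\leq\|\partial_xu\|_{L^2}\|F(u)\|_{L^2},\qquad \|F(u)\|_{L^2}=\Bigl(\int|u|^{4p+2}\Bigr)^{1/2}.
\]
Taking square roots gives \eqref{gap:chain}, with exactly the constant $(p+1)/\sqrt{2p+1}$.

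\emph{Main obstacle.} The only delicate point is to make the pairing and Gagliardo identities rigorous at $H^1$ regularity. I would verify them first for Schwartz $u$ and then pass to general $u$ by $H^1$ approximation. The limit is justified because $f$ and $F$ are locally Lipschitz from $H^1$ to $H^1$ in one dimension, and the Gagliardo integrand is nonnegative, so Fatou's lemma applies.
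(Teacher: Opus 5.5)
Your proof is correct and is essentially the paper's argument. Both use the same pointwise Cauchy--Schwarz bound comparing increments of $f$ with those of $|u|^{2p}u$, integrate it against the bilinear kernel of $D$, and finish with $(|u|^{2p}u,Du)_{L^2}\le\||u|^{2p}u\|_{L^2}\|\partial_xu\|_{L^2}$.

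One small correction: for $0<p<1$ the map $u\mapsto|u|^pu$ is only continuous (H\"older), not locally Lipschitz, on $H^1$. This does not affect the result, because your Fatou argument only needs uniform convergence of the approximants.
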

\begin{proof}
Set $g(s)=|s|^{2p}s$. For $a>b$, the fundamental theorem of calculus
and Cauchy--Schwarz imply
\[
\begin{aligned}
|f(a)-f(b)|^2
&=(p+1)^2\left(\int_b^a|s|^p\,ds\right)^2\\
&\leq(p+1)^2(a-b)\int_b^a|s|^{2p}\,ds
=\frac{(p+1)^2}{2p+1}(a-b)(g(a)-g(b)).
\end{aligned}
\]
The inequality also holds for $a\leq b$ by symmetry.
For real smooth functions, the bilinear representation of $D$ is
\[
(h,D\ell)_{L^2}
=\frac1{2\pi}\iint
\frac{(h(x)-h(y))(\ell(x)-\ell(y))}{(x-y)^2}\,dx\,dy.
\]
Apply the scalar inequality with $a=u(x)$, $b=u(y)$ and integrate
against this kernel. We obtain
\[
\begin{aligned}
\|D^{1/2}f(u)\|_{L^{2}}^2
&\leq\frac{(p+1)^2}{2p+1}(g(u),Du)_{L^2}\\
&\leq\frac{(p+1)^2}{2p+1}\|g(u)\|_{L^{2}}\|Du\|_{L^{2}}
=\frac{(p+1)^2}{2p+1}
\left(\int|u|^{4p+2}\right)^{1/2}\|\partial_xu\|_{L^{2}}.
\end{aligned}
\]
Taking square roots proves \eqref{gap:chain}. For general $u\in H^1$,
smooth approximation converges also in $L^\infty$ and in every finite
$L^q$, $q\geq2$; the nonlinear terms and both sides therefore pass to
the limit. 
\end{proof}

\subsection{The error estimate from complex orthogonality}\label{app:product}
\begin{lem}[Complex-orthogonal variation]\label{lem:gap-product}
Let $z\in H^{1/2}(\mathbb R)\setminus\{0\}$, $W=\Lambda^{1/2}z$, $k=\|W\|_{L^{2}}^2$, and let
$b_\perp\in L^2$ satisfy $(b_\perp,W)_{\mathbb C}=0$.
For the quartic functional in Proposition~\ref{prop:strict-gap},
\[
|d\mathfrak M(z)[i\Lambda^{-1/2}b_\perp]|
\leq\frac{2\sqrt2}{3}k^{3/2}\|b_\perp\|_{L^{2}}.
\]
In particular, for the nonlinear projection used in that proposition,
\begin{equation}\label{gap:error}
|\mathcal R_\perp|
\leq\frac{2\sqrt2}{3}k^{3/2}\|b_\perp\|_{L^{2}}
\leq\frac{2\sqrt2}{3}k^{3/2}\|D^{1/2}f(u)\|_{L^{2}}.
\end{equation}
\end{lem}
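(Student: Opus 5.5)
The plan is to work entirely on the Fourier side, using the representation $\mathfrak M(z)=\tfrac12(\mathcal Bh_W,\Sigma\mathcal Bh_W)_{\mathbb C}$ from Proposition~\ref{prop:strict-gap} together with the operator bound $\|\mathcal B\|^2\leq2/3$ from Lemma~\ref{lem:gap-free}. First I compute the differential. The map $z\mapsto W=\Lambda^{1/2}z$ is real-linear, and $\mathfrak M$ is a real quadratic form in $h_W$. Hence, for a direction $Y\in H^{1/2}$, the derivative is
\[
d\mathfrak M(z)[Y]=\operatorname{Re}(\mathcal B\,\delta h,\Sigma\mathcal Bh_W)_{\mathbb C},
\]
where $\delta h$ is the part of $h_{W+\Lambda^{1/2}Y}-h_W$ that is linear in $Y$. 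Here I use that $\Sigma$ is self-adjoint and real, so the two cross terms of the quadratic form coincide after taking real parts. For $Y=i\Lambda^{-1/2}b_\perp$ we have $\Lambda^{1/2}Y=ib_\perp$, so
\[
\delta h(\xi,\eta)=i(\mathcal Fb_\perp)(\xi)\overline{(\mathcal FW)(\eta)}-i(\mathcal FW)(\xi)\overline{(\mathcal Fb_\perp)(\eta)}.
\]

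Next I apply Cauchy--Schwarz together with $\|\Sigma\|=1$ and $\|\mathcal B\|^2\leq2/3$:
\[
|d\mathfrak M(z)[Y]|\leq\|\mathcal B\|^2\|\delta h\|_{L^2}\|h_W\|_{L^2}\leq\tfrac23\|\delta h\|_{L^2}\,k,
\]
using $\|h_W\|_{L^2}=\|W\|_{L^2}^2=k$. The key step is the exact computation of $\|\delta h\|_{L^2}^2$. Each of the two tensor products has squared norm $\|b_\perp\|_{L^2}^2\|W\|_{L^2}^2=k\|b_\perp\|^2_{L^2}$ by Fubini and Plancherel. The cross term is $-2\operatorname{Re}$ of
\[
\iint(\mathcal Fb_\perp)(\xi)\overline{(\mathcal FW)(\eta)}\,\overline{(\mathcal FW)(\xi)}(\mathcal Fb_\perp)(\eta)\,d\xi\,d\eta=(b_\perp,W)_{\mathbb C}^2,
\]
which vanishes by the orthogonality hypothesis. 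Therefore $\|\delta h\|_{L^2}=\sqrt{2k}\,\|b_\perp\|_{L^2}$, and the bound becomes $\tfrac23\sqrt2\,k^{3/2}\|b_\perp\|_{L^2}$, which is the first claim.

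For \eqref{gap:error}, the quantity $\mathcal R_\perp$ is by definition $d\mathfrak M(z)[i\Lambda^{-1/2}b_\perp]$, where $b_\perp$ is the complex-orthogonal projection of $b=\Lambda^{1/2}r=D\Lambda^{-1/2}f(u)$ off $W$. So the first inequality is immediate, and Bessel's inequality gives $\|b_\perp\|_{L^2}\leq\|b\|_{L^2}$. The symbol bound $\xi^2/\sqrt{1+\xi^2}\leq|\xi|$ then gives $\|b\|_{L^2}\leq\|D^{1/2}f(u)\|_{L^2}$ by Plancherel; this norm is finite because $f(u)\in H^1$.

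The main obstacle is justifying the differentiation formula at $H^{1/2}$ regularity, where $b_\perp$ is merely in $L^2$. I would handle it by noting that $h\mapsto\tfrac12(\mathcal Bh,\Sigma\mathcal Bh)$ is a bounded quadratic form on $L^2_{\xi,\eta}$. The map $Y\mapsto h_{W+\Lambda^{1/2}Y}$ is a polynomial from $H^{1/2}$ into $L^2$ whose quadratic remainder has norm $\|\Lambda^{1/2}Y\|_{L^2}^2$. The chain rule therefore applies directly, without any density argument.
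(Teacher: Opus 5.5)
Your proposal is correct and follows the paper's proof essentially step for step. You differentiate the representation $\mathfrak M=\tfrac12(\mathcal Bh_W,\Sigma\mathcal Bh_W)_{\mathbb C}$ and use complex orthogonality to kill the cross term, so that $\|\delta h\|_{L^2}=\sqrt{2k}\,\|b_\perp\|_{L^2}$. You then apply $\|\mathcal B\|^2\leq 2/3$ and $\|\Sigma\|=1$, and finish with the projection bound and the multiplier inequality $\xi^2/\sqrt{1+\xi^2}\leq|\xi|$. Your justification of the differential, via the bounded quadratic form on $L^2_{\xi,\eta}$ and the polynomial map $Y\mapsto h_{W+\Lambda^{1/2}Y}$, is the same as the paper's remark in Step 2 of Proposition~\ref{prop:strict-gap}.
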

\begin{proof}
The real derivative of
$h_W(\xi,\eta)=(\mathcal FW)(\xi)\overline{(\mathcal FW)(\eta)}$
in the direction $ib_\perp$ is
\[
\delta h(\xi,\eta)
=i(\mathcal Fb_\perp)(\xi)\overline{(\mathcal FW)(\eta)}
-i(\mathcal FW)(\xi)\overline{(\mathcal Fb_\perp)(\eta)}.
\]
By Fubini's theorem, each product on the right has squared
$L^2_{\xi,\eta}$ norm $k\|b_\perp\|_{L^{2}}^2$. The cross term factors as
\[
\begin{aligned}
&\iint (\mathcal Fb_\perp)(\xi)\overline{(\mathcal FW)(\xi)}
(\mathcal Fb_\perp)(\eta)\overline{(\mathcal FW)(\eta)}\,d\xi\,d\eta\\
&\quad=\left(\int(\mathcal Fb_\perp)(\xi)
\overline{(\mathcal FW)(\xi)}\,d\xi\right)^2
=(b_\perp,W)_{\mathbb C}^2=0.
\end{aligned}
\]
The two products are orthogonal in $L^2(\mathbb R^2)$. Thus
$\|\delta h\|_{L^{2}}^2=2k\|b_\perp\|_{L^{2}}^2$.
Since $\Sigma$ is self-adjoint and has norm one, differentiation of
$\mathfrak M=\tfrac12(\mathcal Bh_W,\Sigma\mathcal Bh_W)_{\mathbb C}$
gives
\[
|d\mathfrak M(z)[i\Lambda^{-1/2}b_\perp]| =|\operatorname{Re}(\mathcal B\delta h,\Sigma\mathcal Bh_W)_{\mathbb C}| \leq\|\mathcal B\|^2\|\delta h\|_{L^{2}}\|h_W\|_{L^{2}} \leq\frac{2\sqrt2}{3}k^{3/2}\|b_\perp\|_{L^{2}}.
\]
For $b=\Lambda^{1/2}K(D)f(u)=D\Lambda^{-1/2}f(u)$,
orthogonal projection and Plancherel give
\[
\|b_\perp\|_{L^{2}}^2\leq\|b\|_{L^{2}}^2
=\int\frac{\xi^2}{\sqrt{1+\xi^2}}|\mathcal{F}[f(u)](\xi)|^2\,d\xi
\leq\int|\xi|\,|\mathcal{F}[f(u)](\xi)|^2\,d\xi.
\]
This proves \eqref{gap:error}. A merely real-orthogonal projection would
not in general make this cross term vanish; this explains the
complex projection in the normalization argument.
\end{proof}

\subsection{Verification of the strict margins}\label{app:constants}
\begin{lem}[Uniform scalar margins]\label{lem:gap-constants}
For $p\geq6$, the sharp constant $S_Q$ and the quantities $F(c,p)$ and
$a_p$ defined in \eqref{gap:F} and \eqref{gap:absorb} satisfy
$S_Q\leq U(p)$ and
\[
F(1/2,p)\leq F(1/2,6)<\frac32,\qquad
a_p(3/2)^{p/2}\leq\frac{56}{9\sqrt{39}}<1.
\]
Moreover $F(c,p)\to2c$ uniformly as $p\to\infty$ for $c$ in any
compact subinterval of $(0,1)$.
\end{lem}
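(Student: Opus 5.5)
The plan is to verify the four assertions separately. Each reduces to an explicit one-variable computation; the only delicate point is a monotonicity check in $p$.

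\emph{The bound $S_Q\leq U(p)$.} By \eqref{gap:sharp}, for every $h\ne0$ in $H^1$,
\[
S_Q\leq\|h\|_{H^1}^{2(p+2)/p}\,\|h\|_{L^{p+2}}^{-2(p+2)/p}.
\]
I would test this with $h=e^{-b|x|}$, for which
\[
\|h\|_{L^2}^2=1/b,\qquad \|\partial_xh\|_{L^2}^2=b,\qquad \|h\|_{L^{p+2}}^{p+2}=\frac{2}{(p+2)b}.
\]
The quotient then becomes
\[
(b+1/b)^{(p+2)/p}\bigl((p+2)b/2\bigr)^{2/p}.
\]
Its logarithmic derivative in $b$ vanishes at $b^2=p/(p+4)$. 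Substituting this value, so that $b+1/b=2(p+2)/\sqrt{p(p+4)}$, and simplifying the powers should produce exactly $U(p)$ in \eqref{gap:F}.

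\emph{The bound on $a_p$.} Write
\[
2^{-(p-6)/2}(3/2)^{p/2}=8\,(3/4)^{p/2}.
\]
Then
\[
a_p(3/2)^{p/2}=\frac{p+1}{\sqrt{2p+1}}\,(3/4)^{p/2}\cdot\frac{8\cdot64}{3\cdot81\sqrt3}.
\]
The map $p\mapsto\log\frac{p+1}{\sqrt{2p+1}}+\frac p2\log\frac34$ has derivative
\[
\frac1{p+1}-\frac1{2p+1}+\tfrac12\log\tfrac34.
\]
For $p\geq6$ this is at most $\frac17-\frac1{13}+\frac12\log\frac34<0$, because $\frac12\log\frac43>0.14>\frac{6}{91}$. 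So the supremum is attained at $p=6$. Direct evaluation there gives $\frac{7}{3\sqrt{13}}\cdot\frac{64}{81\sqrt3}\cdot8\cdot\frac{27}{64}=\frac{56}{9\sqrt{39}}$, and this is less than $1$ since $56^2=3136<3159=81\cdot39$.

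\emph{The bound on $F(1/2,p)$.} Here
\[
F(1/2,p)=\frac{p\,U(p)}{2(p+7/4)}.
\]
At $p=6$ one has
\[
U(6)=\frac{16}{\sqrt{60}}\,(32/5)^{1/3}<3.84,
\]
so $F(1/2,6)<\frac{3\cdot3.84}{7.75}<1.49<\frac32$. I would certify this with rational bounds such as $\sqrt{60}>7.74$ and $(32/5)^{1/3}<1.857$.

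The main obstacle is showing that $F(1/2,p)$ is nonincreasing on $[6,\infty)$. I would differentiate $\log F(1/2,p)$, which is the sum of
\[
\frac1p-\frac1{p+7/4}+\frac1{p+2}-\frac12\Bigl(\frac1p+\frac1{p+4}\Bigr)
\]
and
\[
-\frac2{p^2}\log\frac{(p+2)^2}{p+4}+\frac2p\Bigl(\frac2{p+2}-\frac1{p+4}\Bigr).
\]
The rational terms are $O(p^{-2})$ with explicit coefficients. The logarithmic term is at most $-\frac{2}{p^2}\log\frac{32}{5}$, because $(p+2)^2/(p+4)$ is increasing in $p$. I would multiply through by $p^2$ and check that $2\log(32/5)>3.7$ dominates the resulting bounded rational expression for all $p\geq6$. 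If this is tight near $p=6$, I would fall back to a finite numerical check on $[6,p_1]$ plus the asymptotic estimate beyond $p_1$.

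\emph{The uniform limit.} As $p\to\infty$, the factor $2(p+2)/\sqrt{p(p+4)}$ tends to $2$ and $\bigl((p+2)^2/(p+4)\bigr)^{2/p}=\exp\bigl(O(p^{-1}\log p)\bigr)$ tends to $1$. Hence $U(p)\to2$. Moreover $\frac{cp}{p+2-2c^3}-c=\frac{c(2c^3-2)}{p+2-2c^3}=O(1/p)$ uniformly for $c\in(0,1)$. Together these give $F(c,p)\to2c$ uniformly on compact subintervals.
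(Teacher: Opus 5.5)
Your plan follows the paper's proof. Both use the test function $e^{-b|x|}$ at $b^2=p/(p+4)$, logarithmic differentiation in $p$ of $F(1/2,p)$ and of $a_p(3/2)^{p/2}$, explicit evaluation at $p=6$, and $U(p)\to2$ for the uniform limit. The following checks are correct:
\begin{itemize}
\item $S_Q\leq U(p)$;
\item the $a_p$ bound, including the value $56/(9\sqrt{39})$;
\item $F(1/2,6)<1.49$ via $\sqrt{60}>7.74$ and $(32/5)^{1/3}<1.857$;
\item the uniform limit.
\end{itemize}

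The step that fails as you first state it is the monotonicity of $F(1/2,p)$. Your rational terms combine to $\frac{15p+28}{p(p+2)(4p+7)}$. After multiplying by $p^2$ you therefore need
\[
\frac{p(15p+28)}{(p+2)(4p+7)}<2\log\frac{(p+2)^2}{p+4}\qquad(p\geq6).
\]
The left side increases to $15/4=3.75$, while $2\log(32/5)\approx3.7126$. So freezing the logarithm at its $p=6$ value fails for all large $p$, from about $p=187$ on.

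The difficulty is at large $p$, not near $p=6$, where the left side is only about $2.85$. Your fallback works only if the unbounded piece uses the growth of the logarithm. The paper splits at $p=7$:
\begin{itemize}
\item on $[6,7]$, the left side is at most $3<2\log6<2\log(32/5)$; the first bound is equivalent to $3p^2-17p-42\leq0$;
\item on $[7,\infty)$, the left side is below $15/4<2\log7$, and $(p+2)^2/(p+4)\geq81/11>7$ there.
\end{itemize}
With this split your argument is complete.
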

\begin{proof}
The variational characterization gives
\[
S_Q=\inf_{h\ne0}
\frac{\|h\|_{H^1}^{2(p+2)/p}}{\|h\|_{L^{p+2}}^{2(p+2)/p}}.
\]
For $h_b(x)=e^{-b|x|}$,
\[
\|h_b\|_{H^1}^2=b+b^{-1},\qquad
\|h_b\|_{L^{p+2}}^{p+2}=\frac2{(p+2)b}.
\]
Thus
\[
S_Q\leq(b+b^{-1})^{(p+2)/p}
\left(\frac{(p+2)b}{2}\right)^{2/p}.
\]
Logarithmic differentiation shows that the right side is minimized
when $(p+2)(b^2-1)+2(b^2+1)=0$, that is, $b^2=p/(p+4)$.
Substituting this value gives $U(p)$ in \eqref{gap:F}.

For $F_0(p)=F(1/2,p)$, direct differentiation gives
\[
\frac{d}{dp}\log F_0(p)=
\frac{15p+28}{p(p+2)(4p+7)}
-\frac2{p^2}\log\frac{(p+2)^2}{p+4}.
\]
To see that this is negative, put
$B(p)=p(15p+28)/[2(p+2)(4p+7)]$. On $[6,7]$,
$B(p)\leq3/2$ (equivalently $3p^2-17p-42\leq0$), whereas
$\log((p+2)^2/(p+4))>\log6>3/2$.
On $[7,\infty)$, $B(p)<15/8$, whereas the logarithm is larger than
$\log7>15/8$. Here the ratio inside the logarithm is increasing and
equals $32/5$ at $p=6$ and $81/11$ at $p=7$.
Therefore $F_0$ is decreasing, and
\[
F_0(p)\leq F_0(6)
=\frac{192}{31\sqrt{60}}(32/5)^{1/3}
<\frac{360}{31\sqrt{60}}<\frac32.
\]
For the first strict comparison use $(32/5)^{1/3}<15/8$;
the second follows by squaring $240<31\sqrt{60}$.

Next let $q(p)=a_p(3/2)^{p/2}$. Then
\[
\frac{d}{dp}\log q(p)=\frac{p}{(p+1)(2p+1)}+\frac12\log(3/4)<0.
\]
Indeed, the rational term decreases on $[6,\infty)$ and is at most
$6/91<1/8$, while
$\tfrac12\log(4/3)>1/8$, by $\log(1+x)>x/(1+x)$ with $x=1/3$.
Hence
\[
q(p)\leq q(6)=\frac{56}{9\sqrt{39}}<1,
\]
the last inequality being $3136<3159$.
Finally,
\[
U(p)=\frac{2(p+2)}{\sqrt{p(p+4)}}
\exp\left(\frac2p\log\frac{(p+2)^2}{p+4}\right)\longrightarrow2.
\]
The remaining factor in $F(c,p)$ is
$cp/(p+2-2c^3)\to c$ uniformly on compact $c$ intervals.
The uniform limit follows.
\end{proof}

\section{A compactness consequence at the ground-state energy}\label{app:threshold-compactness}
\begin{lem}[Compactness at the ground-state energy]
\label{lem:threshold-compactness}
Let $I_n$ be intervals containing $0$, and let $\vec v_n$ be even-odd
solutions on $I_n$. Suppose that, for some $A,\kappa>0$,
\[
\sup_n\sup_{t\in I_n}\|\vec v_n(t)\|_{X^1}\leq A,\qquad
E(\vec v_n)\longrightarrow E(\vec Q),\qquad
G(\vec v_n(0))\leq E(\vec Q)-\kappa,
\]
and
\[
\|P\vec v_n\|_{S(I_n)}\longrightarrow\infty.
\]
After passing to a subsequence, there are
$\vec\psi\in X^1_{\mathrm{eo}}$ and $s_n\in\mathbb R$ such that
\[
\vec v_n(0)=\U(-s_n)\vec\psi+o_{X^1}(1),\qquad
E(\U(-s_n)\vec\psi)\longrightarrow E(\vec Q).
\]
In particular, no spatial translation is needed.
\end{lem}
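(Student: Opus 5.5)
The plan is to rerun Steps 1--3 of the proof of Proposition~\ref{proo001}. The only changes are that the energy threshold is now $E(\vec Q)$ rather than $E_c^{\mathrm{eo}}$, and that the strict gap on $G$ replaces the positive-well hypothesis.

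\emph{Profile decomposition.} Apply the paired decomposition \eqref{paired-profiles} to $\vec v_n(0)$, which is bounded by $A$ in $X^1_{\mathrm{eo}}$. The $S$ norm diverges, so Remark~\ref{lRe302} (with threshold depending on $A$) together with the interpolation used in Step 1 of Proposition~\ref{proo1} shows that the free scattering norm does not vanish. Hence at least one nonzero profile exists.

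\emph{Energy bookkeeping.} The $G$-decoupling in Lemma~\ref{lee302} gives
\[
\sum_j m_jG(\vec\psi^j)+G(\vec r_n^L)\leq E(\vec Q)-\kappa+o_n(1).
\]
Consequently, as in Proposition~\ref{proo001}, every constituent satisfies $\|\vec f\|_{X^1}^2<S_Q$. Each constituent therefore has $K_1>0$, nonnegative energy, and $E\geq G$. The $E$-decoupling then yields
\[
\sum_j m_je_j+\limsup_n E(\vec r_n^L)\leq E(\vec Q).
\]
It follows that every escaping pair has constituent energy $e_j\leq E(\vec Q)/2<c_*E(\vec Q)$, so Proposition~\ref{prop:strict-gap} makes it scatter with a uniform bound. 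Every centered profile with $e_j<E(\vec Q)$ scatters by Proposition~\ref{proo001}; for nonzero $t_n^j$, the wave operators of Proposition~\ref{le305} are used, exactly as in Step 2 there.

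\emph{Reduction to one centered profile.} If every nonlinear profile scatters, the construction of $\vec Z_n^L$, the uniform bounds \eqref{critical:profile-summability}, and Proposition~\ref{pr001}, all restricted to $I_n$, bound $\|P\vec v_n\|_{S(I_n)}$. This is a contradiction, so some centered profile has $e_j=E(\vec Q)$. Nonnegativity then forces every other profile to vanish and $E(\vec r_n^1)\to0$, hence $\|\vec r_n^1\|_{X^1}\to0$ because $G\leq E$. This gives
\[
\vec v_n(0)=\U(-s_n)\vec\psi+o_{X^1}(1)
\]
with $\vec\psi$ centered and even-odd. The convergence $E(\U(-s_n)\vec\psi)\to E(\vec Q)$ follows from the decoupling itself. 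Note also that the escaping case cannot supply the full energy $E(\vec Q)$: that would require two copies each of energy $E(\vec Q)/2$, and these scatter by the strict gap. This is exactly why the bound $c_*>1/2$ is needed here.

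\emph{Main obstacle.} The delicate step is the case where the single profile has $e_j=E(\vec Q)$. Unlike in Proposition~\ref{proo001}, such a profile need not scatter, so it is not used for a contradiction but kept as the output. The remaining work is then to check two things. First, that all other constituents scatter uniformly; this is ensured by the strict inequalities above together with $G$-trapping. Second, that a threshold-energy profile cannot coexist with a nontrivial remainder. I would expect to settle the latter using $G\leq E(\vec Q)-\kappa$, which keeps the profile strictly inside the well and so rules out degeneration of $K_1$.
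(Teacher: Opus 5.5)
Your proposal is correct and follows the paper's proof essentially step for step: the paired profile decomposition, the $G$-gap placing every constituent below $S_Q$ (hence $K_1>0$ and $E\ge G\ge 0$), scattering of escaping constituents via Proposition~\ref{prop:strict-gap} and of sub-threshold centered ones via Proposition~\ref{proo001}, and the nonlinear profile approximation with Proposition~\ref{pr001} on $I_n$, which forces a single centered profile of energy $E(\vec Q)$. The second ``obstacle'' in your last paragraph is already settled by your own energy-budget step, since the nonnegative energies of the other constituents and of the remainder must vanish, so no separate argument about degeneration of $K_1$ is needed.
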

\begin{proof}
Apply the paired profile decomposition to $\vec v_n(0)$. The strict
bound on $G$ places every nonzero constituent and remainder below the
quadratic ground-state barrier. As in Step~1 of
Proposition~\ref{proo001}, they have positive $K_1$ and nonnegative
energy, and their limiting energies satisfy
\[
\sum_{j=1}^Lm_je_j\leq E(\vec Q).
\]
Every escaping constituent has $e_j\leq E(\vec Q)/2$ and therefore
scatters by Proposition~\ref{prop:strict-gap}. Every centered
constituent with $e_j<E(\vec Q)$ scatters by
Proposition~\ref{proo001}. If the time parameter diverges, we construct the nonlinear profile
using Proposition~\ref{le305}. This is needed only for
$e_j<E(\vec Q)$, within the energy range of that proposition.

If all centered constituents had energy strictly below $E(\vec Q)$,
the nonlinear profile approximation in Step~2 of
Proposition~\ref{proo001} would give approximate solutions with
uniform $X^1$ and $S(\mathbb R)$ bounds. The square-summability and
error estimates proved there are unchanged when the energy budget is
$E(\vec Q)$. On $I_n$, the assumed energy-space bound allows us to apply
Proposition~\ref{pr001}, giving a uniform bound for
$\|P\vec v_n\|_{S(I_n)}$. This contradicts the hypothesis.
Thus one centered constituent carries energy $E(\vec Q)$.
All other constituent energies vanish, and the remainder satisfies
\[
\frac{p}{2(p+2)}\|\vec r_n\|_{X^1}^2
=G(\vec r_n)\leq E(\vec r_n)\longrightarrow0.
\]
This proves the lemma.
\end{proof}

\section*{Acknowledgement}
The authors declare that they have no conflict of interest.

\end{document}